\setlist[enumerate]{label={\upshape(\roman*)}}
\newdimen\proofrulebreadth \proofrulebreadth=.05em
\newdimen\proofdotseparation \proofdotseparation=1.25ex
\newdimen\proofrulebaseline \proofrulebaseline=2ex
\let\then\relax
\def\hfi{\hskip0pt plus.0001fil}
\mathchardef\squigto="3A3B
\newif\ifinsideprooftree\insideprooftreefalse
\newif\ifonleftofproofrule\onleftofproofrulefalse
\newif\ifproofdots\proofdotsfalse
\newif\ifdoubleproof\doubleprooffalse
\let\wereinproofbit\relax
\newdimen\shortenproofleft
\newdimen\shortenproofright
\newdimen\proofbelowshift
\newbox\proofabove
\newbox\proofbelow
\newbox\proofrulename
\def\shiftproofbelow{\let\next\relax\afterassignment\setshiftproofbelow\dimen0 }
\def\shiftproofbelowneg{\def\next{\multiply\dimen0 by-1 }%
\afterassignment\setshiftproofbelow\dimen0 }
\def\setshiftproofbelow{\next\proofbelowshift=\dimen0 }
\def\setproofrulebreadth{\proofrulebreadth}
\def\prooftree{
%
\ifnum  \lastpenalty=1
\then   \unpenalty
\else   \onleftofproofrulefalse
\fi
%
\ifonleftofproofrule
\else   \ifinsideprooftree
        \then   \hskip.5em plus1fil
        \fi
\fi
%
\bgroup
\setbox\proofbelow=\hbox{}\setbox\proofrulename=\hbox{}%
\let\justifies\proofover\let\leadsto\proofoverdots\let\Justifies\proofoverdbl
\let\using\proofusing\let\[\prooftree
\ifinsideprooftree\let\]\endprooftree\fi
\proofdotsfalse\doubleprooffalse
\let\thickness\setproofrulebreadth
\let\shiftright\shiftproofbelow \let\shift\shiftproofbelow
\let\shiftleft\shiftproofbelowneg
\let\ifwasinsideprooftree\ifinsideprooftree
\insideprooftreetrue
%
\setbox\proofabove=\hbox\bgroup$\displaystyle 
\let\wereinproofbit\prooftree
%
\shortenproofleft=0pt \shortenproofright=0pt \proofbelowshift=0pt
%
\onleftofproofruletrue\penalty1
}
\def\eproofbit{
%
\ifx    \wereinproofbit\prooftree
\then   \ifcase \lastpenalty
        \then   \shortenproofright=0pt  
        \or     \unpenalty\hfil         
        \or     \unpenalty\unskip       
        \else   \shortenproofright=0pt  
        \fi
\fi
%
\global\dimen0=\shortenproofleft
\global\dimen1=\shortenproofright
\global\dimen2=\proofrulebreadth
\global\dimen3=\proofbelowshift
\global\dimen4=\proofdotseparation
\global\count255=\proofdotnumber
%
$\egroup  
%
\shortenproofleft=\dimen0
\shortenproofright=\dimen1
\proofrulebreadth=\dimen2
\proofbelowshift=\dimen3
\proofdotseparation=\dimen4
\proofdotnumber=\count255
}
\def\proofover{
\eproofbit 
\setbox\proofbelow=\hbox\bgroup 
\let\wereinproofbit\proofover
$\displaystyle
}%
\def\proofoverdbl{
\eproofbit 
\doubleprooftrue
\setbox\proofbelow=\hbox\bgroup 
\let\wereinproofbit\proofoverdbl
$\displaystyle
}%
\def\proofoverdots{
\eproofbit 
\proofdotstrue
\setbox\proofbelow=\hbox\bgroup 
\let\wereinproofbit\proofoverdots
$\displaystyle
}%
\def\proofusing{
\eproofbit 
\setbox\proofrulename=\hbox\bgroup 
\let\wereinproofbit\proofusing
\kern0.3em$
}
\def\endprooftree{
\eproofbit 
  \dimen5 =0pt
%
\dimen0=\wd\proofabove \advance\dimen0-\shortenproofleft
\advance\dimen0-\shortenproofright
%
\dimen1=.5\dimen0 \advance\dimen1-.5\wd\proofbelow
\dimen4=\dimen1
\advance\dimen1\proofbelowshift \advance\dimen4-\proofbelowshift
%
\ifdim  \dimen1<0pt
\then   \advance\shortenproofleft\dimen1
        \advance\dimen0-\dimen1
        \dimen1=0pt
        \ifdim  \shortenproofleft<0pt
        \then   \setbox\proofabove=\hbox{%
                        \kern-\shortenproofleft\unhbox\proofabove}%
                \shortenproofleft=0pt
        \fi
\fi
%
\ifdim  \dimen4<0pt
\then   \advance\shortenproofright\dimen4
        \advance\dimen0-\dimen4
        \dimen4=0pt
\fi
%
\ifdim  \shortenproofright<\wd\proofrulename
\then   \shortenproofright=\wd\proofrulename
\fi
%
\dimen2=\shortenproofleft \advance\dimen2 by\dimen1
\dimen3=\shortenproofright\advance\dimen3 by\dimen4
%
\ifproofdots
\then
        \dimen6=\shortenproofleft \advance\dimen6 .5\dimen0
        \setbox1=\vbox to\proofdotseparation{\vss\hbox{$\cdot$}\vss}%
        \setbox0=\hbox{%
                \advance\dimen6-.5\wd1
                \kern\dimen6
                $\vcenter to\proofdotnumber\proofdotseparation
                        {\leaders\box1\vfill}$%
                \unhbox\proofrulename}%
\else   \dimen6=\fontdimen22\the\textfont2 
        \dimen7=\dimen6
        \advance\dimen6by.5\proofrulebreadth
        \advance\dimen7by-.5\proofrulebreadth
        \setbox0=\hbox{%
                \kern\shortenproofleft
                \ifdoubleproof
                \then   \hbox to\dimen0{%
                        $\mathsurround0pt\mathord=\mkern-6mu%
                        \cleaders\hbox{$\mkern-2mu=\mkern-2mu$}\hfill
                        \mkern-6mu\mathord=$}%
                \else   \vrule height\dimen6 depth-\dimen7 width\dimen0
                \fi
                \unhbox\proofrulename}%
        \ht0=\dimen6 \dp0=-\dimen7
\fi
%
\let\doll\relax
\ifwasinsideprooftree
\then   \let\VBOX\vbox
\else   \ifmmode\else$\let\doll=$\fi
        \let\VBOX\vcenter
\fi
\VBOX   {\baselineskip\proofrulebaseline \lineskip.2ex
        \expandafter\lineskiplimit\ifproofdots0ex\else-0.6ex\fi
        \hbox   spread\dimen5   {\hfi\unhbox\proofabove\hfi}%
        \hbox{\box0}%
        \hbox   {\kern\dimen2 \box\proofbelow}}\doll%
%
\global\dimen2=\dimen2
\global\dimen3=\dimen3
\egroup 
\ifonleftofproofrule
\then   \shortenproofleft=\dimen2
\fi
\shortenproofright=\dimen3
%
\onleftofproofrulefalse
\ifinsideprooftree
\then   \hskip.5em plus 1fil \penalty2
\fi
}
\numberwithin{equation}{section}
\title[]{Kripke-Joyal forcing for type theory \\ and uniform fibrations}
\author[] {Steve Awodey}
\address{Departments of Philosophy and Mathematics, Carnegie Mellon University, USA}
\email{awodey@cmu.edu}
\author[]{Nicola Gambino} 
\address{Department of Mathematics, University of Manchester, UK}
\email{nicola.gambino@manchester.ac.uk}
\author[]{Sina Hazratpour}
\address{Department of Mathematics, Johns Hopkins University, USA}
\email{sina@jhu.edu}
\newcommand{\noopsort}[1]{}
\date{May 8th, 2024}
\begin{document}

\begin{abstract}
We introduce a new method for precisely relating algebraic structures in a presheaf category and judgements of its internal type theory. 
The method provides a systematic way to organise complex diagrammatic reasoning and generalises the well-known Kripke-Joyal forcing for logic. 
As an application, we prove several properties of algebraic weak factorisation systems considered in Homotopy Type Theory.
\end{abstract}

\maketitle

\begin{flushright}
{\em To Andr\'e Joyal} \bigskip
\end{flushright}

\section*{Introduction}
\label{sec:kjs-intro}

In one of the seminal papers on topos theory~\cite{joyal}, the links between algebraic topology and mathematical logic are described 
as being still embryonic. The situation has changed dramatically since then, thanks to the discovery of a
deep connection between Quillen's homotopical algebra~\cite{quillen-homotopical} and Martin-L\"of type theory~\cite{nordstrom-petersson-smith:ml},
which has given rise to Homotopy Type Theory~\cite{hottbook} and Voevodsky's Univalent Foundations programme~\cite{voevodsky:library}. At the core of this connection is the relationship between the orthogonality property of weak factorisation systems (wfs's for short) and the axioms for identity types~\cite{awodey-warren:homotopy-idtype,gambino-garner:idtypewfs}. 

Presheaf categories play a fundamental role in this context because of the abundance of wfs's and Quillen model structures on them~\cite{cisinski-asterisque}. For applications in both 
type theory~\cite{CoquandT:cubttc} and homotopical algebra~\cite{riehl-cat-homotopy}, however, it is sometimes useful to work with refinements of wfs's known as algebraic weak factorisation systems (awfs's for short)~\cite{garner:small-object-argument,grandis-tholen-nwfs}, where maps satisfying an orthogonality property are replaced by maps equipped with algebraic structure, providing diagonal fillers for lifting problems and satisfying suitable uniformity conditions. The theory of awfs's is analogous to that of wfs's and is supported by a myriad of examples~\cite{bourke-garner-I,bourke-garner-II}, including the notions of a uniform Kan fibration studied in Homotopy Type Theory~\cite{Awodey:cubical-model,van-den-berg-faber,gambino2017frobenius,sattler17,SwanA:algwfs,swan2018identity}. Yet, working with awfs's diagrammatically can be very complex, since the algebraic structure on a map is in general not unique, and thus needs to be carried around explicitly. Because of this, the construction of some
important objects (such as that of fibration structures on a given map) using only diagrams can be a daunting task.

An alternative approach is provided by what may be called the \emph{internal type theory} of a presheaf category, which is a highly expressive extensional dependent type theory~\cite{MaiettiME:modcdt,pitts-catlog,seely-lccc}, capable of handling complex categorical structure and amenable to computer-assisted formalisation of proofs using Coq~\cite{CoqArt} or Agda~\cite{Norell2009}. 
This idea, suggested by Thierry Coquand~\cite{coquand-mfps}, has been successfully applied to Homotopy Type Theory in~\cite{orton-pitts}, where part 
of the theory of uniform fibrations was developed  type-theoretically and implemented in Agda. As we will see here, this approach makes it feasible to write down, say, the object of fibration structures on a map and reason about it formally.  At present, however, in order to link these two approaches one needs to unfold the interpretation of the internal type theory into the presheaf category~\cite{HofmannM:synsdt}, which can be a difficult and laborious process due to the subtleties introduced by dependent types.

Our first main contribution is to introduce and develop a new tool to relate the category-theoretic and the type-theoretic methods for describing structures in presheaf categories: namely, we extend the
Kripke-Joyal forcing of the higher-order logic of a presheaf category~$\tE$ --- a powerful, long-established, technique of categorical logic~\cite{mm:sgl,OsiusG:notkj} --- to a precisely stated  internal type theory of~$\tE$.
In particular, we show how Kripke-Joyal forcing for this type theory allows us to test the validity of type-theoretic judgements in~$\tE$~(\cref{cor:SandC}), unfold explicitly what forcing amounts to for various forms of type, such as dependent products (\cref{prop:forcing.dprd}), which are very important for our applications, and relate it precisely to the ordinary forcing of higher-order logic (\cref{prop:forcing.props-in-ctx}).    This technique then also permits intricate type-theoretic constructions of bespoke \emph{classifiers} which, via forcing, represent familiar and important fibrations of structures (as studied 
in~\cite{shulman2019infty1toposes,swan2021cttalk}), a possibility that promises many further applications. 

The second main contribution of the paper is to the study of the awfs's considered in Homotopy Type Theory, using a combination of the new Kripke-Joyal forcing for the internal type theory and the usual internal reasoning therein. We thereby reconcile the category-theoretic and type-theoretic descriptions of awfs's considered in~\cite{Awodey:cubical-model,CoquandT:modttc,gambino2017frobenius,sattler17,SwanA:algwfs,swan2018identity}
and give simple proofs of several properties of cofibrations and uniform (trivial) fibrations. In particular, we use Kripke-Joyal forcing to provide a type-theoretic characterisation of cofibrations (\cref{prop:forcing.cofibrations}) and a new proof that the pointed endofunctor of cofibrant partial elements admits the structure of a monad~\cref{thm:monad-structure-partial-elts}). 

One of our key results (\cref{prop:ext-implies-tfib}) shows how the uniformity condition that is part of the definition of a uniform (trivial) fibration arises naturally from the forcing conditions for dependent products. We also use Kripke-Joyal forcing to provide a new, simpler, proof that the uniform fibration structures on a map are exactly the algebra structures for a pointed endofunctor~(\cref{prop:plusistfib}).  With these results in place, we then
give an account of the basic properties of uniform fibrations, combining category-theoretic and type-theoretic reasoning. In particular, we describe type-theoretically the object of uniform fibration structures on a map (\cref{prop:uniform-fibration-from-fib-type}) and use it to construct a universal uniform fibration~(\cref{prop:classifier-fib}).  We also describe the homotopical semantics of identity types as path types~(\cref{prop:IdTypeRules}). To the best of our knowledge, all of these results are original and fill an awkward gap in the research in the area, by relating precisely the category-theoretic and the type-theoretic approaches to uniform fibrations.

We hope that these applications demonstrate the utility of Kripke-Joyal forcing for researchers in mathematical logic, category theory and algebraic topology. We should also mention that, while our applications are to the theory of awfs's and Homotopy Type Theory, our Kripke-Joyal forcing for type theory is applicable to many other kinds of algebraic structures that may occur on presheaves, a topic that we leave for future research. 

In order to briefly explain our approach in more detail, let us recall that the standard Kripke-Joyal forcing provides a way to test the validity of a formula of the internal logic of a presheaf category~$\tE = [\op{\mathbb{C}}, \cSet]$. Given a formula $\sigma \co X \to \iprop$, \ie a map into the subobject classifier of~$\tE$,  and a generalised element 
$x \co \yon c \to X$ (where $\yon c$ is the Yoneda embedding applied to~$c \in \cC$), we say that $c$ forces $\sigma(x)$  if there exists a (necessarily unique) map as indicated by the dotted arrow in the diagram
\[
  \begin{tikzcd}
	 & \propcomp{x \co X}{\sigma(x)} \ar[d, tail] \ar[r] 
	 & 1 \ar[d, "\ttrue"] \\
\yon c \ar[r, swap, "{x}"]  \ar[ur, dotted, bend left = 20]  
	& X \ar[r, swap, "{\sigma}"] 
	& \iprop \mathrlap{,} 
 \end{tikzcd}
 \]
 where $\propcomp{x \co X}{\sigma(x)} \mono X$ is the subobject of $X$ obtained by pullback of 
 the subobject classifier $\ttrue \co 1  \to \iprop$ along~$\sigma$. In order to introduce our Kripke-Joyal forcing, we follow~\cite{Awo18-naturalModels} and set up the internal type theory of~$\tE$ by regarding types as maps into the Hofmann-Streicher universe  $\UV$ in~$\tE$ associated to an inaccessible cardinal~$\kappa$ in~$\cSet$~\cite{hofmann-streicher-universes}, an approach which deals correctly with coherence issues~\cite{HofmannM:intttl,pitts-catlog}.
Then, given a type $\alpha \co X \to \UV$ and a generalised element 
$x \co \yon c \to X$, we define our Kripke-Joyal forcing by saying that $c$ forces $a \oftype \alpha(x)$ if we have a (no longer unique) map as indicated by the dotted arrow in the
diagram
\[
  \begin{tikzcd}
	 & \typecomp{X}{\alpha} \ar[d] \ar[r] 
	 & \UVptd \ar[d, "\dspu"] \\
\yon c \ar[r, swap, "{x}"]  \ar[ur, dotted, bend left = 20, "{(x,a)}"]  
	& X \ar[r, swap, "{\alpha}"] 
	& \UV \mathrlap{,} 
 \end{tikzcd}
 \]
where $\typecomp{X}{\alpha} \to X$ is the display map obtained by pullback of the small map classifier~$\dspu~\co~\UVptd~\to~\UV$, in the sense of~\cite{joyal-moerdijk}, along~$\alpha$.  As is the case for Kripke-Joyal forcing of logical formulas, unfolding this condition recursively according to the form of the type $\alpha$ provides a quasi-mechanical method to determine necessary and sufficient conditions for the validity of type-theoretic judgements in $\tE$, something which may be difficult to obtain by other means.

Our definition of the Kripke-Joyal forcing involves generalised elements of a presheaf with domain a representable of the form $x \co \yon c \to X$, where $c \in \cC$, rather than $u \co Y \to X$, where $Y$ is another presheaf. Our choice is motivated by examples of simplicial sets and cubical sets, where generalised elements with domain a representable have a clear topological meaning (as $n$-simplices or as $n$-cubes of $X$, respectively). It should also be pointed out that any quantification over the generalised elements from representables ranges over a set, rather than over a class. Also, our notion of a uniform fibration arises direcly from considering generalised elements with a representable domain. We leave the interesting question of exploring a Kripke-Joyal forcing for arbitrary generalised elements to future work.

Our approach to the internal type theory of $\tE$ also allows us to obtain some new results regarding the relationship between propositions and types in a presheaf category. In particular, we show how the image factorisation of the small map classifier provides a convenient way of expressing the operation of propositional truncation in the type theory (\cref{thm:prop-trunc}). We use this to provide a precise account of the relationship between provability of formulas and inhabitation of associated types (\cref{prop:prop-as-types,prop:prop-as-types-isos}), which is of independent interest. Finally, we note that our Kripke-Joyal forcing semantics for type theory is \emph{complete} with respect to the standard notion of deduction for Martin-L\"of type theory (\cref{Kripke-Joyal-completeness}), in the same way that conventional Kripke semantics is complete for (intuitionistic) first-order logic, something that fails for Kripke-Joyal forcing for higher-order logic.

\smallskip

\noindent
\emph{Outline of the paper.} 
The paper is divided into two parts. The first part, which includes 
\cref{sec:presheaves,sec:small-vs-subobject,sec:types,sec:kjshott-definition}, sets up the general framework,
definition and fundamental results on Kripke-Joyal forcing for the presheaf type theory. The second part,
which includes~\cref{sec:cof,subsec:partiality,sec:tfib,sec:unifib,sec:path-types}, applies Kripke-Joyal forcing to the study of cofibrations, partial elements, uniform trivial fibrations, uniform fibrations and path types, respectively. We end the paper in \cref{sec:con} with some directions for future work.

\smallskip

\noindent
\emph{Notation} For a category $\tC$ and $X, Y \in \tC$, we write $\tC(X,Y)$ for the class of maps from~$X$ to~$Y$. For maps $f \co X \to Y$ and $g \co Y \to Z$, we write $g \circ f$ or simply $gf$ for their composite. For $X \in \tC$, we write $\id_X \co X \to X$ for the identity map at $X$. We write $0$ and $1$ for the initial and terminal object, $X \times Y$ for product, $X + Y$ for coproduct, and more generally~$X \times_Z Y$ for pullback and $X +_Z Y$ for pushout. Given categories $\tC$ and $\tD$ and functors $F \co \tC \to \tD$ and $G \co \tD \to \tC$, we write $F \dashv G$ to mean that~$F$ is left adjoint to~$G$. In this case, there is a family of natural bijections $\tC(X, GA) \cong \tD(FX, A)$, for~$X \in \tC$ and~$A \in \tD$. The transpose of a map $f$ (in either direction) is written $f^\sharp$. As usual, unit and counit of the adjunction are denoted $\eta \co \id_{\tC} \rightarrow GF$ and $\varepsilon \co FG \rightarrow \id_{\tD}$. For a category~$\tE$ and~$X \in \tE$, we write $\tE/_{X}$ for the slice category of objects over $X$.

Throughout the paper, we shall work assuming a fixed inaccessible cardinal $\kappa$.\footnote{Alternatively, we could have assumed an internal full subcategory of $\tE$ closed under the appropriate category-theoretic operations.} We say that a set is $\kappa$-\strong{small} if it has cardinality less than $\kappa$. We say that a category is \strong{locally small} (or \strong{locally $\kappa$-small})  if its hom-classes are sets (or $\kappa$-small sets, respectively). Similarly, we say that a category is \strong{small} (or \strong{$\kappa$-small}) if its class of objects is a set and it is locally small (or its class of objects is $\kappa$-small and it is locally $\kappa$-small, respectively).  We write~$\cSet$ for the category of sets and functions, ${\cSet}^\bullet$ for the category of pointed sets and point-preserving functions, ${\cSet}_{\kappa}$ for the category of sets of rank $< \kappa$
and functions (which is equivalent to the category of $\kappa$-small sets and functions, but has the advantage of being small) and~${\cSet}^\bullet_{\kappa}$ for the category of pointed sets of rank $< \kappa$. 

\smallskip

\noindent
{\em Acknowledgements.} We are grateful to Thierry Coquand and Andy Pitts for helpful conversations on the topic of this paper and comments on a
draft version of it.
Steve Awodey and Nicola Gambino are grateful for the support of the Centre for Advanced Study at the Norwegian Academy of Science and Letters, where this research was begun.  Awodey was also supported by the Air Force Office of Scientific Research under MURI grant FA9550-21-1-0009, and award number FA9550-21-1-0009.
Gambino was supported by EPSRC via grant~EP/V002325/2 and AFOSR 
under agreements FA9550-17-1-0290 and FA9550-21-1-0007. The work in this paper was carried out while Sina Haztratpour was a research
fellow at the University of Leeds, supported by AFOSR under agreement~FA9550-17-1-0290. We are grateful to the referee for their careful reading of the paper. Proof trees were typeset using Paul Taylor's prooftree package.


\section{Presheaf categories}
\label{sec:presheaves} 

\subsection*{Preliminaries}
For a small category $\cC$, we write $\PshC$ for the category
of presheaves over~$\cC$ and $\yon \maps \cC \to \PshC$ for the Yoneda embedding. Sometimes we identify objects and maps in $\cC$ with the representable presheaves and natural transformations between them. Furthermore, for a presheaf
$X$, we often identify elements $x \in X(c)$ with maps~$x \maps \yon c \to X$, as 
permitted by the Yoneda lemma. For $f \co d \to c$ in $\cC$, we write~$x.f$ for the element of~$X(d)$
obtained by applying~$X(f) \co X(c) \to X(d)$ to~$x \in X(c)$, which corresponds to the composite map
$x.f \co \yon d \to \yon c \to X$ under the identification above.
For  $X \in \PshC$, we write $\int\! X$ for its category of elements. Recall the equivalence of categories 
\begin{equation}
\label{eq:slice-of-presheaf-cat}
    \textstyle
\PshC/_{X} \simeq  \Psh(\int\! X) \mathrlap{.}
\end{equation}
If $X = \yon c$, then $\int\! X = \cC/_{c}$ and we have an equivalence of categories 
\begin{equation}\label{eq:sliceoverrep}
\PshC/_{\yon c} \simeq \Psh(\cC/_{c}) \mathrlap{.}
\end{equation}

Let us now fix a small category $\cC$ and define $\tE \defeq \PshC$. We now review some of the basic structure of $\tE$ in order to fix notation. Foremost, $\tE$ is locally cartesian closed, \ie it has a terminal object $1$ and all of its slice categories $\tE/_{X}$ are cartesian closed. Thus for every $f \co Y \to X$ the pullback functor $\pbk{f} \co \tE/_{X} \to \tE/_{Y}$ 
has both a left and a right adjoint, written $\ladj{f} \co \tE/_{Y} \to \tE/_{X}$ and~$\radj{f} \co   \tE/_{Y} \to \tE/_{X}$, respectively.
The action of $\ladj{f}$ is  given simply by composition with $f \co Y \to X$.

\subsection*{Small maps}
We introduce the notion of a small map in $\tE$, which is determined by the Grothendieck universe of $\kappa$-small sets in the ambient set theory.

\begin{defn} \label{thm:def-small-map}
\leavevmode
\begin{enumerate} 
\item\label{item:smallmap} We say that a map $p \co A \to X$ in $\tE$ is a \strong{small map}
if all of its fibers $p^{-1}_c\{x\} \subseteq A(c)$, for $c \in \cC$ and $x \in X(c)$, are $\kappa$-small sets.   Note that this is equivalent to the condition that, for every $x \co \yon c \to X$, the set of all lifts $a \co \yon c \to A$ across $p \co A \to X$  is $\kappa$-small.
\[
\begin{tikzcd}
& A \ar[d, "p"] \\
\yon c \ar[ru, dotted, "a"] \ar[r, swap, "x"] & X
\end{tikzcd}
\]
\item An object $A \in \tE$ is \strong{small} if  $A \to 1$ is a small map.  In this case, all of the values of $A$ are $\kappa$-small sets.
\end{enumerate}
\end{defn}

We write $\tS$ for the class of small maps in $\tE$. For $X \in \tE$, we write $\tS/_{X}$ for the full
subcategory of~$\tE/_{X}$ spanned by small maps.  The category $\tE$ admits a classifier for small maps  (cf.~\cite[Proposition 82]{awodey-cubical-git}), 
given by the Hofmann-Streicher universe $\UV \in \tE$ \cite{hofmann-streicher-universes}, which is defined by letting, for $c \in \cC$,
\begin{equation}
\label{equ:hofmann-streicher-universe}
\UV(c) \defeq \mathrm{Obj} \big[ \op{(\cC/_{c})} \, ,  {\cSet}_\kappa \big]  \, .
\end{equation}
This is set-sized by our choice of ${\cSet}_\kappa$ in the Introduction.
Explicitly, an element~$\alpha~\in~\UV(c)$ is a presheaf on the slice category $\cC/_{c}$ whose values are small sets. Note that we consider~$\UV(c)$ as a set. The action on $U$ of 
an arrow~$d\to c$ in $\cC$ is by precomposition with the composition functor $\cC/_d \to \cC/_c$.  Similarly, define $\UVptd \in \tE$ for $c \in \cC$ by letting,
\[
\UVptd(c) \defeq \mathrm{Obj} \big[ \op{(\cC/_{c})} \, ,  {\cSet}^\bullet_\kappa \big]  \, .
\]
Thus, an element $\alpha \in \UVptd(c)$ is a contravariant functor on the slice category $\cC/_{c}$ whose values are \emph{pointed} small sets.
The forgetful functor ${\cSet}^\bullet_\kappa \to {\cSet}_\kappa$ induces a natural transformation 
\begin{equation}
\label{equ:dspu}
\dspu \co \UVptd \to \UV
\end{equation}
by composition.  We call $\dspu$ the \strong{small map classifier}. This terminology is justified by the fact that~$\dspu$ is small and, for every small map $p \co A \to X$, there exists a pullback diagram
\begin{equation}
\label{equ:char-for-small}
\begin{tikzcd}
A \ar[r] \ar[d, swap, "p"] & \UVptd \ar[d, "\dspu"] \\
X \ar[r, swap, "{\alpha}"]  & \UV \mathrlap{.}
\end{tikzcd}
\end{equation}
Indeed, for a small map $p \co A \to X$, a canonical map $\alpha$ can be defined as follows. For~$c~\in~\cC$ and~$x \in X(c)$, we require an element~$\alpha_{x}\in \UV(c)$, which is
a presheaf $\alpha_{x}~\co~\op{{\cC}/_{c}}~\to~\cSet_\kappa$. This is defined for $f \co d \to c$  by, 
\begin{equation}
\label{equ:char-map-of-small-map}
\alpha_{x}(f) =  \{ a \in A(d) \ | \ p_d(a) = x.f \}  \mathrlap{,}
\end{equation}
which is the set of all $a$ making the following commute,
\begin{equation*}
\begin{tikzcd}
 \yon d  \ar[r, dotted, "a"] \ar[d, swap, "f"] & A  \ar[d, "p"] \\
 \yon c  \ar[r, swap, "x"] & X {.}
\end{tikzcd}
\end{equation*}
Note that the values of $\alpha_{x}$ are small, as $p \co A \to X$ is a small map.\footnote{By our definition of $\cSet_\kappa$, one should replace the set defining
$\alpha_x(f)$ in~\eqref{equ:char-map-of-small-map} with an isomorphic set of rank~$< \kappa$.
Similar considerations should be applied elsewhere in the paper.} We call this map~$\alpha~\co~X \to~\UV$ the \strong{classifying map} of $p \co A \to X$ and say that~$p$ is \emph{classified} 
by~$\alpha$.

We introduce some notation for canonical pullbacks of $\dspu \co \UVptd \to \UV$.
First, for~$\alpha\co\yon c \to \UV$, there is a canonical pullback square 
\begin{equation}
\label{diag:context-extension-repr}
 \begin{tikzcd}
      \typecomp{\yon c}{\alpha} \ar[r] \ar[d, swap, "\dsp{\alpha}"]  & \UVptd
      \ar[d, "\dspu"] \\
       \yon c \ar[r, swap, "\alpha"] & \UV \mathrlap{,}
           \end{tikzcd}   
\end{equation} 
given by taking $\typecomp{\yon c}{\alpha}$ at $d\in \cC$ to be $(\typecomp{\yon c}{\alpha})(d)\ \defeq\ \coprod_{f \in \cC(d,c)}\alpha(f)$. 
Note that this is the object of $\tE/_{\yon c} \simeq \Psh(\cC/_{c})$
associated  to $\alpha \co \op{{\cC}/_{c}} \to \cSet_\kappa$ under \eqref{eq:sliceoverrep}.  
The rest of the pullback in \eqref{diag:context-extension-repr} is then evident.

This assignment then determines a choice of pullback for each $\alpha \co X \to \UV$, 
 \begin{equation}
\label{diag:context-extension}
 \begin{tikzcd}
      \typecomp{X}{\alpha} \ar[r] \ar[d, swap, "\dsp{\alpha}"]  & \UVptd
      \ar[d, "\dspu"] \\
       X \ar[r, swap, "\alpha"] & \UV \mathrlap{.}
           \end{tikzcd}   
\end{equation}
We call $\typecomp{X}{\alpha}$ the \strong{comprehension of $\alpha$ with respect to $\UV$} and 
$\dsp{\alpha}\co\typecomp{X}{\alpha} \to X$ the \strong{display map} associated to $\alpha$.
Since $\pi$ is a small map, so is $\typecomp{X}{\alpha} \to X$. 
In this way, every small map is isomorphic to a display map. Explicitly, for every small map $p \co A \to X$, there is a  diagram
\begin{equation}
\label{diag:context-extension-ess-surj}
\begin{tikzcd} 
A \ar[drr, bend left = 25] \ar[ddr, bend right = 25, swap, "p"] \ar[dr, pos= 0.6, "\cong"] & & \\
&  \typecomp{X}{\alpha} \ar[r] \ar[d] & \UVptd \ar[d, "\dspu"] \\ 
 & X \ar[r, swap, "\alpha"] & \UV \mathrlap{,}
 \end{tikzcd} 
\end{equation}
where $\alpha$ is the classifying map of $p$. In this situation, we say that
the small map $p$ is \strong{displayed}. We summarise the notation for these equivalent notions:
\[
p \co A \to X \text{ small map} \mathrlap{,}  \qquad
\alpha \co X \to \UV \mathrlap{,}  \qquad
\dsp{\alpha} \co \typecomp{X}{\alpha} \to X  \text{ display map} \mathrlap{.} 
\]
In the following, all three points of view will be exploited.

\begin{rmk}
\label{rmk:universe-in-slice}
Let $X$ be an object of $\tE = \Psh(\cC)$ and consider the slice topos $\tE/_{X}$.  Up to the equivalence of categories~\eqref{eq:slice-of-presheaf-cat}, this is the presheaf topos $\Psh(\int\! X)$, with a notion of small map as in \cref{thm:def-small-map}, and an associated small map classifier $\pi_X \maps E_X \to U_X$.  One can show that this map is obtained by composition (and whiskering) of the presheaves~$E$ and~$U$ (and the natural transformation $\pi \co E\to U$) with the forgetful functor $\op{(\int\! X)} \to \op{\cC}$.  In particular, 
$E_X (c,x) = E(c)$ and $U_X (c,x) = U(c)$, for $c$ in $\cC$ and $x \in X(c)$.
In~$\tE/_{X}$, the small map classifier $\pi_X \maps E_X \to U_X$ corresponds under~\eqref{eq:slice-of-presheaf-cat} to the base change~$X^*\pi$ of the small map classifier $\pi$ in $\tE$, 
\begin{equation}
\label{eq:small-map-classifier-in-slice}
\begin{tikzcd}
X^*\UVptd \ar[d, swap, " X^*\pi "]  \ar[r, equals] 
		& \UVptd \times X \ar[r] \ar[d, swap, " \dspu \times X "] & \UVptd \ar[d, "\dspu"] \\
X^*\UV  \ar[r, equals] & \UV \times X \ar[r, swap]  & \UV \mathrlap{.}
\end{tikzcd}
\end{equation}
Thus in particular,  
\begin{equation}
\label{eq:small-map-classifier-in-slice-sections}
\textstyle \Psh(\cC)(X,U)\ \iso\ \Psh(\cC)/_{X}(1, X^*U)\ \iso\ \Psh(\int\! X) (1, U_X)  \, .
\end{equation} 
We will sometimes notationally \emph{identify} a type $\alpha \co X \to \UV$ in $\Psh(\cC)$ with the associated closed type $\alpha \co 1 \to \UV_X$ in $\Psh(\int\! X) $ under this isomorphism.
\end{rmk}

\subsection*{Dependent sums and products of small maps}
Small maps are closed under a variety of operations.
In particular, the pullback of a small map along any map $f\co Y\to X$ is again a small map. 
Furthermore, if $p \co A \to X$ is a small map, then the left and right adjoints to pullback along $p$ 
 restrict to small maps, in the sense that there are serially-commuting diagrams as follows.
\begin{equation}
\label{equ:ladj-radj-small}
\begin{tikzcd} 
\tS/_{A} \ar[r, bend left = 20, "\ladj{p}"]  \ar[d] & 
\tS/_{X} \ar[l, bend left = 20,  "\pbk{p}"]  \ar[d] \\[2ex]
\tE/_{A} \ar[r, bend left = 20,  "\ladj{p}"]  & 
\tE/_{X} \ar[l, bend left = 20,  "\pbk{p}"]  
\end{tikzcd} 
\qquad \qquad
\begin{tikzcd} 
\tS/_{X} \ar[r, bend left = 20, "\pbk{p}"]  \ar[d] & 
\tS/_{A} \ar[l, bend left = 20,  "\radj{p}"]  \ar[d] \\[2ex]
\tE/_{X} \ar[r, bend left = 20,  "\pbk{p}"]  & 
\tE/_{A} \mathrlap{.} \ar[l, bend left = 20,  "\radj{p}"]  
\end{tikzcd} 
\end{equation}
The action of these adjoints is reflected into operations on classifying maps, as we now explain.

Consider first the pullback along any map $f \co Y \to X$.  Given $\alpha \co X \to \UV$ with its small display map $\dsp{\alpha}\co\typecomp{X}{\alpha} \to X$ the pullback $f^*(\dsp{\alpha})$ is classified by the composite $\alpha f\co Y \to \UV$, by the two pullbacks lemma,
\begin{equation}
 \label{equ:pullbackcomp}
 \begin{tikzcd}
\typecomp{Y}{\alpha f} \ar[r] \ar[d] & \typecomp{X}{\alpha} \ar[r] \ar[d]  &[10pt] \UVptd \ar[d] \\
 Y \ar[r, swap, "f"] & X \ar[r, swap, "\alpha"] & \UV \mathrlap{.}
 \end{tikzcd} 
 \end{equation}
In this way, the pullback operation on small maps $f^*\co \tS/_{X}\to \tS/_{Y}$ is induced, up to isomorphism, by precomposition of the associated classifying maps,
$f^*p_\alpha \cong p_{\alpha f}$. We have the following characterisation of the sections of the small display map $ \typecomp{Y}{\alpha f} \to Y$, which will be used to validate the 
substitution rule in~\cref{lem:subst}.

\begin{prop}\label{thm:pullback-subst-terms} 
Let $f\co Y\to X$ and  $\alpha\co X\to \UV$. 
Then the following data are in bijective correspondence:
\begin{enumerate}
  \item sections $a\co Y \to \typecomp{Y}{\alpha f}$ over $Y$,
  \item lifts $a' \co Y \to \typecomp{X}{\alpha}$ of $f$ across $p_\alpha$. \qed
  \end{enumerate}
\end{prop}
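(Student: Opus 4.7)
The plan is to read the bijection directly off the universal property of the left-hand pullback square in diagram~\eqref{equ:pullbackcomp}. Concretely, since
\[
\typecomp{Y}{\alpha f} \;\cong\; Y \times_X \typecomp{X}{\alpha}\mathrlap{,}
\]
a map $a \co Y \to \typecomp{Y}{\alpha f}$ is the same thing as a pair of maps $g \co Y \to Y$ and $a' \co Y \to \typecomp{X}{\alpha}$ satisfying $f \circ g = p_\alpha \circ a'$.

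First I would unwind the two conditions in terms of this pullback data. If $a$ is a section of $\dsp{\alpha f}$, then the leg $g = \dsp{\alpha f} \circ a$ equals $\id_Y$, so the compatibility condition becomes $f = p_\alpha \circ a'$, which is exactly what it means for $a'$ to be a lift of $f$ across $p_\alpha$. Conversely, starting from a lift $a'$ of $f$, the pair $(\id_Y, a')$ trivially satisfies the compatibility $f \circ \id_Y = p_\alpha \circ a'$, so the pullback universal property produces a unique $a \co Y \to \typecomp{Y}{\alpha f}$ whose projection to $Y$ is $\id_Y$, \ie a section.

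It remains to observe that these two assignments are mutually inverse, which is immediate from the uniqueness clause in the universal property of the pullback: the round-trip $a \mapsto a' \mapsto a$ recovers the original section because both agree on the two projections $(\id_Y, a')$, and similarly the other round-trip is the identity. There is no real obstacle here; the only subtlety worth flagging explicitly is the normalization of choosing pullbacks coherently so that $\typecomp{Y}{\alpha f}$ really is (not merely isomorphic to) the canonical pullback $f^* \typecomp{X}{\alpha}$, which is guaranteed by the construction of the comprehensions in~\eqref{diag:context-extension-repr}--\eqref{diag:context-extension} and the two-pullbacks lemma as recorded in~\eqref{equ:pullbackcomp}.
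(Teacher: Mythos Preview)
Your proposal is correct and is precisely the argument the paper has in mind: the statement is marked with a \qed\ and no proof, so it is left as immediate from the universal property of the left pullback square in~\eqref{equ:pullbackcomp}, which is exactly what you unfold.
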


Now consider the left adjoint. 
Given $\alpha \co X \to \UV$ and $\beta \co  \typecomp{X}{\alpha} \to \UV$, let $p \co  \typecomp{X}{\alpha} \to X$ and~$q \co \typecomp{X.\alpha}{\beta}  \to \typecomp{X}{\alpha}$ be the associated display maps. Since $p$ and $q$ are small, we obtain a small map $\ladj{p}(q)$  in~$\tS/_{X}$ by application of the functor $\Sigma_p$ in~\eqref{equ:ladj-radj-small}. This is given simply by composition, so that $\ladj{p}(q) \defeq q \circ p$.  Let 
$\Sigma_\alpha(\beta) \co X \to \UV$ be  the classifying map of $\ladj{p}(q) \co \typecomp{X.\alpha}{\beta} \to X$, giving rise to the following pullback diagram, in which the map labelled $\tpair_{\alpha, \beta}$ is the canonical one: 
\begin{equation}
 \label{equ:sigma-pair}
 \begin{tikzcd}
\typecomp{X}{\Sigma_\alpha(\beta)} \ar[r, "\tpair_{\alpha, \beta}"] \ar[d]  &[10pt] \UVptd \ar[d] \\
 X \ar[r, swap, "\Sigma_\alpha(\beta)"] & \UV \mathrlap{.} 
 \end{tikzcd} 
 \end{equation}
Note that there is an isomorphism $\typecomp{X.\alpha}{\beta} \cong \typecomp{X}{\Sigma_{\alpha}(\beta)}$ over $X$.
 We therefore obtain maps~$\tproj_1$ and~$\tproj_2$ fitting into the diagrams
\begin{equation}
\label{equ:sigma-tproj}
 \begin{tikzcd}
\typecomp{X}{\Sigma_\alpha(\beta)} \ar[d] \ar[r, "\tproj_1"] & \UVptd \ar[d] \\ 
X \ar[r, swap, "\alpha"] & \UV \mathrlap{,}
\end{tikzcd} \qquad
\begin{tikzcd}
\typecomp{X}{\Sigma_\alpha(\beta)} 
              \ar[d] \ar[r, "\tproj_2"] & \UVptd \ar[d] \\ 
\typecomp{X}{\alpha} \ar[r, swap, "\beta"] & \UV  \mathrlap{.} 
\end{tikzcd}
\end{equation}
From this description we obtain \cref{thm:seely-sigma} below, which characterises the sections of the display map~$\typecomp{X}{\Sigma_\alpha(\beta)}~\to~X$. It is essentially contained already in~\cite{seely-lccc}, and will be used to show the validity of the rules for dependent sum types in~\cref{thm:rules-sigma}. 

\begin{prop}\label{thm:seely-sigma} 
Let $\alpha\co X\to \UV$ and $\beta \co \typecomp{X}{\alpha} \to \UV$. 
Then the following data are in bijective correspondence:
\begin{enumerate}
  \item pairs of sections $a \co X \to \typecomp{X}{\alpha}$ and $b \co X \to \typecomp{X}{\beta(a)}$, both over $X$, 
  \item sections $c \co X \to \typecomp{X}{\Sigma_\alpha(\beta)}$ over $X$. \qed
  \end{enumerate} 
\end{prop}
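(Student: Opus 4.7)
The plan is to reduce the claim to a double application of Proposition~\ref{thm:pullback-subst-terms} by exploiting the isomorphism $\typecomp{X.\alpha}{\beta} \cong \typecomp{X}{\Sigma_\alpha(\beta)}$ over $X$ recorded just after diagram~\eqref{equ:sigma-pair}. Since $\Sigma_\alpha(\beta)$ was defined as the classifier of the composite display map $\dsp{\alpha} \circ \dsp{\beta} \co \typecomp{X.\alpha}{\beta} \to X$, this isomorphism puts sections $c \co X \to \typecomp{X}{\Sigma_\alpha(\beta)}$ over $X$ in bijective correspondence with sections $s \co X \to \typecomp{X.\alpha}{\beta}$ of $\dsp{\alpha} \circ \dsp{\beta}$. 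So it suffices to identify such $s$ with pairs $(a,b)$ as in (i).

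From $s$ to $(a,b)$: set $a \defeq \dsp{\beta} \circ s \co X \to \typecomp{X}{\alpha}$; then $\dsp{\alpha} \circ a = (\dsp{\alpha} \circ \dsp{\beta}) \circ s = \id_X$, so $a$ is a section of $\dsp{\alpha}$. By construction $\dsp{\beta} \circ s = a$, so $s$ is a lift of $a$ across $\dsp{\beta}$. Applying Proposition~\ref{thm:pullback-subst-terms} to $f \defeq a$ and classifier $\beta \co \typecomp{X}{\alpha} \to \UV$, this lift corresponds uniquely to a section $b \co X \to \typecomp{X}{\beta(a)}$ of the display map of $\beta \circ a$, the latter fitting in the canonical pullback square~\eqref{equ:pullbackcomp} with $f = a$.

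From $(a,b)$ back to $s$: Proposition~\ref{thm:pullback-subst-terms} converts $b$ into a lift $s \co X \to \typecomp{X.\alpha}{\beta}$ of $a$ across $\dsp{\beta}$, and then $(\dsp{\alpha} \circ \dsp{\beta}) \circ s = \dsp{\alpha} \circ a = \id_X$ shows that $s$ is a section of the composite display map, yielding the required $c$ via the isomorphism. The two constructions are mutually inverse because each step is already a bijection: the isomorphism $\typecomp{X.\alpha}{\beta} \cong \typecomp{X}{\Sigma_\alpha(\beta)}$ on the outside, and Proposition~\ref{thm:pullback-subst-terms} on the inside. There is no real obstacle; the only care needed is in tracking which pullback square (the one for $\Sigma_\alpha(\beta)$ in~\eqref{equ:sigma-pair} or the one for $\beta \circ a$ in~\eqref{equ:pullbackcomp}) is being used at each step, but both correspondences are immediate from the relevant universal properties.
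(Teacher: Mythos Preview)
Your proof is correct. The paper itself gives no proof of this proposition at all: it is stated with a \qed\ and a remark that the content ``is essentially contained already in~\cite{seely-lccc}''. Your argument is exactly the natural one using the machinery the paper has already set up, namely the isomorphism $\typecomp{X.\alpha}{\beta} \cong \typecomp{X}{\Sigma_\alpha(\beta)}$ over $X$ recorded after~\eqref{equ:sigma-pair} and Proposition~\ref{thm:pullback-subst-terms}; there is nothing to add.
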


\begin{rmk*} 
As shown in~\cite{Awo18-naturalModels}, the operation mapping $\alpha$ and $\beta$ to $\Sigma_\alpha(\beta)$ can be internalised in~$\tE$, by
making use of the polynomial functor $P \co \tE \to \tE$ associated to the map $\dspu \co \UVptd \to \UV$, in the sense of~\cite{gambino-kock}, \ie the composite
\begin{equation}
\label{equ:poly-dspu}
\begin{tikzcd} 
\tE \ar[r, "(-) \times \UVptd"]  &[10pt]  \tE/_{\UVptd} \ar[r, "\radj{\pi}"] &[10pt] \tE/_{\UV} \ar[r, "\ladj{\UV}"]  &[10pt] \tE \mathrlap{.} 
\end{tikzcd} 
\end{equation}
Indeed, there is a pullback diagram 
\[
\begin{tikzcd} 
Q \ar[r, "\tpair"]  \ar[d] & \UVptd \ar[d] \\
P(\UV) \ar[r, swap, "\Sigma"]  & \UV \mathrlap{,} 
\end{tikzcd}
\]
and for every $\alpha$ and $\beta$ as above, we have a pasting
\begin{equation*}
\begin{tikzcd}
\typecomp{X}{\Sigma(\alpha,\beta)} \ar[r] \ar[d] & Q \ar[r, "\tpair"] \ar[d] &[20pt] \UVptd \ar[d] \\
 X \ar[r, swap, "{(\alpha, \beta)}" ] & P(\UV) \ar[r, swap, "\Sigma" ] & \UV  \mathrlap{,}
 \end{tikzcd} 
\end{equation*}
where $(\alpha, \beta) \co X \to P(\UV)$ is determined by the universal property of $P(\UV)$, and $Q$ can be seen as the object of pairs of sections $(a,b)$ as in \cref{thm:seely-sigma}. 
Comparing this pasting with~\eqref{equ:sigma-pair}, not only $X.\Sigma_{\alpha}(\beta)$ is isomorphic to  $X.\Sigma(\alpha, \beta)$ over $X$, but also the composite map $\Sigma(\alpha, \beta) \co X \to U$ above is isomorphic 
to the map classifying map $\Sigma_\alpha(\beta) \co X \to U$, in the sense to be made precise when we discuss the internal category of types in \cref{sec:small-vs-subobject}.
\end{rmk*}

 We now consider the right adjoint. With $p \co  \typecomp{X}{\alpha} \to X$ and $q \co \typecomp{X.\alpha}{\beta}  \to \typecomp{X}{\alpha}$ as before, let $\Pi_\alpha(\beta)$ be the classifying map of the small map $\radj{p}(q)$, giving rise to a pullback diagram of the form
 \begin{equation}
 \label{equ:pi-lambda}
 \begin{tikzcd}
\typecomp{X}{\Pi_\alpha(\beta)}  \ar[r, "\lambda_{\alpha, \beta}"] \ar[d] &[20pt] \UVptd \ar[d] \\
 X \ar[r, swap, "{\Pi_\alpha(\beta)}" ] & \UV  \mathrlap{.}
 \end{tikzcd} 
 \end{equation}
The map on the left hand side  is isomorphic to $\radj{p}(q)$ in $\tE/_{X}$ and hence
inherits its universal property. In particular, the counit of the adjunction induces a 
map~$\varepsilon \co \typecomp{X}{\alpha.\Pi_\alpha(\beta)} \to \typecomp{X}{\alpha.\beta}$ 
in~$\tE/_{\typecomp{X}{\alpha}}$ which gives rise to a commutative diagram
\begin{equation}
\label{equ:pi-app}
 \begin{tikzcd}
\typecomp{X.\alpha}{\Pi_\alpha(\beta)}  \ar[r, "\funapp"] \ar[d] &[20pt] \UVptd \ar[d] \\
 \typecomp{X}{\alpha} \ar[r, swap, "{\beta}" ] & \UV  \mathrlap{.}
 \end{tikzcd} 
 \end{equation}
 
The resulting characterisation of the sections of the display map $\typecomp{X}{\Pi_\alpha(\beta)} \to X$ is also essentially contained already in~\cite{seely-lccc}, and will be used to show the validity of the rules for dependent product types in~\cref{thm:rules-pi}. 

\begin{prop}\label{thm:seely-pi} 
Let $\alpha\co X\to \UV$ and $\beta \co \typecomp{X}{\alpha} \to \UV$. 
Then the following data are in bijective correspondence:
\begin{enumerate}
  \item sections $b \co \typecomp{X}{\alpha} \to \typecomp{X}{\alpha.\beta}$ over $\typecomp{X}{\alpha}$,
  \item sections ${b}^\sharp \co X \to \typecomp{X}{\Pi_\alpha(\beta)}$ over $X$.
  \end{enumerate}
 The bijection is defined by $\varepsilon \circ~p^*b^\sharp =~b$, using pullback along $p$ and composition with the counit $\varepsilon$. \qed
 \end{prop}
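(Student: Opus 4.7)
The plan is to derive the bijection directly from the adjunction $\pbk{p} \dashv \radj{p}$, where $p = \dsp{\alpha} \co \typecomp{X}{\alpha} \to X$. First, by the construction in~\eqref{equ:pi-lambda}, the display map $\dsp{\Pi_\alpha(\beta)} \co \typecomp{X}{\Pi_\alpha(\beta)} \to X$ is isomorphic in $\tE/_{X}$ to $\radj{p}(q)$, where $q = \dsp{\beta} \co \typecomp{X.\alpha}{\beta} \to \typecomp{X}{\alpha}$. Hence sections over $X$ of the display map $\dsp{\Pi_\alpha(\beta)}$ correspond naturally to morphisms $1_{X} \to \radj{p}(q)$ in $\tE/_{X}$.

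Second, I apply the adjunction to obtain a bijection
\begin{equation*}
\tE/_{X}\bigl(1_{X},\, \radj{p}(q)\bigr) \;\cong\; \tE/_{\typecomp{X}{\alpha}}\bigl(\pbk{p}(1_{X}),\, q\bigr).
\end{equation*}
Since $\pbk{p}$ preserves terminal objects in the slice, $\pbk{p}(1_{X}) \cong 1_{\typecomp{X}{\alpha}}$, and so the right-hand side is precisely the set of sections of $q$ over $\typecomp{X}{\alpha}$, i.e., the maps $b \co \typecomp{X}{\alpha} \to \typecomp{X.\alpha}{\beta}$ with $q \circ b = \id$. Composing with the first correspondence establishes the claimed bijection between (i) and (ii).

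Finally, to verify the explicit formula, I unfold the adjoint transpose. The counit $\varepsilon \co \pbk{p}\radj{p}(q) \to q$ is, by the construction preceding~\eqref{equ:pi-app}, exactly the map $\funapp$ in that diagram, and the transpose of a section $b^\sharp \co X \to \typecomp{X}{\Pi_\alpha(\beta)}$ is computed in the standard way as $\varepsilon \circ \pbk{p}(b^\sharp)$; reading this as a map $\typecomp{X}{\alpha} \to \typecomp{X.\alpha}{\beta}$ over $\typecomp{X}{\alpha}$ recovers $b$. There is no real obstacle here: the proposition is essentially the $\pbk{p} \dashv \radj{p}$ adjunction applied to the object $1_{X} \in \tE/_{X}$, and the only point that needs to be checked carefully is the identification of $\funapp$ as the counit of the adjunction at $q$, which is built into the definition of $\Pi_\alpha(\beta)$ via~\eqref{equ:pi-lambda}--\eqref{equ:pi-app}.
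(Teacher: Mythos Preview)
Your argument is correct and is precisely the standard one: sections of $\typecomp{X}{\Pi_\alpha(\beta)} \to X$ are points of $\radj{p}(q)$ in $\tE/_X$, which by $\pbk{p}\dashv\radj{p}$ correspond to points of $q$ in $\tE/_{\typecomp{X}{\alpha}}$, i.e.\ sections of $\typecomp{X.\alpha}{\beta}\to\typecomp{X}{\alpha}$; the counit formula is the usual description of the transpose. The paper in fact gives no proof at all --- the proposition carries a terminal \qed\ and is attributed to Seely --- so your write-up is simply supplying the omitted argument.

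One minor imprecision: you say the counit $\varepsilon$ ``is exactly the map $\funapp$'' in~\eqref{equ:pi-app}. Strictly, $\varepsilon \co \typecomp{X.\alpha}{\Pi_\alpha(\beta)} \to \typecomp{X.\alpha}{\beta}$ is a map in $\tE/_{\typecomp{X}{\alpha}}$, whereas $\funapp$ is the composite into $\UVptd$ obtained from $\varepsilon$ via the pullback square defining $\typecomp{X.\alpha}{\beta}$. They determine one another, so this does not affect the argument, but it would be cleaner to say that $\funapp$ is \emph{induced by} $\varepsilon$ rather than equal to it.
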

 
\medskip

 \begin{rmk*} 
As was the case for the left adjoint, there is a pullback diagram 
\[
\begin{tikzcd} 
P(\UVptd) \ar[r, "\lambda"]  \ar[d, "P(\pi)"] & \UVptd \ar[d, "\pi"] \\
P(\UV) \ar[r, swap, "\Pi"]  & \UV \mathrlap{,} 
\end{tikzcd}
\]
which gives rise for every $\alpha$ and $\beta$ to the pasting diagram
\begin{equation}\label{diagram:Picomp}
\begin{tikzcd}
\typecomp{X}{\Pi(\alpha,\beta)} \ar[r] \ar[d] & P(\UVptd) \ar[r, "\lambda"] \ar[d] &[20pt] \UVptd \ar[d] \\
 X \ar[r, swap, "{(\alpha, \beta)}" ] & P(\UV) \ar[r, swap, "\Pi" ] & \UV  \mathrlap{.}
 \end{tikzcd} 
\end{equation}
This is related to the one in~\eqref{equ:pi-lambda} in a way that is analogous to the one discussed for the left adjoint.
\end{rmk*}

\subsection*{The subobject classifier}\label{subsec:SOC}

We conclude this review section by introducing some notation and recalling some basic facts about the subobject classifier of $\tE$,
written $\iprop$. This presheaf is defined  by letting, for $c \in \cC$, 
\begin{equation}
\label{equ:subobject-classifier}
\Omega(c) \defeq \mathrm{Obj} \big[ \op{ (\cC/_{c})} \, , \mathbf{2} \big] \mathrlap{,}
\end{equation}
where $\mathbf{2}$ is the poset $\{0\leq 1\}$.  The map $\ttrue \co 1 \to \iprop$ is defined by letting its component at $c$ be the constant functor with value~$1$. This classifies subobjects
in the sense that for every subobject $S \mono X$, there exists a unique $\sigma \co X \to \Omega$ such that
\begin{equation}
\label{equ:char-for-sub}
\begin{tikzcd}
S \ar[d, tail] \ar[r] & 1 \ar[d, "\ttrue"] \\
X \ar[r, swap, "\sigma"] & \iprop 
\end{tikzcd} 
\end{equation}
is a pullback diagram. In this case, we call $\sigma$ the \emph{characteristic map} of~$S$ and say that~$S$ is
\strong{classified} by $\sigma$.  We introduce some notation for
the pullbacks of $\ttrue \co 1 \to \iprop$. Given~$\sigma \co X \to \Omega$, we write
\begin{equation} 
\label{equ:subobject-classifier-2} 
\begin{tikzcd}
\propcomp{x \oftype X}{\sigma(x)} \ar[r] \ar[d, tail]  &1\ar[d, tail, "\ttrue"] \\
X  \ar[r, swap, "\sigma"]  & \iprop
\end{tikzcd}
\end{equation} 
for the subobject which is the pullback of $\ttrue$ along $\sigma$.

For $X \in \tE$, we write $\Sub(X)$ for the poset of subobjects of $X$. This poset has a Heyting algebra structure, which is reflected into operations on morphisms from~$X$ to~$\Omega$, for which we shall use standard logical notation. For example, if $S, T \in \Sub(X)$ are classified by~$\sigma$ and~$\tau$, 
respectively, so that $S = \{ x \oftype X \ | \ \sigma(x) \}$ and $ T =\{ x \oftype X \ | \ \tau(x) \}$, 
then we write~$\sigma \implies \tau$ for the characteristic map of the Heyting implication $S \implies T$ in~$\Sub(X)$. Thus the subobject $\{ x \oftype X \ | \ (\sigma \implies \tau)(x) \}$ has the universal property of the Heyting implication~$S~\implies~T$.

For our purposes, quantification along small maps will play an important role, so we introduce some notation for it. Fix $\alpha \co X \to \UV$ and let $p \co \typecomp{X}{\alpha} \to X$ be the associated small display map. Pullback along $p$ restricts to a function $\pbk{p} \co \Sub(X) \to \Sub(\typecomp{X}{\alpha})$, which is a Heyting algebra morphism with both left and right adjoints, 
written~$\exists_p$ and~$\forall_p$, respectively.  If $S \in \Sub(\typecomp{X}{\alpha})$ is classified by $\sigma \co \typecomp{X}{\alpha} \to \iprop$, 
so that $S = \{ (x,a) \oftype \typecomp{X}{\alpha} \ | \ \sigma(x,a) \}$, 
then we write $\exists_\alpha(\sigma) \co X \to \iprop$ for the characteristic map of $\exists_p(S) \in \Sub(X)$, so that
\[
\exists_p(S) = \exists_p\{ (x,a) \oftype \typecomp{X}{\alpha} \ | \ \sigma(x,a) \} = \{ x \oftype X \ | \ (\exists_\alpha \sigma) (x) \} \mathrlap{.}
\]
Since $\exists_p$ is calculated by taking the image factorisation of the composite of $S \mono \typecomp{X}{\alpha}$ and $p \co \typecomp{X}{\alpha} \to X$, we obtain a diagram 
\begin{equation}\label{diag:imageclassify}
\begin{tikzcd} 
S  \ar[r, twoheadrightarrow] \ar[d, tail] &[20pt] \exists_p(S)  \ar[d, tail]  \ar[r] &[20pt] 1 \ar[d, "\ttrue"] \\
\typecomp{X}{\alpha} \ar[r, swap, "p"] & X \ar[r,  swap, "{\exists_\alpha(\sigma)}"] & \iprop \mathrlap{,}
\end{tikzcd}
\end{equation}
whose right hand side is a pullback. The universal quantification $\forall_p(S) \in \Sub(X)$ is similarly classified by a map written $\forall_\alpha(\sigma) \co X \to \iprop$, giving  
\[
\forall_p(S) = \forall_p\{ (x,a) \oftype \typecomp{X}{\alpha} \ | \ \sigma(x,a) \}  = \{ x \oftype X \ | \ (\forall_\alpha \sigma)(x) \} \mathrlap{.}
\]


\section{Small maps and subobjects}
\label{sec:small-vs-subobject}

\subsection*{Subobjects as small maps} 

There is an evident analogy between the small map classifier $\dspu \co \UVptd \to \UV$ and the subobject classifier $\ttrue \co 1 \to \iprop$.  For one thing, recall that the universe~$\UV$ was defined in~\eqref{equ:hofmann-streicher-universe} by mapping into the category $\cSet_{\kappa}$ of small sets, while the subobject classifer~$\iprop$ was defined in~\eqref{equ:subobject-classifier} by mapping into the category~$\mathbf{2}$ of truth values. 
Note also the similarity between the classifying map of a small map in~\eqref{equ:char-for-small} and the characteristic map of 
a subobject in~\eqref{equ:char-for-sub}, as well as between the comprehension of a map $\alpha \co X \to \UV$ with respect to $\UV$ in~\eqref{diag:context-extension} and the comprehension of a map~$\sigma \co X \to \Omega$ with respect to $\Omega$ in~\eqref{equ:subobject-classifier-2}.
Our aim in this section is to make the relation between~$\UV$ and~$\iprop$ more precise.  Anticipating the use to which they will be put in the sequel, we may regard~$\UV$ as an object of \emph{types} and~$\iprop$ as an object of \emph{propositions}.  Observe first that there is an inclusion functor of truth values into sets, $\mathbf{2} \hookrightarrow {\cSet}_{\kappa}$,
where, to be specific, we take the ordinals $0= \emptyset$, $1 = \{0\}$, $2 = \{0, 1\}$, the latter ordered by inclusion when regarded as the category $\mathbf{2}$.  By the definitions of $\UV$ and $\iprop$ in~\eqref{equ:hofmann-streicher-universe} and~\eqref{equ:subobject-classifier}, respectively,
this functor induces by composition a natural transformation 
\begin{equation}
\label{equ:pat-Omega-to-U}
\pat{-} \co \iprop \to \UV \mathrlap{,}
\end{equation}
which we call the \emph{inclusion} of propositions into types.

\begin{prop} \label{diag:iprop-vs-V} The inclusion map $\pat{-} \maps \iprop \to \UV$ is a monomorphism in $\tE$, and fits into a pullback diagram of the form
\begin{equation*}
\begin{tikzcd}
1 \ar[r] \ar[d, swap, tail,  "\ttrue"] 
& \UVptd \ar[d, "\dspu"] \\
\iprop \ar[r, swap, tail, "\pat{-}"] 
& \UV \rlap{.}
\end{tikzcd}
\end{equation*} 
\end{prop}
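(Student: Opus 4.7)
The plan is to verify both claims pointwise, exploiting the fact that limits and monomorphisms in $\tE = \PshC$ are computed componentwise, and that the definitions of $\iprop$, $\UV$, $\UVptd$ given in \eqref{equ:subobject-classifier}, \eqref{equ:hofmann-streicher-universe} and just after are pointwise functor-categories into $\mathbf{2}$, $\cSet_\kappa$, $\cSet_\kappa^\bullet$ respectively. Since the inclusion $\mathbf{2} \hookrightarrow \cSet_\kappa$ is injective on objects (as the ordinals $0 = \emptyset$ and $1 = \{0\}$ are distinct sets), postcomposition induces, at each $c \in \cC$, an injection $\iprop(c) \hookrightarrow \UV(c)$. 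Hence $\pat{-}$ is a pointwise, and therefore a global, monomorphism.

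For the pullback square, I would fix $c \in \cC$ and unfold the pointwise pullback $\iprop(c) \times_{\UV(c)} \UVptd(c)$. An element consists of a functor $\sigma \co \op{(\cC/c)} \to \mathbf{2}$ and a functor $\alpha \co \op{(\cC/c)} \to \cSet^\bullet_\kappa$ such that the underlying set-valued functor $\dspu(\alpha)$ coincides with the image $\pat{\sigma}$ of $\sigma$ under the inclusion $\mathbf{2} \hookrightarrow \cSet_\kappa$. The decisive observation is that the values of $\alpha$ are pointed sets and are therefore nonempty, while $\pat{\sigma}(f) \in \{\emptyset, \{0\}\}$ for each $f$; the equation $\dspu(\alpha) = \pat{\sigma}$ therefore forces $\sigma(f) = 1$ for every $f \in \cC/c$, i.e.\ $\sigma$ is the constant functor with value $1 \in \mathbf{2}$, which is precisely the image of the unique element of $1(c)$ under $\ttrue_c$. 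It then also forces $\alpha$ to be the constantly-$\{0\}$ functor, whose pointing is uniquely determined since $\{0\}$ admits a unique point.

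This shows that the canonical comparison map $1(c) \to \iprop(c) \times_{\UV(c)} \UVptd(c)$ is a bijection for every $c$, so the square is a pullback in $\tE$. I would also remark, for clarity, that naturality in $c$ of all these constructions is immediate from the fact that the functorial action on both $\iprop$ and $\UV$ is precomposition with the composition functor $\cC/_d \to \cC/_c$, and the inclusion $\mathbf{2} \hookrightarrow \cSet_\kappa$ intertwines these actions.

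I do not anticipate a real obstacle: once the pointwise description is written out, the whole argument reduces to the trivial fact that a pointed set cannot have empty underlying set. The only mild subtlety is making sure that the unique pointing of $\{0\}$ yields uniqueness on the $\UVptd$-side, so that the comparison map is surjective as well as injective; this is handled by the explicit description of $\cSet^\bullet_\kappa \to \cSet_\kappa$ as the forgetful functor.
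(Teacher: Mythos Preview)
Your proof is correct and follows essentially the same approach as the paper. The paper's argument is terser: it observes that the square of categories and functors
\[
\begin{tikzcd}
\mathbf{1} \ar[r] \ar[d] & \cSet^\bullet_\kappa \ar[d] \\
\mathbf{2} \ar[r] & \cSet_\kappa
\end{tikzcd}
\]
is itself a pullback, and that this induces the desired pullback of presheaves by postcomposition; your explicit pointwise calculation (pointed sets are nonempty, so $\sigma$ is forced to be constantly $1$, and then $\alpha$ is forced to be the uniquely pointed $\{0\}$) is exactly the verification that this square of categories is a pullback.
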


\begin{proof} 
The inclusion functor $\mathbf{2} \hookrightarrow {\cSet}_{\kappa}$ is injective on objects, and so composing with it is pointwise monic. 
Similarly, the pullback is induced via composition by the evident pullback of categories and functors
\[
\begin{tikzcd}
\mathbf{1} \ar[r] \ar[d] 
&  {\cSet}^\bullet_{\kappa} \ar[d] \\
\mathbf{2} \ar[r] 
& {\cSet}_{\kappa}  \mathrlap{.} 
\end{tikzcd} \vspace{-1em}
\] 
\end{proof} 

\begin{cor}\label{cor:compUvOmega} Let $\sigma \colon X \to \Omega$. Then $\typecomp{X}{\pat{\sigma}} = \propcomp{x \co X}{\sigma(x)}$, 
\ie  the comprehension of $\sigma$ with respect
to $\Omega$ is equal, as a subobject of $X$, to the comprehension of its inclusion $\pat{\sigma}$ with respect to $\UV$. 
\end{cor}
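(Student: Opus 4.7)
My plan is to derive the claimed equality by pasting pullback squares, invoking the preceding proposition together with the definitions of the two comprehensions.

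First, I recall the data. By definition of the inclusion $\pat{-}$, the composite $\pat{\sigma} \co X \to \iprop \to \UV$ is the characteristic map whose pullback against $\dspu \co \UVptd \to \UV$ is, by definition of $\typecomp{X}{\pat{\sigma}}$, the display map $\dsp{\pat{\sigma}} \co \typecomp{X}{\pat{\sigma}} \to X$ from diagram~\eqref{diag:context-extension}. On the other hand, by~\eqref{equ:subobject-classifier-2}, the subobject $\propcomp{x \co X}{\sigma(x)} \mono X$ is obtained as the pullback of $\ttrue \co 1 \to \iprop$ along $\sigma \co X \to \iprop$.

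The key step is then to form the horizontally pasted diagram
\[
\begin{tikzcd}
\typecomp{X}{\pat{\sigma}} \ar[r] \ar[d, tail] & 1 \ar[r] \ar[d, tail, "\ttrue"] & \UVptd \ar[d, "\dspu"] \\
X \ar[r, swap, "\sigma"] & \iprop \ar[r, swap, tail, "\pat{-}"] & \UV \mathrlap{.}
\end{tikzcd}
\]
The right-hand square is a pullback by the previous proposition, and the outer rectangle is the canonical pullback that defines $\typecomp{X}{\pat{\sigma}}$ (since its bottom edge is precisely $\pat{\sigma} = \pat{-}\circ\sigma$). By the two-pullbacks lemma, the left-hand square is therefore also a pullback. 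But that left-hand square is exactly the pullback defining $\propcomp{x \co X}{\sigma(x)}$ as a subobject of $X$, so the two subobjects agree, giving the desired equality. Note that this also shows $\dsp{\pat{\sigma}}$ is itself a monomorphism, consistent with $\pat{-}$ being monic.

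There is no serious obstacle here; the only thing to be careful about is matching the two different "canonical" choices of pullback. The corollary asserts equality, not merely isomorphism, of subobjects of $X$, so one needs the canonical pullback chosen for $\typecomp{X}{\pat{\sigma}}$ in~\eqref{diag:context-extension} to agree with the canonical pullback chosen for $\propcomp{x \co X}{\sigma(x)}$ in~\eqref{equ:subobject-classifier-2} after pasting with the square from the proposition. Since both are obtained by the same pullback-along-composite construction, and since equality of subobjects means equality as elements of $\Sub(X)$ (hence only equality up to canonical iso over $X$), this identification is immediate from the pasting lemma.
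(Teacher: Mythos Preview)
Your proof is correct and takes essentially the same approach as the paper: paste the definitional pullback for $\typecomp{X}{\pat{\sigma}}$ with the pullback square from \cref{diag:iprop-vs-V}, then apply the two-pullbacks lemma to conclude the left square is a pullback, hence exhibits $\propcomp{x \co X}{\sigma(x)}$. Your added remark about equality of subobjects versus strict equality of representing monos is a reasonable clarification, though the paper simply treats this as understood.
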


\begin{proof} The display map $p_{\pat{\sigma}}\co \typecomp{X}{\pat{\sigma}} \to X$ fits into the diagram
\[
\begin{tikzcd}
 \typecomp{X}{\pat{\sigma}} \ar[r] \ar[d]  & 1\arrow[d, tail, "\ttrue"]  \arrow[r]& \UVptd \arrow[d, "\dspu"]  \\
  X  \ar[r, swap, "\sigma"] & \iprop \ar[r, "\pat{-}", swap]  & \UV \mathrlap{.} 
\end{tikzcd} 
\]
Since the outer rectangle is a pullback by definition and the right hand square is a pullback by~\cref{diag:iprop-vs-V},  the left hand square is also a pullback. Thus the mono $p_{\pat{\sigma}}\co \typecomp{X}{\pat{\sigma}} \to X$ represents the subobject $\propcomp{x \co X}{\sigma(x)}$ classified by $\sigma \co X \to \Omega$. 
\end{proof}

\begin{rmk}
Propositional comprehension of a subobject $S\mono X$ from a map $\sigma\co X\to \iprop$ is determined only 
up to equivalence of the representing monomorphisms $s \maps S \mono X$, whereas for a type $\alpha\co X\to \UV$, we have made a canonical choice $p_\alpha \co \typecomp{X}{\alpha} \to X$, which however is only one among many isomorphic small maps $p \co A \to X$ classified by $\alpha$.  \cref{cor:compUvOmega} says that we can use these canonical small maps as canonical representatives of subobjects when convenient.  See \cref{subsec:PaT} for a warning about this use.
\end{rmk}

\subsection*{Propositional truncation} \label{subsec:prop-trunc}
Consider the universal small map $\dspu \co \UVptd \to \UV$ and its image factorisation $\UVptd \twoheadrightarrow \mathsf{im}(\dspu)  \mono \UV$
and let $\supp \co \UV \to \iprop$ be the characteristic map of the subobject $\mathsf{im}(\dspu) \mono {\UV}$, fitting into the pullback diagram
\[
\begin{tikzcd}
 \mathsf{im}(\dspu)  \ar[d,tail] \ar[r]  &  1 \ar[d, tail, "\ttrue"] \\
  \UV \ar[r, swap, "\supp"] & \iprop \mathrlap{.} 
\end{tikzcd}
\] 
Note that $\supp$ is induced by composition with the functor ${\cSet}_{\kappa} \to \mathbf{2}$ mapping every non-empty set
to $1$ and the empty set to $0$.  For $\alpha \co X \to \UV$, we write $\supp(\alpha) \co X \to \iprop$ and call it the \strong{support} of $\alpha$. Pasting the factorisation on the left and the pullback diagram of \cref{diag:iprop-vs-V}  on the right, we obtain
\begin{equation}
\label{diag:iotasigma}
\begin{tikzcd}
{\UVptd}   \ar[d,swap,"\pi"]  \ar[r,twoheadrightarrow] & \mathsf{im}(\dspu)  \ar[d,tail] \ar[r]  &  1 \ar[d, tail,"\ttrue"] \ar[r] & {\UVptd}  \ar[d,"\dspu"] \\
\UV \ar[r, equal] &  \UV \ar[r, swap, "\supp"] & \iprop \ar[r,swap,"\pat{-}"] &  \UV.
\end{tikzcd}
\end{equation}
We define the \strong{truncation} operation $\trunc \co \UV \to \UV$ to be the composite of $\supp \co \UV \to \iprop$ and~$\pat{-}\co \iprop \mono \UV$. For $\alpha \co X \to \UV$, we write $\trunc(\alpha) \co X \to \UV$ for the composite of $\alpha$ and $\trunc$, so that 
\begin{equation}
\label{equ:ptrun-definition}
\trunc(\alpha) \defeq \pat{ \supp(\alpha) }  \mathrlap{.} 
\end{equation}
This operation of \strong{propositional truncation} behaves like the bracket types of \cite{Awodey-Bauer}, as stated in the following proposition

\begin{prop} \label{thm:prop-trunc}
Let $p \co A \to X$ be a small map with classifying map $\alpha \co X \to \UV$. Then 
$\ptrun{\alpha} \co X \to~\UV$ is a classifying map for the image factorisation of $p$, in
the sense that there is a pullback diagram
\[
\begin{tikzcd}
\mathsf{im}(p) \ar[r] \ar[d, tail] & \UVptd \ar[d, "\dspu"] \\
X \ar[r,  swap, "\ptrun{\alpha}"] & \UV \mathrlap{.} 
\end{tikzcd}
\]
Moreover, the composite $\supp\circ \pat{-} \co \iprop \to \iprop$ is the identity map.
\end{prop}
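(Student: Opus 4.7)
My plan is to reduce both claims to the pasting identity already encoded in diagram~\eqref{diag:iotasigma}, together with the stability of (epi, mono) factorisations under pullback in the presheaf topos~$\tE$.

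For the first claim, I would begin by observing that the two rightmost squares of~\eqref{diag:iotasigma} paste to a single pullback square exhibiting $\mathsf{im}(\dspu) \mono \UV$ as the pullback of $\dspu \co \UVptd \to \UV$ along $\trunc \co \UV \to \UV$. Next, I would pull back the image factorisation $\UVptd \twoheadrightarrow \mathsf{im}(\dspu) \mono \UV$ along the characteristic map $\alpha \co X \to \UV$ of $p$. Since presheaf toposes are regular, pullback preserves the image factorisation, and so this yields the image factorisation of $\alpha^{*} \dspu = p$; in particular, $\mathsf{im}(p) \cong \alpha^{*} \mathsf{im}(\dspu)$ as subobjects of $X$. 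Pasting the two pullbacks $\mathsf{im}(\dspu) \cong \trunc^{*} \UVptd$ and $\mathsf{im}(p) \cong \alpha^{*} \mathsf{im}(\dspu)$, and invoking the two-pullbacks lemma with the bottom composite equal to $\trunc \circ \alpha = \ptrun{\alpha}$, then yields the required pullback diagram.

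For the second claim, I would use that $\supp \circ \pat{-} \co \iprop \to \iprop$ classifies the pullback of $\ttrue \co 1 \to \iprop$ along this composite, which by pasting coincides with the pullback of $\mathsf{im}(\dspu) \mono \UV$ along $\pat{-}$. Pulling the image factorisation $\UVptd \twoheadrightarrow \mathsf{im}(\dspu) \mono \UV$ back along $\pat{-}$, and again using stability of images together with \cref{diag:iprop-vs-V} (which identifies $\pat{-}^{*} \dspu$ with $\ttrue \co 1 \to \iprop$), the factorisation becomes $1 \twoheadrightarrow 1 \mono \iprop$. Hence $\pat{-}^{*} \mathsf{im}(\dspu)$ is the subobject $\ttrue \co 1 \mono \iprop$, which is also classified by $\id_{\iprop}$. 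Uniqueness of classifying maps for the subobject classifier then forces $\supp \circ \pat{-} = \id_{\iprop}$.

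The only non-routine ingredient is the stability of epi–mono factorisations under pullback, invoked in both parts; this is immediate because $\tE = \PshC$ is a topos, hence in particular a regular category. Everything else is standard pullback-pasting combined with the universal properties of $\dspu$ and of the subobject classifier~$\iprop$.
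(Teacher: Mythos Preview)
Your proposal is correct and follows essentially the same approach as the paper: both arguments use stability of epi--mono factorisations under pullback to identify $\mathsf{im}(p)$ with $\alpha^*\mathsf{im}(\dspu)$, then paste with the pullbacks in~\eqref{diag:iotasigma} to obtain the classifying map $\ptrun{\alpha}$, and for the second claim both specialise this to $\alpha=\pat{-}$ and invoke \cref{diag:iprop-vs-V} together with uniqueness of subobject classifiers. The paper merely packages the first step in a cube diagram rather than in prose.
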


\begin{proof}
Consider the image factorisation on the left face below, which is a pullback of the right face, since image factorisations are stable under pullback:
\begin{equation}\label{diag:trunctypeimage}
\begin{tikzcd}[column sep=small, row sep = small]
A \ar[rr] \ar[dd, swap, "p"]  \ar[rd, twoheadrightarrow, swap] 
  &[10pt] & \, \, \, {\UVptd}  \, \, \,  \ar[dr,twoheadrightarrow] \ar[dd,swap] \\
 & {\mathsf{im}(p)}  \ar[rr, crossing over] & &  \mathsf{im}(\dspu) \ar[dd,tail]  \\
X \ar[rr, swap, pos= 0.7, "\alpha"] \ar[dr, equal]  
      && \UV \ar[dr,equal] & \\
&  X  \ar[from = uu, crossing over, tail] \ar[rr,swap, "\alpha"]  
  && \UV \mathrlap{.} 
\end{tikzcd}
\end{equation}
Adjoining diagram \eqref{diag:iotasigma} on the right, we obtain
\[
\begin{tikzcd}[column sep=small, row sep = small]
 {A} \ar[rr] \ar[dd]  \ar[rd, twoheadrightarrow, swap] 
  && \, \, \, {\UVptd}  \, \, \,  \ar[dr,twoheadrightarrow] \ar[dd,swap] \\
 & {\mathsf{im}(p)}  \ar[rr, crossing over] & & \mathsf{im}(\dspu)  \ar[dd,tail]  \ar[rr]  
  &&  1 \ar[dd, tail] \ar[rr] && {\UVptd}  \ar[dd] \\
 X \ar[rr, swap, pos= 0.7, "\alpha"] \ar[dr, equal]  
      && \UV \ar[dr,equal] & \\
&  X  \ar[from = uu, crossing over, tail] \ar[rr,swap, "\alpha"]  
  && \UV \ar[rr, swap, "\supp"] && \iprop \ar[rr,swap,"\pat{-}"] &&  \UV \mathrlap{,}
\end{tikzcd}
\] 
which shows that the three-fold composite across the front face below is also a pullback.
Thus 
\[
\mathsf{im}(p) \cong \typecomp{X}{\pat{\supp(\alpha)}} = \typecomp{X}{\ptrun{\alpha}} \mathrlap{.}
\]
Finally, the left hand square in 
\[
\begin{tikzcd}
1  \ar[d,swap,"\ttrue",tail]  \ar[rr] 
  &&  \mathsf{im}(\dspu)  \ar[d,tail] \ar[rr]  &&  1 \ar[d,"\ttrue", tail] \\
\iprop \ar[rr,swap,"\pat{-}"] &&  \UV \ar[rr, swap, "\supp"] && \iprop 
\end{tikzcd}
\] 
is a pullback, as can be seen by putting $\pat{-}$ for $\alpha$ in \eqref{diag:trunctypeimage} and using $\mathsf{im}(\typecomp{\Omega}{\pat{-}}) = \mathsf{im}(\ttrue) = \ttrue$, since $\ttrue \co 1\to\iprop$ is monic.
The right hand square is a pullback by definition.  Thus the outer rectangle is a pullback, and so the composite $\supp \circ \pat{-}$ must be the identity, since it is the characteristic map of
 $\ttrue \co 1 \mono \iprop$.
\end{proof}

\subsection*{Propositions and types}\label{subsec:PaT}

Subobjects and small maps can be combined, but some care is required in working with their characteristic maps and classifying maps, respectively. For example, fix $\alpha \co X \to \UV$ and let $p \co \typecomp{X}{\alpha} \to X$ be the associated display map. Then consider 
 $\sigma \co  \typecomp{X}{\alpha}  \to \Omega$ and let $S =  \propcomp{ (x,a) \oftype  \typecomp{X}{\alpha}}{\sigma(x,a)}$ 
 be the associated subobject,
\begin{equation}\label{diag:sigmaandalpha}
\begin{tikzcd} 
S \ar[r, tail] &   \typecomp{X}{\alpha} \ar[r, "p"] &  X \mathrlap{.}
\end{tikzcd} 
 \end{equation}
By Corollary \ref{cor:compUvOmega}, $S = \typecomp{X}{\alpha.\pat{\sigma}}$ as subobjects, and so the composite $S \to X$ is classified  as a small map by $\Sigma_{\alpha}\pat{\sigma} \co X\to\UV$.  Thus $S \to X$ is isomorphic over $X$ to the asssociated display map,
\[
\begin{tikzcd}
S \ar[r,"\cong"] \ar[rd] & \typecomp{X}{\Sigma_{\alpha}\pat{\sigma}} \ar[r] \ar[d]  & \UVptd \ar[d, "{\dspu}"] \\
& X \ar[r, swap, "\Sigma_{\alpha}\pat{\sigma}"] & \UV \mathrlap{.}
  \end{tikzcd}
 \]
 
Now consider the image factorisation $S \twoheadrightarrow \exists_p(S) \rightarrowtail X$ and the characteristic map ${\exists_\alpha(\sigma)} \co X\to \iprop$ of the subobject $\exists_p(S) \mono X$, as in  \eqref{diag:imageclassify}.   
Composing with the inclusion $\pat{-}\co \iprop\mono\UV$, we can compare the map $\pat{\exists_\alpha(\sigma)} \co X\to\UV$
to the propositional truncation $\ptrun{\Sigma_{\alpha}\pat{\sigma}} \co X\to \UV$, 
which also classifies the image $\exists_p(S)\mono X$ (as a small map), by Proposition \ref{thm:prop-trunc}.  
Indeed, since $\trunc = \pat{-}\circ\supp$, and $\pat{-}$ is monic, it suffices to compare the maps 
$\exists_\alpha(\sigma)  \co X\to \iprop$ and $\supp(\Sigma_{\alpha}\pat{\sigma})  \co X\to \iprop$.
Since these are characteristic maps into $\iprop$, they are equal if and only if the associated subobjects of $X$ are the same, which is true since both are $\mathsf{im}(S)\mono X$.  Thus
 we have $\pat{\exists_\alpha(\sigma)} = \ptrun{\Sigma_{\alpha}\pat{\sigma}}$ as maps from~$X$ to~$\UV$. 

Now consider the universal quantification, and compare $\pat{\forall_\alpha(\sigma)} $ with $ \Pi_{\alpha}\pat{\sigma}$.  
Again we have the subobject 
$S \mono \typecomp{X}{\alpha}$ of \eqref{diag:sigmaandalpha} with its characteristic map $\sigma \co \typecomp{X}{\alpha} \to \iprop$, as well as its classifying map as a small map $\pat{\sigma} \co \typecomp{X}{\alpha} \to \UV$. Applying $\Pi_p$ along $\typecomp{X}{\alpha}\to X$ gives a small map $q\co\typecomp{X}{\Pi_{\alpha}\pat{\sigma}} \to X$ classified by $\Pi_{\alpha}\pat{\sigma} \co X\to \UV$.  Since $\Pi_p$ preserves monomorphisms, $q\co\typecomp{X}{\Pi_{\alpha}\pat{\sigma}} \to X$ is monic and so determines a subobject of $X$, which by definition is
$\forall_p(S) \mono X$.
The characteristic map of this subobject is then $\forall_{\alpha}(\sigma) \co X\to \iprop$. 
Finally, composing again with the inclusion $\pat{-}\co \iprop\mono\UV$ we can compare the maps
\begin{equation}
 \label{eq:forallvPi}
\pat{\forall_\alpha(\sigma)} \co X\to \UV  \mathrlap{,}  \qquad \Pi_{\alpha}\pat{\sigma} \co X\to \UV \mathrlap{.}
\end{equation}
But now there is no reason why these maps should be equal.  What we can say is that their supports agree, because they are the characteristic maps of the same subobject of $X$, and therefore
\[
\forall_\alpha(\sigma) = \supp\pat{\forall_\alpha(\sigma)} = \supp(\Pi_{\alpha}\pat{\sigma}).
\]

Bearing this lesson in mind, we have the following relationship between the operations on propositions $\sigma\co X\to\Omega$ and on types $\alpha\co X\to\UV$, stated in terms of equations between maps into $\iprop$.  
This is one variant of the usual propositions-as-types translation (see  \ref{prop:prop-as-types-isos} below for another). 

\begin{prop} \label{prop:prop-as-types} The following equalities hold for maps into $\iprop$, where $\sigma \, , \tau \co X \to \iprop$.
\begin{enumerate} 
\item $\ttrue = \supp(\unit) $,
\item $\tfalse = \supp(\emptytp)$,
\item $\sigma \land \tau =  \supp(\pat{\sigma}  \times \pat{\tau})$,
\item $\sigma \lor \tau  =  \supp(\pat{\sigma}  + \pat{\tau} )$,
\item $\sigma \Rightarrow \tau  =  \supp(\pat{\sigma}  \to \pat{\tau})$.
\end{enumerate} 
For $\alpha \co X \to \UV$ and $\sigma \co \typecomp{X}{\alpha} \to \Omega$,
\begin{enumerate} \setcounter{enumi}{5} 
\item \label{item:patA} ${\forall_\alpha (\sigma) }  =  \supp(\Pi_\alpha \pat{ \sigma} )$,
\item\label{item:patE} ${ \exists_\alpha(\sigma) }   = \supp(\Sigma_\alpha \pat{\sigma} ) $.
\end{enumerate} 
\end{prop}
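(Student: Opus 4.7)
The plan is to reduce every item to verifying that both sides classify the same subobject of $X$. By \cref{thm:prop-trunc}, for any $\gamma \co X \to \UV$ the map $\supp(\gamma) \co X \to \iprop$ is the characteristic map of the image of the small map $\typecomp{X}{\gamma} \to X$; and since $\supp \circ \pat{-} = \id_{\iprop}$, whenever $\gamma$ itself classifies a monomorphism $S \mono X$ (so that its image equals $S$), the composite $\supp(\gamma)$ is simply the characteristic map of $S$ into $\iprop$. Thus for each proposed equality $\phi = \supp(\gamma)$ it suffices to identify the image of the small map $\typecomp{X}{\gamma} \to X$ with the subobject $\{x \oftype X \mid \phi\}$ of $X$.

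Items (vi) and (vii) are then essentially contained in the discussion immediately preceding the statement. For (vii), the display map $\typecomp{X}{\Sigma_\alpha \pat{\sigma}} \to X$ is isomorphic over $X$ to the composite $S \mono \typecomp{X}{\alpha} \to X$, whose image is by definition $\exists_p(S)$, and this subobject is classified by $\exists_\alpha(\sigma)$. For (vi), the functor $\Pi_p$ preserves monomorphisms as a right adjoint, so $\typecomp{X}{\Pi_\alpha \pat{\sigma}} \to X$ is itself a monomorphism representing $\forall_p(S)$; being a subobject, it is its own image, and is by definition classified by $\forall_\alpha(\sigma)$.

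For items (i)--(v) I proceed by direct fibrewise inspection, using that for $\sigma \co X \to \iprop$ the fibre of the small map classified by $\pat{\sigma}$ is a singleton at $x$ when $\sigma(x)$ holds and empty otherwise, \ie $\typecomp{X}{\pat{\sigma}} \mono X$ is just the subobject $\propcomp{x \oftype X}{\sigma(x)}$ (\cref{cor:compUvOmega}). The unit and empty types thus yield small maps with constant singleton and empty fibres, hence images $X \mono X$ and $0 \mono X$, classified by $\ttrue$ and $\tfalse$ respectively. The product $\pat{\sigma} \times \pat{\tau}$ is itself a monomorphism whose underlying subobject is the intersection $\{x \oftype X \mid \sigma \land \tau\}$; the exponential $\pat{\sigma} \to \pat{\tau}$ is likewise a monomorphism (function spaces of singletons and emptys are singletons or emptys) whose subobject is $\{x \oftype X \mid \sigma \Rightarrow \tau\}$.

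The one place where the full image factorisation of \cref{thm:prop-trunc} does substantive work is item (iv): the coproduct $\pat{\sigma} + \pat{\tau}$ is \emph{not} in general monic, since its fibre over a point at which both $\sigma$ and $\tau$ hold has two elements. Nevertheless, in $\Sub(X)$ the image of the small map $\pat{\sigma} + \pat{\tau} \co \typecomp{X}{\pat{\sigma}} + \typecomp{X}{\pat{\tau}} \to X$ is precisely the join of the two subobjects $\{x \mid \sigma\}$ and $\{x \mid \tau\}$, namely $\{x \oftype X \mid \sigma \lor \tau\}$, which closes this case and completes the proof.
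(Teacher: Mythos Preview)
Your proposal is correct and matches the paper's approach: the paper does not give a separate proof block for this proposition, since items (vi) and (vii) are established in the discussion immediately preceding it (which you rightly cite), and items (i)--(v) are left as routine variations on the same theme. Your explicit treatment of (i)--(v), including the observation that (iv) is the one case where the map is genuinely not monic and the image factorisation does real work, fills in exactly what the paper leaves implicit.
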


\begin{rmk}  \label{diag:patuquantifier} The equations of \cref{prop:prop-as-types} can be expressed 
as diagrams in $\tE$ by representing the operations on subobjects and on small maps internally as maps involving $\Omega$ 
 and $\UV$ (as shown in~\cite{Awo18-naturalModels}).
We illustrate this only for the quantifiers. 
Let $P \co \tE \to \tE$ be the polynomial
functor associated to the map $\dspu \co \UVptd \to \UV$, as defined in~\eqref{equ:poly-dspu}.
Dependent sums and existential quantifiers then give rise to maps
$\Sigma \colon  P(\UV) \to \UV$ and $\exists \colon P(\Omega) \to \Omega$, respectively.
The following square 
\begin{equation*}
\begin{tikzcd}
P(\iprop) \ar[r, "{\exists}"] \ar[d, swap,  "P\pat{-}"] 
& \iprop \\
P(\UV) \ar[r, swap, "\Sigma"] 
& \UV  \ar[u,swap, "\supp"]
\end{tikzcd}
\end{equation*}
then commutes, since the equation \ref{item:patE} holds for all $\alpha \co X \to  \UV$ and $\sigma \co \typecomp{X}{\alpha} \fun \iprop$. 
The direct analogue for the universal quantifier also holds, of course, by \ref{item:patA}; 
but consider instead the following diagram, corresponding to the two maps in~\eqref{eq:forallvPi}:
\begin{equation}
\label{diag:patequantifier}
\begin{tikzcd}
{P(\iprop)} \ar[r, "{\forall}"] \ar[d, swap,  "P\pat{-}"] 
& \iprop \ar[d, "\pat{-} "] \\
{P(\UV)} \ar[r, swap, "\Pi"]  
& \UV \mathrlap{.}
\end{tikzcd}
\end{equation}
Since the maps in this diagram land in $\UV$, rather than $\iprop$, we cannot use the universal property of $\iprop$ to conclude that it commutes.  Nonetheless, it can be shown to commute ``up to isomorphism'', which we make precise next. 
\end{rmk}

\subsection*{The internal category of types}\label{subsec:categoryoftypes}

We have emphasized the analogy between the small map classifier $\UV$ and the subobject classifier $\iprop$.
There are, of course, also some important differences; in particular, the universal property of $\iprop$ is stronger than that of $\UV$, in that a subobject $S \mono X$ has a \strong{unique} characteristic map from $X$ to $\iprop$, while for a small map $p \co A \to X$, a classifying map from $X$ to $\UV$ will usually not be unique.  As was shown in~\eqref{equ:char-map-of-small-map}, however, there is a canonical choice of the classifying map of a given small map; moreover, this map is in fact \emph{unique up to isomorphism}, in a sense that we now make precise.

As was noted in (\cref{subsec:SOC}), for each $X \in \tE$, the poset of subobjects $\Sub(X)$ has a stable Heyting algebra structure, which is mirrored by operations on morphisms from $X$ to $\iprop$.  Since the isomorphism $\Sub(X)\ \cong\ \tE(X, \iprop)$ is natural in $X$, by the Yoneda lemma there are associated operations on $\Omega$ making it an internal Heyting algebra in $\tE$.  In particular, $\Omega$ is an internal poset, with an ordering relation 
$\{ (x, y) \oftype \iprop\times\iprop \ | \ x \leq y \} \mono \iprop\times\iprop$, 
satisfying the condition that for subobjects $S, T \mono X$ with characteristic maps $\sigma, \tau \co X \to \iprop$, one has 
$S\leq T$  if and only if there is a (necessarily unique) lift as indicated in the following diagram.
\begin{equation*}
\begin{tikzcd}
& \{ (x, y) \ | \ x \leq y \} \ar[d, tail] \\
X \ar[r,swap, "{(\sigma, \tau)}"] \ar[ur, dotted] & \iprop\times\iprop \rlap{.}
\end{tikzcd}
\end{equation*}

In much the same way, there is an internal \emph{category} structure on the object $\UV$,
\[
\catUmor \to \UV\times \UV,
\]
such that for any small maps $A \to X$ and $B \to X$, with classifying maps $\alpha, \beta \co X \to \UV$, the maps $f \co A\to B$ in the slice category over $X$ correspond (uniquely!)~to lifts~$\vartheta$ of~$(\alpha, \beta) \co X \to \UV\times\UV$, as indicated in the following diagram.
\begin{equation*}
\begin{tikzcd}
& \catUmor \ar[d] \\
X \ar[r,swap, "{(\alpha, \beta)}"] \ar[ur, dotted, "\vartheta"] & \UV\times\UV \rlap{.}
\end{tikzcd}
\end{equation*}
Moreover, since by \eqref{diag:context-extension-ess-surj} the assignment of $p_\alpha\co \typecomp{X}{\alpha} \to X$ to $\alpha \co X\to\UV$ is essentially surjective onto the objects in the category $\tS/_{X}$ of small maps, we therefore have an equivalence of categories,
\begin{equation}\label{eq:catoftypes}
\tS/_{X}\ \simeq\ \underline{\tE}(X, \catU),
\end{equation}
where $\catU = (\UV, \catUmor)$ may be called the \emph{internal category of types}, and the category $\underline{\tE}(X, \catU)$ consists of objects  $\tE(X, \UV)$ and morphisms $\tE(X, \catUmor)$.  
This can be done internally in any locally cartesian closed category using only the dependent product structure, via the so-called internal full subcategory construction. In our context, it is possible to give an alternative 
construction, which we limit ourselves to sketching since we shall make no essential use of \eqref{eq:catoftypes}.

First recall the definition \eqref{equ:hofmann-streicher-universe} of the set $\UV(c)$, for $c \in \cC$, and consider the corresponding set of morphisms,
\begin{equation}
\label{eq:catoftypesmorphisms}
\catUmor(c) \defeq \mathrm{Mor} \big[ \op{(\cC/_{c})} \, ,  {\cSet}_\kappa \big]  \, .
\end{equation}
Explicitly, an element $\vartheta \in \catUmor(c)$ is a natural transformation $\vartheta \co \alpha\to \beta$ of presheaves $\alpha,\beta \in \UV(c)$ on the slice category $\cC/_{c}$ whose values are small sets.  The domain and codomain maps are just those of the functor category $\big[ \op{(\cC/_{c})} \, ,  {\cSet}_\kappa \big]$, as are the identities and composition. Naturality with respect to $d\to c$ in $\cC$ is given by ``whiskering'' (i.e.\ precomposition) with the composition functor $\cC/d \to \cC/c$. 

Now let $c \in \cC$ and assume $\alpha,\beta \co \yon c \to \UV$, with associated display maps $p_\alpha\co\typecomp{\yon c}{\alpha} \to \yon c$ and $p_\beta\co\typecomp{\yon c}{\beta} \to \yon c$.  A map $h \co p_\alpha \to p_\beta$ in $\tS/_{\yon c}$ is a morphism in
\begin{equation}
\label{eq:catoftypesequivalence}
\big({{\cSet}_\kappa}^{\op{\cC}}\big){/\yon c}\ \simeq\ {{\cSet}_\kappa}^{\op{(\cC{/c})}},
\end{equation}
and therefore corresponds to a unique element $\vartheta$ of $\catUmor(c)$ by \eqref{eq:catoftypesmorphisms}.  One then checks that the equivalence of categories \eqref{eq:catoftypesequivalence} indeed relates $\alpha \co \yon c \to \UV$ to the display map $p_\alpha\co\typecomp{\yon c}{\alpha} \to \yon c$ in a functorial way (see \eqref{diag:context-extension-repr}).

Finally, two maps $\alpha,\beta \co X \to \UV$ are said to be \emph{isomorphic} if they are so as objects in the category $\underline{\tE}(X, \catU)$ of types over $X$ \eqref{eq:catoftypes}.  In particular, it is in this sense that the diagram~\eqref{diag:patequantifier} commutes up to isomorphism.
Indeed, using this notion, we can give a more precise statement than Proposition \ref{prop:prop-as-types} of the relationship between propositions $\sigma\co X\to\Omega$ and types $\alpha\co X\to\UV$. 

\begin{prop} \label{prop:prop-as-types-isos} The following equalities and isomorphisms hold for maps into $\UV$.
\begin{enumerate} 
\item $\pat{\ttrue} \cong \unit$,
\item $\pat{\tfalse} \cong \emptytp$,
\item $\pat{\sigma \land \tau}  \cong  \pat{\sigma}  \times \pat{\tau}$,
\item $\pat{\sigma \lor \tau}  = \trunc \big(  \pat{\sigma}  + \pat{\tau} \big)$,
\item $\pat{\sigma \Rightarrow \tau}  \cong  \pat{\sigma}  \to \pat{\tau}$.
\end{enumerate} 
For $\alpha \co X \to \UV$ and $\sigma \co \typecomp{X}{\alpha} \to \Omega$,
\begin{enumerate} \setcounter{enumi}{5} 
\item $\pat{ \forall_\alpha (\sigma) }  \cong  \Pi_\alpha \pat{ \sigma} $,
\item $\pat{ \exists_\alpha(\sigma) }   =  \trunc (\Sigma_\alpha \pat{\sigma} )$.
\end{enumerate} 
\end{prop}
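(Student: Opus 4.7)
The plan is to use the equivalence of categories $\tS/_{X} \simeq \underline{\tE}(X, \catU)$ recorded in~\eqref{eq:catoftypes}: two characteristic maps $\alpha, \beta \co X \to \UV$ are isomorphic as objects of the internal category of types if and only if their display maps $p_\alpha, p_\beta$ are isomorphic in $\tS/_{X}$. For each of the seven equations, I would compute the small map over $X$ classified by either side and exhibit a canonical iso between them over $X$. The two essential tools are \cref{cor:compUvOmega}, which identifies $\typecomp{X}{\pat{\sigma}}$ with the subobject $\propcomp{x \co X}{\sigma(x)} \mono X$, and \cref{thm:prop-trunc}, which identifies $\trunc$ on $\UV$ with the characteristic map of the image factorisation of the associated small map.

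For the propositional connectives~(i)--(v), identifying the small maps on the two sides reduces to standard topos-theoretic facts about subterminal objects of the slice $\tE/_{X}$, since subobjects of $X$ are precisely the subterminals of $\tE/_{X}$. Items~(i) and~(ii) are immediate: $\pat{\ttrue}$ and $\unit$ both classify $\id_X$, while $\pat{\tfalse}$ and $\emptytp$ both classify $0 \to X$. For~(iii), $\pat{\sigma \land \tau}$ classifies the pullback $S \times_X T$ of the subobjects $S, T \mono X$ associated to $\sigma$ and $\tau$, which is exactly the product $\pat{\sigma} \times \pat{\tau}$ in $\tE/_{X}$. For~(v), I would recall that for subterminals $S, T$ in $\tE/_{X}$ the exponential $T^S$ is again subterminal and coincides with the Heyting implication $S \Rightarrow T$ in $\Sub(X)$. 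For~(iv), the coproduct $\pat{\sigma} + \pat{\tau}$ in $\tE/_{X}$ is the disjoint sum $S + T \to X$, which is no longer mono; its image factorisation yields $S \cup T \mono X$, classified on the one hand by $\sigma \lor \tau$ followed by $\pat{-}$, and on the other hand, by~\cref{thm:prop-trunc}, by $\trunc(\pat{\sigma} + \pat{\tau})$.

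Items~(vi) and~(vii) are essentially contained in the discussion preceding the proposition. For~(vi), since $\Pi_p$ preserves monomorphisms along the display map $p \co \typecomp{X}{\alpha} \to X$, the small map $\Pi_\alpha \pat{\sigma}$ classifies a monomorphism into $X$ which by definition represents $\forall_p(S) \in \Sub(X)$; this subobject is classified by $\forall_\alpha(\sigma)$, and the inclusion along $\pat{-}$ preserves this. For~(vii), $\Sigma_\alpha \pat{\sigma}$ classifies the composite $S \mono \typecomp{X}{\alpha} \to X$, so by \cref{thm:prop-trunc} $\trunc(\Sigma_\alpha \pat{\sigma})$ classifies its image, namely $\exists_p(S) \mono X$, which matches $\pat{\exists_\alpha(\sigma)}$. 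I expect the main obstacle to be merely a clean statement of the fact used in~(v) that exponentials of subterminals in $\tE/_{X}$ realise the Heyting implication in $\Sub(X)$; once this (and the analogous observations for the other subobject operations) is recorded, every case reduces to combining \cref{cor:compUvOmega} with \cref{thm:prop-trunc} and the universal properties of the type-theoretic operations involved.
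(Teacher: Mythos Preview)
Your approach is correct and complete; the paper, however, argues rather differently. Instead of verifying each item by computing the classified small map in $\tS/_{X}$, the paper records a single structural fact: there is an adjunction $\supp \dashv \pat{-}$, in the sense that $\Omega^X(\supp(\alpha),\sigma) \cong \catU^X(\alpha,\pat{\sigma})$ naturally. Combined with $\supp\circ\pat{-}=\id$ (\cref{thm:prop-trunc}) and $\pat{-}\circ\supp = \trunc$, this exhibits $\Omega^X$ as a reflective full subcategory of $\catU^X$ with reflector $\supp$. The isomorphisms then follow by applying $\pat{-}$ to the equalities of \cref{prop:prop-as-types}: each becomes the claimed isomorphism with a $\trunc$ on the right, and in cases (i), (ii), (iii), (v), (vi) that $\trunc$ is absorbed because the right-hand type already classifies a monomorphism.

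What each approach buys: the paper's adjunction argument is conceptually uniform and explains in one stroke why $\trunc$ is needed precisely for the ``colimit'' clauses (iv) and (vii) and not for the ``limit'' ones. Your direct computation is more explicit about the topos-theoretic content (products, images, and exponentials of subterminals in $\tE/_X$) and does not rely on \cref{prop:prop-as-types} as an intermediate step. In particular, your treatment of (v) makes visible the standard fact that the exponential of subterminals realises Heyting implication, which the paper's proof leaves implicit in the phrase ``use the adjunction''. For (vi) and (vii) your argument and the paper's coincide, since that discussion already appears in the text just before the proposition.
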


\begin{proof}
We have an adjunction $\supp \dashv \{-\}$, so that for all $\sigma \co X\to \Omega$ and $\alpha \co X\to \UV$, there is a natural isomorphism
$\Omega^X ( \supp(\alpha) , \sigma)\ \cong\ \catU^X (\alpha , \pat{\sigma})$.
Now use the fact that $\supp(\pat{\sigma}) = \sigma$  and $\pat{\supp(\alpha )} = \trunc (\alpha)$.
For the two equalities, use~\cref{prop:prop-as-types}.
\end{proof}


\section{The type theory of a presheaf category} 
\label{sec:types}

\subsection*{The category with families} 
The aim of this section is to introduce precisely a dependent type theory associated to the presheaf category $\tE$. In order to do so, we begin by saying how $\tE$ determines a category with families via the universe $\pi\co E\to\UV$ (as in \cite{Awo18-naturalModels}). We then consider what forms of type are supported by this category with families. We will not specify all the deduction rules that are valid in $\tE$, but rather focus on those that will be most important for our applications, namely those concerning dependent sums, dependent products and the subobject classifier. 

In \cref{sec:cof,sec:unifib}, we will extend this type theory with additional forms of type, corresponding to additional structure that may be assumed on $\tE$. 
We begin by introducing some terminology.

\begin{defn}[The category with families $\mathcal{T}_{\tE}$] \leavevmode
\begin{itemize} 
\item A \strong{context} is an object of $\tE$. We use letters $\Gamma, \Delta, \ldots$ to denote contexts.
\item For a context $\Gamma$, a \strong{type in context} $\Gamma$ is a morphism $\alpha \maps \Gamma \to \UV$ in $\tE$. In this case, we write 
$\Gamma \vdash \alpha \colon \UV$.
\item Given two types $\alpha_1$ and $\alpha_2$ in context $\Gamma$, we say that $\alpha_1$ and $\alpha_2$ are
\emph{judgementally equal} if they are equal as morphisms from $\Gamma$ to $U$. In this case, we write $\Gamma \vdash \alpha_1 = \alpha_2 \colon \UV$.
\item For a context $\Gamma$ and a type $\alpha$ in context $\Gamma$, an \strong{element of} $\alpha$ in context $\Gamma$ is a map $a \co \Gamma \to \UVptd$ such that
 \begin{equation}
\label{equ:tmGammaA}
\begin{tikzcd}
\Gamma \ar[d, equal] \ar[r, "a"] & \UVptd \ar[d, "{\dspu}"] \\
\Gamma \ar[r, swap, "\alpha"] & \UV \mathrlap{.} 
\end{tikzcd}
\end{equation} 
 commutes. In this case, we write $\Gamma \vdash a \colon \alpha$.
 \item For two terms $a_1$ and $a_2$ of type $\alpha$ in context $\Gamma$, we say that $a_1$ and $a_2$ are \emph{judgementally equal} elements of $\alpha$ if they are equal as morphisms from $\Gamma$ to $E$. We then write $\Gamma \vdash a_1 = a_2 \colon \alpha$.
\end{itemize}
\end{defn}

For a context $\Gamma$ and a type $\alpha$ in context $\Gamma$, we have a new
context $\typecomp{\Gamma}{\alpha}$, obtained by the pullback in \eqref{diag:context-extension}. This is the operation
of \strong{context extension}.  We sometimes refer to a map $a$ with codomain $\UVptd$ simply as an \emph{element}, leaving
implicit that it is an element of the type $\dspu \circ a$.

\begin{rmk}\label{rmk:sections} Thus elements of a type $\alpha$ in context $\Gamma$ are in bijective correspondence with sections of its display map
$\Gamma.\alpha \to \Gamma$.  Indeed, an element $a$ of $\alpha$ in context $\Gamma$, as defined above, determines the diagram
  \[
  \begin{tikzcd}
\Gamma \ar[drr, bend left = 30, "{a}"] \ar[ddr, swap, "{\id_\Gamma}", bend right = 30] \ar[dr, dotted, "{(\id_\Gamma,a)}"] & & \\
 & \typecomp{\Gamma}{\alpha} \ar[r] \ar[d] & \UVptd \ar[d, "\dspu"] \\
 & \Gamma \ar[r, swap, "{\alpha}"] & \UV \mathrlap{.}
 \end{tikzcd}
\]
\end{rmk}

The \strong{empty context} is the terminal object $1$ of $\tE$. A type in the empty context, which is just a map~$\alpha \co 1 \to \UV$ is called a \strong{closed type}. In this case, we simply write
$\vdash \alpha \co \UV$. As a special case of~\eqref{diag:context-extension}, for a closed type $\alpha$, we obtain 
\[
\begin{tikzcd}
1.\alpha  \ar[r] \ar[d] & \UVptd \ar[d, "\dspu"] \\
1 \ar[r, swap, "\alpha"] & \UV \mathrlap{.}
\end{tikzcd}
\]
Such closed types correspond up to isomorphism to small presheaves $A\co \op{\cC} \to \cSet_\kappa$.

\begin{defn} Let $\Gamma$ and $\Delta$ be contexts. A \strong{context morphism}
from $\Delta$ to $\Gamma$ is a map 
$t \co \Delta \to \Gamma$ in $\tE$.
\end{defn} 

Context morphisms, which may be regarded as (tuples of) \emph{terms}, act on types and elements via the operation of substitution, which we now define.

\begin{nom}[Substitution] \label{nom:substitution} \label{lem:subst}
Fix a context morphism $t \maps \Delta\to\Gamma$. For a type~$\alpha$ in context~$\Gamma$, we define a type $\alpha(t)$ in context $\Delta$,
obtained by \strong{substitution} of $t$ in $\alpha$, by letting
\begin{equation}
\label{equ:subst-terms-into-types}
\alpha(t) \defeq \alpha \circ t \mathrlap{.}
\end{equation}
 We then have the following diagram of the associated context extensions, in which the left square is also a pullback.
\[
\begin{tikzcd}
{\Delta. \alpha(t) } \ar[d] \ar[r] &  \Gamma.\alpha \ar[d] \ar[r] & \UVptd \ar[d, "{\dspu}"] \\
\Delta \ar[r, swap, "t"] & \Gamma \ar[r, swap, "\alpha"] & \UV \mathrlap{.} 
\end{tikzcd}
\]
If $a$ is term of type $\alpha$ in context $\Gamma$, we define  the \strong{substitution} of $t$ in $a$ to be the element $a(t)$ of type $\alpha(t)$ in context $\Delta$ defined by letting $a(t) \defeq a \circ t$.  Note that $a(t)$ is indeed an element of  $\alpha(t)$ since
\[
\begin{tikzcd}
\Delta \ar[d, equal]  \ar[r, "t"] & \Gamma \ar[r, "a"]  \ar[d, equal] & \UVptd \ar[d, "{\dspu}"] \\
\Delta \ar[r, swap, "t"]  & \Gamma  \ar[r, swap, "\alpha"] & \UV 
\end{tikzcd}
\]
commutes if the right hand square does. Note that substitution satisfies the expected laws $\alpha(t)(s) = \alpha(ts)$, etc.

We introduce some auxiliary notation for substitutions of a context morphism of the form $(\id_\Gamma, a) \co \Gamma \to \Gamma.\alpha$ as in \cref{rmk:sections}. In this case, given a type $\beta$ in context $\typecomp{\Gamma}{\alpha}$, we write the substitution of $(\id_\Gamma, a)$ in $\beta$ as $\beta(a)$, rather than the more precise $\beta(\id_\Gamma, a)$. Similarly, if $b$ is a term of type $\beta$
in context $\Gamma.\alpha$, we write $b(a)$ instead of $b(\id_\Gamma, a)$. The interpretation of these terms and types is displayed in the diagram
\[
\begin{tikzcd}
\Gamma \ar[d, equal]  \ar[r, "{(\id_\Gamma, a)}"] &[10pt] \Gamma.\alpha \ar[r, "b"]  \ar[d, equal] &[10pt] \UVptd \ar[d, "{\dspu}"] \\
\Gamma \ar[r, swap, "{(\id_\Gamma, a)}"]  & \Gamma.\alpha \ar[r, swap, "\beta"] & \UV \mathrlap{.} 
\end{tikzcd}
\]
This notation allows us to derive the substitution rule
\[
\begin{prooftree}
\Gamma \vdash a \oftype \alpha \qquad \Gamma, \alpha \vdash b \oftype \beta
\justifies
\Gamma \vdash b(a) \oftype \beta(a) \mathrlap{.}
\end{prooftree}
\]
\end{nom}

\begin{nom}[Weakening]  \label{nom:weakening}
When we perform a substitution along a map of the form $\dsp{\beta} \co \Gamma.\beta \to \Gamma$
for some $\beta \co \Gamma \to \UV$, we obtain the diagram
\[
\begin{tikzcd}
\Gamma.\beta.\alpha \ar[d] \ar[r] &  \Gamma.\alpha \ar[d] \ar[r] & \UVptd \ar[d, "{\dspu}"] \\
\Gamma.\beta \ar[r, swap, "\dsp{\beta}"] & \Gamma \ar[r, swap, "\alpha"] & \UV \mathrlap{.} 
\end{tikzcd}
\]
This corresponds to the rule of \strong{weakening}, which we write as
\[
\begin{prooftree}
\Gamma \vdash \alpha \co \UV \qquad
\Gamma \vdash \beta \co \UV
\justifies
\typecomp{\Gamma}{\alpha} \vdash \beta(\dsp{\alpha}) \co \UV \mathrlap{.}
\end{prooftree}
\]
Note that we write $\beta(\dsp{\alpha})$ rather than the more familiar $\beta$ in the conclusion, as this will help us keep track of the context in which we are working. Analogously, we have weakening for elements of types, which we write as
\[
\begin{prooftree}
\Gamma \vdash \alpha \co \UV \qquad
\Gamma \vdash b \co \beta 
\justifies
\typecomp{\Gamma}{\alpha} \vdash b(\dsp{\alpha}) \co \beta(\dsp{\alpha}) \mathrlap{.}
\end{prooftree}
\]
\end{nom}

\begin{nom} Observe that in the context extension diagram \eqref{diag:context-extension}, viz.\
\begin{equation}
 \begin{tikzcd}
      \typecomp{\Gamma}{\alpha} \ar[r, "q_\alpha"] \ar[d, swap, "\dsp{\alpha}"]  & \UVptd
      \ar[d, "\dspu"] \\
       \Gamma \ar[r, swap, "\alpha"] & \UV \mathrlap{,}
    \end{tikzcd}   
\end{equation}
the top arrow $q_\alpha \co \typecomp{\Gamma}{\alpha} \to E$  gives a canonical element $q_\alpha \co \alpha(\dsp{\alpha})$ in the weakened context~$\typecomp{\Gamma}{\alpha}$.  This corresponds to the syntactic rule of introducing a variable, usually expressed in dependent type theories by a judgement of the form $\Gamma, x\co\alpha \vdash x \co \alpha$.
\end{nom}

We recall a fundamental fact from~\cite{Awo18-naturalModels}.

\begin{thm} Contexts, context morphisms, types, elements, and the context extension operation, as defined above, form a category with families, denoted $\mathcal{T}_{\tE}$. \qed
\end{thm}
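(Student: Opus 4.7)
The plan is to verify the axioms of Dybjer's category with families for the data assembled above, following the natural models perspective of \cite{Awo18-naturalModels}. From that point of view, the entire CwF structure on $\tE$ is packaged by a single representable natural transformation in the presheaf category on $\tE$ --- here instantiated by the small map classifier $\dspu \co \UVptd \to \UV$, whose representability reduces to the existence of the canonical pullback \eqref{diag:context-extension} supplying context extension.

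First I would observe that the category of contexts and substitutions is just $\tE$ itself, with the terminal object $1$ playing the role of the empty context. Next, I would assemble the type-and-term data into a $\mathbf{Fam}$-valued presheaf $T \co \op{\tE} \to \mathbf{Fam}$: the index set at $\Gamma$ is $\tE(\Gamma,\UV)$, and the set of elements over $\alpha \co \Gamma \to \UV$ consists of lifts $a \co \Gamma \to \UVptd$ of $\alpha$ through $\dspu$, as in \eqref{equ:tmGammaA}. Substitution along $t \co \Delta \to \Gamma$ is implemented by precomposition, as in \cref{nom:substitution}: $\alpha(t) = \alpha \circ t$ and $a(t) = a \circ t$. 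The presheaf laws $\alpha(ts) = \alpha(t)(s)$ and $\alpha(\id_\Gamma) = \alpha$ then follow at once from associativity and unitality of composition in $\tE$, and the commutative triangle witnessing that $a(t)$ is a term of $\alpha(t)$ is immediate.

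For context extension, I would take as data the canonical pullback \eqref{diag:context-extension}, giving $\typecomp{\Gamma}{\alpha}$, the display map $\dsp{\alpha}$ and the generic element $q_\alpha$. The CwF universal property --- that pairs $(t, a)$ with $t \co \Delta \to \Gamma$ and $a$ an element of $\alpha(t)$ in context $\Delta$ biject naturally with maps $\langle t,a \rangle \co \Delta \to \typecomp{\Gamma}{\alpha}$ satisfying $\dsp{\alpha} \circ \langle t,a \rangle = t$ and $q_\alpha \circ \langle t,a \rangle = a$ --- is precisely the content of \cref{thm:pullback-subst-terms} together with \cref{rmk:sections}, applied to the pullback \eqref{diag:context-extension}. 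Stability of this extension under further substitution follows from the two-pullbacks lemma, yielding $\typecomp{\Gamma}{\alpha}(t) = \typecomp{\Delta}{\alpha(t)}$ strictly (not just up to isomorphism), because the chosen pullback of a pullback along a given map is the composite pullback.

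The one point requiring care is the strictness of the substitution equations for context extension, display maps and generic elements; in a naive set-theoretic or categorical model this is the coherence obstruction that forces one to realise iterated type dependency only up to isomorphism. Here it is handled by the Hofmann--Streicher device \cite{hofmann-streicher-universes,HofmannM:intttl}: types are maps into the single fixed object $\UV$, so substitution is realised by plain composition and is strictly functorial, while context extension is defined by a once-and-for-all chosen canonical pullback of the fixed map $\dspu$. This is the insight underlying the natural-models formulation, and under it the remaining CwF axioms hold on the nose.
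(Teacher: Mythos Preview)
Your proposal is correct and follows precisely the approach the paper intends: the theorem is stated with a \qed and attributed to \cite{Awo18-naturalModels}, and your sketch unwinds exactly that natural-models argument, packaging the CwF as the representable natural transformation $\dspu \co \UVptd \to \UV$ with comprehension supplied by the chosen pullback~\eqref{diag:context-extension}. The one remark I would temper is your sentence about strict equality $\typecomp{\Gamma}{\alpha}(t) = \typecomp{\Delta}{\alpha(t)}$: the CwF axioms in Dybjer's sense require only the comprehension universal property (which the pullback provides), not that the chosen pullbacks themselves compose on the nose, so this clause is not needed for the theorem as stated.
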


\begin{rmk}\label{rem:sliceCwF}
By \cref{rmk:universe-in-slice}, the slice category $\tE/_{X} \cong \Psh(\int\! X)$ also has a universal small map $\pi_X \maps E_X \to U_X$, and so we can apply the foregoing construction to obtain a category with families $\mathcal{T}_{\tE/_{\!X}}$.
Using \eqref{eq:small-map-classifier-in-slice-sections}, and similar reasoning, one sees that the contexts, types, elements, etc., of $\mathcal{T}_{\tE/_{\!X}}$ correspond to those of $\mathcal{T}_{\tE}$ in which the context has the form $X.(-)$, as in e.g.\  $X.\Gamma \vdash a\co \alpha$.
We shall not distinguish between these isomorphic type theories.
\end{rmk}

\subsection*{Dependent sum and dependent product types} 
The category-theoretic structure of $\tE$ and the closure properties of the class $\tE$ of small maps discussed in~\cref{sec:presheaves} correspond to
well-known type-theoretic constructions. In particular, the  type theory of $\tE$ has empty type $\emptytp$, a unit type $\unit$, sum types $\alpha + \beta$,
dependent sum types $\Sigma_\alpha(\beta)$, dependent
product types $\Pi_\alpha(\beta)$, a type of propositions $\Omega$ and subset types~
$\propcomp{x \oftype \alpha}{\sigma}$. 
With these product types $\alpha \times \beta$ and function types $\alpha \rightarrow \beta$ can be defined, as explained 
in~\cref{nom:fun-and-prod-types} below. It also has a type of natural numbers $\mathsf{N}$, as well as many other inductively defined types.  We shall not list the rules for all these types, limiting ourselves to those that will be most relevant and illustrative, namely dependent sums, dependent products and propositions. 

\cref{thm:rules-sigma} and \cref{thm:rules-pi} and their proofs, for which we refer to~~\cite{Awo18-naturalModels}, make use of the convention introduced in \cref{nom:substitution}. Even if we speak of `rules' in their statements, it should be pointed out that these are claims about semantic objects, not generating rules of a dependent type theory.

\begin{prop}[Rules for dependent sum types] \label{thm:rules-sigma} 
The following rules are valid in~$\tE$.
\begin{itemize} 
\item Formation: if $\Gamma \vdash \alpha \oftype \UV$ and $\typecomp{\Gamma}{\alpha} \vdash \beta \oftype \UV$, then $\Gamma \vdash \Sigma_\alpha(\beta) \oftype \UV$.
\item Introduction: if $\Gamma \vdash a \oftype \alpha$ and $\Gamma \vdash b \oftype \beta(a)$ then
$\Gamma \vdash \tpair(a,b) \oftype \Sigma_\alpha(\beta)$.
\item Elimination: if $\Gamma \vdash t \co \Sigma_\alpha(\beta)$ then 
$\Gamma \vdash \tproj_1(t) \co \alpha$ and $\Gamma \vdash \tproj_2(t) \co \beta(\tproj_1(t))$.
\item Computation: if $\Gamma \vdash a \oftype \alpha$ and $\Gamma \vdash b \oftype \beta(a)$
then $\Gamma \vdash \tproj_1(\tpair(a,b)) = a  \oftype \alpha$ and~$\Gamma \vdash \tproj_2(\tpair(a,b)) = b  \oftype \beta(a)$.
\item Expansion: if $\Gamma \vdash t \co \Sigma_\alpha(\beta)$ then 
$\Gamma \vdash t = \tpair( \tproj_1(t), \tproj_2(t)) \co \Sigma_\alpha(\beta)$. \qed
\end{itemize}
\end{prop}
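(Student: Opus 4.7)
The plan is to reduce each of the five rules to the bijection established in Proposition \ref{thm:seely-sigma}, interpreting the syntactic judgements in terms of the canonical display maps constructed via the small map classifier $\dspu \co \UVptd \to \UV$. Throughout, I will use Remark \ref{rmk:sections} to silently identify elements $\Gamma \vdash a \oftype \alpha$ with sections $a \co \Gamma \to \typecomp{\Gamma}{\alpha}$ of the corresponding display map.

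For formation, I would simply invoke the definition of $\Sigma_\alpha(\beta)$ given around equation \eqref{equ:sigma-pair}. Since $p \co \typecomp{\Gamma}{\alpha} \to \Gamma$ is small and so is $q \co \typecomp{\Gamma.\alpha}{\beta} \to \typecomp{\Gamma}{\alpha}$, their composite $\ladj{p}(q) = p \circ q$ is a small map into $\Gamma$ as in diagram \eqref{equ:ladj-radj-small}, and therefore admits a classifying map $\Sigma_\alpha(\beta) \co \Gamma \to \UV$. For introduction and elimination I would invoke the bijection of Proposition \ref{thm:seely-sigma} in its two directions. Given $\Gamma \vdash a \oftype \alpha$ and $\Gamma \vdash b \oftype \beta(a)$, viewed as the pair of sections $a \co \Gamma \to \typecomp{\Gamma}{\alpha}$ and $b \co \Gamma \to \typecomp{\Gamma}{\beta(a)}$, the bijection produces a section $\tpair(a,b) \co \Gamma \to \typecomp{\Gamma}{\Sigma_\alpha(\beta)}$, interpreted as the judgement $\Gamma \vdash \tpair(a,b) \oftype \Sigma_\alpha(\beta)$. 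Conversely, a term $\Gamma \vdash t \co \Sigma_\alpha(\beta)$ is a section $t \co \Gamma \to \typecomp{\Gamma}{\Sigma_\alpha(\beta)}$, and the inverse bijection produces sections $\tproj_1(t) \co \Gamma \to \typecomp{\Gamma}{\alpha}$ and $\tproj_2(t) \co \Gamma \to \typecomp{\Gamma}{\beta(\tproj_1(t))}$.

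The computation and expansion rules are then immediate from the bijectivity of this correspondence: starting from a pair $(a,b)$, forming $\tpair(a,b)$ and then applying the two projections returns precisely $(a,b)$ (giving the computation rules $\tproj_1(\tpair(a,b)) = a$ and $\tproj_2(\tpair(a,b)) = b$), and starting from $t$, applying elimination followed by pairing returns $t$ (giving the expansion rule $t = \tpair(\tproj_1(t), \tproj_2(t))$). These equalities hold strictly as equalities of morphisms in $\tE$ because the bijection of Proposition \ref{thm:seely-sigma} is obtained from the universal properties of the pullbacks defining $\typecomp{\Gamma}{\alpha}$, $\typecomp{\Gamma}{\beta(a)}$ and $\typecomp{\Gamma}{\Sigma_\alpha(\beta)}$.

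The delicate point, and the main obstacle, is that the interpretations of $\Sigma$, $\tpair$, $\tproj_1$ and $\tproj_2$ must be \emph{stable under substitution}, so that for any context morphism $t \co \Delta \to \Gamma$ one has strict equalities such as $\Sigma_\alpha(\beta)(t) = \Sigma_{\alpha(t)}(\beta(t^+))$ and $\tpair(a,b)(t) = \tpair(a(t), b(t))$, where $t^+$ denotes the lifted substitution on the extended context. This Beck-Chevalley coherence is exactly what the polynomial-functor presentation of Section \ref{sec:presheaves} secures: by taking $\Sigma_\alpha(\beta)$ to be the composite $\Sigma \circ (\alpha, \beta) \co \Gamma \to P(\UV) \to \UV$ associated to the polynomial $P$ of \eqref{equ:poly-dspu}, formation is literally natural in $\Gamma$, and a parallel description of $\tpair$ via the pullback above \eqref{equ:sigma-pair} ensures that the term constructors commute strictly with reindexing. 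The detailed verification of this coherence is carried out in \cite{Awo18-naturalModels}, to which I would refer the reader for the routine but tedious diagram chases.
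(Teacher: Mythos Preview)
Your proposal is correct and matches the paper's approach: the paper itself does not give a proof here but simply refers to \cite{Awo18-naturalModels}, having set up exactly the ingredients you invoke, namely the bijection of Proposition~\ref{thm:seely-sigma} for the introduction/elimination/computation/expansion rules and the polynomial-functor description $\Sigma_\alpha(\beta) = \Sigma \circ (\alpha,\beta)$ for the Beck--Chevalley coherence. Your sketch is in fact more explicit than what the paper provides.
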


\begin{nom} \label{nom:simplified-lambda} 
In the statement of the next proposition and below we make use not only of the convention introduced in \cref{nom:substitution}, but also write
$\lambda(b)$ instead of $\lambda( (\id_\Gamma, b^\sharp) )$ for simplicity.
\end{nom}

\begin{prop}[Rules for dependent product types] \label{thm:rules-pi} The following rules are valid in~$\tE$.
\begin{itemize}
\item Formation: if $\Gamma \vdash \alpha \oftype \UV$ and $\typecomp{\Gamma}{\alpha} \vdash \beta \oftype \UV$
then $\Gamma \vdash \Pi_\alpha \beta \oftype \UV$.
\item Introduction: if $\typecomp{\Gamma}{\alpha} \vdash  b \co \beta$ then $\Gamma \vdash \lambda(b) \co \Pi_\alpha(\beta)$.
\item Elimination: if $\Gamma \vdash t \oftype \Pi_\alpha(\beta)$ and $\Gamma \vdash a \oftype \alpha$
then $\Gamma  \vdash \funapp(t,a) \co \beta(a)$.
\item Computation: if $\typecomp{\Gamma}{\alpha} \vdash  b \co \beta$ and $\Gamma \vdash a \oftype \alpha$ then $\Gamma \vdash \funapp \big( \lambda(b), a \big) = b(a)  \co \beta(a)$.
\item Expansion: if $\Gamma \vdash t \oftype \Pi_\alpha(\beta)$ then $\Gamma \vdash 
\lambda( \funapp(t(p_\alpha), q_\alpha) = t 
 \co \Pi_\alpha(\beta)$. \qed
\end{itemize}
\end{prop}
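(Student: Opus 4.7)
The plan is to derive all five rules from the bijective correspondence of Proposition~\ref{thm:seely-pi} together with the universal description of $\Pi_\alpha(\beta)$ given by diagrams~\eqref{equ:pi-lambda}, \eqref{equ:pi-app} and \eqref{diagram:Picomp}. The formation rule is essentially definitional: given $\alpha \co \Gamma \to \UV$ with display map $p = \dsp{\alpha}$ and $\beta \co \typecomp{\Gamma}{\alpha} \to \UV$ with display map $q = \dsp{\beta}$, the right adjoint $\radj{p}(q) \in \tE/_{\Gamma}$ is again a small map by the closure property recorded in~\eqref{equ:ladj-radj-small}, so it admits a (chosen) characteristic map $\Pi_\alpha(\beta) = \Pi\circ (\alpha,\beta) \co \Gamma \to \UV$.

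For introduction and elimination I would invoke Proposition~\ref{thm:seely-pi} directly. Given $b \co \typecomp{\Gamma}{\alpha} \to \typecomp{\Gamma.\alpha}{\beta}$, i.e.\ a term $\typecomp{\Gamma}{\alpha} \vdash b \co \beta$, the bijection returns a section $b^\sharp \co \Gamma \to \typecomp{\Gamma}{\Pi_\alpha(\beta)}$, and I set $\lambda(b) \defeq b^\sharp$. For elimination, apply the inverse bijection to $t \co \Gamma \to \typecomp{\Gamma}{\Pi_\alpha(\beta)}$ to obtain the section $t^\flat \defeq \varepsilon \circ \pbk{p}(t) \co \typecomp{\Gamma}{\alpha} \to \typecomp{\Gamma.\alpha}{\beta}$, which by \cref{thm:pullback-subst-terms} is the datum of a term $\typecomp{\Gamma}{\alpha} \vdash t^\flat \co \beta$. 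Substituting along $(\id_\Gamma,a) \co \Gamma \to \typecomp{\Gamma}{\alpha}$ as in \cref{nom:substitution} then defines $\funapp(t,a) \defeq t^\flat(a) \co \beta(a)$. Equivalently, one reads $\funapp$ off the diagram~\eqref{equ:pi-app} by transposing the pair $(a,t)$ into a map $\Gamma \to \typecomp{\Gamma.\alpha}{\Pi_\alpha(\beta)}$ and postcomposing with the $\funapp$ of that diagram.

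The computation and expansion rules then fall out of the fact that $(-)^\sharp$ and $(-)^\flat$ are mutually inverse. Unwinding, $\funapp(\lambda(b), a) = (\lambda(b))^\flat(a) = b(a)$ by the first triangle identity, and $\lambda(\funapp(t,q_\alpha)) = \lambda(t^\flat) = (t^\flat)^\sharp = t$ by the second, once one identifies the generic term $\typecomp{\Gamma}{\alpha} \vdash \funapp(t(\dsp{\alpha}), q_\alpha) \co \beta$ with $t^\flat$ itself, which is exactly what the explicit formula $t^\flat = \varepsilon\circ \pbk{p}(t)$ of Proposition~\ref{thm:seely-pi} expresses.

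The main obstacle I anticipate is not any individual rule but rather the bookkeeping required to ensure these operations are stable on the nose under an arbitrary substitution $s \co \Delta \to \Gamma$, i.e.\ the Beck--Chevalley condition, so that $\mathcal{T}_{\tE}$ supports $\Pi$-types as a category with families rather than merely up to isomorphism. This is resolved by the observation recorded around~\eqref{diagram:Picomp}: since $\Pi_\alpha(\beta) = \Pi \circ (\alpha,\beta)$ is obtained by composition with the single chosen universal maps $\Pi \co P(\UV) \to \UV$ and $\lambda \co P(\UVptd) \to \UVptd$, and since precomposition with $s$ commutes strictly with these chosen maps, the equalities $\Pi_{\alpha(s)}(\beta(s)) = \Pi_\alpha(\beta)(s)$, $\lambda(b)(s) = \lambda(b(s))$, and $\funapp(t,a)(s) = \funapp(t(s),a(s))$ hold judgementally, exactly as required.
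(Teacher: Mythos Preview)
Your proposal is correct and is exactly the argument the paper has been setting up: it builds $\Pi_\alpha(\beta)$ from the closure of small maps under $\radj{p}$~\eqref{equ:ladj-radj-small}, reads off introduction and elimination from the bijection of Proposition~\ref{thm:seely-pi} and the counit map in~\eqref{equ:pi-app}, derives computation and expansion from the two triangle identities, and handles strict substitution-stability via the universal $\Pi$ and $\lambda$ of~\eqref{diagram:Picomp}. The paper itself does not spell this out, deferring instead to~\cite{Awo18-naturalModels}; your sketch is precisely the proof one finds there. One small bookkeeping point: in the paper's conventions a term is a map into $\UVptd$ rather than a section of the display map, so strictly speaking $\lambda(b)$ is the composite of your section $b^\sharp$ with the top map $\lambda_{\alpha,\beta}$ of~\eqref{equ:pi-lambda} (cf.\ \cref{nom:simplified-lambda}), but this is the harmless identification of \cref{rmk:sections}.
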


\begin{nom}[Product and function types] \label{nom:fun-and-prod-types} The rules for dependent sum and dependent product types of~\cref{thm:rules-sigma,thm:rules-pi}, in combination with the weakening operation of~\cref{nom:weakening}, allow us to define product and function types and derive 
deduction rules for them. For $\alpha \co \Gamma \to \UV$ and $\beta \co \Gamma \to \UV$, we define the product type $\alpha \times \beta \co
\Gamma \to \UV$ and the function type $\alpha \to \beta \co \Gamma \to \UV$ by letting
\[
\alpha \times \beta \defeq \Sigma_\alpha \beta(\dsp{\alpha}) \mathrlap{,} \qquad
\alpha \to \beta \defeq \Pi_\alpha \beta(\dsp{\alpha}) \mathrlap{,} 
\]
where we used the notation for weakening introduced in~\eqref{nom:weakening}. We can then derive the deduction rules 
\[
\begin{prooftree}
\Gamma \vdash \alpha \co \UV \quad
\Gamma \vdash \beta \co \UV
\justifies
\Gamma \vdash \alpha \times \beta \co \UV \mathrlap{,}
\end{prooftree} \qquad
\begin{prooftree}
\Gamma \vdash a \co \alpha  \quad
\Gamma \vdash b \co \beta
\justifies
\Gamma \vdash \tpair(a,b) \co \alpha \times \beta 
\end{prooftree} 
\]
for product types, and the rules
\[
\begin{prooftree}
\Gamma \vdash \alpha \co \UV \quad
\Gamma \vdash \beta \co \UV
\justifies
\Gamma \vdash \alpha \rightarrow \beta \co \UV \mathrlap{,}
\end{prooftree} 
\qquad
\begin{prooftree}
\typecomp{\Gamma}{\alpha} \vdash b \co \beta(\dsp{\alpha})
\justifies
\Gamma \vdash \lambda(b) \co \alpha \rightarrow \beta  
\end{prooftree} 
\]
for function types, where we made use of the simplification introduced in~\cref{nom:simplified-lambda}.
\end{nom}

\begin{nom}[Constant functions] \label{not:constant-functions}
 In order to be able to manipulate constant functions conveniently, we introduce a further abuse of notation on top of the
one discussed in~\cref{nom:simplified-lambda}. If $\Gamma \vdash b \co \beta$, we write $\Gamma \vdash \lambda(b) \co \alpha \to \beta$ instead of
 $\Gamma \vdash \lambda ( b(\dsp{\alpha})) \co \alpha \to \beta$, thereby omitting mention of the weakening in this particular case. This convention will be used frequently in \cref{sec:cof}.
\end{nom} 

\begin{rmk}\label{rem:up-to-iso-types} The rules for dependent sum and dependent product types stated in \cref{thm:rules-sigma} and \cref{thm:rules-pi}  imply also many familiar properties of these types.  In particular, if~$\Gamma \vdash 
\alpha \colon \UV$ and $\typecomp{\Gamma}{\alpha}\vdash \beta \co \UV$ and $\typecomp{\Gamma.\alpha}{\beta} \vdash  
\rho \co \UV$, we can prove  that there are isomorphisms of types,
\begin{align}
\label{prop:dsums-fubini}
\textstyle
\dsm{x \co \alpha} \dsm{y \oftype \beta}  \rho 
&  \textstyle
\ \iso\ \dsm{z \co \dsm{x \oftype \alpha} \beta}  
\rho(\tproj_1(z), \tproj_2(z))   
\mathrlap{,} \\ 
\label{prop:forcing.AC}
\textstyle
\dprd{x \oftype \alpha} \dsm{y  \oftype \beta} \rho & 
\textstyle
\ \iso\  
\dsm{u \oftype \dprd{x \oftype \alpha} \beta}  \dprd{x \oftype \alpha} \rho(x, \funapp(u,x)) \mathrlap{.}
\end{align}
Also, if $\Gamma \vdash 
\alpha \colon \UV$ and $\typecomp{\Gamma}{\alpha} \vdash \beta \co \UV$, then there is an isomorphism, 
\begin{equation}
\label{prop:maps-out-of-dsm}
\textstyle
\big(\dsm{x \oftype \alpha} \beta \big) \fun \rho\ \iso\ \dprd{x\oftype \alpha} \big( \beta \fun \rho \big) \mathrlap{.}
\end{equation}

The soundness of the rules in $\tE$, \cref{thm:rules-sigma}
and \cref{thm:rules-pi}, then implies that there are corresponding isomorphisms in the category of types over $\Gamma$, 
\[
\tS/_{\Gamma}\ \simeq\ \underline{\tE}(\Gamma, \catU),
\]
introduced in \eqref{eq:catoftypes} of section \ref{subsec:categoryoftypes}.  
In particular, it follows that the internal category of types~$\catU$ is locally cartesian closed, much as the subobject classifier~$\iprop$ is an internal Heyting algebra.
\end{rmk}

\subsection*{The type of propositions} 
We conclude this section by fixing some notation for the closed type associated to the subobject classifier of $\tE$, which we shall also write $\Omega$. The elements of this type, to be regarded as propositions, will be written using the standard logical notation,
\[
\ttrue \mathrlap{,} \quad \tfalse \mathrlap{,} \quad  a =_\alpha b \mathrlap{,}  \quad
\sigma \land \tau \mathrlap{,}  \quad
\sigma \lor \tau \mathrlap{,}  \quad 
\sigma \implies \tau \mathrlap{,}  \quad
 (\forall x \co \alpha) \tau(x) \mathrlap{,}  \quad
 (\exists x \co \alpha) \tau(x) \mathrlap{.}
\]
Note that the proposition $a =_\alpha b$ is a well-formed element of type $\Omega$ in context $\Gamma$ when
$a$ and $b$ are well-formed elements of type $\alpha$ in context $\Gamma$. We refer to this as \strong{propositional equality}; it is to be distinguished from judgemental equality $a = b \co \alpha$, relating two elements of type $\alpha$ (and from $p = q \co \Omega$, relating two elements of type $\Omega$). In spite of this, we will sometimes drop the subscript from propositional equality, when this does not create confusion (see also \cref{rmk:Equality} below).

\begin{defn}  \label{defn:valid}
We say that a proposition $\sigma$ in context $\Gamma$ is \strong{valid} if 
$\sigma \co \Gamma \to \iprop$ factors through $\ttrue \co 1 \to \iprop$.  In that case, we write $\Gamma \vdash \sigma$.
Similarly, we say that $\Gamma, \varphi_1, \ldots, \varphi_n$ \emph{implies} $\varphi$,
if the subobject $\propcomp{\Gamma}{\varphi_1 \land \ldots \land \varphi_n}$ factors through $\propcomp{\Gamma}{\varphi}$.
In this case, we write  $\Gamma, \varphi_1, \ldots, \varphi_n \vdash \varphi$.
 \end{defn}

Note that $\sigma$ is valid exactly when the associated subobject $\propcomp{\Gamma}{\sigma} \mono \Gamma$
admits a section, and hence is isomorphic to $\Gamma$.

\begin{rmk}\label{rmk:Equality} Propositional equality $a=_\alpha b \co \Omega$ serves the purpose of the \emph{equality types} $\mathsf{Eq}_{\alpha}(a,b)$ sometimes used in a system of type theory without a type of propositions.  Here judgemental and propositional equality are equivalent, in the sense that the inference rules
\[
\begin{prooftree}
\Gamma \vdash a_1 \co \alpha \quad
\Gamma \vdash a_2 \co \alpha \quad
\Gamma \vdash a_1 =_\alpha a_2
\justifies
\Gamma \vdash a_1 = a_2 \co \alpha
\end{prooftree}
\qquad\qquad
\begin{prooftree}
\Gamma \vdash a_1 \co \alpha \quad
\Gamma \vdash a_2 \co \alpha \quad
\Gamma \vdash a_1 = a_2 \co \alpha
\justifies
\Gamma \vdash a_1 =_\alpha a_2  
\end{prooftree}
\]
are also valid. This is sometimes expressed by saying that the type theory of $\tE$ is \strong{extensional},
in contrast with formulations of Martin-L\"of type theories that are \strong{intensional}. 
\end{rmk}


\section{Kripke-Joyal forcing}
\label{sec:kjshott-definition}

\subsection*{Definition and basic properties} 
The aim of this section is to give the definition of the Kripke-Joyal semantics for the type theory $\mathcal{T}_{\tE}$ of the category $\tE$ of presheaves, introduced in \cref{sec:types}, and establish some of its key properties. In particular,
we unfold the general definition for each of the main type forming operations, so as
to facilitate the work of subsequent sections.

\begin{defn}  \label{defn:forcing.types} \label{defn:forcing.kjshott} 
Let $\alpha \colon X \to \UV$ and $x \maps \yon{c} \to X$.
\begin{enumerate}
\item For $a \maps \yon{c} \to \UVptd$, we say that 
$c$ \strong{forces} $a \colon \alpha(x)$, written $$c \forces a \colon \alpha(x)\,,$$ 
if the following diagram commutes:
\[
\begin{tikzcd}
\yon{c} \ar[r,  "{a}"] \ar[d, swap, "{x}"]  & \UVptd \ar[d, "\dspu"] \\
  X \ar[r, swap, "{\alpha}"] & \UV \mathrlap{.} 
 \end{tikzcd}
\]
 \item For $a, b \maps \yon{c} \to \UVptd$ such that $c \forces a \colon \alpha(x)$ and $c \forces b \colon  \alpha(x)$,  we say that 
$c$ \strong{forces} $a = b \colon \alpha(x)$, written $$c \forces a = b \colon \alpha(x)\,,$$ 
if $a$ and $b$ are equal maps in $\tE$.
 \end{enumerate}
 \end{defn}

 \begin{rmk} \label{thm:forcing-alternative}
 We provide some equivalent ways of regarding the forcing condition.
 \begin{enumerate}
 \item  \label{item:forcing-vs-entail} A map $a \co \yon{c} \to \UVptd$ such that $c \forces a \co \alpha(x)$ is the same thing as  an element of $\alpha(x)$ in context $\yon{c}$. Indeed, we can rewrite the diagram in \cref{defn:forcing.types} as
\[
\begin{tikzcd} 
\yon{c} \ar[rr, "a"] \ar[d, equal] & & \UVptd \ar[d, "\dspu"] \\
\yon{c} \ar[r, swap, "x"] & X \ar[r, swap, "\alpha"] & \UV \mathrlap{.}
\end{tikzcd}
\]
Consulting \eqref{equ:tmGammaA}, we see that $c \forces a \co \alpha(x)$ is therefore simply another notation for $\yon{c} \vdash a \co \alpha(x)$. Even if it is just an abbreviation, the forcing notation provides an alternate tool for working with the type theory of $\tE$,
when unfolded for the various type formers, as we shall see later.

\item A map $a \co \yon{c} \to \UVptd$ such that $c \forces a \co \alpha(x)$ is the same thing
 as a dotted map making the following diagram commute:
 \[
  \begin{tikzcd}
	 & \typecomp{X}{\alpha} \ar[d] \ar[r] 
	 & \UVptd \ar[d] \\
\yon{c} \ar[r, swap, "{x}"]  \ar[ur, dotted, bend left = 30]  
	& X \ar[r, swap, "{\alpha}"] 
	& \UV \mathrlap{.}
 \end{tikzcd}
 \]
 As we will see in \cref{prop:forcing.props-in-ctx}, our definition subsumes the Kripke-Joyal forcing for propositions, which 
 defines $c \forces \sigma(x)$ to mean that there is a (now necessarily unique) map in
  \[
  \begin{tikzcd}
	 & \propcomp{X}{\sigma} \ar[d, tail] \ar[r] 
	 & 1 \ar[d] \\
\yon{c} \ar[r, swap, "{x}"]  \ar[ur, dotted, bend left = 30]  
	& X \ar[r, swap, "{\sigma}"] 
	& \iprop\diadot{.}
 \end{tikzcd}
 \]
One of the key differences between forcing for types and forcing for propositions is that in
the forcing condition for types we need to record the map $a$, as it is not unique  in
general. 

\item\label{item:forcing-vs-section}  A map $a \co \yon{c} \to \UVptd$ such that $c \forces a \co \alpha(x)$ also corresponds uniquely to a section of the display map of ${\alpha(x)}\co \yon{c} \to \UV$,
\[
\begin{tikzcd}
\typecomp{\yon{c}}{\alpha(x)} \ar[d] \ar[rr] && \UVptd \ar[d] \\
\yon{c} \ar[r, swap, "{x}"]  \ar[u, dotted, bend left = 40]  & X \ar[r, swap, "{\alpha}"] & \UV\diadot{.}
 \end{tikzcd}
 \]
 \end{enumerate}
 \end{rmk}
 
The following statements gather some immediate consequences of \cref{defn:forcing.kjshott}.  

\begin{lemma} \label{thm:equality-pointwise}
Let $\alpha, \beta \co X \to \UV$. Then $\alpha = \beta$ if and only if  $c \forces \alpha(x) = \beta(x)$ for every~$c\in\cC$ and~$x \co \yon c \to X$. \qed
\end{lemma}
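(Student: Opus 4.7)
The plan is to reduce the statement to a direct application of the Yoneda lemma. First I would make explicit what the forcing judgement means in this case: by analogy with \cref{defn:forcing.kjshott}~(ii), the assertion $c \forces \alpha(x) = \beta(x)$ is just the assertion that the composites $\alpha \circ x$ and $\beta \circ x$ coincide as maps $\yon c \to \UV$, equivalently that $\yon c \vdash \alpha(x) = \beta(x) \oftype \UV$ in $\mathcal{T}_{\tE}$.

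The ``only if'' direction is then immediate: if $\alpha = \beta$ as maps $X \to \UV$, then postcomposing with any $x\co\yon c \to X$ yields $\alpha\circ x = \beta\circ x$, which is exactly $c \forces \alpha(x) = \beta(x)$.

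For the ``if'' direction I would invoke the Yoneda lemma. Two natural transformations $\alpha,\beta\co X \to \UV$ are equal precisely when they agree on all generalised elements, that is, when $\alpha \circ x = \beta \circ x$ for every $c\in\cC$ and every $x\co \yon c \to X$ (equivalently, on every component $\alpha_c = \beta_c$, by identifying elements of $X(c)$ with maps $\yon c \to X$). This is exactly the hypothesis. There is no real obstacle here; the lemma is essentially a reformulation of Yoneda in forcing language, and its role in the sequel is to license pointwise verification of type equalities via forcing over representables.
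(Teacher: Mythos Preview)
Your proposal is correct and matches the paper's treatment: the paper gives no proof at all (the statement is immediately followed by \qed), treating the lemma as an immediate consequence of the Yoneda lemma, which is exactly what you do. Your explicit unpacking of the forcing judgement $c \forces \alpha(x) = \beta(x)$ as equality of the composites $\alpha\circ x$ and $\beta\circ x$ is the right reading, even though \cref{defn:forcing.kjshott} only spells out the term case.
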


For the next lemma, recall that the \strong{monotonicity} of conventional Kripke semantics states that if $k \forces p$ and $j>k$ then $j\forces p$. Something similar holds in our setting as well.

\begin{lemma}[Monotonicity] \label{prop:forcing.monotonicity}  Let $\alpha \co X \to \UV$, 
$x \co \yon{c} \to X$ and $a \maps \yon{c} \to \UVptd$. If  \(c \forces a \oftype \alpha(x) \)  then  \(d \forces a(f)  \oftype \alpha (x f) \)
for every  \(f \maps d \to c\).
\end{lemma}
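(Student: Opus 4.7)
The plan is to unfold the forcing condition back to its diagrammatic definition and then precompose with $\yon f \co \yon d \to \yon c$. By \cref{defn:forcing.types}, the hypothesis $c \forces a \oftype \alpha(x)$ is equivalent to the equation of maps $\dspu \circ a = \alpha \circ x$ in $\tE$. Precomposing both sides with $\yon f$ (which, identifying representables with objects of $\cC$ via the Yoneda embedding, corresponds to the substitution $a(f) \defeq a \circ \yon f$ and $xf \defeq x \circ \yon f$) yields
\[
\dspu \circ a(f) \ = \ \dspu \circ a \circ \yon f \ = \ \alpha \circ x \circ \yon f \ = \ \alpha \circ (xf) \mathrlap{,}
\]
which is precisely the condition that the outer rectangle of
\[
\begin{tikzcd}
\yon d \ar[r, "\yon f"] \ar[d, swap, "xf"] & \yon c \ar[r, "a"] \ar[d, "x"] & \UVptd \ar[d, "\dspu"] \\
X \ar[r, equal] & X \ar[r, swap, "\alpha"] & \UV
\end{tikzcd}
\]
commutes. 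By \cref{defn:forcing.types} again, this means $d \forces a(f) \oftype \alpha(xf)$.

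There is no real obstacle here: monotonicity is immediate from the fact that the forcing relation is defined by commutativity of a square, and precomposition preserves commutativity. The only thing worth being careful about is the notational convention that an element $a \co \yon c \to \UVptd$ is substituted along $f \co d \to c$ by composition with $\yon f$, which is consistent with the substitution operation of \cref{nom:substitution} via the identification of \cref{thm:forcing-alternative}\ref{item:forcing-vs-entail}.
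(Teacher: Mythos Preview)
Your proof is correct and takes essentially the same approach as the paper, which simply says ``just precompose the diagram \cref{defn:forcing.kjshott} with $f \co d \to c$.'' You have merely written out this precomposition in more detail.
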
 

\begin{proof} Just precompose the diagram \cref{defn:forcing.kjshott} with $f \co d \to c$. 
\end{proof}

A \strong{converse to monotonicity} might be stated as follows. Suppose that for every  \(f \maps d \to c\), we have $a_f \co \yon{d} \to \UVptd$ with $d \forces a_f  \oftype \alpha (x f)$, and moreover the uniformity condition $e \forces a_f(  g) = a_{fg} \oftype \alpha(x  fg)$ holds for all \(g \maps e \to d\).
Then there is $a \co \yon{c} \to \UVptd$ with $c \forces a \oftype \alpha(x)$, and moreover $d \forces a(f) = a_f \oftype \alpha(x  f)$
for all  \(f \maps d \to c\).  By the Yoneda lemma, this is trivial, however, because for~$a$ we can just take $a_{\id_{c}}$.  
What is not so trivial is the analogous statement  for a judgement of the form $X\vdash a \co \alpha$ when $X$ is not  representable. Since the presheaf $X$ is a colimit of representables, the validity of such judgements in the internal type theory $\mathcal{T}_\tE$ can be related to forcing semantics as follows.

\begin{prop} \label{prop:validity-and-forcing} 
Let $X \vdash \alpha \co \UV$. Suppose that for each ${x \maps \yon{c} \to X}$ there is a given element $a_x\co \yon{c} \to \UVptd$ such that
 \begin{equation}\label{eq:SandCforces}
c \forces a_x \oftype \alpha(x),
\end{equation}
and moreover, the following \strong{uniformity condition} holds for all $f \maps d \to c$,
\[
d \forces a_x(  f) = a_{x  f} \oftype \alpha(x  f)\,.
\]
Then there is a unique element $a \co X\to\UVptd$ such that 
\begin{equation}\label{eq:SandCvalid}
X\vdash a \co \alpha
\end{equation}
and, for all ${x \maps \yon{c} \to X}$,
\[
c \forces a(  x) = a_{x  } \oftype \alpha(x  )\,.
\]
\end{prop}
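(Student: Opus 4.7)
The plan is to invoke the co-Yoneda lemma, writing $X$ as the colimit of the diagram of representables mapping into it, and recognise the forcing hypotheses as exactly the data of a cocone over that diagram. By \cref{rmk:sections} and part \ref{item:forcing-vs-section} of \cref{thm:forcing-alternative}, we are looking for a section $\widetilde{a} \co X \to \typecomp{X}{\alpha}$ of the display map $\dsp{\alpha}$, which then transposes to the desired $a \co X \to \UVptd$ with $X \vdash a \co \alpha$.

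First I would translate the hypothesis. Given $x \co \yon c \to X$, the forcing judgement $c \forces a_x \oftype \alpha(x)$ corresponds, by the universal property of the pullback~\eqref{diag:context-extension}, to a unique lift
\[
\widetilde{a}_x \co \yon c \to \typecomp{X}{\alpha} \quad \text{with} \quad \dsp{\alpha} \circ \widetilde{a}_x = x \mathrlap{.}
\]
The uniformity condition $d \forces a_x(f) = a_{x \circ f} \oftype \alpha(x \circ f)$ for every $f \co d \to c$ then unwinds, again by the uniqueness of such lifts, to the equation $\widetilde{a}_x \circ f = \widetilde{a}_{x \circ f}$ of maps $\yon d \to \typecomp{X}{\alpha}$. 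In other words, the family $(\widetilde{a}_x)$ is a cocone on the canonical diagram $D \co \int \! X \to \tE$ sending $(c,x)$ to $\yon c$, with vertex $\typecomp{X}{\alpha}$.

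Next I would apply the co-Yoneda lemma, which identifies $X$ with $\colim D$, the tautological cocone being $x \co \yon c \to X$ itself. The cocone $(\widetilde{a}_x)$ therefore induces a unique map $\widetilde{a} \co X \to \typecomp{X}{\alpha}$ such that $\widetilde{a} \circ x = \widetilde{a}_x$ for every $x \co \yon c \to X$. Composing with $\dsp{\alpha}$, the family $(\dsp{\alpha} \circ \widetilde{a}_x) = (x)$ is exactly the universal cocone into $X$, so by uniqueness $\dsp{\alpha} \circ \widetilde{a} = \id_X$; that is, $\widetilde{a}$ is a section of $\dsp{\alpha}$. Transposing back across~\eqref{diag:context-extension} yields the desired $a \co X \to \UVptd$ with $X \vdash a \co \alpha$, and the equation $a \circ x = a_x$ (\ie $c \forces a(x) = a_x \oftype \alpha(x)$) holds for every $x \co \yon c \to X$ by construction.

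Uniqueness of $a$ is immediate from the Yoneda lemma: any two elements $a, a' \co X \to \UVptd$ satisfying $a \circ x = a_x = a' \circ x$ for all generalised elements $x$ coincide. I do not expect a substantial obstacle: the entire proposition is a repackaging of the density of representables in $\PshC$, and the only point requiring care is the bookkeeping between the forcing notation and the pullback lifts, which is routine once the dictionary in \cref{thm:forcing-alternative} has been set up.
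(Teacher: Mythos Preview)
Your proposal is correct and follows essentially the same approach as the paper: both rest on the density of representables in $\tE$, assembling the family $(a_x)$ into a single map out of $X$. The only difference is cosmetic --- you phrase it via the co-Yoneda colimit and lifts $\widetilde{a}_x \co \yon c \to \typecomp{X}{\alpha}$, whereas the paper works directly with the components $\mathbf{a}_c \co X(c) \to \UVptd(c)$ and checks naturality by hand --- but the content is the same.
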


\begin{proof} 
Given any $X \vdash a \co \alpha$ we can set $a_x \defeq ax$ for each ${x \maps \yon{c} \to X}$, and we then have $c \forces a_x \oftype \alpha(x)$  by precomposing \eqref{equ:tmGammaA} with ${x \maps \yon{c} \to X}$.  The uniformity condition $d \forces a_x(  f) = a_{x  f} \oftype \alpha(x  f)$ holds because $a_x(  f) = (ax)f = a(x f) = a_{x  f}$. 

To see that any uniform family of elements $(a_x\co \yon{c} \to \UVptd)_{x \maps \yon{c} \to X}$ arises in this way from a unique  $X \vdash a \co \alpha$, note that, for each $c$, the family $(a_x\co \yon{c} \to \UVptd)_{x \maps \yon{c} \to X}$ determines a function $\mathbf{a}_c\co X(c) \to E(c)$, while the  uniformity condition means that the various functions~$\mathbf{a}_c$ are natural in $c$, so we indeed have a natural transformation $a \co X \to E$.  Recall that $X \vdash a \co \alpha$ means that this $a \co X \to E$ makes the diagram \eqref{equ:tmGammaA} commute.  This follows from the individual conditions~\eqref{eq:SandCforces}. 
 \end{proof} 

We can summarise \cref{prop:validity-and-forcing} as follows.

\begin{cor} \label{cor:forcing.dependent-elements} 
Let $\alpha \co X\to \UV$. Then the following data are in bijective correspondence:
\begin{enumerate}
\item\label{cor:forcing.dependent-elements(i)}  elements $a \maps  X \to \UVptd$ such that $X\vdash a\co\alpha$,  
\item\label{cor:forcing.dependent-elements(ii)} 
uniform families of elements $a_x \maps  \yon{c} \to \UVptd$  such that $c \forces a_x \co \alpha(x)$.
\end{enumerate}
Specifically, the bijection is defined by letting $a_x \defeq a(x)\co \alpha(x)$. \qed
\end{cor}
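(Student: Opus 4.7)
The plan is to observe that this corollary is essentially a repackaging of \cref{prop:validity-and-forcing} in the form of a bijection, so its proof should be a short appeal to what has just been done. The forward direction of the bijection sends a global element $a \co X \to \UVptd$ witnessing $X \vdash a \co \alpha$ to the family $a_x \defeq a \circ x$ indexed by all $x \co \yon c \to X$ as $c$ varies over $\cC$.

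First I would verify that this forward assignment lands in uniform families of forcing witnesses. Precomposing the defining square \eqref{equ:tmGammaA} of $X \vdash a \co \alpha$ with $x \co \yon c \to X$ produces precisely the commutative square required by \cref{defn:forcing.types}, so $c \forces a_x \co \alpha(x)$. The uniformity condition $d \forces a_x(f) = a_{xf} \co \alpha(xf)$ is then immediate from associativity of composition, since $(a \circ x) \circ f = a \circ (x \circ f)$.

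For the reverse direction, I would invoke \cref{prop:validity-and-forcing} directly: it associates to any uniform family $(a_x)_x$ a unique $a \co X \to \UVptd$ with $X \vdash a \co \alpha$ whose forcing values recover the $a_x$. Its uniqueness clause gives injectivity of the bijection (the original $a$ is recovered from the family it determines), while its existence clause gives surjectivity. Finally, checking that the two assignments are mutually inverse boils down to comparing $a \circ x$ with the value of the reconstructed element at $x$, which agree by construction.

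The only substantive content, already taken care of in the proof of \cref{prop:validity-and-forcing}, is the Yoneda-style passage between the pointwise data $\mathbf{a}_c \co X(c) \to E(c)$ (obtained from the family on representables) and an honest natural transformation $a \co X \to E$; naturality in $c$ corresponds exactly to the uniformity condition. I do not expect any obstacle beyond bookkeeping.
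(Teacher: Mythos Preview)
Your proposal is correct and matches the paper's approach: the corollary is stated with a \qed and no separate proof, the preceding sentence reading ``We can summarise \cref{prop:validity-and-forcing} as follows.'' Your unpacking of the forward direction (precomposition) and appeal to \cref{prop:validity-and-forcing} for the reverse is exactly the intended reading.
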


\begin{prop} \label{defn:KJforcing-in-E}
Let $\alpha \maps 1 \to \UV$ be a  closed type. 
Then the following data are in bijective correspondence:
\begin{enumerate}
\item elements $a \co 1 \to \UVptd$ such that $\vdash a \co \alpha$,
\item uniform families of elements $(a_c)_{c\in\cC}$, where $c \forces a_c \oftype \alpha$,  {i.e.}~such that
$d \forces {a_c}( f) = a_{d} \oftype \alpha$,  for all $f \maps d \to c$.
\end{enumerate}
\end{prop}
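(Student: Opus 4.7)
The plan is to derive \cref{defn:KJforcing-in-E} as a direct specialisation of \cref{cor:forcing.dependent-elements} to the closed-type case $X = 1$. The only real work is translating clause (ii) of that corollary into the simpler indexing by objects of $\cC$ used in the proposition, and checking that the uniformity conditions match up.

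First I would observe that for each $c \in \cC$ the hom-set $\tE(\yon c, 1)$ is a singleton, with unique element $!_c \co \yon c \to 1$. Hence a family $(a_x)$ indexed by generalised elements $x \co \yon c \to 1$ is the same as a family $(a_c)_{c \in \cC}$ indexed by objects of $\cC$, and the forcing condition $c \forces a_x \co \alpha(x)$ from the corollary becomes $c \forces a_c \co \alpha(!_c)$, which we abbreviate to $c \forces a_c \co \alpha$ by the convention (cf.~\cref{rmk:universe-in-slice}) that the pullback $\alpha \circ !_c$ of a closed type along $!_c$ is still denoted $\alpha$.

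Next I would translate the uniformity condition from the corollary, namely $d \forces a_x(f) = a_{xf} \co \alpha(xf)$ for every $f \co d \to c$. With $x = !_c$, uniqueness of maps into $1$ forces $xf = !_d$, so the condition reduces to the stated $d \forces a_c(f) = a_d \co \alpha$. Finally, the bijection of \cref{cor:forcing.dependent-elements}, which sends a global element $a \co 1 \to \UVptd$ with $\vdash a \co \alpha$ to the family $a_x \defeq a \circ x$, specialises to $a \longmapsto (a \circ !_c)_{c \in \cC}$, giving the bijection claimed here.

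No substantial obstacle arises: the whole proof is just the observation that taking $X = 1$ in \cref{cor:forcing.dependent-elements} re-indexes the uniform family by objects rather than by generalised elements. The only mild care required is notational, namely to identify the unique map $!_c$ with its domain $c$ and to keep track of the convention identifying $\alpha$ with its pullback along $!_c$.
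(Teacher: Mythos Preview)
Your proposal is correct and takes essentially the same approach as the paper, which simply says the result is immediate from \cref{cor:forcing.dependent-elements}. You have spelled out explicitly the translation of the indexing and uniformity condition under the specialisation $X = 1$, which is exactly what ``immediate'' is hiding.
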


\begin{proof} Immediate from \cref{cor:forcing.dependent-elements}.
\end{proof}

When the equivalent conditions of \cref{defn:KJforcing-in-E}  hold, we may write $\tE \forces a \co \alpha$ and say that \emph{$\tE$ forces $a \co \alpha$} (or even just that \emph{$\tE$ forces $\alpha$}). In these terms \cref{cor:forcing.dependent-elements} implies the following soundness and completeness theorem for the $\vdash$ relation of the internal type theory $\mathcal{T}_{\tE}$ with respect to the Kripke-Joyal forcing relation $\forces$ (but also see \cref{Kripke-Joyal-completeness}).

\begin{thm}[Soundness and completeness of forcing] \label{cor:SandC}
Let  $\alpha \maps 1 \to \UV$ be a closed type and $a \co 1 \to \UVptd$. Then $\mathcal{T}_{\tE}\vdash a \co\alpha$ if and only if $\tE\forces a\co\alpha$. \qed
\end{thm}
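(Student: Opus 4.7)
The statement is essentially an immediate consequence of the preceding \cref{defn:KJforcing-in-E}, which was itself a specialisation of \cref{cor:forcing.dependent-elements} to the case $X = 1$. So my plan is to reduce the theorem to that corollary by carefully unpacking the notation $\tE \forces a \co \alpha$.

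The plan is as follows. First, I would recall that by definition $\tE \forces a \co \alpha$ means that there is a uniform family of elements $(a_c)_{c \in \cC}$, with $a_c \co \yon c \to \UVptd$, such that $c \forces a_c \co \alpha$ for every $c \in \cC$ and $d \forces a_c(f) = a_d \co \alpha$ for every $f \co d \to c$ (where we use that $\alpha$ is closed, so its substitution along any map is again $\alpha$). Second, given $a \co 1 \to \UVptd$, I would define the associated family by setting $a_c \defeq a \circ \mathord{!}_c$, where $\mathord{!}_c \co \yon c \to 1$ is the unique map. The uniformity condition is then automatic from functoriality.

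The main step is then to observe that $\mathcal{T}_{\tE} \vdash a \co \alpha$ holds if and only if the defining square \eqref{equ:tmGammaA} commutes with $\Gamma = 1$, which by naturality in $c$ is equivalent to $c \forces a_c \co \alpha$ holding for every $c \in \cC$. This is exactly the equivalence of items (i) and (ii) in \cref{cor:forcing.dependent-elements} specialised to $X = 1$, using that $1$ is the colimit of the representables $\yon c$ (with all maps between them) and that a natural transformation $1 \to \UVptd$ is determined by such a uniform family. Conversely, given any uniform family, the bijection from \cref{cor:forcing.dependent-elements} recovers a unique $a \co 1 \to \UVptd$ with $\vdash a \co \alpha$ whose components are the given $a_c$.

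There is no serious obstacle here: the theorem is really a restatement of \cref{defn:KJforcing-in-E} in the language of soundness/completeness. The only minor subtlety worth flagging is that ``soundness and completeness'' is phrased for a \emph{fixed} element $a \co 1 \to \UVptd$, so one must be explicit that the bijection of \cref{cor:forcing.dependent-elements} is compatible with the assignment $a \mapsto (a \circ \mathord{!}_c)_{c \in \cC}$, and hence both directions of the equivalence hold for the same $a$ on the nose. Once that is pointed out, the proof consists of a single appeal to \cref{cor:forcing.dependent-elements}. \qed
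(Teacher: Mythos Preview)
Your proposal is correct and matches the paper's approach exactly: the paper gives no proof at all (just \qed), treating the theorem as an immediate restatement of \cref{defn:KJforcing-in-E} once the notation $\tE\forces a\co\alpha$ has been introduced. You have simply spelled out the unpacking in more detail than the paper bothers to.
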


\subsection*{Forcing in slice categories}\label{susec:slice-forcing}
  
Since any slice category of a presheaf topos  is also a presheaf topos, as $\Psh(\cC)/_{X} \cong\Psh(\int\!X)$, \cref{defn:forcing.kjshott} also determines a notion of forcing there (for the type theory $\mathcal{T}_{\tE/_{\!X}}$, see \cref{rem:sliceCwF}), which is related to forcing in~$\tE$ by the following, \cf also \cref{rmk:universe-in-slice}.

\begin{lemma} \label{lemma:forcing-in-slices}
Let $\alpha \maps X \to \UV$ and $x \maps \yon c \to X$. Then there is a bijection between 
For any  the following conditions are equivalent.
\begin{enumerate}
\item \label{item:total} maps $a \co \yon c \to \UVptd$ such that $c \forces  a \oftype \alpha(x)$ in  $\Psh(\cC)$.
\item \label{item:slice} maps $a \co \yon (c,x) \to E_X$ such that $(c, x) \forces a \oftype  \alpha$ in $\Psh(\int\!X)$. 
\end{enumerate}
\end{lemma}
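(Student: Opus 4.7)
The plan is to reduce the equivalence to the identification of the small map classifier in the slice topos established in \cref{rmk:universe-in-slice}, together with the slice equivalence $\Psh(\cC)/_{X} \simeq \Psh(\int\! X)$ of~\eqref{eq:slice-of-presheaf-cat}. Recall that under this equivalence, the representable $\yon(c,x)$ in $\Psh(\int\! X)$ corresponds to the object $x \co \yon c \to X$ in $\Psh(\cC)/_{X}$, and the small map classifier $\dspu_X \co E_X \to U_X$ corresponds to the base change $X^*\dspu \co X^*\UVptd \to X^*\UV$ as in~\eqref{eq:small-map-classifier-in-slice}. Furthermore, via~\eqref{eq:small-map-classifier-in-slice-sections} the closed type $\alpha \co 1 \to U_X$ in $\Psh(\int\! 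X)$ corresponds to the section $(\alpha,\id_X) \co X \to \UV\times X$ of $X^*\UV \to X$, which is the same datum as $\alpha \co X \to \UV$.

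Concretely, I would unfold condition~\ref{item:slice} as follows. The stage $(c,x) \forces a \co \alpha$ in $\Psh(\int\! X)$ asserts the existence of a map $\bar a \co \yon(c,x) \to E_X$ making the forcing square commute over the unique map $\yon(c,x) \to 1$ and the closed type $\alpha \co 1 \to U_X$. Transporting across the slice equivalence, this becomes a filler in $\Psh(\cC)/_{X}$ of the form
\[
\begin{tikzcd}
 & X^*\UVptd \ar[r] \ar[d, "X^*\dspu"] & \UVptd \ar[d, "\dspu"] \\
\yon c \ar[r, "{(\alpha x, x)}", swap] \ar[ur, dotted, bend left = 20] & X^*\UV \ar[r] & \UV \mathrlap{,}
\end{tikzcd}
\]
where the right-hand square is the defining pullback of $X^*\UV$. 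By the two-pullbacks lemma, specifying such a lift over $X$ is the same as specifying a map $a \co \yon c \to \UVptd$ with $\dspu \circ a = \alpha \circ x$, which is exactly condition~\ref{item:total}; and the $X$-component of the lift is forced to be $x$, so the datum $a$ on both sides is literally the same morphism $\yon c \to \UVptd$ in $\tE$.

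I do not expect any real obstacle here: the content is entirely a bookkeeping unwinding of the slice equivalence and the identification of the small map classifier in the slice. The only point requiring a little care is to check that under the equivalence $\Psh(\cC)/_{X} \simeq \Psh(\int\! X)$ the element~$a$ in~\ref{item:total} is identified with the element appearing in~\ref{item:slice}, rather than with some different name; this is handled by the explicit description of the equivalence on representables and by the pullback in~\eqref{eq:small-map-classifier-in-slice}. The conclusion is then that \ref{item:total} and \ref{item:slice} are two presentations of the same lifting datum, hence equivalent.
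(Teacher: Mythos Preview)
Your proposal is correct and follows essentially the same route as the paper: both arguments unfold condition~\ref{item:slice} using the identification of the small map classifier in the slice from \cref{rmk:universe-in-slice} and~\eqref{eq:small-map-classifier-in-slice-sections}, transport the resulting commuting square across the equivalence $\Psh(\cC)/_{X} \simeq \Psh(\int\! X)$, and observe that it coincides with the square defining $c \forces a \co \alpha(x)$. Your version is slightly more explicit in passing through the pullback $X^*\UVptd$ and invoking the two-pullbacks lemma, but the content is the same bookkeeping unwinding.
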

   
\begin{proof} 
Under the identification of \eqref{eq:small-map-classifier-in-slice-sections} (and observing \cref{rem:sliceCwF}), the forcing condition 
\((c, x) \forces a \oftype \alpha \)
states that the diagram on the left below commutes in $\Psh(\int\!X)$, where~$\pi_X$ is the small map classifier of $\Psh(\int\!X)$, as in \cref{rmk:universe-in-slice}.  But by the specification of~$\pi_X$, this is equivalent to the commutativity in $\Psh(\cC)$ of the diagram on the right, which is what the condition $c \forces  a \oftype \alpha(x)$ means.  
\[
 \begin{tikzcd}
    \yon (c, x) \ar[r, "a"]   \ar[d, swap]  & \UVptd_X   \ar[d, "\dspu_X"] \\
  1 \ar[r, swap, "\alpha"] & \UV_X
\end{tikzcd}   
\qquad \qquad
\begin{tikzcd}
    \yon (c) \ar[r, "a"]   \ar[d, swap, "x"]  & \UVptd   \ar[d, "\dspu"] \\
  X \ar[r, swap, "\alpha"] & \UV
\end{tikzcd}   \vspace{-2em}
 \]
\end{proof}

\begin{rmk} Let $\alpha \co X\to \UV$ and reconsider the bijection of \cref{cor:forcing.dependent-elements} between
\begin{quote}
\begin{enumerate}
\item elements $X\vdash a\co\alpha$,
\item uniform families of elements $c \forces a_x \co \alpha(x)$, for all $x \maps \yon{c} \to X $.
\end{enumerate}
\end{quote}
In light of \cref{lemma:forcing-in-slices}, when we move to the slice category $\tE/_X$ where $\alpha \co X\to \UV$ becomes a \emph{closed type}, the bijection  is then between:
\begin{quote}
\begin{enumerate}
\item elements $\vdash a\co\alpha$,
\item uniform families of elements $(c,x)\forces a_{(c,x)} \co \alpha$, for all $(c,x) \in \int{X}$.
\end{enumerate}
\end{quote}
The latter statement is  \cref{defn:KJforcing-in-E}.  
\end{rmk} 

In conclusion, forcing a \emph{type in context} $X\vdash a \co\alpha$ in $\mathcal{T}_\tE$ is equivalent to forcing the corresponding \emph{closed type} $\vdash a \co\alpha$ in $\mathcal{T}_{\tE/_{X}}$.  Indeed, there is a bijection between the witnessing elements $a$.  We can summarise this as follows.

\begin{prop}\label{prop:forcing-eq-closedsliceforcing} 
Let $\alpha \co X\to \UV$ be a type. Then  the following data are in bijective correspondence:
\begin{enumerate}
\item elements $a \co X \to \UVptd$ such that $\tE/_X \forces a\co \alpha$,
\item uniform families of elements $(a_x)_{(c,x) \in \int X}$, where $c\forces a_x\co\alpha(x)$. \qed
\end{enumerate}
\end{prop}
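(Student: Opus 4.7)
The plan is to obtain this proposition essentially for free by combining \cref{lemma:forcing-in-slices} with the closed-type version of forcing \cref{defn:KJforcing-in-E} (equivalently \cref{cor:forcing.dependent-elements}), applied not in $\tE$ but in the slice topos $\tE/_X\simeq\Psh(\int\! X)$. The statement asserts exactly the bijection produced by instantiating \cref{defn:KJforcing-in-E} in that slice, once the forcing conditions on $(c,x)\in\int\!X$ are re-expressed as forcing conditions in $\tE$ via \cref{lemma:forcing-in-slices}.

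Concretely, I would proceed in three steps. First, using \cref{rmk:universe-in-slice} and the identification \eqref{eq:small-map-classifier-in-slice-sections}, regard $\alpha\co X\to\UV$ as a closed type in $\tE/_X$ and note that elements $a\co X\to\UVptd$ with $\tE/_X\forces a\co\alpha$ correspond bijectively to elements $\vdash a\co\alpha$ of the associated closed type in $\Psh(\int\!X)$ (this is exactly what $\tE/_X\forces$ abbreviates, by \cref{cor:SandC} applied to $\mathcal{T}_{\tE/_X}$). Second, apply \cref{defn:KJforcing-in-E} to the presheaf category $\Psh(\int\!X)$: this gives a bijection between such closed elements and uniform families $\bigl(a_{(c,x)}\bigr)_{(c,x)\in\int\!X}$ satisfying $(c,x)\forces a_{(c,x)}\co\alpha$ in $\Psh(\int\!X)$, with the uniformity expressed along the morphisms $f\co(d,xf)\to(c,x)$ of $\int\!X$. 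Third, invoke \cref{lemma:forcing-in-slices} to translate each slice forcing condition $(c,x)\forces a_{(c,x)}\co\alpha$ into the forcing condition $c\forces a_{(c,x)}\co\alpha(x)$ in $\tE$, and observe that the uniformity condition transported along this translation is precisely the one stated, namely $d\forces a_{(c,x)}(f)=a_{(d,xf)}\co\alpha(xf)$ for every $f\co d\to c$ in $\cC$.

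There is no real obstacle here, as the proposition is simply the fusion of two prior results; the only care needed is bookkeeping: verifying that the notion of uniformity in $\int\!X$ coming from \cref{defn:KJforcing-in-E} matches the one in item~(ii), and that the correspondence $a\leftrightarrow(a_x)$ is the one given pointwise by $a_x\defeq a\circ x$, as in \cref{cor:forcing.dependent-elements}. Both of these are immediate once the identifications of \cref{rmk:universe-in-slice} and \cref{rem:sliceCwF} are in place, so the proof should be a single short paragraph citing \cref{lemma:forcing-in-slices} and \cref{defn:KJforcing-in-E}.
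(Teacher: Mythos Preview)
Your proposal is correct and matches the paper's approach: the proposition carries a \qed because it is meant as a summary of the remark immediately preceding it, which derives the bijection from \cref{cor:forcing.dependent-elements} (equivalently \cref{defn:KJforcing-in-E}) applied in the slice $\tE/_X\simeq\Psh(\int\!X)$ together with \cref{lemma:forcing-in-slices}. Your identification of the bookkeeping steps (the slice universe from \cref{rmk:universe-in-slice}, the translation of uniformity along morphisms of $\int\!X$, and the pointwise formula $a_x=a\circ x$) is exactly what the paper leaves implicit.
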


\subsection*{Forcing for types}  We now unfold the definition of forcing with respect to each of the main type-forming operations of $\mathcal{T}_{\tE}$.  We make use thereby of the term constructors $\mathsf{pair}$, $\funapp$, etc., of $\mathcal{T}_{\tE}$ that were introduced in Section \ref{sec:types}.

\begin{prop}[Forcing for empty and unit type]  Let $c \in \mathbb{C}$.
\label{prop:forcing.empty-type} \label{prop:forcing.unit-type}
\begin{enumerate}
\item \label{item:forcing-empty} $c \notforces a \co \emptytp$ for all $a \co \yon{c} \to \UVptd$.
\item \label{item:forcing-unit}  $c \forces a \co \unit$ for a unique $a=* \co \yon{c} \to \UVptd$.
\end{enumerate}
\end{prop}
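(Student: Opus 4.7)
The plan is to reduce both claims to the equivalent formulation of forcing in terms of sections of display maps given in \cref{thm:forcing-alternative}~\ref{item:forcing-vs-section}: a map $a \co \yon c \to \UVptd$ with $c \forces a \co \alpha(x)$ corresponds bijectively to a section of the display map $\dsp{\alpha(x)} \co \typecomp{\yon c}{\alpha(x)} \to \yon c$. Once this is done, both parts become immediate structural observations about the display maps associated to $\emptytp$ and $\unit$, given that these closed types classify the initial map $0 \to 1$ and the identity map $\id_1 \co 1 \to 1$, respectively.

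For part~\ref{item:forcing-empty}, I would argue that the pullback $\typecomp{\yon c}{\emptytp}$ of $0 \to 1$ along the unique map $\yon c \to 1$ is itself the initial presheaf $0$, and therefore admits no sections from $\yon c$, since $\yon c$ is inhabited (e.g.~the element $\id_c \in \yon c(c)$ would have to be sent to an element of $0(c) = \emptyset$). Hence no such $a$ can exist. For part~\ref{item:forcing-unit}, the pullback $\typecomp{\yon c}{\unit}$ of $\id_1$ along $\yon c \to 1$ is canonically isomorphic to $\yon c$ itself via the projection, so $\dsp{\unit}$ is an isomorphism and admits a unique section, which under the bijection of \cref{thm:forcing-alternative}~\ref{item:forcing-vs-section} corresponds to the unique element $* \co \yon c \to \UVptd$ over $\unit \co \yon c \to \UV$.

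The only substantive point to verify is the identification of the display maps: that the comprehensions $\typecomp{\yon c}{\emptytp}$ and $\typecomp{\yon c}{\unit}$ really are $0$ and $\yon c$, respectively. This follows from the standard descriptions of $\emptytp$ and $\unit$ as the characteristic maps (into $\UV$) of $0 \to 1$ and $\id_1$, combined with the two-pullbacks lemma applied to \eqref{diag:context-extension}. No obstacle of real depth arises; the proof is essentially a matter of assembling these pullback squares and noting that $\yon c$ is non-initial.
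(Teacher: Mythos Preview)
Your proposal is correct and follows essentially the same approach as the paper: both reduce to \cref{thm:forcing-alternative}~\ref{item:forcing-vs-section}, identify the display maps of $\emptytp$ and $\unit$ over $\yon c$ with $0 \to \yon c$ and $\id_{\yon c}$ respectively, and then count sections, using that $\yon c$ is non-initial for part~\ref{item:forcing-empty}.
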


\begin{proof} 
For \cref{item:forcing-empty}, if $c \forces a \co \emptytp$, then by \cref{item:forcing-vs-section} of \cref{thm:forcing-alternative} we would have a section of the display map
$\yon{c}.\emptytp  \to \yon{c}$, but $\yon{c}.\emptytp = \{\yon{c}\ |\ \bot\} \mono \yon{c}$ is the initial object of $\tE$, so this is impossible, since $\yon{c}$ always has at least the element $\id \co\yon{c}\to\yon{c}$. 

For \cref{item:forcing-unit}, $c \forces a \co \unit$ for every section $a$ of the display map $\yon{c}.\unit \to \yon{c}$, but 
$\yon{c}.\unit = \{\yon{c}\ |\ \top \} \mono \yon{c}$ is the total subobject, so there is always exactly one such section.
\end{proof} 

\begin{prop}[Forcing for sum types]
\label{prop:forcing.sum}
Let \(\alpha, \beta \oftype X \to \UV \). For every $x \co \yon{c} \to X$, the following conditions hold.
\begin{enumerate} 
\item \label{item:forcing.sum-i} If \( c \forces a\co\alpha(x)  \) then \( c \forces \TypeFont{inl}(a) \colon \alpha(x) + \beta(x)$, 
	and if \( c \forces b\co\beta(x)  \) then \( c \forces  \TypeFont{inr}(b) \colon \alpha(x) + \beta(x)$.    
\item \label{item:forcing.sum-iii} If \(c \forces  t \co \alpha(x) + \beta(x) \) 
	then either \( c \forces t =  \TypeFont{inl}(a)\co\alpha(x)  \TypeFont{+} \beta(x) \)  for \( c \forces a\co\alpha(x)  \)  
	or  \( c \forces t =  \TypeFont{inr}(b)\co\alpha(x)  \TypeFont{+} \beta(x) \) for \( c \forces b\co\beta(x)  \).
\end{enumerate}
\end{prop}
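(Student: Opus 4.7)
The plan is to exploit the characterisation of forcing as sections of display maps (item \ref{item:forcing-vs-section} of \cref{thm:forcing-alternative}), together with the fact that the sum type former commutes with display: the display map for $\alpha(x) + \beta(x)$ over $\yon c$ is (up to the canonical isomorphism) the coproduct $\yon c.\alpha(x) + \yon c.\beta(x) \to \yon c$ in $\tE/_{\yon c}$, with the two coproduct injections interpreting $\TypeFont{inl}$ and $\TypeFont{inr}$. Thus $c \forces t \co \alpha(x) + \beta(x)$ is equivalent to giving a map $t \co \yon c \to \yon c.\alpha(x) + \yon c.\beta(x)$ over $\yon c$.

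For part~\cref{item:forcing.sum-i}, I would simply take a section $a \co \yon c \to \yon c.\alpha(x)$ witnessing $c \forces a \co \alpha(x)$ and post-compose with the left coproduct injection $\yon c.\alpha(x) \hookrightarrow \yon c.\alpha(x) + \yon c.\beta(x)$ to produce a section over $\yon c$; by definition this is $\TypeFont{inl}(a)$, and it witnesses $c \forces \TypeFont{inl}(a) \co \alpha(x) + \beta(x)$. The right injection case is identical.

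For part~\cref{item:forcing.sum-iii}, the key point is that representable presheaves are \emph{connected} with respect to coproducts in $\tE = \PshC$: by Yoneda, a morphism $\yon c \to A + B$ corresponds to an element of $(A+B)(c) = A(c) + B(c)$, which lies in precisely one summand, forcing the whole morphism to factor through either $A$ or $B$. Applying this to $t \co \yon c \to \yon c.\alpha(x) + \yon c.\beta(x)$ yields either a section $a \co \yon c \to \yon c.\alpha(x)$ with $t = \TypeFont{inl}(a)$ (hence $c \forces a \co \alpha(x)$ and $c \forces t = \TypeFont{inl}(a)$ by \cref{thm:equality-pointwise} or directly as maps in $\tE$), or analogously on the right.

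I do not anticipate any serious obstacle here; the only point requiring care is verifying that the interpretation of $\alpha + \beta$ really produces the display map $\yon c.\alpha(x) + \yon c.\beta(x) \to \yon c$ and that the term constructors $\TypeFont{inl}, \TypeFont{inr}$ are interpreted as the coproduct injections, which is part of the standard soundness of the type theory $\mathcal{T}_\tE$ and can be invoked without further argument.
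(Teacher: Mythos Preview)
Your proposal is correct and follows essentially the same approach as the paper. The paper phrases things in terms of lifts of $x \co \yon c \to X$ across the coproduct map $[p_\alpha, p_\beta] \co X.\alpha + X.\beta \to X$ rather than sections over $\yon c$, but this is the same content since coproducts are stable under pullback; both arguments ultimately rest on the indecomposability (connectedness) of representables, which you make explicit via Yoneda and $(A+B)(c) = A(c) + B(c)$.
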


\begin{proof}
The sum \(\alpha + \beta\colon X \to \UV\) 
classifies the canonical map from the coproduct $$[p_\alpha, p_\beta] \co X.\alpha + X.\beta \to X.$$ 
Now if \( c \forces a\co\alpha(x)  \) then we have a lift $a \co \yon{c} \to X.\alpha$ of $x \co \yon{c} \to X$ across the display map $p_\alpha \co X.\alpha \to X$. Composing with the coproduct inclusion $ \TypeFont{inl} \co X.\alpha  \to X.\alpha + X.\beta$ we obtain \( c \forces  \TypeFont{inl}(a) \colon \alpha(x) + \beta(x)$.  If instead \( c \forces b\co\beta(x)  \) then in the same way we obtain \( c \forces  \TypeFont{inr}(b) \colon \alpha(x) + \beta(x)$. Thus \ref{item:forcing.sum-i}.

For \ref{item:forcing.sum-iii}, if 
\(c \forces  t \co \alpha(x) + \beta(x) \), then there is a lift
\(t \maps \yon{c} \to X.\alpha + X.\beta\)
of $x \co \yon{c} \to X$ across $[p_\alpha, p_\beta]$. 
Since the representable $\yon{c}$ is indecomposable, the map $t$ must factor through one of the coproduct inclusions.  Thus we either have  \( c \forces t = \mathsf{inl}(a)\co\alpha(x) + \beta(x)\) where \( c \forces a\co\alpha(x)  \)  or we have \( c \forces t = \mathsf{inr}(b)\co\alpha(x) + \beta(x) \) where \( c \forces b\co\beta(x)  \), as required.
\end{proof}

\begin{prop}[Forcing for dependent sum types]
 \label{prop:forcing.dsum}
Let $\alpha \co X \to \UV$ and $\beta \co \typecomp{X}{\alpha}  \to \UV$. For every
$x \co \yon{c} \to X$, the following conditions hold.
\begin{enumerate} 
\item \label{eq:forcing.dsum-1} If $c \forces a \colon \alpha(x)$ and $c \forces b \co \beta(a)$, then $c \forces 
\mathsf{pair}( a, b) \colon  (\Sigma_\alpha \beta)(x)$.
\item \label{eq:forcing.dsum-2}
If $ c \forces t \oftype (\Sigma_\alpha \beta)(x)$, then $c \forces \tproj_1(t) \colon \alpha(x)$
and $c \forces \tproj_2(t) \colon \beta(\tproj_1(t))$.
\end{enumerate}
\end{prop}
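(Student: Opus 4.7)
The plan is to reduce both directions of the proposition to the bijection of \cref{thm:seely-sigma} applied with base context $\yon c$, using the characterisation of forcing via sections of display maps from \cref{thm:forcing-alternative}~\ref{item:forcing-vs-section}. The key preliminary observation is the Beck-Chevalley compatibility between substitution and dependent sum formation: because $\Sigma_\alpha(\beta)$ is defined via the polynomial functor $P$ as the composite $\Sigma \circ (\alpha, \beta)$ (see the pasting diagram following~\eqref{equ:sigma-pair}), one has $(\Sigma_\alpha \beta)(x) = \Sigma_{\alpha(x)}(\beta')$ as maps $\yon c \to \UV$, where $\beta' \co \typecomp{\yon c}{\alpha(x)} \to \UV$ is the pullback of $\beta$ along the canonical map $\typecomp{\yon c}{\alpha(x)} \to \typecomp{X}{\alpha}$ induced by $x$. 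Under this identification the term constructors $\tpair$, $\tproj_1$, $\tproj_2$ of \cref{thm:rules-sigma} are also preserved, since they were defined from the pullback squares of~\eqref{equ:sigma-pair} and~\eqref{equ:sigma-tproj}, which are themselves stable under pullback.

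For part~(i), suppose $c \forces a \co \alpha(x)$ and $c \forces b \co \beta(a)$. By \cref{thm:forcing-alternative}~\ref{item:forcing-vs-section} these two forcings correspond respectively to a section of the display map $\dsp{\alpha(x)} \co \typecomp{\yon c}{\alpha(x)} \to \yon c$ and to a section of $\dsp{\beta(a)} \co \typecomp{\yon c}{\beta(a)} \to \yon c$. Feeding this pair of sections into the bijection of \cref{thm:seely-sigma}, applied with base $\yon c$ to the types $\alpha(x) \co \yon c \to \UV$ and $\beta' \co \typecomp{\yon c}{\alpha(x)} \to \UV$, produces a section of $\dsp{\Sigma_{\alpha(x)}(\beta')} \co \typecomp{\yon c}{\Sigma_{\alpha(x)}(\beta')} \to \yon c$. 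By the Beck-Chevalley identification this is a section of $\dsp{(\Sigma_\alpha\beta)(x)}$, and by the compatibility of $\tpair$ with pullback it equals $\tpair(a,b)$. Reading this back through \cref{thm:forcing-alternative}~\ref{item:forcing-vs-section} gives $c \forces \tpair(a,b) \co (\Sigma_\alpha\beta)(x)$.

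For part~(ii), the argument runs in reverse: a forcing $c \forces t \co (\Sigma_\alpha\beta)(x)$ is, by \cref{thm:forcing-alternative}~\ref{item:forcing-vs-section}, a section of $\dsp{(\Sigma_\alpha\beta)(x)}$, hence, after Beck-Chevalley, of $\dsp{\Sigma_{\alpha(x)}(\beta')}$. Applying the inverse of the bijection of \cref{thm:seely-sigma} produces a pair of sections $a$ of $\dsp{\alpha(x)}$ and $b$ of $\dsp{\beta(a)}$, which by construction of the maps in~\eqref{equ:sigma-tproj} are $\tproj_1(t)$ and $\tproj_2(t)$; translating back via \cref{thm:forcing-alternative}~\ref{item:forcing-vs-section} delivers the desired $c \forces \tproj_1(t) \co \alpha(x)$ and $c \forces \tproj_2(t) \co \beta(\tproj_1(t))$. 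The single technical point underlying both halves is the Beck-Chevalley identification together with the preservation of the term constructors under pullback along $x$; granting this, the proposition is immediate from \cref{thm:seely-sigma}. An equivalent route, packaging this bookkeeping inside the slice topos, would be to invoke \cref{lemma:forcing-in-slices} to reduce to the closed case in $\tE/_{\yon c}$ and then apply \cref{cor:SandC} to the rules of \cref{thm:rules-sigma}.
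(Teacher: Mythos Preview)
Your argument is correct, but it takes a noticeably longer semantic detour than the paper's proof. The paper dispatches both parts in a single sentence: by \cref{thm:forcing-alternative}\ref{item:forcing-vs-entail}, the condition $c \forces a \co \alpha(x)$ is \emph{literally} the judgement $\yon c \vdash a \co \alpha(x)$, so the introduction and elimination rules of \cref{thm:rules-sigma} apply verbatim in context $\yon c$ and give exactly the required conclusions. No Beck--Chevalley verification, no appeal to \cref{thm:seely-sigma}, and no tracking of term constructors under pullback is needed, because all of that work was already absorbed into the proof of \cref{thm:rules-sigma}.

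What you do instead is pass through \cref{thm:forcing-alternative}\ref{item:forcing-vs-section} to sections of display maps, then invoke \cref{thm:seely-sigma} at base $\yon c$, and then check that $\Sigma$, $\tpair$, $\tproj_1$, $\tproj_2$ are stable under the pullback along $x$. This is valid --- indeed it amounts to re-deriving, at a representable context, the soundness of the $\Sigma$-rules that \cref{thm:rules-sigma} already records --- but it duplicates effort. The alternative route you sketch at the end (via \cref{lemma:forcing-in-slices} and \cref{cor:SandC}) is closer in spirit to the paper's argument, yet still more roundabout: the paper needs neither slicing nor the soundness/completeness theorem, only the direct identification of forcing with entailment at representables.
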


\begin{proof} The statements (i) and (ii) follow from~\cref{thm:rules-sigma} (introduction and elimination) and \cref{item:forcing-vs-entail} of \cref{thm:forcing-alternative}.
\end{proof}

\begin{cor}[Forcing for product types]
 \label{prop:forcing.product-types}
For $\alpha, \beta \co X \to \UV$, the following hold.
\begin{enumerate} 
\item \label{item:forcing.product-i} If $c \forces a \colon \alpha(x)$ and $c \forces b \co  \beta(x)$, then $c \forces 
\mathsf{pair}(a, b) \colon (\alpha \times \beta) (x)$.
\item \label{item:forcing.product-ii} If $ c \forces t \oftype  (\alpha \times \beta)(x)$, then $c \forces \tproj_1(t) \colon  \alpha(x)$
and $c \forces \tproj_2(t) \colon \beta(x)$.
\end{enumerate}
\end{cor}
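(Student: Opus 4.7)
My plan is to derive this corollary directly from \cref{prop:forcing.dsum} by specialising the dependent sum case to the particular dependent type $\beta(\dsp{\alpha}) \co \typecomp{X}{\alpha} \to \UV$ obtained by weakening $\beta \co X \to \UV$ along the display map $\dsp{\alpha}$, as in \cref{nom:fun-and-prod-types}. Recall that the product type is defined by $\alpha \times \beta \defeq \Sigma_\alpha \beta(\dsp{\alpha})$, so the first step is simply to unfold this definition and arrange to apply \cref{prop:forcing.dsum} with $\beta$ replaced by $\beta(\dsp{\alpha})$.

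The key observation is the computation of the substitution $\beta(\dsp{\alpha})(a)$. Given $c \forces a \co \alpha(x)$, the element $a$ corresponds (via \cref{thm:pullback-subst-terms} or \cref{rmk:sections}) to the map $(x, a) \co \yon c \to \typecomp{X}{\alpha}$ lifting $x$ across $\dsp{\alpha}$. Substituting this into the weakened type $\beta(\dsp{\alpha})$ gives
\[
\beta(\dsp{\alpha})(a) \,=\, \beta \circ \dsp{\alpha} \circ (x, a) \,=\, \beta \circ x \,=\, \beta(x),
\]
so the hypothesis $c \forces b \co \beta(x)$ is exactly the hypothesis $c \forces b \co \beta(\dsp{\alpha})(a)$ needed to invoke \cref{eq:forcing.dsum-1} of \cref{prop:forcing.dsum}. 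Applying that proposition yields $c \forces \mathsf{pair}(a,b) \co (\Sigma_\alpha \beta(\dsp{\alpha}))(x) = (\alpha \times \beta)(x)$, giving part \ref{item:forcing.product-i}.

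For part \ref{item:forcing.product-ii}, suppose $c \forces t \co (\alpha \times \beta)(x) = (\Sigma_\alpha \beta(\dsp{\alpha}))(x)$. By \cref{eq:forcing.dsum-2} of \cref{prop:forcing.dsum}, we get $c \forces \tproj_1(t) \co \alpha(x)$ and $c \forces \tproj_2(t) \co \beta(\dsp{\alpha})(\tproj_1(t))$. By the same substitution computation as above, $\beta(\dsp{\alpha})(\tproj_1(t)) = \beta(x)$, so this reads $c \forces \tproj_2(t) \co \beta(x)$, as required.

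There is no real obstacle here beyond bookkeeping; the only point that deserves care is the explicit identification $\beta(\dsp{\alpha})(a) = \beta(x)$, which is a straightforward unfolding of the substitution conventions of \cref{nom:substitution,nom:weakening} together with the fact that $\dsp{\alpha} \circ (x,a) = x$ by the pullback defining context extension.
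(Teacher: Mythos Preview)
Your proof is correct and follows exactly the same approach as the paper, which simply notes that this is a special case of \cref{prop:forcing.dsum} since product types are defined as dependent sums (\cref{nom:fun-and-prod-types}). You have additionally spelled out the substitution computation $\beta(\dsp{\alpha})(a) = \beta(x)$ that justifies the specialisation, which the paper leaves implicit.
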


\begin{proof} Special case of \cref{prop:forcing.dsum} since product types are defined as dependent sum types (see \cref{nom:fun-and-prod-types}).
\end{proof}

The next proposition unfolds the forcing condition for elements of the dependent product type. We see another appearance of the uniformity condition, which will play a role in our study of uniform fibrations in \cref{sec:tfib}.  As the proof shows, this condition arises from the non-pointwise character of dependent products in presheaves, much like the consideration of ``future possible worlds'' in the familiar Kripke semantics of implication $p \Rightarrow q$.

\begin{prop}[Forcing for dependent product types]
\label{prop:forcing.dprd}
\leavevmode
Let $\alpha \co X \to \UV$ and $\beta \co \typecomp{X}{\alpha} \to \UV$. Then for every $x \co \yon{c} \to X$ we have the  following.
\begin{enumerate}

\item \label{item:forcing-lambda-i}
Suppose for every $f \maps d \to c$ and every $d \forces a \oftype \alpha(x f )$ we are given ${d \forces b_{(f,a)} \oftype \beta(x f, a)}$, 
satisfying the following uniformity condition for all $g \maps e \to d$, 
$$e \forces b_{(f,a)} (g) = b_{(fg,ag)} \co \beta(x fg, ag).$$
Then there is a map $b \co \typecomp{\yon{c}}{\alpha} \to \UVptd$ for which 
\[
c \forces  \lambda b \oftype ( \Pi_\alpha \beta  ) (x) \mathrlap{,}
\]
and for all $f \maps d \to c$ and $d \forces a \oftype \alpha(x f )$ we have
\[
{d \forces   \funapp ( (\lambda b) (f) , a ) =  b_{(f,a)} \oftype \beta(x f, a)} \mathrlap{.}
\]

\item \label{item:forcing-app} 
Suppose that $c \forces t \co  (\Pi_{\alpha} \beta)(x)$.  Then for every $f \co d \to c$ and every $d \forces a \co \alpha(x f)$, we have
\[
d \forces \funapp (  t (f), a) \co \beta(x f, a)\,,
\] 
and moreover the following uniformity condition holds  for every  $g \maps e \to d$,
$$e \forces \funapp ( t (f) , a) (g) =  \funapp ( t (f g) , a (g) ) \co \beta(x fg, a).$$

\end{enumerate}
\end{prop}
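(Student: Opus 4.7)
My plan is to reduce both parts to the Pi-rules of \cref{thm:rules-pi}, using \cref{cor:forcing.dependent-elements} as the bridge between forcing and the internal type theory. Part (ii) is essentially a direct application: from $c \forces t \co (\Pi_\alpha \beta)(x)$ and $f \co d \to c$, monotonicity (\cref{prop:forcing.monotonicity}) yields $d \forces t(f) \co (\Pi_\alpha \beta)(xf)$, and the Pi-elimination rule applied to $d \forces a \co \alpha(xf)$ produces $d \forces \funapp(t(f), a) \co \beta(xf, a)$. The uniformity condition for $g \co e \to d$ is then immediate, since $\funapp$ is induced by the morphism in~\eqref{equ:pi-app}, whose construction commutes with arbitrary precomposition, giving $\funapp(t(f), a) \circ g = \funapp(t(fg), a(g))$.

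The substance lies in part (i). First I would observe, via \cref{thm:pullback-subst-terms}, that pairs $(f, a)$ with $f \co d \to c$ and $d \forces a \co \alpha(xf)$ correspond bijectively to maps $(f, a) \co \yon d \to \typecomp{\yon c}{\alpha(x)}$. Under this bijection, the hypothesised uniformity $e \forces b_{(f,a)}(g) = b_{(fg, ag)}$ becomes exactly the naturality hypothesis of \cref{cor:forcing.dependent-elements}, applied in the slice over $\yon c$ to $\beta$ regarded, after pullback along $\typecomp{\yon c}{\alpha(x)} \to \typecomp{X}{\alpha}$, as a type in context $\typecomp{\yon c}{\alpha(x)}$. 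That corollary then yields a unique $b \co \typecomp{\yon c}{\alpha(x)} \to \UVptd$ with $\typecomp{\yon c}{\alpha(x)} \vdash b \co \beta$ and $b \circ (f, a) = b_{(f, a)}$. Applying Pi-introduction to this $b$ produces $\lambda(b) \co (\Pi_\alpha \beta)(x)$ in context $\yon c$, i.e.\ $c \forces \lambda b \co (\Pi_\alpha \beta)(x)$, and the Pi-computation rule then gives $\funapp(\lambda(b)(f), a) = b(f, a) = b_{(f, a)}$, the required equation.

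The main obstacle I anticipate is notational rather than conceptual: keeping the iterated pullbacks and substitutions straight so as to confirm that the $b$ assembled above really lives in the correct context (namely $\typecomp{\yon c}{\alpha(x)}$, over which the $\Pi$-abstraction is then taken), and that Beck-Chevalley ensures the $\lambda$ formed in the representable context agrees, along $x$, with the global $\lambda_{\alpha, \beta}$ of~\eqref{equ:pi-lambda}. Once that bookkeeping is in place, both halves reduce mechanically to the Pi-rules plus monotonicity, with \cref{cor:forcing.dependent-elements} supplying the only non-trivial passage between forcing and validity.
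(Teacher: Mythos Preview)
Your proposal is correct and follows essentially the same route as the paper: both parts are reduced to the $\Pi$-rules of \cref{thm:rules-pi} via the bridge between forcing and validity, with part (i) assembling the family $(b_{(f,a)})$ into a single term $b$ over $\typecomp{\yon{c}}{\alpha(x)}$ using \cref{cor:forcing.dependent-elements} (the paper phrases this step via the slice-forcing \cref{prop:forcing-eq-closedsliceforcing} and \cref{cor:SandC}, but that is the same content). Your concern about Beck--Chevalley is legitimate bookkeeping but not a genuine obstacle, since the $\Pi$-construction was set up precisely so that $\Pi_\alpha(\beta)(x) = \Pi_{\alpha(x)}(\beta(x,-))$ holds strictly.
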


\begin{proof}
We work in the slice category $\tE/{\typecomp{\yon{c}}{\alpha}}$ where the representables are pairs of the form $(f, a)$ with $f \co d \to c$ and $d \forces a \co \alpha(x f)$.  Applying \cref{prop:forcing-eq-closedsliceforcing} to the condition in~(i) we obtain
$\tE/{\typecomp{\yon{c}}{\alpha}} \forces b\co \beta(x)$, 
which by \cref{cor:SandC} implies $\mathcal{T}_{\tE/{\typecomp{\yon{c}}{\alpha}}} \vdash b \co\beta(x)$.
Thus in $\mathcal{T}$ we have $\typecomp{\yon{c}}{\alpha} \vdash b \co \beta(x)$
for some $b \co \typecomp{\yon{c}}{\alpha} \to \UVptd$.
By \cref{thm:rules-pi}, therefore, $\yon{c} \vdash  \lambda b \co  \Pi_\alpha(\beta)(x)$, 
and hence  $c \forces  \lambda b \co  \Pi_\alpha(\beta)(x)$ by \cref{thm:forcing-alternative}(i). 
Then for all $f \maps d \to c$ and $d \forces a \oftype \alpha(x f )$ we indeed have 
\[
 \funapp ( (\lambda b) (f) , a ) = b(f,a) =  b_{(f,a)} \oftype \beta(x f, a)
 \] 
 by the computation rule of \cref{thm:rules-pi} and 
\cref{cor:forcing.dependent-elements}.
Thus (i), as required.

For (ii), take $c \forces t \co  (\Pi_{\alpha} \beta)(x)$ and $f \co d \to c$ so that 
$d \forces t(f) \co  (\Pi_{\alpha} \beta)(xf)$
by \cref{prop:forcing.monotonicity}.  We then have $\yon{d} \vdash t(f) \co  (\Pi_{\alpha} \beta)(xf)$, 
again by (i) of \cref{thm:forcing-alternative}.  And similarly for~$d \forces a \co \alpha(x f)$, we have 
$\yon{d} \vdash a \co \alpha(x f)$.  Applying the elimination rule from \cref{thm:rules-pi} then gives
$\yon{d} \vdash \funapp (  t (f), a) \co \beta(x f, a)$, whence $d \forces \funapp (  t (f), a) \co \beta(x f, a)$.
The uniformity with respect to $g \maps e \to d$ follows from the corresponding statement in the type theory, which is proved as usual from the Computation, Expansion, and Substitution rules.
\end{proof}

\begin{cor}[Forcing for function types]
\label{prop:forcing.function}
Let $\alpha \co X \to \UV$ and $\beta \co X \to \UV$. Then for every $x \co \yon{c} \to X$ we have the  following.
\begin{enumerate}
\item 
Suppose for every $f \maps d \to c$ and every $d \forces a \oftype \alpha(x f )$
we are given  $d \forces b_{(f,a)} \oftype \beta(x f)$, satisfying the following uniformity condition for all $g \maps e \to d$, 
$$e \forces b_{(f,a)} (g) = b_{(fg,a(g))} \co \beta(x fg).$$
Then there is a map $b \co \typecomp{\yon{c}}{\alpha} \to \UVptd$ for which 
\[
c \forces  \lambda(b) \oftype  (\alpha \to \beta)(x) \mathrlap{,}
\]
and for all $f \maps d \to c$ and $d \forces a \oftype \alpha(x f )$ we have
\[
{d \forces   \funapp ( (\lambda b) (f) , a ) =  b_{(f,a)} \oftype \beta(x f)} \mathrlap{.}
\]

\item  Suppose that $c \forces t \co  (\alpha \to \beta)(x)$.
Then for every $f \co d \to c$, 
and every $d \forces a \co \alpha(x f)$, we have
\[
d \forces \funapp (  t (f), a) \co \beta(x f)\,,
\] 
and moreover  the following uniformity condition holds for every  $g \maps e \to d$,
\[
\funapp ( t (f) , a) (g) =  \funapp ( t (fg) , a( g))\co \beta(x fg).
\]
\end{enumerate}
\end{cor}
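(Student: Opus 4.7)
The plan is to derive this corollary directly from \cref{prop:forcing.dprd} by specialising to the case where the second type is weakened, exploiting the definition of function types. Recall from \cref{nom:fun-and-prod-types} that $\alpha \to \beta \defeq \Pi_\alpha \beta(\dsp{\alpha})$, where $\beta(\dsp{\alpha}) \co \typecomp{X}{\alpha} \to \UV$ is the weakening of $\beta \co X \to \UV$. The crucial observation is that, by the substitution law, for any $f \co d \to c$ and any $a$ with $d \forces a \co \alpha(xf)$, one has
\[
\beta(\dsp{\alpha})(xf, a) \ =\ \beta \circ \dsp{\alpha} \circ (xf, a) \ =\ \beta(xf)\mathrlap{,}
\]
so that the type of the dependent product version degenerates to the simpler type $\beta(xf)$ appearing in the function-type statement.

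For part (i), I would take a family $(b_{(f,a)})$ satisfying the stated uniformity condition $e \forces b_{(f,a)}(g) = b_{(fg, a(g))} \co \beta(xfg)$. Since $\beta(\dsp{\alpha})(xfg, ag) = \beta(xfg)$, this is precisely a uniform family of the form required by \cref{prop:forcing.dprd}\cref{item:forcing-lambda-i} applied to $\beta(\dsp{\alpha})$. That proposition then yields $b \co \typecomp{\yon c}{\alpha} \to \UVptd$ with $c \forces \lambda b \co \Pi_\alpha(\beta(\dsp{\alpha}))(x)$, which is exactly $c \forces \lambda(b) \co (\alpha \to \beta)(x)$, together with the computation identity $d \forces \funapp((\lambda b)(f), a) = b_{(f,a)} \co \beta(xf)$.

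For part (ii), given $c \forces t \co (\alpha \to \beta)(x)$, I reinterpret this as $c \forces t \co \Pi_\alpha(\beta(\dsp{\alpha}))(x)$ and invoke \cref{prop:forcing.dprd}\cref{item:forcing-app} to obtain $d \forces \funapp(t(f), a) \co \beta(\dsp{\alpha})(xf, a)$, which is the same as $d \forces \funapp(t(f), a) \co \beta(xf)$. The uniformity statement is inherited verbatim from the dependent-product version after the same type simplification.

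No real obstacle arises: the entire argument is a straightforward unfolding of definitions, and the only thing to be careful about is the bookkeeping for weakening --- specifically, noting that $\beta(\dsp{\alpha})$ is constant in its second argument so that the type indices $\beta(xf, a)$ and $\beta(xfg, ag)$ in \cref{prop:forcing.dprd} collapse to $\beta(xf)$ and $\beta(xfg)$, matching the function-type formulation.
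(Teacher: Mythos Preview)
Your proposal is correct and matches the paper's approach exactly: the paper simply states that this is the special case of \cref{prop:forcing.dprd} arising from the definition $\alpha \to \beta \defeq \Pi_\alpha \beta(\dsp{\alpha})$ in \cref{nom:fun-and-prod-types}. Your explicit verification that $\beta(\dsp{\alpha})(xf,a) = \beta(xf)$ is the one piece of bookkeeping the paper leaves implicit, and you have handled it correctly.
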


\begin{proof} This is the special case of \cref{prop:forcing.dprd} since function types are defined as dependent product types (see \cref{nom:fun-and-prod-types}).
\end{proof}

The foregoing forcing statements included detailed information about which specific terms are forced, on the basis of those assumed as given.  While this information is sometimes useful, it can also be convenient to use the following simplified \emph{forcing conditions} which elide this information (to the extent possible).  Except where otherwise indicated, the individual forcing statements of the form $c \forces  t \co  \alpha$ then assert that \emph{there merely exists} a term $t$ such that $c \forces  t \co  \alpha$.

\begin{cor}[Forcing conditions]\label{cor:Forcingiff}
Let the types $\alpha$, $\beta$, etc., be as in the corresponding propositions above. 
Then for every $x \co \yon{c} \to X$ we have the following equivalences.
\begin{enumerate}
\item  $c \forces a \co \emptytp$ if and only if $a\neq a$.
\item $c \forces a \co \unit$ if and only if $a= *$.
\item $c \forces  t \co  (\alpha + \beta)(x)$ if and only if $c \forces a\co\alpha(x)$ or $c \forces b\co\beta(x)$,
\item $c \forces t \co  (\alpha \times\beta)(x)$ if and only if $c \forces a \colon \alpha(x)$ and $c \forces b \colon \beta(x)$.
\item $c \forces t \co  (\Sigma_\alpha \beta)(x)$ if and only if $c \forces a \colon \alpha(x)$ and $c \forces b \colon \beta(a)$.
\item $c \forces t \co  ({\alpha} \to {\beta})(x)$ if and only if $d \forces b_{(f,a)} \oftype \beta(x f)$ for all $f \maps d \to c$ and $d \forces a \oftype \alpha(x f )$, and $e \forces b_{(f,a)} (g) = b_{(fg,ag)} \co \beta(x fg)$ for all $g \maps e \to d$.
\item $c \forces t \co  (\Pi_{\alpha} \beta)(x)$ if and only if $d \forces b_{(f,a)} \oftype \beta(x f, a)$
for all $f \maps d \to c$ and $d \forces a \oftype \alpha(x f )$, and $e \forces b_{(f,a)} (g) = b_{(fg,ag)} \co \beta(x fg, ag)$
for all $g \maps e \to d$. \qed
\end{enumerate}
\end{cor}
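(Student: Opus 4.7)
The plan is to obtain each of the seven equivalences by combining the ``introduction'' and ``elimination'' halves of the preceding forcing propositions~(\cref{prop:forcing.empty-type}--\cref{prop:forcing.function}), reading them as, respectively, sufficient and necessary conditions on the mere existence of a forcing witness. The right-to-left directions supply an explicit witness $t$ from given data; the left-to-right directions extract from an assumed $t$ the data that certify the equivalent condition.

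Concretely, for \emph{(i)} note that $a \neq a$ is unsatisfiable, and \cref{prop:forcing.empty-type}\,\ref{item:forcing-empty} says $c \notforces a \co \emptytp$ for every $a$, so both sides are false. For \emph{(ii)}, \cref{prop:forcing.unit-type}\,\ref{item:forcing-unit} gives exactly one $a$ with $c \forces a \co \unit$, namely $a = *$. For \emph{(iii)}, the backward direction is \cref{prop:forcing.sum}\,\ref{item:forcing.sum-i} (take $t \defeq \TypeFont{inl}(a)$ or $\TypeFont{inr}(b)$), while the forward direction is \cref{prop:forcing.sum}\,\ref{item:forcing.sum-iii}, which uses the indecomposability of the representable $\yon c$. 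For \emph{(iv)} and \emph{(v)}, take $t \defeq \mathsf{pair}(a,b)$ by \cref{prop:forcing.dsum}\,\ref{eq:forcing.dsum-1} (resp.~\cref{prop:forcing.product-types}\,\ref{item:forcing.product-i}) for the backward direction, and set $a \defeq \tproj_1(t)$, $b \defeq \tproj_2(t)$ by \cref{prop:forcing.dsum}\,\ref{eq:forcing.dsum-2} for the forward direction.

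For \emph{(vi)} and \emph{(vii)}, the backward direction is \cref{prop:forcing.dprd}\,\ref{item:forcing-lambda-i} (resp.~\cref{prop:forcing.function}(i)) applied to the given uniform family $(b_{(f,a)})$ to produce $t \defeq \lambda b$. For the forward direction, given $t$ with $c \forces t \co (\Pi_\alpha\beta)(x)$, define $b_{(f,a)} \defeq \funapp(t(f),a)$; then \cref{prop:forcing.dprd}\,\ref{item:forcing-app} gives both $d \forces b_{(f,a)} \co \beta(xf,a)$ and the required uniformity in $g \maps e \to d$. The only mildly subtle point is bookkeeping for substitution in the uniformity equation $e \forces b_{(f,a)}(g) = b_{(fg,ag)}$, which is just the composition law $t(f)(g) = t(fg)$ together with naturality of $\funapp$ under precomposition, as established in the proof of \cref{prop:forcing.dprd}.

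I do not foresee a main obstacle: the content of the corollary is genuinely a reorganisation of the preceding propositions, rephrasing the separately stated introduction and elimination clauses as a single biconditional about existence of a forcing term, with no new category-theoretic input required. The only care needed is to verify that, in each case, the witness constructed in the backward direction and the witness extracted in the forward direction are compatible enough that the biconditional holds at the level of ``there merely exists $t$'', which follows directly from the corresponding computation rules in \cref{thm:rules-sigma} and \cref{thm:rules-pi}.
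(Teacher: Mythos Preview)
Your proposal is correct and matches the paper's approach: the corollary is stated with a \qed and no proof, precisely because it is a direct repackaging of \cref{prop:forcing.empty-type}--\cref{prop:forcing.function} into mere-existence biconditionals, exactly as you describe. There is nothing further to add.
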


\subsection*{Kripke-Joyal forcing for propositions} 

We conclude this section by specialising our Kripke-Joyal forcing for type theory to propositions in $\tE$ and relating it to
the standard Kripke-Joyal forcing for first-order logic. For this, we make use of the relation between the small map classifier $\UV$ and the subobject classifier $\iprop$ described in \cref{sec:small-vs-subobject}. 
Recall that the standard Kripke-Joyal semantics is given by the specification, for 
$\sigma \colon X \to
\Omega$ and $x \colon \yon{c} \to X$, that $c\forces\sigma(x)$ if there is a (necessarily
unique) dotted map fitting in the diagram
\begin{equation}\label{diag:standardKJforcing}
\begin{tikzcd}
& \propcomp{x \oftype X}{\sigma(x)} \ar[r] \ar[d, tail] 
&1\ar[d, tail, "\ttrue"] \\
\yon{c} \ar[r, swap, "x"] \ar[ru, bend left = 20, dotted] &  X  \ar[r, swap, "\sigma"] 
& \iprop \, .
\end{tikzcd}
\end{equation}
The following shows that our Kripke-Joyal semantics for type theory coincides with the standard one when restricted to propositions.

\begin{thm}
\label{prop:forcing.props-in-ctx} \label{prop:forcing.props}
Let \(\sigma \maps X \to \iprop\) and $x \maps \yon{c} \to X$. Then the following 
conditions are equivalent.
\begin{enumerate}
\item $c \forces \sigma(x)$. 
\item There exists a (necessarily unique) $s \colon \yon{c} \to \UVptd$ such that $c \forces s \colon 
\pat{\sigma(x)}$.
\end{enumerate}
\end{thm}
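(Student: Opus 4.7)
The plan is to deduce this directly from the material already built up in Section \ref{sec:small-vs-subobject}, specifically Proposition \ref{diag:iprop-vs-V} and Corollary \ref{cor:compUvOmega}. The key observation is that the inclusion $\pat{-} \co \iprop \to \UV$ fits into a pullback square with $\dspu \co \UVptd \to \UV$ along $\ttrue \co 1 \to \iprop$, so pasting this pullback on the right of the defining pullback of $\propcomp{x \oftype X}{\sigma(x)}$ along $\sigma$ identifies the display map $\typecomp{X}{\pat{\sigma}} \to X$ with the subobject $\propcomp{x \oftype X}{\sigma(x)} \mono X$ (as in the proof of \cref{cor:compUvOmega}). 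In particular, $\typecomp{X}{\pat{\sigma}} \to X$ is a monomorphism.

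Now I would simply unfold both forcing conditions. The standard condition $c \forces \sigma(x)$ means, by \eqref{diag:standardKJforcing}, that there is a (necessarily unique) lift $\yon c \to \propcomp{x \oftype X}{\sigma(x)}$ of $x$. On the other hand, by \cref{defn:forcing.kjshott} applied to the type $\pat{\sigma} \co X \to \UV$, the condition $c \forces s \co \pat{\sigma(x)}$ asserts that $s \co \yon c \to \UVptd$ makes the square with $\pat{\sigma} \circ x = \dspu \circ s$ commute. By the universal property of the pullback diagram \eqref{diag:context-extension} defining $\typecomp{X}{\pat{\sigma}}$, such an $s$ corresponds bijectively to a lift $\yon c \to \typecomp{X}{\pat{\sigma}}$ of $x$, with $s$ recovered by postcomposition with the canonical top map $\typecomp{X}{\pat{\sigma}} \to \UVptd$ (this is exactly \cref{item:forcing-vs-section} of \cref{thm:forcing-alternative}).

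Combining these two observations, under the identification $\typecomp{X}{\pat{\sigma}} = \propcomp{x \oftype X}{\sigma(x)}$ of \cref{cor:compUvOmega}, the two lifting problems coincide. Since $\typecomp{X}{\pat{\sigma}} \to X$ is monic, a lift is unique when it exists, so $s$ is likewise uniquely determined. This gives the equivalence (i) $\Leftrightarrow$ (ii), together with the asserted uniqueness of $s$.

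I do not anticipate a genuine obstacle here: the argument is a matter of stringing together the already established pullback identities, and the only mild subtlety is to observe that the canonical top map $\typecomp{X}{\pat{\sigma}} \to \UVptd$ factors through the point $1 \to \UVptd$ (by \cref{diag:iprop-vs-V}), which is what ensures that $s$ is determined by the mere existence of the lift rather than requiring additional data.
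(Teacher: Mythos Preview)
Your proposal is correct and follows essentially the same approach as the paper: both arguments rest on the two-pullback diagram obtained by pasting the defining pullback of $\propcomp{x \oftype X}{\sigma(x)}$ along $\sigma$ with the pullback of \cref{diag:iprop-vs-V}, which identifies lifts of $x$ through $\propcomp{x \oftype X}{\sigma(x)}\mono X$ with lifts through $\typecomp{X}{\pat{\sigma}}\to X$. The paper simply states this more tersely as ``immediate from the definitions'' and draws the diagram, whereas you spell out the steps and explicitly invoke \cref{cor:compUvOmega} and \cref{thm:forcing-alternative}.
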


 \begin{proof} Immediate from the definitions, observing that we have the two-pullback diagram
\[ 
\begin{gathered}
\begin{tikzcd}
& \typecomp{X}{\pat{\sigma}} \ar[r] \ar[d, tail]
&1\arrow[d, tail, "\ttrue"]  \arrow[r]& \UVptd \arrow[d, "\pi"]  \\
 \yon{c} \ar[r, swap, "x"] \ar[ru, bend left = 20, dotted] &  X  \ar[r, swap, "\sigma"] 
& \iprop \ar[r, "\pat{-}", swap]  & \UV \mathrlap{.}
\end{tikzcd} 
\end{gathered} \vspace{-2em}
\]
\end{proof}

\begin{nom}
In light of \cref{prop:forcing.props-in-ctx}, we shall often write \(c \forces  \sigma(x) \) to mean 
that there exists an $s$ such that \(c \forces s \colon \pat{\sigma(x)} \). 
\end{nom}

By  \cref{prop:forcing.props-in-ctx}, we obtain forcing conditions for all  types 
$\sigma \colon \Gamma \to \UV$ that factor through the monomorphism $\{-\}\co\iprop \to \UV$
of~\eqref{equ:pat-Omega-to-U}. 
For such types, the uniformity conditions required for function types
and $\Pi$-types become vacuous and indeed give back the standard Kripke-Joyal conditions.
We illustrate this in a few cases.

\begin{prop}[Forcing for propositional equality]
\label{prop:forcing.exteq}
Let $\alpha \co X \to \UV$  and suppose that $X \vdash a \co \alpha$ and  $X \vdash b \co \alpha$.
Then for any $x \maps \yon{c} \to X$, the following conditions are equivalent.
\begin{enumerate}
    \item \( c \forces a(x) = b(x) \).
    \item $a(x)$ and $b(x)$ are equal as maps $\yon{c} \to \UVptd$.
\end{enumerate}
\end{prop}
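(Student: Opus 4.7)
The plan is to reduce the statement to the definition of propositional equality via the subobject classifier and then apply \cref{prop:forcing.props-in-ctx}. First I would spell out what the term $a(x) =_{\alpha(x)} b(x)$ actually means categorically: since $a, b \co X \to \UVptd$ are two parallel sections of $\dspu$ over $\alpha$, the proposition $a =_\alpha b \co X \to \iprop$ is by definition the characteristic map of the equaliser $\mathsf{Eq}(a,b) \mono X$ of the pair $(a,b)$.

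Then, applying \cref{prop:forcing.props-in-ctx} to the proposition $\sigma \defeq (a =_\alpha b) \co X \to \iprop$, we have that $c \forces (a =_\alpha b)(x)$ holds if and only if there exists a (necessarily unique) dotted lift in the diagram
\[
\begin{tikzcd}
& \mathsf{Eq}(a,b) \ar[r] \ar[d, tail] & 1 \ar[d, tail, "\ttrue"] \\
\yon c \ar[r, swap, "x"] \ar[ur, dotted, bend left = 20] & X \ar[r, swap, "{a =_\alpha b}"] & \iprop \mathrlap{.}
\end{tikzcd}
\]
By the universal property of the equaliser, such a lift exists precisely when $a \circ x = b \circ x$, i.e.\ when $a(x)$ and $b(x)$ agree as maps $\yon c \to \UVptd$. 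This yields the equivalence of (i) and (ii) in one stroke.

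The only mild subtlety, which I would flag explicitly, is that the statement of the proposition uses the abbreviation $c \forces a(x) = b(x)$ to mean forcing of the propositional equality $a(x) =_{\alpha(x)} b(x) \co \iprop$ (as per the convention following \cref{prop:forcing.props-in-ctx}), and not the notion of \cref{defn:forcing.types}(ii) directly; however, because the type theory of $\tE$ is extensional (\cref{rmk:Equality}), the two notions coincide, so no ambiguity arises. There is no substantive obstacle here: the argument is essentially a tautology unpacking the definition of propositional equality via the subobject classifier, once \cref{prop:forcing.props-in-ctx} is in hand.
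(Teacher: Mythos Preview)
Your proposal is correct and takes essentially the same approach as the paper. The only cosmetic difference is that the paper works with the diagonal $\delta_\alpha \co \typecomp{X}{\alpha} \mono \typecomp{X}{\alpha \times \alpha}$ classified by $=_\alpha$, and checks that the map $(x, a(x), b(x)) \co \yon{c} \to \typecomp{X}{\alpha \times \alpha}$ factors through it; you instead pull this diagonal back along $(\id, a, b)$ first to obtain the equaliser $\mathsf{Eq}(a,b) \mono X$, and then check that $x$ factors through that --- but these are two ways of saying the same thing.
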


\begin{proof} For each $x \maps \yon{c} \to X$, the element $a\co \alpha$ determines a map $(x,a(x))$  as follows (and similarly for $b\co \alpha$).
\[
\begin{tikzcd}
& \typecomp{X}{\alpha} \ar[d] \ar[r] & \UVptd \ar[d] \\
\yon{c} \ar[r, swap, "{x}"] \ar[ru, dotted, bend left = 20,  "{(x,a(x))}"]  & X \ar[ru, "a"] \ar[r,swap, "{\alpha}"] & \UV\diadot{.}
 \end{tikzcd}
 \]
Now consider the following diagram, in which both squares are pullbacks. 
\[
\begin{tikzcd}
       && \typecomp{X}{\alpha} \ar[d, swap, tail, "{\delta_\alpha}"] \ar[r] &  1 \ar[d] \ar[r] & \UVptd \ar[d] \\
 \yon{c} \ar[rr, swap, "{(x,a(x), b(x))}"]  \ar[urr, bend left = 20, dotted] 
 	&& \typecomp{X}{\alpha \times \alpha} \ar[r,  swap, "{=_\alpha}"] & \iprop \ar[r] & \UV  \mathrlap{.}
 \end{tikzcd}
 \]
By definition, we have \( c \forces a(x) = b(x)\) \ifaif there is a dotted map as indicated.  But this holds  if and only if 
 $(x,a(x)) = (x,b(x))$, which in turn is equivalent to $a(x) = b(x)$ as maps $\yon{c} \to \UVptd$.  
\end{proof}

For disjunction, even though $\pat{\sigma \lor \tau}$  is not the same as $\pat{\sigma}+ \pat{\tau}$, we still
have the following equivalence due to special properties of presheaf categories.

\begin{prop}[Forcing for disjunction]
\label{prop:forcing.disjunction}
Let \(\sigma, \tau \oftype X \to \iprop \). For $x \co \yon{c} \to X$,
 the following conditions are equivalent.
\begin{enumerate}
    \item \label{item:kj-disj-i} \(c \forces  \sigma(x) \disj \tau(x)  \). 
    \item \label{item:kj-disj-iii} \( c \forces \sigma(x) \) or  \( c \forces \tau(x) \).
\end{enumerate}
\end{prop}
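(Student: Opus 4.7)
The plan is to apply \cref{prop:forcing.props-in-ctx}, which recasts both conditions in terms of factorisations of the generalised element $x \co \yon c \to X$ through subobjects of $X$. Writing $S \defeq \propcomp{x \co X}{\sigma(x)}$ and $T \defeq \propcomp{x \co X}{\tau(x)}$, condition~\ref{item:kj-disj-i} says that $x$ factors through the join $S \vee T$ in the Heyting algebra $\Sub(X)$ (which is, by construction, the subobject classified by $\sigma \lor \tau$), while~\ref{item:kj-disj-iii} says that $x$ factors through $S$ or through $T$.

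The direction \ref{item:kj-disj-iii} $\Rightarrow$ \ref{item:kj-disj-i} is immediate from the monotonicity of characteristic maps: since $S \leq S \vee T$ and $T \leq S \vee T$, any factorisation of $x$ through either $S$ or $T$ extends to a factorisation through $S \vee T$. So all the content is in the converse.

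For the converse, the crucial input is that in the presheaf topos $\tE = \Psh(\cC)$ finite colimits, and in particular joins of subobjects, are computed pointwise; hence $(S \vee T)(c) = S(c) \cup T(c)$ as subsets of $X(c)$. Combined with the Yoneda correspondence, which identifies a natural transformation $x \co \yon c \to X$ with an element of $X(c)$, and identifies factorisations through a subobject $R \mono X$ with the membership of the corresponding element in $R(c)$, one concludes: $x$ factors through $S \vee T$ iff $x \in S(c) \cup T(c)$ iff $x \in S(c)$ or $x \in T(c)$ iff $x$ factors through $S$ or through $T$, as required.

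The main point worth flagging is that this simple equivalence is special to presheaf categories. For a general Grothendieck topos, the Kripke-Joyal condition for disjunction only entails the existence of a covering family $(f_i \co d_i \to c)$ such that each $d_i$ forces one of the disjuncts; the stronger statement here reflects the fact that representables in $\Psh(\cC)$ are covered only by themselves under the trivial topology, so any such cover must contain the identity $\id_c$. Apart from identifying this pointwise computation of joins as the decisive ingredient, the argument is a direct unfolding of definitions.
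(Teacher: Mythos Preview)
Your argument is correct. The key step---that the join $S\vee T$ in $\Sub(X)$ is computed pointwise as $(S\vee T)(c)=S(c)\cup T(c)$---holds in any presheaf topos, and together with Yoneda it immediately gives the equivalence. (A small quibble: joins of subobjects are not literally colimits in $\tE$, but rather images of coproducts; however, since both images and coproducts are pointwise in presheaves, your conclusion stands.)

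The paper takes a different, more structural route: it writes the join as the image of the canonical map $\pat{\sigma}+\pat{\tau}\to X$, then uses that the representable $\yon c$ is \emph{projective} to lift the factorisation through the epi $\pat{\sigma}+\pat{\tau}\twoheadrightarrow\pat{\sigma}\cup\pat{\tau}$, and finally invokes the forcing condition for sum types (\cref{prop:forcing.sum}), which in turn rests on $\yon c$ being \emph{indecomposable}. Your approach is more elementary and self-contained for the presheaf setting, bypassing the detour through sums. The paper's approach, on the other hand, makes explicit which categorical properties of representables are doing the work (projectivity and indecomposability), and ties the result to the already-established forcing clause for $+$, which fits the paper's programme of unfolding forcing uniformly across type formers.
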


\begin{proof}
As stated in \cref{prop:prop-as-types}, the disjunction \(\sigma \disj \tau \colon X \to \iprop\)
classifies the subobject $\pat{\sigma} \cup \pat{\tau} \mono X$, which is the image of the canonical map $\pat{\sigma} + \pat{\tau} \to X$.  So if
 \(c \forces  \sigma(x) \disj \tau(x) \),
then there is a map 
\(v \maps \yon{c} \to \pat{\sigma} \cup \pat{\tau}\)
lifting $x$. 
Since the representable $\yon{c}$ is projective, there is then a further lift $u \colon \yon{c} \to \pat{\sigma} + \pat{\tau}$, and so $c \forces u \colon \pat{\sigma(x)} + \pat{\tau(x)}$. 
Applying \cref{prop:forcing.sum} we obtain either
 \(c \forces  a \colon \pat{\sigma(x)} \) or  \(c \forces  b \colon \pat{\tau(x)} \), whence either \( c \forces \sigma(x) \) or  \( c \forces \tau(x) \).
The converse is even more direct, using the factorisations 
\[
\begin{tikzcd}
 \pat{\sigma} \ar[rd] \ar[r] & \pat{\sigma} \cup \pat{\tau} \ar[d] & \ar[l] \ar[ld] \pat{\tau} \\
& X \mathrlap{.} &
 \end{tikzcd}\vspace{-2em}
 \]
 \end{proof}

A similar fact also holds for existential quantifiers.

\begin{prop}
\label{prop:forcing.exists}
Let $\alpha \co X  \to \UV$ and  $\sigma \co  \typecomp{X}{\alpha} \to \Omega$. For
$x \co \yon{c} \to X$,  the following conditions are equivalent.
\begin{enumerate}
\item \( c \forces   (\exists_\alpha \sigma ) (x) \).
\item There exists $a \co \yon{c} \to \UVptd$ such that \(c \forces a \oftype \alpha(x) \), 
and \(c \forces \sigma(x, a) \).
\end{enumerate}
\end{prop}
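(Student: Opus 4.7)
The plan is to mimic the proof of \cref{prop:forcing.disjunction} for disjunction, but using the image factorisation \eqref{diag:imageclassify} instead of the subobject $\pat{\sigma}\cup\pat{\tau}$. Recall that, writing $p\co\typecomp{X}{\alpha}\to X$ for the display map and $S = \propcomp{(x,a)\co \typecomp{X}{\alpha}}{\sigma(x,a)}\mono \typecomp{X}{\alpha}$, the map $\exists_\alpha(\sigma)\co X\to\iprop$ classifies the image $\exists_p(S)\mono X$, which is obtained as the epi-mono factorisation of the composite $S\mono \typecomp{X}{\alpha}\xrightarrow{p}X$.

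For the direction (i)$\Rightarrow$(ii), assume $c\forces (\exists_\alpha\sigma)(x)$. By the definition of forcing for propositions (\cref{prop:forcing.props}), this means that $x\co \yon c\to X$ lifts across the monomorphism $\exists_p(S)\mono X$. Since $S\twoheadrightarrow \exists_p(S)$ is an epimorphism and the representable $\yon c$ is projective (as used already in the proof of \cref{prop:forcing.disjunction}), we obtain a further lift $\yon c \to S$. Composing with the inclusion $S\mono \typecomp{X}{\alpha}$ yields a map $\yon c \to \typecomp{X}{\alpha}$ over $X$, which by \cref{thm:pullback-subst-terms} (or equivalently by \cref{item:forcing-vs-section} of \cref{thm:forcing-alternative}) corresponds to an element $a\co \yon c \to \UVptd$ with $c\forces a\co \alpha(x)$. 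Moreover, since this map factors through $S\mono \typecomp{X}{\alpha}$, the composite $(x,a)\co \yon c \to \typecomp{X}{\alpha}\xrightarrow{\sigma}\iprop$ factors through $\ttrue$, i.e.\ $c\forces \sigma(x,a)$.

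For the converse (ii)$\Rightarrow$(i), assume we have $a\co \yon c \to \UVptd$ with $c\forces a\co \alpha(x)$ and $c\forces \sigma(x,a)$. The first condition gives a map $(x,a)\co \yon c \to \typecomp{X}{\alpha}$ lifting $x$, and the second condition tells us that $(x,a)$ factors through $S\mono \typecomp{X}{\alpha}$. Composing with $S\twoheadrightarrow \exists_p(S)$ gives a lift of $x$ through $\exists_p(S)\mono X$, which by \cref{prop:forcing.props} means $c\forces(\exists_\alpha\sigma)(x)$.

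The only nontrivial ingredient is the projectivity of representables used to lift across the image epi $S\twoheadrightarrow \exists_p(S)$; this is the same principle that makes Kripke--Joyal forcing for $\lor$ and $\exists$ behave ``intuitionistically-and-more'' in presheaf toposes, and it is exactly what distinguishes the proof from the straightforward direction. The rest is diagram-chasing through \eqref{diag:imageclassify}.
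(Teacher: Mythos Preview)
Your proof is correct and follows essentially the same approach as the paper: both use the image factorisation $S \twoheadrightarrow \exists_p(S) \rightarrowtail X$ together with projectivity of the representable $\yon c$ to lift along the epimorphism, then read off the required $a$ and the truth of $\sigma(x,a)$ from the resulting factorisation through $S \mono \typecomp{X}{\alpha}$. The converse direction is handled identically in both.
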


\begin{proof} First assume that $c \forces (\exists_\alpha \sigma)(x)$, which means that we
have a lift
\[
\begin{tikzcd}
 & \propcomp{x \oftype X}{ \exists_\alpha \sigma (x)} \ar[d, tail] \\
 \yon{c} \ar[ur, dotted, "u"]  \ar[r, swap, "x"] & X \mathrlap{.}
 \end{tikzcd}
 \]
Since $\yon{c}$ is projective, there is then a further lift as indicated in the diagram below, which also recalls the construction of $\propcomp{X}{\exists_\alpha \sigma}$.
\[
\begin{tikzcd} 
& \typecomp{X}{\alpha.\pat{\sigma}} \ar[r, tail, "{p_\sigma}"] \ar[d, twoheadrightarrow] 
	& \typecomp{X}{\alpha}  \ar[r, "{q_\alpha}"] \ar[d, "{p_\alpha}"] 
	& \UVptd \ar[d, "{\dspu}"] \\
\yon{c}\ar[ur, dotted, "v"] \ar[r, swap, "u"] & \propcomp{X}{\exists_\alpha \sigma} \ar[r, tail] 
	& X \ar[r, swap, "\alpha"] & \UV
 \end{tikzcd} 
\]
Let $a \defeq q_\alpha\circ p_\sigma \circ v$, so that \(c \forces a \oftype \alpha(x) \), 
and \(c \forces \sigma(x, a) \), by construction.
The converse is direct, using the above diagram.
\end{proof}

The general characterisation of forcing for subset types follows the same pattern. The proof is left to the reader.

\begin{prop}[Forcing for subset types] \label{prop:forcing.formula}
Let $\alpha \co X \to \UV$ and $\sigma \colon \typecomp{X}{\alpha} \to \Omega$.
For  \(x \maps  \yon{c} \to X\), the following conditions 
are equivalent.
\begin{enumerate}
\item There exists $s$ such that $c \forces s \oftype \cmpset{y \oftype \alpha(x)}{\sigma(x,y)} $
\item There exists $a$ such that $c \forces a \oftype \alpha(x)$ and $c \forces  \sigma (x, a)$. \qed
\end{enumerate}
\end{prop}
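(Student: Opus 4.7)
The plan is to identify the subset type $\cmpset{y \oftype \alpha}{\sigma}$ with the dependent sum $\Sigma_\alpha(\pat{\sigma})$, where $\pat{-} \co \iprop \mono \UV$ is the inclusion of propositions into types from~\eqref{equ:pat-Omega-to-U}, and then chain together the already-established forcing conditions for dependent sums and for propositions.

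Under this identification, $c \forces s \oftype \cmpset{y \oftype \alpha(x)}{\sigma(x,y)}$ is the same as $c \forces s \oftype (\Sigma_\alpha \pat{\sigma})(x)$. First I would apply \cref{prop:forcing.dsum}, which gives the equivalence with the existence of elements $a$ and $b$ such that $c \forces a \oftype \alpha(x)$ and $c \forces b \oftype \pat{\sigma(x,a)}$: in the forward direction one takes $a \defeq \tproj_1(s)$ and $b \defeq \tproj_2(s)$, and in the reverse direction one takes $s \defeq \tpair(a,b)$. Next, I would apply \cref{prop:forcing.props-in-ctx} to the proposition $\sigma(x,a) \co \yon c \to \iprop$, which states that the existence of such a $b$ is equivalent to $c \forces \sigma(x,a)$; moreover $b$ is then unique, so it may be safely elided from the statement. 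Composing these two equivalences yields the desired characterisation.

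I do not expect a substantial obstacle: the argument is a mechanical unfolding, essentially parallel to the proof of \cref{prop:forcing.exists} but slightly simpler, since the subset type records the witness $a$ explicitly and so there is no need to invoke projectivity of $\yon c$ to lift through an image factorisation.
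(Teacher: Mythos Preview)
Your approach is correct. The paper leaves this proof to the reader, and the route you outline---identifying the subset type with $\Sigma_\alpha\pat{\sigma}$ (as in the discussion of \S\ref{subsec:PaT}, where $\typecomp{X.\alpha}{\pat{\sigma}} = \propcomp{(x,a)\co X.\alpha}{\sigma(x,a)}$ by \cref{cor:compUvOmega}, so the composite to $X$ is classified by $\Sigma_\alpha\pat{\sigma}$), then chaining \cref{prop:forcing.dsum} with \cref{prop:forcing.props-in-ctx}---is entirely sound and in keeping with the paper's methodology.

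A marginally more direct alternative, which you may have in mind when you say the argument is ``slightly simpler'' than \cref{prop:forcing.exists}, is to bypass $\Sigma$ altogether: the display map of the subset type factors as the mono $\propcomp{(x,a)}{\sigma(x,a)} \mono \typecomp{X}{\alpha}$ followed by $\dsp{\alpha}$, so a lift of $x \co \yon c \to X$ to the subset type is exactly a lift $(x,a) \co \yon c \to \typecomp{X}{\alpha}$ (giving $c \forces a \co \alpha(x)$) that lands in the subobject (giving $c \forces \sigma(x,a)$). This avoids invoking the $\tpair$/$\tproj$ machinery, but the content is the same, and your observation that no projectivity of $\yon c$ is needed here (in contrast to \cref{prop:forcing.exists}) is exactly right.
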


\begin{prop}[Forcing for propositional truncation] 
Let $\alpha \co X \to \UV$. For  \(x \maps  \yon{c} \to X\), the following conditions 
are equivalent.
\begin{enumerate}
\item \label{item:trunc} $c\forces \supp(\alpha)(x)$.
\item \label{item:exists-witness} There exists $a$ such that $c\forces a \co \alpha(x)$.
\end{enumerate}
\end{prop}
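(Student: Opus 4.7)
The plan is to mimic the arguments used for \cref{prop:forcing.disjunction} and \cref{prop:forcing.exists}, exploiting the construction of $\supp$ as the classifier of the image factorisation of the small map classifier, together with the projectivity of representable presheaves.

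For the implication (ii) $\Rightarrow$ (i), I would observe that if $c \forces a \co \alpha(x)$ then $a \co \yon c \to \UVptd$ satisfies $\dspu \circ a = \alpha \circ x$, so the composite $\alpha \circ x$ factors through the epi-part $\UVptd \twoheadrightarrow \mathsf{im}(\dspu)$ of the image factorisation. Composing with the inclusion $\mathsf{im}(\dspu) \mono \UV$ and chasing the pullback square defining $\supp$ shows that $\supp(\alpha)(x) = \supp \circ \alpha \circ x \co \yon c \to \iprop$ factors through $\ttrue \co 1 \to \iprop$, which by definition says $c \forces \supp(\alpha)(x)$.

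For the converse (i) $\Rightarrow$ (ii), suppose $c \forces \supp(\alpha)(x)$. Then $\alpha \circ x \co \yon c \to \UV$ factors through the subobject $\mathsf{im}(\dspu) \mono \UV$, giving a map $v \co \yon c \to \mathsf{im}(\dspu)$. Here the key step is to use the fact that the representable presheaf $\yon c$ is projective in $\tE$, together with the fact that the map $\UVptd \twoheadrightarrow \mathsf{im}(\dspu)$ is an epimorphism (pointwise surjection) as part of the image factorisation. This allows us to lift $v$ through the epi to obtain $a \co \yon c \to \UVptd$. By construction $\dspu \circ a$ equals the composite of $v$ with $\mathsf{im}(\dspu) \mono \UV$, which is $\alpha \circ x$. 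This is precisely the condition $c \forces a \co \alpha(x)$.

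The main (very mild) obstacle is just to lay out the commutative diagram relating the two factorisations of $\dspu$ through $\mathsf{im}(\dspu)$ (the one through $\supp$ and $\ttrue$, and the one through the epi) so that the reader can read off both directions. This is essentially already pictured in diagram~\eqref{diag:iotasigma}, so I would simply refer the reader there and rely on projectivity of representables as in the proof of \cref{prop:forcing.disjunction}.
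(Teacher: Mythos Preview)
Your proposal is correct and follows essentially the same approach as the paper: both use that $\supp(\alpha)$ classifies the image of the relevant map, and both invoke projectivity of the representable $\yon c$ to lift through the epimorphism in the image factorisation. The only cosmetic difference is that the paper first pulls the image factorisation of $\dspu$ back along $\alpha$ to work with $\typecomp{X}{\alpha}\twoheadrightarrow \typecomp{X}{\{\supp(\alpha)\}}\mono X$, whereas you argue directly at the universal level with $\UVptd\twoheadrightarrow\mathsf{im}(\dspu)\mono\UV$; these are equivalent since image factorisations are stable under pullback.
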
       
        
\begin{proof}  Recall from \cref{thm:prop-trunc} that the propositional truncation $\supp(\alpha) \co X \to \Omega$ is the pullback along $\alpha$ of the image factorisation of $\pi \co \UVptd \to \UV$,
\[
\begin{tikzcd}
&[20pt] \typecomp{X}{\alpha} \ar[d,twoheadrightarrow]  \ar[r,"{q_\alpha}"] & \UVptd \ar[d,twoheadrightarrow] \\
& \typecomp{X}{\{\supp(\alpha)\}} \ar[d,tail]  \ar[r] & \im(\pi) \ar[d,tail]  \\
\yon{c} \ar[r,swap, "x"] \ar[ru, dotted,pos= 0.65] \ar[ruu, dotted, bend left = 20, pos= 0.6, "u"] 
	& X  \ar[r, swap, "\alpha"]  & \UV \mathrlap{.}
\end{tikzcd}
\] 
By definition, $c\forces \supp(\alpha)(x)$ if and only if there is a factorisation of $x \co \yon{c} \to X$ through the subobject $\typecomp{X}{\{\supp(\alpha)\}}\mono X$.
But since the representable $\yon{c}$ is projective, this is the case if and only if there is a further lift $u\co \yon{c} \to\typecomp{X}{\alpha}$ as indicated.  Composing to get~$a= q_\alpha\circ u$ gives $c\forces a \co \alpha(x)$. 
The converse is  immediate.
\end{proof}

\begin{rmk}\label{Kripke-Joyal-completeness}
Our Kripke-Joyal forcing semantics can be specialised to the system of dependent type theory without a type $\Omega$ of propositions, but with the type formers
\begin{equation}\label{eq:EDTT}
  \quad 1 \mathrlap{,}   \quad   \alpha \times \beta \mathrlap{,}   \quad  \alpha \to \beta \mathrlap{,}  \quad \Sigma_{x\co \alpha }\beta(x) \mathrlap{,}  
\quad \Pi_{x \co \alpha} \beta(x) \mathrlap{,}  \quad  a =_\alpha b \mathrlap{,}  
\end{equation}
by treating propositional equality $a =_\alpha b$ equivalently as an (extensional) equality type $\mathsf{Eq}_{\alpha}(a,b)$, as mentioned in \cref{rmk:Equality}.  This can then be seen as a proof-relevant version of the standard Kripke semantics for intuitionistic first-order logic~\cite{troelstra-van-dalen}. The standard deduction relation $\vdash_\mathsf{IFOL}$ for that system can be shown to be \emph{sound and complete} with respect to standard Kripke semantics, in the sense that for any closed formula $\sigma$, one has 
$ \vdash_\mathsf{IFOL} \sigma$ if and only if $K\forces \sigma$
for all Kripke frames~$K$, which are essentially interpretations of the signature of $\sigma$ into categories of the form~$\mathsf{Set}^K$ for arbitrary posets~$K$.

Similarly, one can infer from the results in \cite{Awodey-Rabe2011} that the system of type theory specified by \eqref{eq:EDTT},  with the customary inference rules, as stated e.g.\ in \emph{ibid.}, is deductively complete with respect to our Kripke-Joyal forcing semantics, in the following sense. Given a closed type $\alpha$ (over a basic signature of closed and dependent types and terms), one can construct a closed term $\vdash a \co \alpha$ if and only if, for every interpretation of the signature of $\alpha$ into a category of the form $\mathsf{Set}^P$ for an arbitrary \emph{poset} $P$, one has $\mathsf{Set}^P \forces a \co \alpha$
in the sense of \cref{defn:KJforcing-in-E}.  Briefly, extensional dependent type theory is complete with respect to Kripke-Joyal forcing semantics, something that is not true for the system of higher-order logic, with a type of propositions.   We leave to the reader the pleasant task of simplifying the forcing conditions given in this section for the special case where the index category~$\cC$ is a poset.
\end{rmk}


\section{Cofibrations}
\label{sec:cof}

\subsection*{Basic definitions}

This section starts the second part of the paper, in which we  use our forcing semantics to describe models of homotopy type theory. In order
to do so, we assume certain auxiliary structures on our fixed presheaf category, as in~\cite{CoquandT:cubttc}, which give rise to (algebraic) weak factorisation systems~\cite{awodey-warren:homotopy-idtype,van-den-berg-garner,gambino-garner:idtypewfs}. As we shall see, forcing allows us to relate precisely  category-theoretic concepts, as defined in~\cite{CoquandT:cubttc,gambino-larrea,gambino2017frobenius}, with their type-theoretic counterparts, as defined in~\cite{orton-pitts}. Below, we shall take the categorical point of view as our starting point. In \cref{defn:cofibrations}, we write $\tE^{\to}_{\mathrm{cart}}$ for the category of arrows and pullback
squares in $\tE$.

\begin{defn} \label{defn:cofibrations}
A {\em class of cofibrations in $\tE = \Psh(\cC)$} is a class of maps $\awfsCof$  that satisfies the following conditions:
\begin{enumerate}[label=(C\arabic*)]
\item\label{item:ax-cof-mono} The elements of $\awfsCof$ are monomorphisms.
\item\label{item:ax-cof-false}  The unique map $0 \to 1$ is in $\awfsCof$.
\item\label{item:ax-cof-true}  The identity $\id_1 \co 1 \to 1$ is in $\awfsCof$.
\item\label{item:ax-cof-pullback} Cofibrations are stable under pullback along all maps.
\item\label{item:ax-cof-composition} Cofibrations are closed under composition.
\item\label{item:ax-cof-colimit} The full subcategory of $\tE^{\to}_{\mathrm{cart}}$ spanned by $\awfsCof$ has a terminal object.
\end{enumerate} 
\end{defn}

Let us now assume that $\awfsCof$ is a class of cofibrations in $\tE$. We say that an object $X \in \tE$ is {\em cofibrant} if the unique map $!_X \co 0 \to X$ is a cofibration.

\begin{rmk}\label{remark:cofibrations}
We note without proof a few elementary consequences of these axioms.
\begin{enumerate}
\item  In light of \ref{item:ax-cof-pullback}, axioms \ref{item:ax-cof-false} and \ref{item:ax-cof-true} imply that, for every $X \in \tE$,
the unique map $!_X \co 0 \to X$
and the identity $\id_X \co X \to X$ are cofibrations . Thus, every object is cofibrant. See~\cite{gambino-henry,henry-kan-constructive} for a context where this is not the case.
\item The terminal cofibration posited in \ref{item:ax-cof-colimit} will be denoted $\tcof \co 1 \mono \Phi$, note that its domain can be shown to be $1$.  In the pullback square
\begin{equation}\label{defn:Cof}
\begin{tikzcd}
 1 \ar[r] \ar[d, swap, tail, "\tcof"] & 1 \ar[d,"\ttrue", tail] \\
 \Cof \ar[r,swap] & \iprop \mathrlap{,}
\end{tikzcd}
\end{equation}
the lower horizontal characteristic map can be shown to be monic.
The map $\tcof \co 1 \mono \Phi$ classifies cofibrations, in the sense that a monomorphism $S \mono X$ is in $\awfsCof$ if and only if its characteristic map \(\sigma \maps X \to \iprop\)  factors through \( \Cof \mono \iprop\), and thus if and only if it is a pullback of $\tcof$ along a (necessarily unique) classifying map.

\item The full subcategory of $\tE^{\to}_{\mathrm{cart}}$ spanned by $\awfsCof$ has all colimits, and these are preserved by the inclusion into $\tE^{\to}$.  In particular, cofibrations are closed under coproducts $A + B \mono X + Y$, descent along epimorphisms, and joins $S \vee T \mono X$.

\item The cofibrations with representable codomain $S \mono\yon c$ are called \strong{generating cofibrations}. Note that these form a small set. Every cofibration is a colimit in $\tE^{\to}_{\mathrm{cart}}$ of generating cofibrations in a canonical way (determined by the category of elements of its codomain).  As shown in~\cite[Theorem~9.1]{gambino2017frobenius} using Garner's small object argument~\cite{garner:small-object-argument}, the subcategory of generating cofibrations determines an algebraic weak factorisation system~$(\awfsCof, \awfsTrivFib)$, with right class the \strong{uniform trivial fibrations} to be considered in \cref{sec:tfib}.

\end{enumerate}
\end{rmk}

Since $\Cof$ is a subobject of $\Omega$, it admits a characteristic map $\cof \maps \iprop \to \iprop$,
\begin{equation*} 
\begin{tikzcd} 
\Cof \ar[d, tail] \ar[r] & 1 \ar[d,tail, "\ttrue"] \\
\iprop \ar[r, swap, "\cof"] & \iprop  \mathrlap{.}
\end{tikzcd}
\end{equation*}
In the internal type theory, we thus have 
\begin{equation}
    \Cof =  \cmpset{\varphi \co \iprop}{\cof{\varphi}} \mathrlap{.} 
\end{equation}
We will call a proposition~$\varphi \co X \to \Omega$ \strong{cofibrant} if it factors through $\Cof \mono \iprop$, for which we also write $\varphi \co X \to \Phi$.

Let $\varphi \co X \to \Cof$ be a cofibrant proposition. The \strong{comprehension} of $\varphi$ with respect to $\Cof$ is defined as the pullback of $\tcof$ along $\varphi$.   Thus the cofibrant comprehension agrees with the usual (subobject) comprehension,
 \[ 
\begin{tikzcd}
 \pat{X\,|\,\varphi} \ar[d, tail]  \ar[r] & 1 \ar[r] \ar[d,"\tcof", tail] 
 & 1 \ar[d,"\ttrue", tail] \\
X  \ar[r, swap, "\varphi"]  & \Cof \ar[r,swap,tail]
& \iprop \mathrlap{.}
\end{tikzcd}
\]   
Thus in particular, every generating cofibration is of the form $\pat{\yon c\,|\,\varphi} \mono \yon c$ for a unique $\varphi \co \yon c \to \Cof$.

The next series of propositions shows some of the consequences of our axioms for cofibrations in the internal type theory of $\tE$, thus relating them
to those in~\cite[Figure~1]{orton-pitts}. In particular, \cref{prop:closure-properties-of-cof-under-composition} below 
is~\cite[Lemma~5.4~(i)]{orton-pitts}, but we give a forcing proof.

\begin{lem} Let $\varphi \co X \to \Omega$ be a proposition. For every $x \co \yon c \to X$, the following conditions are equivalent.
\begin{enumerate}
\item $c \forces  \cof \varphi(x)$.
\item  $\varphi(x)  \maps \yon c \to \Omega$ factors through  $\Cof \mono \Omega$.
\end{enumerate}
\end{lem}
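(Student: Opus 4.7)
The plan is to apply \cref{prop:forcing.props-in-ctx} to reduce the forcing condition to a subobject lifting problem, and then to use the two-pullbacks lemma. By \cref{prop:forcing.props-in-ctx} and the notational convention immediately following it, the condition $c \forces \cof\varphi(x)$ on the proposition $\cof \circ \varphi \co X \to \iprop$ is equivalent to the existence of a (necessarily unique) lift of $x \co \yon c \to X$ across the subobject $\propcomp{x \co X}{\cof\varphi(x)} \mono X$, which is by definition the pullback of $\ttrue \co 1 \to \iprop$ along $\cof \circ \varphi$.

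Next I would invoke the definition of $\cof$ as the classifying map of $\Cof \mono \iprop$. By the two-pullbacks lemma, the subobject $\propcomp{x \co X}{\cof\varphi(x)} \mono X$ is equally the pullback of $\Cof \mono \iprop$ along $\varphi \co X \to \iprop$:
\[
\begin{tikzcd}
\propcomp{x \co X}{\cof\varphi(x)} \ar[r] \ar[d, tail] & \Cof \ar[d, tail] \ar[r] & 1 \ar[d, tail, "\ttrue"] \\
X \ar[r, swap, "\varphi"] & \iprop \ar[r, swap, "\cof"] & \iprop \mathrlap{.}
\end{tikzcd}
\]
Hence, by the universal property of the left-hand pullback square, a lift of $x \co \yon c \to X$ across $\propcomp{x \co X}{\cof\varphi(x)} \mono X$ is exactly the same datum as a factorisation of $\varphi \circ x = \varphi(x) \co \yon c \to \iprop$ through $\Cof \mono \iprop$. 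This yields the equivalence of (i) and (ii).

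There is no real obstacle here; the result is a straightforward bookkeeping consequence of the definition of $\cof$ combined with pullback pasting. Its purpose is rather to record, at the level of forcing, that the internal statement ``$\varphi(x)$ is cofibrant'' coincides with the external condition that $\varphi(x) \co \yon c \to \iprop$ land in $\Cof$, which enables the forcing-style reasoning about cofibrations developed in the rest of this section.
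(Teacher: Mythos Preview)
Your proof is correct and follows essentially the same approach as the paper: both use the two-pullbacks diagram with the right-hand square defining $\cof$ and identify the forcing condition with a factorisation through $\Cof\mono\iprop$. The paper's version is terser, simply displaying the single diagram with the dotted lift from $\yon c$, while you spell out explicitly the appeal to \cref{prop:forcing.props-in-ctx} and the pullback-pasting step; but the underlying argument is identical.
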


\begin{proof} Consider the diagram
\begin{equation*}
\label{diag:forcing.cof}
\begin{tikzcd}[scale=1.5]
 & \pat{X\,|\,\cof \varphi}  \ar[d, tail]  \ar[r] & \Cof \ar[r] \ar[d, tail] 
& 1 \ar[d,"\ttrue", tail] \\
\yon c \ar[ur, dotted,bend left = 20] \ar[r, "x",swap] & X   \ar[r, swap, "\varphi"]  & \iprop  \ar[r,swap,"\cof"]
& \iprop \mathrlap{.}
\end{tikzcd}\vspace{-2em}
\end{equation*}
\end{proof}

\begin{lemma}
Let $\varphi \co X \to \Omega$. Then  \(\tE \forces \varphi \imp \cof \varphi \).
\end{lemma}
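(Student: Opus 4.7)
The plan is to unfold the forcing statement directly. Since $\varphi \imp \cof \varphi$ is a proposition, by \cref{prop:forcing.props-in-ctx} together with the familiar Kripke--Joyal clause for implication in a presheaf topos, $\tE \forces \varphi \imp \cof \varphi$ amounts to the condition that, for every $x \co \yon c \to X$ and every $f \co d \to c$, if $d \forces \varphi(xf)$ then $d \forces \cof \varphi(xf)$.

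By the preceding lemma, the conclusion $d \forces \cof \varphi(xf)$ is equivalent to the map $\varphi(xf) \co \yon d \to \iprop$ factoring through $\Cof \mono \iprop$. On the other hand, the hypothesis $d \forces \varphi(xf)$ means exactly that $\varphi(xf)$ factors through $\ttrue \co 1 \to \iprop$. But the defining pullback square~\eqref{defn:Cof} exhibits $\ttrue$ itself as factoring through $\Cof$ via $\tcof \co 1 \to \Cof$; this is nothing but a reformulation of the axiom that $\id_1 \co 1 \to 1$ is a cofibration, i.e.\ that the terminal truth value is cofibrant. Composing the two factorisations then yields the desired one.

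There is no substantive obstacle: the content of the lemma is simply the cofibrancy of the top proposition, and the forcing proof merely repackages this. An equally brief argument avoiding forcing would use \cref{defn:valid} to rephrase validity as the subobject containment $\propcomp{x \oftype X}{\varphi(x)} \leq \propcomp{x \oftype X}{\cof\varphi(x)}$ in $\Sub(X)$, which after pulling back along $\varphi$ reduces once again to the same factorisation of $\ttrue$ through $\Cof$.
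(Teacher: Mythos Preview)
Your proof is correct and follows essentially the same route as the paper. The paper unfolds the implication in exactly the same way and then draws the two-storey pullback diagram over $X$ to exhibit the inclusion $\pat{\varphi}\mono\pat{\cof\varphi}$; you instead phrase the same step as a factorisation of the map $\varphi(xf)\co\yon d\to\iprop$ through $\Cof$, invoking the preceding lemma and the square~\eqref{defn:Cof}. These are the same argument up to whether one works with subobjects of $X$ or with characteristic maps into $\iprop$.
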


\begin{proof} We show that \(c \forces \varphi(x) \imp \cof \varphi(x)\) for every $x \maps \yon c \to X$.  So
let $f \maps d \to c$ and assume \(d \forces \varphi(x  f ) \). Then there is a dotted map in 
\[
\begin{tikzcd}
  & & \pat{\varphi} \ar[r] \ar[d, tail] \pbmark & 1 \ar[d, "\ttrue"] \\
\yon d \ar[r, swap, "f"] \ar[urr, dotted, bend left = 20] & \yon c \ar[r, swap, "x"] & X \ar[r, swap, "\varphi"] & \iprop \mathrlap{.}
\end{tikzcd}
\] 
We then obtain the diagram
\begin{equation*}
\begin{tikzcd}
 & & \pat{\varphi} \ar[r] \ar[d, tail] \pbmark & 1 \ar[d,tail, "\tcof"] & \\ 
 & & { \pat{\cof \varphi}} \ar[d, tail] \ar[r] \pbmark & \Cof \ar[r] \ar[d, tail] \pbmark & 1 \ar[d,"\ttrue"] \\
\yon d \ar[r, "f", swap]   \ar[rru, dotted, bend left = 20] \ar[rruu, dotted, bend left = 20] & \yon c  \ar[r, swap,"x"] & X  \ar[r, swap,  "\varphi"]  & \iprop \ar[r,swap,"\cof"] & \iprop \mathrlap{,}
\end{tikzcd}
\end{equation*}
which shows $d \forces \cof \varphi (xf )$, as required.
\end{proof}

\begin{cor} Let     \( \varphi \co X  \to \iprop \).
\label{cor:basic-properties-of-cofibrations}
\label{lem:properties-of-cofibrations}
\begin{enumerate}
    \item 
    The map \(\pat{\varphi} \mono \pat{\cof \varphi} \)   is a cofibration. 
    \item 
    $\cof \varphi =_\iprop \ttrue$ if and only if $\pat{\varphi}  \mono X$ is a cofibration 
    if and only if  \( \pat{\cof \varphi}  \mono X \) is an isomorphism. \qed 
\end{enumerate}
\end{cor}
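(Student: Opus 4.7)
The strategy is to combine the two preceding lemmas — the forcing characterisation $c \forces \cof\varphi(x) \Leftrightarrow \varphi(x) \text{ factors through } \Cof$, and the entailment $\tE \forces \varphi \imp \cof\varphi$ — with the defining pullback square~\eqref{defn:Cof}.

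For part~(i), the second lemma gives $\pat{\varphi} \leq \pat{\cof\varphi}$ as subobjects of $X$, so the monomorphism $\pat{\varphi} \mono \pat{\cof\varphi}$ is well-defined. To show that it lies in $\awfsCof$, I would verify that the composite $\pat{\cof\varphi} \overset{\iota}{\mono} X \xrightarrow{\varphi} \iprop$ factors through $\Cof \mono \iprop$ via some $\tilde\varphi \co \pat{\cof\varphi} \to \Cof$; granted this, the pasting lemma identifies $\pat{\varphi} \mono \pat{\cof\varphi}$ with the pullback of $\tcof \co 1 \mono \Cof$ along $\tilde\varphi$, and axiom~\ref{item:ax-cof-pullback} then delivers the cofibration property. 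To obtain the factorisation, take an arbitrary generalised element $y \co \yon c \to \pat{\cof\varphi}$: by construction $c \forces \cof\varphi(\iota y)$, so the first lemma yields that $\varphi(\iota y)$ factors through $\Cof$. Since $\Cof \mono \iprop$ is a monomorphism in the presheaf category $\tE$, these pointwise factorisations cohere uniquely into a single factorisation of $\varphi \circ \iota$ through~$\Cof$.

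For part~(ii) I would establish the two equivalences separately. The equivalence of $\cof\varphi =_\iprop \ttrue$ with the cofibrancy of $\pat{\varphi} \mono X$ is immediate from~\eqref{defn:Cof}: the map $\varphi \co X \to \iprop$ factors through $\Cof$ precisely when $\cof \circ \varphi$ equals the constantly-true map, and factoring through $\Cof$ is by definition the condition for $\pat{\varphi} \mono X$ to lie in $\awfsCof$. The equivalence with $\pat{\cof\varphi} \mono X$ being an isomorphism follows from the universal property of $\ttrue \co 1 \mono \iprop$: a classifying map into $\iprop$ equals $\ttrue$ exactly when the classified subobject is all of $X$, which is to say when the inclusion $\pat{\cof\varphi} \mono X$ is invertible.

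The main obstacle is the global coherence step in~(i) — passing from the family of pointwise factorisations $\varphi(\iota y) \co \yon c \to \Cof$ to a factorisation of $\varphi \circ \iota$ itself through $\Cof$. This relies on the presheaf-specific fact that subobjects are detected on representables, which is precisely where the Kripke-Joyal vocabulary pays off; the rest is routine diagram-chasing around the pullback square~\eqref{defn:Cof}.
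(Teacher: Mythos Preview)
Your proposal is correct and is precisely the natural unpacking of the two preceding lemmas that the paper's bare \qed\ leaves implicit. One minor quibble: the equivalence ``$\varphi$ factors through $\Cof$ iff $\cof\circ\varphi = \ttrue$'' in part~(ii) comes from the (unlabelled) pullback square defining $\cof\co\iprop\to\iprop$ rather than from~\eqref{defn:Cof} itself, but the content is the same.
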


\begin{prop}[Forcing characterisation of cofibrations]
\label{prop:forcing.cofibrations}
Let \( m \maps S \mono X \) be a monomorphism. Then the following conditions are equivalent.
\begin{enumerate}
\item The map $m$ is a cofibration. 
\item \(\tE \forces (\forall x \co X) \cof\!\big( (\exists s \co S)( m(s) = x) \big) \). 
\end{enumerate}
\end{prop}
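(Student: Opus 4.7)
My plan is to unfold the forcing condition on the right-hand side and identify it with the condition that every pullback of $m$ along a representable is a cofibration; then use axioms \ref{item:ax-cof-pullback} and \ref{item:ax-cof-colimit} for the two directions.

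First I would translate the forcing statement. By the definition of validity for $\forall_X$, the condition $\tE \forces (\forall x \co X)\cof((\exists s \co S)(m(s)=x))$ amounts to $c \forces \cof((\exists s \co S)(m(s)=x))$ for every $c \in \cC$ and $x \co \yon c \to X$. By the lemma characterising $c \forces \cof \varphi$, this says precisely that the map $(\exists s \co S)(m(s)=x) \co \yon c \to \Omega$ factors through $\Phi \mono \Omega$, i.e.\ it is the classifying map of a cofibration. Using \cref{prop:forcing.exists} (forcing for existential) together with the hypothesis that $m$ is a monomorphism, the subobject classified by $(\exists s \co S)(m(s)=x) \co \yon c \to \Omega$ is exactly the pullback $S \times_X \yon c \mono \yon c$, since the image of the monic pullback is itself. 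So the forcing condition is equivalent to the assertion that, for every representable $\yon c$ and every $x \co \yon c \to X$, the pullback $x^* m \co S \times_X \yon c \mono \yon c$ is a (generating) cofibration.

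The forward implication is then immediate: if $m$ is a cofibration, then by axiom \ref{item:ax-cof-pullback} every pullback $x^*m$ is a cofibration. For the converse, suppose that every such pullback along a representable is a cofibration; I need to show that $m$ itself lies in $\awfsCof$. I would invoke the remark (following \cref{defn:cofibrations}) that the full subcategory of $\tE^{\to}_{\mathrm{cart}}$ spanned by $\awfsCof$ is closed under colimits, together with the canonical presentation of any monomorphism $m \co S \mono X$ as the colimit, in $\tE^{\to}_{\mathrm{cart}}$, of the diagram of its pullbacks $x^* m \co S \times_X \yon c \mono \yon c$ indexed by the category of elements $\int\! X$. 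Since each member of this diagram is a cofibration by hypothesis, and colimits in $\awfsCof \subseteq \tE^{\to}_{\mathrm{cart}}$ agree with those in $\tE^{\to}_{\mathrm{cart}}$, the colimit $m$ is a cofibration.

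The main obstacle is the colimit presentation in the backward direction: one must verify that the canonical colimit of the pullback squares $(x^*m, x)$ in the arrow category $\tE^{\to}_{\mathrm{cart}}$ is indeed the square $(m, \id_X)$. This is a standard consequence of the density of representables in~$\tE$ (so that $X \cong \colim_{(c,x) \in \int\! X} \yon c$) together with the fact that pullback along a colimit cocone gives a colimit cocone, but it must be used in the precise form that generating cofibrations generate all cofibrations as colimits in $\tE^{\to}_{\mathrm{cart}}$. Once this presentation is in hand, the closure of $\awfsCof$ under such colimits (guaranteed by axiom \ref{item:ax-cof-colimit} via Garner's small object argument, as recorded in the remark) finishes the proof.
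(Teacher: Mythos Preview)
Your argument is correct, but the paper's proof is more direct in the converse direction.  Both you and the paper agree on the unfolding: the forcing condition says that for every $x \co \yon c \to X$ the composite $\varphi\circ x \co \yon c \to \Omega$ factors through $\Phi$, where $\varphi = (\exists s \co S)(m(s)=x)$ is the classifying map of $m$.  You then read this as ``each pullback $x^*m$ is a cofibration'' and reassemble $m$ as a colimit in $\tE^{\to}_{\mathrm{cart}}$ of generating cofibrations, invoking closure of $\awfsCof$ under such colimits.  The paper instead observes that ``$\varphi\circ x$ factors through $\Phi$ for every $x \co \yon c \to X$'' is, by Yoneda, exactly the statement that $\varphi \co X \to \Omega$ itself factors through the subobject $\Phi \mono \Omega$; this is the characterisation of cofibrations recorded in the remark after \cref{defn:cofibrations}, so one is done immediately without any colimit bookkeeping.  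Your route has the merit of making the role of generating cofibrations explicit, but it relies on the colimit-closure remark (which in turn rests on axiom~\ref{item:ax-cof-colimit}), whereas the paper's Yoneda argument uses only that $\Phi \mono \Omega$ is a subobject.
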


\begin{proof} We first show that, for $\varphi \maps X \to \iprop$, $\tE \forces  (\forall x \co X) \cof \varphi(x)$ if and only if $\varphi$ is
cofibrant. Indeed, $\tE \forces  (\forall x \co X) \cof \varphi(x)$  if and only if $d \forces \cof \varphi(x  f)$ for all $x \co \yon c \to X$ and $f \co d \to c$.
This holds if and only if there is a dotted map in
\[
\begin{tikzcd}
    						& 				& {\pat{\cof \varphi}} \ar[d, tail] \\
\yon d \ar[r, swap, "f"]  \ar[urr, dotted, bend left = 20] &  \yon c \ar[r, swap, "x"] & X    \mathrlap{.}
 \end{tikzcd} 
\]
By the Yoneda lemma, this means there is a factorisation of $\varphi \co X \to\Omega$ through $\Phi$,
\[
\begin{tikzcd}
 & &  {\pat{\cof \varphi}} \ar[d, tail] \ar[r] & \Cof \ar[r] \ar[d, tail] & 1 \ar[d,"\ttrue", tail] \\
\yon d  \ar[r, "f", swap]  \ar[rru, dotted, bend left = 20] & \yon c  \ar[r, swap,"x"] & X \ar[ru, dotted, swap] \ar[r, swap, "\varphi"]  & \iprop \ar[r,swap,"\cof"] & \iprop  \mathrlap{.}
\end{tikzcd}
\]
To prove the required claim, it suffices to define $\varphi \co X \to \Omega$ to be the characteristic map of~\( m \maps S \mono X \), {i.e.}~$\varphi(x) \defeq (\exists s \co S) (m(s) = x)$.
\end{proof}

\subsection*{Dominance}

We show that Axiom \ref{item:ax-cof-composition} corresponds to the basic law of a \strong{dominance}, introduced in \cite{Rosolini:thesis}.  

\begin{lemma}
\label{lemma:forcing-dependent-cofibrant-prop}
Suppose $\varphi  \co X \to  \iprop$ and $\psi \co X \to \iprop$, and consider the pullback squares
\begin{equation*}
\begin{tikzcd}
\pat{\varphi \wedge \psi} 
\arrow[r,tail] \arrow[d,tail,swap] & \arrow[d,tail] \pat{\psi} \ar[r] & 1 \ar[d, "\ttrue"]  \\
\pat{\varphi}  \arrow[r,swap,tail]  & X \ar[r, swap, "\psi"] & \iprop  \mathrlap{.}
\end{tikzcd}
\end{equation*}
The following conditions are equivalent. 
\begin{enumerate} 
\item $\pat{\varphi \land \psi} \rightarrowtail \pat{\varphi}$ is a cofibration.
\item \( \tE \forces \varphi \imp \cof \psi \).
\end{enumerate}
\end{lemma}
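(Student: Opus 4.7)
My plan is to combine the forcing semantics for implication between propositions with the classification of cofibrations by the subobject $\Cof \mono \iprop$. Throughout, let $i \co \pat{\varphi} \mono X$ denote the inclusion. The two-pullbacks lemma applied to the squares displayed in the statement identifies the subobject $\pat{\varphi \land \psi} \mono \pat{\varphi}$ as the one classified by the composite $\psi \circ i \co \pat{\varphi} \to \iprop$; I will take this bookkeeping fact as the starting point for both directions.

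For (i) $\Rightarrow$ (ii), I would fix $x \co \yon c \to X$ and $f \co d \to c$ with $d \forces \varphi(xf)$, aiming to derive $d \forces \cof\psi(xf)$ (which, unfolded as in \cref{prop:forcing.dprd} specialised to implication via~\cref{prop:forcing.props}, is exactly the forcing clause for $\varphi \imp \cof\psi$). By the standard Kripke--Joyal forcing clause for propositions, $d \forces \varphi(xf)$ produces a unique factorisation $y \co \yon d \to \pat{\varphi}$ of $xf$ through $i$. Pulling back the assumed cofibration $\pat{\varphi \land \psi} \to \pat{\varphi}$ along $y$ then yields, by axiom~\ref{item:ax-cof-pullback}, a cofibration over $\yon d$. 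A further application of the two-pullbacks lemma identifies this pullback as $\pat{\psi(xf)} \mono \yon d$, and the preceding lemma characterising $c \forces \cof \varphi(x)$ in terms of factorisation through $\Cof$ then delivers $d \forces \cof\psi(xf)$.

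For (ii) $\Rightarrow$ (i), I would invoke the forcing characterisation of cofibrations (\cref{prop:forcing.cofibrations}) to reformulate the goal: writing $m$ for the inclusion $\pat{\varphi \land \psi} \mono \pat{\varphi}$, it suffices to show that $\tE \forces (\forall y \co \pat{\varphi})\, \cof\bigl((\exists s \co \pat{\varphi \land \psi})\,m(s) = y\bigr)$. By the identification of $m$ as comprehension of $\psi \circ i$, the inner proposition is equivalent to $\psi(iy)$, so the task reduces to $\tE \forces (\forall y \co \pat{\varphi})\, \cof\psi(iy)$. But any $y \co \yon d \to \pat{\varphi}$ gives $d \forces \varphi(iy)$ tautologically (via the factorisation $y$ itself), so the hypothesis $\tE \forces \varphi \imp \cof\psi$ immediately yields $d \forces \cof\psi(iy)$.

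I do not anticipate a genuine obstacle: both directions are essentially book-keeping on top of forcing clauses already established in the paper, together with pullback stability of cofibrations. The only point that requires a moment of care, and is reused in both directions, is the identification of $\pat{\varphi \land \psi} \mono \pat{\varphi}$ as the pullback of $\tcof \co 1 \mono \Cof$ along $(\psi \circ i)$ viewed as a map into $\iprop$; this is what lets me pass freely between ``$\pat{\varphi \land \psi} \to \pat{\varphi}$ is a cofibration'' and ``$\psi(iy)$ is cofibrant in context $\pat{\varphi}$''.
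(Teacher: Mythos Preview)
Your proposal is correct and follows essentially the same approach as the paper. The forward direction is identical; for the converse the paper argues slightly more directly by observing that the forward analysis already shows $\tE \forces \varphi \imp \cof\psi$ holds iff every pullback of $\pat{\varphi\land\psi}\mono\pat{\varphi}$ along a map from a representable is a cofibration, and then concludes, whereas you route this same reasoning through \cref{prop:forcing.cofibrations}.
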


\begin{proof} Assume first that $\pat{\varphi \land \psi} \rightarrowtail \pat{\varphi}$ is a cofibration.
Let $c \in \cC$ and $x \co \yon c \to X$. We want to show \( c \forces \varphi(x) \imp \cof \psi(x)  \),
so let $f \co d \to c$ and assume that $d \forces \varphi(xf)$, \ie that we have a factorisation
\[
\begin{tikzcd} 
   & & \pat{\varphi} \ar[d, tail] \\
 \yon d \ar[r, swap, "f"] \ar[rru, dotted, bend left =20] &   \yon c \ar[r, swap, "x"] & X \mathrlap{,}
\end{tikzcd}
\]
and we claim that $d \forces \cof \psi(xf)$. That holds if and only if $\psi(x f) \co \yon d \to \Omega$ factors through $\Cof$ or, equivalently, just if  $\pat{ \psi(xf) } \rightarrowtail \yon d$ is a cofibration. But $\pat{ \psi(xf) } \rightarrowtail \yon d$ is also the pullback of $\pat{\varphi \land \psi} \rightarrowtail \pat{\varphi}$ along the dotted arrow above. The claim follows by the closure of cofibrations under pullback, Axiom~\ref{item:ax-cof-pullback}.

Conversely, as we just saw, $\tE\forces\varphi \imp \cof \psi$ if and only if the pullback of $\pat{\varphi \land \psi} \rightarrowtail \pat{\varphi}$
along any map $\yon c \to \pat{ \varphi }$ is a cofibration. But this implies that $\pat{\varphi \land \psi} \rightarrowtail \pat{\varphi}$ itself is a cofibration. 
\end{proof}

\begin{prop} Assuming only axioms (C1--4) and (C6), axiom (C5) can be expressed equivalently as follows. 
\label{prop:closure-properties-of-cof-under-composition}
\begin{enumerate} 
\item \label{item:cof-composition}  Cofibrations are closed under composition.
\item \label{item:dominance} $\tE \forces ( \forall \varphi, \psi \co \iprop)  \cof\varphi \imp
  \big((\varphi\imp\cof\psi) \imp \cof(\varphi \conj \psi)\big)$.
  \end{enumerate}
\end{prop}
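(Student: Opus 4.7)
The plan is to use Lemma~\ref{lemma:forcing-dependent-cofibrant-prop} as a dictionary between the internal forcing statement in~\ref{item:dominance} and statements about cofibrations in $\tE$. Recall that for any $X$ and $\varphi,\psi \co X \to \iprop$, that lemma tells us that $\tE \forces \varphi \imp \cof\psi$ is equivalent to $\pat{X \mid \varphi \land \psi} \mono \pat{X \mid \varphi}$ being a cofibration, while Corollary~\ref{cor:basic-properties-of-cofibrations} tells us that $\tE\forces \cof\varphi$ is equivalent to $\pat{X \mid \varphi} \mono X$ being a cofibration.

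For \ref{item:cof-composition} $\Rightarrow$ \ref{item:dominance}, I would first use the standard Kripke-Joyal conditions for $\forall$ and $\imp$ over $\iprop$ (via Proposition~\ref{prop:forcing.props-in-ctx}) to reduce the forcing statement, at each stage $c$, to the following: for every $\varphi,\psi \co \yon c \to \iprop$, if both $\pat{\yon c \mid \varphi} \mono \yon c$ and $\pat{\yon c \mid \varphi \land \psi} \mono \pat{\yon c \mid \varphi}$ are cofibrations, then so is $\pat{\yon c \mid \varphi \land \psi} \mono \yon c$. But this last map is literally the composite of the two given cofibrations, so \ref{item:cof-composition} yields the conclusion immediately (and Corollary~\ref{cor:basic-properties-of-cofibrations} then converts it back into $c \forces \cof(\varphi \land \psi)$).

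For \ref{item:dominance} $\Rightarrow$ \ref{item:cof-composition}, let $m \co A \mono B$ and $n \co B \mono C$ be cofibrations. Let $\psi \co C \to \iprop$ classify $n$, so that $\pat{C \mid \psi} = B$ as a subobject of $C$, and let $\chi \co C \to \iprop$ classify the composite $nm \co A \mono C$. Since $A \le B$ as subobjects of $C$, we have $\chi \Rightarrow \psi$, and therefore $\psi \land \chi = \chi$ as maps into $\iprop$. The pullback defining $\pat{C \mid \psi \land \chi}$ inside $\pat{C \mid \psi}$ identifies it with the monomorphism $m \co A \mono B$, which is a cofibration by assumption. Thus Lemma~\ref{lemma:forcing-dependent-cofibrant-prop} gives $\tE \forces \psi \imp \cof\chi$; since also $\tE \forces \cof\psi$ (because $n$ is a cofibration), instantiating~\ref{item:dominance} with $\psi$ for $\varphi$ and $\chi$ for the second proposition yields $\tE \forces \cof(\psi \land \chi) = \cof\chi$. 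Applying Corollary~\ref{cor:basic-properties-of-cofibrations} once more, we conclude that $\pat{C \mid \chi} = A \mono C$ is a cofibration, as required.

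The only mildly delicate step is the pullback identification used in the reverse direction, which simply formalises the fact that the cofibration $m \co A \mono B$ can be re-expressed as a comprehension over $C$ inside the cofibrant subobject $B = \pat{C \mid \psi}$; apart from this bookkeeping, the equivalence is a transparent consequence of Lemma~\ref{lemma:forcing-dependent-cofibrant-prop} together with the axioms \ref{item:ax-cof-mono}--\ref{item:ax-cof-pullback} already in play.
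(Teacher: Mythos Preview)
Your argument is correct. The direction \ref{item:cof-composition}$\Rightarrow$\ref{item:dominance} matches the paper's proof once one unwinds the Kripke--Joyal clauses carefully: the nested stages $d$ and $e$ arising from the two implications can be absorbed by relettering and monotonicity (using \ref{item:ax-cof-pullback} to push $\cof\varphi$ down to the inner stage), after which the condition becomes exactly what you state. This is what the paper does, only with the intermediate stages written out explicitly.

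For \ref{item:dominance}$\Rightarrow$\ref{item:cof-composition} you take a genuinely different route. The paper first reduces to the case of a representable codomain $\yon c$ (arguing that a composite is a cofibration iff its pullback along every $\yon c\to C$ is one), and then applies the forcing hypothesis locally at~$c$. You instead instantiate the forced universal statement directly at an arbitrary object $C$, obtaining the validity $C\vdash \cof\psi \imp ((\psi\imp\cof\chi)\imp\cof(\psi\land\chi))$, and then use modus ponens in the Heyting algebra $\Sub(C)$ together with Lemma~\ref{lemma:forcing-dependent-cofibrant-prop} and Corollary~\ref{cor:basic-properties-of-cofibrations}. Your route is a bit more direct and avoids the reduction to representables; the paper's route stays closer to the forcing calculus it has just set up. Both are perfectly valid, and the ``bookkeeping'' step you flag (identifying $\pat{C\mid \psi\land\chi}\mono\pat{C\mid\psi}$ with $m\co A\mono B$ via $\chi\Rightarrow\psi$) is exactly right.
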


\begin{proof} 
Suppose that \cref{item:cof-composition} holds.  We need to show that  for every $c \in \cC$, we have $c \forces ( \forall \varphi, \psi \co \iprop)  \cof\varphi \imp 
	\big((\varphi\imp\cof\psi) \imp \cof(\varphi \conj \psi)\big)$\,.
Thus take any $x : d\to c$ and $\varphi, \psi : \yon{d} \to \Omega$ and any $y\co e\to d$ such that \(e \forces \cof\varphi(xy) \), and let furthermore $z\co f\to e$ with \(f \forces \varphi(xyz) \imp \cof(\psi(xyz)) \).
Then $\pat{\varphi(xy)}\rightarrowtail \yon e$ is a cofibration and therefore so is $\pat{\varphi(xyz)}\rightarrowtail \yon f$ and, by \cref{lemma:forcing-dependent-cofibrant-prop},
$\pat{\varphi(xyz) \land \psi(xyz)} \rightarrowtail \pat{\varphi(xyz)}$ is a cofibration as well.
By the assumption, the composite $\pat{\varphi(xyz) \land \psi(xyz)} \rightarrowtail \yon f$ is then a cofibration, and hence 
\(f \forces \cof(\varphi(xyz) \land \psi(xyz))\), as required.

To see that \ref{item:dominance} implies \ref{item:cof-composition}, note first that arbitrary cofibrations compose just if the same is true of cofibrations of the form $\pat{\varphi} \rightarrowtail \yon c$ and $\pat{\varphi \land \psi} \rightarrowtail \pat{\varphi}$, where $\varphi, \psi \co \yon c \to \iprop$. (Indeed, given arbitrary cofibrations $\pat{\varphi} \rightarrowtail  X$ and $\pat{\vartheta} \mono \pat{\varphi}$, the composite is a cofibration just if its pullback along every $\yon c \to X$ is one; then take $\pat{\psi} \rightarrowtail \yon c$ to be the composite $\pat{\vartheta} \rightarrowtail \pat{\varphi} \rightarrowtail \yon c$.)  
So let $\pat{\varphi} \rightarrowtail \yon c$ and $\pat{\varphi \land \psi} \rightarrowtail \pat{\varphi}$ be cofibrations, and assume \cref{item:dominance}, so that for all $d \to c $ and all $\varphi, \psi: d \to  \Omega$, we have
\[
d \forces  \cof\varphi \imp
  ((\varphi \imp\cof\psi) \imp \cof(\varphi\conj \psi)) \mathrlap{.}
\]
Thus, in particular, if $c \forces \cof\varphi$ and $c \forces \varphi \imp\cof \psi$ then 
$c \forces \cof(\varphi \conj \psi)$. But 
the first premise holds since $\pat{\varphi} \rightarrowtail \yon c$ is assumed to be a cofibration, and the second premise holds by \cref{lemma:forcing-dependent-cofibrant-prop}, since $\pat{\varphi \land \psi} \rightarrowtail \pat{\varphi}$ is also assumed to be a cofibration. Thus we have $c \forces \cof(\varphi \conj \psi)$, and so $\pat{\varphi \land \psi} \rightarrowtail \yon c$ is a cofibration, as required.
\end{proof}

\begin{rmk}\label{rmk:cof-composition}
Let $\varphi  \co X \to  \Cof$ and $\psi \co \pat{\varphi} \to \Cof$, and consider the pullback squares
\begin{equation*}
\begin{tikzcd}
\pat{\varphi}  \arrow[r] \arrow[d,tail]  & 1  \ar[d,"\tcof"] \\
X \ar[r, swap, "\varphi"] & \Cof\mathrlap{,} 
\end{tikzcd} \qquad
\begin{tikzcd} 
\pat{\varphi \,|\, \psi} \arrow[r] \arrow[d,tail] & 1 \ar[d, "\tcof"]  \\
 \pat{\varphi}   \ar[r, swap, "\psi"] & \Cof \mathrlap{.} 
\end{tikzcd}
\end{equation*}
We write $\Sigma_{\varphi} \psi \co X \to \Phi$ for the classifer of  the composite cofibration $\pat{\varphi \,|\, \psi} \mono \pat{\varphi} \mono X$. Thus, $\pat{\varphi \,|\, \psi} = \pat{\Sigma_{\varphi} \psi}$ as subobjects of $X$.
\end{rmk}


\section{Partial elements}
\label{subsec:partiality}

Uniform trivial fibrations will be defined in \cref{sec:tfib} as maps equipped with a lifting structure against cofibrations. In the type theory of $\tE$, this is expressed in terms of the notion of a cofibrant partial element of a type~\cite[Section~5.1]{orton-pitts}. In preparation for the material on uniform fibrations, we discuss cofibrant partial elements, leading up to \cref{prop:forcing.partial}, which provides the equivalence between the categorical and type-theoretical
definitions. As before, we start with a diagrammatic definition. We consider a fixed class of cofibrations as in \cref{defn:cofibrations}.

\begin{defn}
\label{defn:sem-partial-element}
Let $p \co A \to X$ be a map in $\tE$.
\begin{enumerate}
\item A \strong{(cofibrant) partial element} of  $p \co A \to X$ is a pair $(m, u)$, where $m \co S \mono X$ is a cofibration
and $u \co S \to A$ makes the following diagram commute:
\begin{equation}
\label{diag:exto-sem}
\begin{tikzcd}
S  \arrow[r,  "{u}"]	\arrow[d,tail, swap, "m"]  & A \arrow[d, "p"] \\
X \ar[r, equal]	& X	 \mathrlap{.} 
\end{tikzcd}
\end{equation}
We call $S$ the \strong{extent} of the partial element $(m, u)$. 
\item A partial element $(m, u)$ of $A \to X$ is \strong{extensible} if there exists $a \co X \to A$  such that the following diagram commutes:
\[
\begin{tikzcd}
S \arrow[r,  "{u}"]	\arrow[d,tail, swap, "m"]  & A \arrow[d] \\
X \ar[r, equal]	\ar[ur, swap,dotted, "a"] & X \mathrlap{.} 
\end{tikzcd}
\]
\end{enumerate}
\end{defn}

Regarding $A$ as an object in $\tE/_{\!X}$, we wish to define the object $\T_X(A) \in \tE/_{\!X}$ of all such partial elements of $A$, or as we shall say, the \emph{classifier} for partial elements.  
Consider the case $X=1$.  A partial element can then be described equivalently as a pair $(\varphi, u)$ where $\varphi\co1\to \Phi$,  and $u \co \pat{\varphi}\to A$.  We can therefore define the object $\T(A)$ as follows.  Let $\T \co \tE \to \tE$ be the polynomial endofunctor associated to the map $\tcof \maps 1 \to \Cof$, namely the composite
\begin{equation}
\label{defn:typeof-partial-elements}
\begin{tikzcd} 
 \tE \ar[r, "\radj{\tcof}"] &[10pt] \tE/_{\Cof} \ar[r, "\ladj{\Cof}"]  &[10pt] \tE \mathrlap{.} 
\end{tikzcd} 
\end{equation}

The global elements $t \co 1\to \T(A)$ are pairs  \(t=\pair{\varphi}{v}\) with \(\varphi \oftype 1\to \Cof\) and~\(v \co 1\to A^{\pat{\varphi}} \). This data determines a unique span
\begin{equation}
\label{diag:partialelement1}
\begin{tikzcd}
\pat{\varphi}  \arrow[r,  "{u}"]	\arrow[d,tail, swap]  & A \arrow[d] \\
1 \ar[r, equal]	& 1	 \mathrlap{,}
\end{tikzcd}
\end{equation}
with $\lambda u = v$, which is exactly a partial element of $A$ in the sense of \cref{defn:sem-partial-element}.
For example, since \(\cof(\tcof) = \ttrue\) by axiom~\ref{item:ax-cof-true}, for any $a \co 1\to A$ there is the totally defined  partial element~\(\pair{\tcof}{a }\co 1\to \T(A)\). 
Similarly, since \(\cof(\fcof)= \ttrue \) by axiom \ref{item:ax-cof-false}, we have the totally undefined partial element~\(\pair{\fcof}{0_A} \co 1\to \T(A)\).

Finally, if $p\co A\to X$, then let $\T_X(A)\to X$ be the result of applying the corresponding endofunctor $\T_X \co \tE/_{\!X} \to \tE/_{\!X}$, where in place of $\tcof \maps 1 \to \Cof$ we use the cofibration classifier for $\tE/_{\!X}$, namely $X^*\tcof \maps X^*1 \to X^*\Cof$, as in~\cite[Section~9]{gambino2017frobenius}.  We then recover the description in~\cite[Definition~5.1]{orton-pitts} using the internal type theory of $\tE$, as follows. If $p \co A\to X$  is classified by $\alpha \co X\to \UV$, then $\T_X(A) \to X$ is classified by the type
\begin{equation}\label{eq:T-type}
\textstyle
\T(\alpha) \defeq\ \dsm{\varphi \oftype \Cof}  \alpha^{\pat{\varphi}} \mathrlap{,}
\end{equation}
in the sense that $\T_X(A) \cong \typecomp{X}{\T(\alpha)}$ over $X$.  Hereafter, we will  use the type-theoretic description \eqref{eq:T-type} interchangeably with the relative point of view, keeping in mind that the function type $\alpha^{\pat{\varphi}}$ refers to the relative exponential over $X$. Sections $t \co X\to \typecomp{X}{\T(\alpha)}$ over $X$ then correspond in exactly the same way to diagrams
\begin{equation}
\label{diag:partialelement2}
\begin{tikzcd}
\pat{\varphi}  \arrow[r,  "{u}"]	\arrow[d,tail, swap]  & \typecomp{X}{\alpha} \arrow[d, "{\dsp{\alpha}}"] \\
X \ar[r, equal]	& X	 \mathrlap{,}
\end{tikzcd}
\end{equation}
i.e.\ partial elements of $\dsp{\alpha} \co \typecomp{X}{\alpha}\to X$, with $t = (\varphi , \lambda u)$ for $\varphi \co X\to\Cof$ and $u \co \pat{\varphi} \to X.\alpha$ over $X$.

The functor $\T \co \tE \to \tE$ defined in~\eqref{defn:typeof-partial-elements}, as well as its relative versions $\T_X \co \tE/_{\!X} \to \tE/_{\!X}$, has a canonical pointing $\eta \co \id_{\tE} \to \T$, given by the composite
\[
\begin{tikzcd} 
1 \ar[r, equal] 
 & \Sigma_{{\Cof}}  \circ \Sigma_{\tcof}  \ar[r] 
 & \Sigma_{{\Cof}}  \circ \Pi_{\tcof} \circ {\tcof}^* \circ \Sigma_{\tcof}  \ar[r, "\cong"]
 &  \Sigma_{{\Cof}}\circ  \Pi_{\tcof} \ar[r, equal]
 & \T \mathrlap{,}
 \end{tikzcd}
 \]
where the second map uses the unit of the adjunction ${\tcof}^* \dashv \Pi_\tcof$, and the third map uses that the unit of the adjunction $\Sigma_\tcof \dashv {\tcof}^*$ is an isomorphism $\id_{\tE} \cong {\tcof}^* \circ \Sigma_\tcof$ since $\tcof$ is monic.
Observe that $\eta_A \co A \to \T(A)$ can be defined  in the type theory of $\tE$ as the function taking $a \co A$ to the totally defined partial element \(\pair{\tcof}{a } \co T(A)\).  \cref{prop:etaclassifies} below gives the sense in which $\T(A)$ classifies cofibrant partial elements of $A$. For this, we 
require a lemma.

\begin{lem}\label{lem:tiscof} For every $A \in \tE$, the commutative diagram
\[
\begin{tikzcd}[scale=1.5]
A  \arrow[r, "\eta_A"] \arrow[d, swap, "!_A"] 
& \T(A) \arrow[d,"\fst"] \\
1 \arrow[r,swap,"\tcof"]
& \Cof
\end{tikzcd}
\]
is a pullback. Thus $\eta_A \co A \to \T(A)$ is a cofibration.
\end{lem}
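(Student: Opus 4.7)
The plan is to recognise the square as a direct unfolding of the dependent sum description of $\T(A)$ from \eqref{eq:T-type}, and then to conclude the cofibration statement from the pullback stability axiom~\ref{item:ax-cof-pullback}.

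First I would verify the pullback. Recall that, up to isomorphism over $\Cof$, $\T(A) = \Sigma_{\varphi \co \Cof} A^{\pat{\varphi}}$, with $\fst$ being the first projection. The pullback of $\fst \co \T(A) \to \Cof$ along $\tcof \co 1 \to \Cof$ is the object of pairs $(\varphi, u)$ with $\varphi$ factoring through $\tcof$. Since $\tcof \co 1 \mono \Cof$ is monic, any such factorisation is unique and forces $\varphi = \tcof \circ\, !$, in which case $\pat{\varphi} \cong 1$ and $u$ reduces to an arbitrary element of $A^{1} \cong A$. A quick check using Kripke--Joyal forcing (\cref{prop:validity-and-forcing}) makes this precise: at stage $c$, an element of the pullback is determined by an element of $A(c)$.

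Next I would check that under this identification the canonical map into $\T(A)$ is $\eta_A$. This is immediate from the construction of $\eta_A$ as the composite of the unit of $\tcof^* \dashv \Pi_\tcof$ with $\Sigma_{\Cof}$: tracing an element $a \co A$ through this composite produces precisely the pair $(\tcof, \lambda a)$ with $\lambda a$ the constant function on $\pat{\tcof} \cong 1$ with value $a$, which is the description of $\eta_A$ given just before the statement.

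For the second assertion, by axiom~\ref{item:ax-cof-colimit} the terminal cofibration $\tcof \co 1 \mono \Cof$ is itself a cofibration, so by the pullback stability axiom~\ref{item:ax-cof-pullback} its pullback $\eta_A$ along $\fst$ is a cofibration as well. The only delicate step in the whole argument is the identification of the comparison map with $\eta_A$; once the description of $\T(A)$ and of $\eta_A$ is unfolded, everything else is formal.
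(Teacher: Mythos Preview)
Your argument is correct, but it takes a more concrete route than the paper's. The paper argues purely via adjunctions: since $\tcof$ is monic, the unit of $\Sigma_{\tcof} \dashv \tcof^{*}$ is an isomorphism, and by the standard fact relating fully faithful left adjoints to invertible counits, the counit of $\tcof^{*} \dashv \Pi_{\tcof}$ is then also an isomorphism; since $\T(A) \to \Cof$ is $\Pi_{\tcof}(A)$, pulling back along $\tcof$ recovers $A$. Your proof unwinds the same phenomenon by hand, computing the pullback elementwise from the description $\T(A) \simeq \Sigma_{\varphi\co\Cof} A^{\pat{\varphi}}$ and observing that over $\tcof$ the fibre collapses to $A^{1} \cong A$. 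Both approaches rest on the same key point (monicity of $\tcof$), and yours has the minor virtue of making the identification of the comparison map with $\eta_{A}$ explicit, which the paper leaves to the reader. The appeal to Kripke--Joyal forcing is harmless but unnecessary: the pullback computation is already elementary in any presheaf topos, and invoking forcing here adds little beyond a change of language.
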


\begin{proof} 
As already noted, since $\tcof$ is monic the unit of $\Sigma_\tcof \dashv {\tcof}^*$ is an isomorphism $\id_{\tE} \cong {\tcof}^* \circ \Sigma_\tcof$.  It follows by a standard argument that the counit of ${\tcof}^* \dashv \Pi_\tcof$ is also an isomorphism ${\tcof}^* \circ \Pi_\tcof \cong \id_{\tE}$.  Since $T(A) =  \Pi_\tcof (A)$, we therefore have $\tcof^*(TA) \cong A$.
\end{proof}

\begin{cor}
\label{cor:eta-is-cartesian}
The natural transformation $\eta \co \id_{\tE} \to \T$ is  cartesian.
\end{cor}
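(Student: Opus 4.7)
The plan is to deduce the corollary from \cref{lem:tiscof} by the two-pullbacks lemma. To show that $\eta$ is cartesian, I must verify that for every $f \co A \to B$ in $\tE$, the naturality square
\[
\begin{tikzcd}
A \ar[r, "\eta_A"] \ar[d, swap, "f"] & \T(A) \ar[d, "\T(f)"] \\
B \ar[r, swap, "\eta_B"] & \T(B)
\end{tikzcd}
\]
is a pullback.

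First I would record two easy naturality identities. Since $\T(f)$ acts on a partial element $\pair{\varphi}{u \co \pat{\varphi}\to A}$ by postcomposition, producing $\pair{\varphi}{f\circ u}$, the first projection is preserved, i.e.\ $\fst \circ \T(f) = \fst \co \T(A) \to \Cof$. Likewise, since $1$ is terminal, $!_B \circ f = !_A$.

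Next I would paste the naturality square on top of the pullback square supplied by \cref{lem:tiscof} for $B$:
\[
\begin{tikzcd}
A \ar[r, "\eta_A"] \ar[d, swap, "f"] & \T(A) \ar[d, "\T(f)"] \\
B \ar[r, "\eta_B"] \ar[d, swap, "!_B"] & \T(B) \ar[d, "\fst"] \\
1 \ar[r, swap, "\tcof"] & \Cof \mathrlap{.}
\end{tikzcd}
\]
Using the two identities above, the outer rectangle is precisely the pullback square given by \cref{lem:tiscof} for $A$. Since the bottom square is a pullback by \cref{lem:tiscof} and the outer rectangle is a pullback for the same reason, the two-pullbacks lemma implies that the top square, namely the naturality square, is a pullback as well.

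The argument is essentially formal once the two identities $\fst\circ\T(f)=\fst$ and $!_B\circ f = !_A$ are in place, so I do not expect any genuine obstacle; the only thing to be careful about is that the description of $\T(f)$ as postcomposition on the second component (so that $\fst$ is preserved) follows from the definition of $\T$ as a polynomial functor along the mono $\tcof$, which ensures that the action on the $\Cof$-component is the identity.
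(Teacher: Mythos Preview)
Your proof is correct and follows exactly the same approach as the paper: paste the naturality square above the pullback square of \cref{lem:tiscof} for $B$, observe that the outer rectangle is the pullback square of \cref{lem:tiscof} for $A$, and conclude via the two-pullbacks lemma. The extra justification you give for $\fst \circ \T(f) = \fst$ and $!_B \circ f = !_A$ is a helpful elaboration, but the core argument is identical.
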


\begin{proof}
Let $f \maps A \to B$ be a morphism in $\tE$ and consider the naturality diagram
\[ 
\begin{tikzcd}[ampersand replacement=\&]
A \arrow[r,  "\eta_A"]  \arrow[d, swap, "f"] \&[10pt]  \T(A)  \arrow[d, "\T{f}"]  \\
B  \ar[d] \arrow[r, "\eta_B"]  \&  \T(B) \ar[d]   \\
 1 \ar[r, swap, "\tcof"] \& \Cof \mathrlap{,} 
\end{tikzcd}
\]
where $\tcof \co 1 \to \Cof$ coincides with $\eta_1 \co 1 \to \T(1)$. The bottom square and the outer rectangle are cartesian by the previous lemma, so the top square is also cartesian. 
\end{proof}

The next proposition gives the universal property of $\eta_A \co A \to \T(A)$.

\begin{prop}\label{prop:etaclassifies} For every $A \in \tE$, the map $\eta_A \maps A\mono \T(A) $ classifies cofibrant partial maps to $A$ in the following sense: for any $T \in \tE$ and cofibration $m \co S \mono T$ and map $u \co S \to A$ as indicated below,  the span $(m, u)$ 
can be uniquely completed to a pullback square as follows. 
\begin{equation}
\label{diag:cofibrant-partial-map-classifier}
\begin{tikzcd}
S \arrow[d, tail, swap, "m"] 
\arrow[r,"u"] &[10pt] A  \arrow[d, "\eta_A", tail] \\
T  \arrow[r, swap, dotted, ""] & \T(A) \mathrlap{.} 
\end{tikzcd}
\end{equation}
\end{prop}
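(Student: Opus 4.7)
The plan is to combine the classification of cofibrations by $\tcof \co 1 \mono \Cof$ with the polynomial adjunction description of $\T$ and the cartesianness of $\eta$ established in \cref{cor:eta-is-cartesian} (and \cref{lem:tiscof}). Given the data $(m, u)$, I would first extract the unique classifying map $\varphi \co T \to \Cof$ of the cofibration $m \co S \mono T$, so that $m$ is the pullback of $\tcof$ along $\varphi$ and thus $S \cong \pat{\varphi}$ as subobjects of $T$. The map $u \co S \to A$ then transports along this isomorphism to a map $\bar u \co \pat{\varphi} \to A$.

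Next, I would promote the pair $(\varphi, \bar u)$ to a classifying map $\sigma \co T \to \T(A)$ using the polynomial adjunction. Since $\T(A) = \ladj{\Cof}\radj{\tcof}(A)$, a map $\sigma \co T \to \T(A)$ paired with the composite $\fst \circ \sigma \co T \to \Cof$ amounts to a map $(T, \fst \sigma) \to \radj{\tcof}(A)$ in $\tE/_{\Cof}$, and by $\tcof^* \dashv \radj{\tcof}$ this is in turn equivalent to a map $\tcof^*(T, \fst \sigma) \to A$ whose domain is precisely $\pat{\fst\sigma}$. Thus $\tE(T, \T(A))$ is naturally in bijection with pairs $(\varphi \co T \to \Cof,\ u \co \pat{\varphi} \to A)$; running this bijection in reverse on $(\varphi, \bar u)$ yields a unique $\sigma$ with $\fst \circ \sigma = \varphi$ and $\bar u$ as its adjoint transpose.

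To verify that the resulting square is a pullback, I would paste on the right the pullback of \cref{lem:tiscof}, which exhibits $\eta_A$ as the pullback of $\tcof$ along $\fst$:
\[
\begin{tikzcd}
\pat{\varphi} \ar[r, "\bar u"] \ar[d, tail, swap, "m"] &[10pt] A \ar[r] \ar[d, "\eta_A", tail] &[10pt] 1 \ar[d, "\tcof", tail] \\
T \ar[r, swap, "\sigma"] & \T(A) \ar[r, swap, "\fst"] & \Cof \mathrlap{.}
\end{tikzcd}
\]
Because $\fst \circ \sigma = \varphi$, the outer rectangle is the defining pullback of $\pat{\varphi}$; since the right-hand square is a pullback by \cref{lem:tiscof}, the two-pullback lemma forces the left-hand square to be a pullback as well. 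Uniqueness of $\sigma$ follows from the uniqueness of the classifying map of a cofibration (extracting $\varphi$ from any candidate $\sigma'$ via $\fst \circ \sigma'$ and the pasted outer pullback) together with the uniqueness in the polynomial adjunction bijection. The only delicate point is keeping track of how the adjoint transposition identifies the top map of the left-hand square with $u$ under the isomorphism $S \cong \pat{\varphi}$; beyond this bookkeeping, the statement is essentially formal given \cref{cor:eta-is-cartesian} and the cofibration classification by $\Cof$.
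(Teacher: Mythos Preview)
Your proposal is correct and follows essentially the same approach as the paper: extract the unique classifying map $\varphi \co T \to \Cof$ of the cofibration, use the adjunction $\tcof^* \dashv \Pi_\tcof$ to produce the map $T \to \T(A)$ as an adjoint transpose, and then apply \cref{lem:tiscof} together with the two-pullback lemma to conclude that the resulting square is a pullback. Your mention of \cref{cor:eta-is-cartesian} is slightly superfluous, since only \cref{lem:tiscof} is actually needed in your argument, but this does no harm.
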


\begin{proof} The cofibration $m \co S \mono T$ has a unique classifying map $\varphi \co T \to\Cof$. Using  \cref{lem:tiscof}, we can complete \eqref{diag:cofibrant-partial-map-classifier} to form the solid part of the following diagram, in which the outer (distorted) rectangle is a pullback.
\begin{equation}
\label{diag:proof-cofibrant-partial-map-classifier}
\begin{tikzcd}
S \arrow[d, tail, swap, "m"] 
\arrow[r,"u"] & A  \arrow[d,  tail] 
\arrow[r] 
& 1 \arrow[d, "\tcof"]\\
T  \arrow[r, swap, dotted] \arrow[rr, bend right=25, swap, "\varphi"] & \T(A) \arrow[r,"\fst"] & \Cof \mathrlap{.} 
\end{tikzcd}
\end{equation}

Recalling that~$\fst \co \T(A)\to \Phi$ is $\Pi_\tcof(A)$, we have the indicated dotted map as $u^\sharp \co T \to \T(A)$ over $\Cof$, the adjoint transpose of $u \co T \cong \pbk{\tcof}(T) \to A$ under the adjunction~$\tcof^* \dashv \radj{\tcof}$.  The left hand square then commutes, and is a pullback, because~$\tcof^*(u^\sharp) = u$.
 \end{proof}

Letting $T=1$ in the foregoing shows that $\T(A)$ classifies partial elements of $A$ in $\tE$, and doing the same for $p \co A\to X$ in the slice category $\tE/_{\!X}$ shows that $\T_X(A)$ classifies partial elements in the sense of \cref{defn:sem-partial-element}.
In light of \cref{prop:etaclassifies}, we call $\eta_A \oftype A \to \T(A)$ the \strong{cofibrant partial map classifier} of $A$  (cf.~\cite[Proposition~10]{awodey-cubical-git}), generalising the usual partial map classifier from topos theory~\cite[Proposition~A.2.4.7]{Johnstone:TT}, which occurs when $\cof = \ttrue\oo{!_\Omega}$ and so $\Cof = \Omega$.  

We now unfold explicitly the forcing conditions for $\T(A)$, which are determined by those for dependent sums and dependent products.

\begin{prop}[Forcing for partial elements] 
\label{prop:forcing.partial}
Let $\alpha \co X \to \UV$. Then for any $x \co \yon{c} \to X$ the following conditions are equivalent.
\begin{enumerate}
\item \label{rmk:forcingTiscomsquare} \label{diag:corparforce} There is a cofibration $m \co \pat{\varphi} \rightarrowtail \yon c$ and a 
map $u \co \pat{\varphi}  \to \UVptd$ such that the following diagram commutes:
\begin{equation*}
\begin{tikzcd}[column sep=large]
 \pat{\varphi} \arrow[d, tail, swap, "m"] \arrow[r,"{(xm,u)}"] & X.\alpha \arrow[d, "\dsp{\alpha}"] \\
 \yon{c} \arrow[r, swap,"x"]  &  X \mathrlap{.}
 \end{tikzcd}
\end{equation*}
\item \label{item:forcing-partial} \(c \forces \varphi \oftype \Cof\)  and there is a map $u \co \pat{\varphi}  \to \UVptd $ for which  
 \(c \forces \lambda u \oftype \pat{\varphi}\to \alpha(x)\).
\item \label{item:forcing-monad} There is a map $t \co \yon c \to \UVptd$ such that \( c \forces t \oftype \T( \alpha)(x) \).
\end{enumerate}
\end{prop}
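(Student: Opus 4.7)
The plan is to derive both equivalences by combining the definition $\T(\alpha) \defeq \dsm{\varphi \oftype \Cof} \alpha^{\pat{\varphi}}$ with the unfolded forcing conditions already established in \cref{sec:kjshott-definition}, plus the characterisation of forcing for $\Cof$ coming from \cref{sec:cof}.

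First I would show \ref{item:forcing-partial} $\Leftrightarrow$ \ref{item:forcing-monad} by applying \cref{prop:forcing.dsum} directly to the type $\T(\alpha)(x) = \bigl(\dsm{\varphi \oftype \Cof} \alpha^{\pat{\varphi}}\bigr)(x)$. In one direction, given a term $t$ with $c \forces t \oftype \T(\alpha)(x)$, the elimination rule for dependent sums gives $c \forces \tproj_1(t) \oftype \Cof$ and $c \forces \tproj_2(t) \oftype \alpha^{\pat{\tproj_1(t)}}(x)$; setting $\varphi = \tproj_1(t)$ and writing $\tproj_2(t)$ as $\lambda u$ via the expansion rule of \cref{thm:rules-pi} yields~\ref{item:forcing-partial}. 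Conversely, given $\varphi$ and $u$ as in~\ref{item:forcing-partial}, the introduction rule assembles the required $t \defeq \tpair(\varphi, \lambda u)$.

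Next I would establish \ref{diag:corparforce} $\Leftrightarrow$ \ref{item:forcing-partial} by translating each piece of \ref{item:forcing-partial} into diagrammatic data. By \cref{thm:forcing-alternative} and the defining pullback of $\Cof \mono \Omega$ in~\eqref{defn:Cof}, the condition $c \forces \varphi \oftype \Cof$ is equivalent to $\varphi \co \yon c \to \Omega$ factoring through $\Cof$, which is the case exactly when $m \co \pat{\varphi} \mono \yon c$ is a cofibration. Using \cref{item:forcing-vs-section} of \cref{thm:forcing-alternative}, the second condition $c \forces \lambda u \oftype \pat{\varphi} \to \alpha(x)$ corresponds to a section of the display map classified by $\pat{\varphi} \to \alpha(x)$ over $\yon c$. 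Working in the slice $\tE/_{\yon c}$, where by construction $\alpha^{\pat{\varphi}}$ denotes the relative exponential, this section amounts to a map over $\yon c$ from $\pat{\varphi}$ to $\typecomp{\yon c}{\alpha(x)}$. Composing with the canonical pullback square $\typecomp{\yon c}{\alpha(x)} \to \typecomp{X}{\alpha}$ above $x$ then produces the required commuting square in~\ref{diag:corparforce}; conversely, any such square factors uniquely through the pullback, producing the required section.

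The only real point requiring care is the identification of $\alpha^{\pat{\varphi}}$ as a relative exponential in $\tE/_{\yon c}$ and the matching of its sections with lifts $u \co \pat{\varphi} \to \typecomp{X}{\alpha}$ over $X$; this is routine given the adjunctions in~\eqref{equ:ladj-radj-small} and the two-pullbacks diagram~\eqref{equ:pullbackcomp}. I do not anticipate any substantive obstacle, since the equivalence is essentially a bookkeeping exercise built on top of the already established forcing conditions for $\Sigma$-types, function types, and cofibrant propositions.
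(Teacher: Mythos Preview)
Your proposal is correct and follows essentially the same approach as the paper: both arguments split into (ii)$\Leftrightarrow$(iii) via forcing for dependent sums applied to $\T(\alpha)=\dsm{\varphi\oftype\Cof}\alpha^{\pat{\varphi}}$, and (i)$\Leftrightarrow$(ii) via the equivalence between the commuting square and the judgement $\typecomp{\yon c}{\pat{\varphi}}\vdash u\co\alpha(xm)$. The paper's treatment of (i)$\Leftrightarrow$(ii) is slightly more direct---it observes in one line that the map $(xm,u)$ makes the square commute precisely when $\typecomp{\yon c}{\pat{\varphi}}\vdash u\co\alpha(xm)$, which is the same as $c\forces\lambda u\co\pat{\varphi}\to\alpha(x)$---whereas you route through sections of display maps and relative exponentials in the slice; but this is a difference in packaging, not substance.
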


\begin{proof}  For the equivalence between \cref{rmk:forcingTiscomsquare} and \cref{item:forcing-partial}, a map $u \co \pat{\varphi} \to \UVptd$ determines a unique map $(xm,u) \co \pat{\varphi}  \to \typecomp{X}{\alpha}$ making the diagram in \cref{rmk:forcingTiscomsquare} 
commute if and only if the judgement $\yon{c},\pat{\varphi}\vdash u \co \alpha(xm)$ holds.  But the latter is equivalent to $c\forces \lambda u \co \pat{\varphi}\to \alpha(x)$. 

For the equivalence between \cref{item:forcing-partial} and \cref{item:forcing-monad}, recall the definition of  $\T(\alpha)$ from 
from \eqref{eq:T-type}. So by \cref{prop:forcing.dsum}, 
\( c \forces t \oftype \T(\alpha)(x) \)
if and only if $t = (\varphi,v)$ with 
\(c \forces \varphi \oftype \Cof \) and 
\(c \forces v \oftype (\pat{\varphi}\to \alpha)(x) \). 
We might as well assume without loss of generality that $v = \lambda u$ for suitable $u \co \pat{\varphi}  \to \UVptd$, as required.
\end{proof}

\begin{rmk} The forcing conditions in \cref{prop:forcing.partial} may be unfolded even further using~\cref{prop:forcing.function}.
Indeed, if $t = (\varphi,v)$ then \(c \forces \varphi \oftype \Cof \) and \(c \forces v \oftype (\pat{\varphi}\to \alpha)(x) \) 
if and only if for all  \(f \maps d \to c \)  and \(d \forces p \oftype \pat{\varphi( f)} \) we have 
\(d \forces \funapp(v(f), p) \oftype \alpha(x f)\), and for all \(g \maps e \to d\), we have
\( e\forces \funapp(v(f), p)  (g) = \funapp(v({f g}), p(g))\co \alpha(xfg) \).  
In particular, for~$v = \lambda{u}$, the computation rule for function types reduces the above to
\[
d \forces p \oftype  \pat{\varphi (f)} \, , \qquad d \forces u(f)p \oftype \alpha(x  f)  
\]
for all $f\maps d \to c$ and, furthermore, the following uniformity condition holds
\[
e \forces (u(f)p) ( g) = u({f g})p(g) \co\alpha(xfg) 
\]
for all  \(g \maps e \to d\).	 
\end{rmk}

The next result also admits both diagrammatic and internal type-theoretic proofs (cf.~\cite[Proposition~12]{awodey-cubical-git}).  The proof via forcing given here serves mainly to illustrate the method.

\begin{thm} \label{thm:monad-structure-partial-elts}
Let $\alpha \co 1 \to U$. Then there is a map \(\mu_\alpha \maps \T^2(\alpha) \to \T(\alpha) \), natural in $\alpha$, making the pointed endofunctor $(\T, \eta)$ into a monad on $\tE$.  Moreover, the monad $(\T, \eta, \mu)$ extends to the slice categories $\tE/_{X}$ making it an $\tE$-indexed monad. 
\end{thm}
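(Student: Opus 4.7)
The plan is to construct $\mu_\alpha$ via forcing, then verify the monad laws and the indexed structure. The key idea is that $\T^2(\alpha)$ is the classifier of doubly nested partial elements, and the multiplication flattens two composed cofibrations to one via axiom~(C5).

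First I would unfold the forcing condition for $\T^2(\alpha)$ by iterating \cref{prop:forcing.partial}. A forced element $c \forces t \co \T^2(\alpha)(x)$ decomposes uniquely as $t = (\varphi, \lambda v)$ with $\varphi \co \yon c \to \Cof$ and $v \co \pat\varphi \to \T(\alpha)$; then $v$ itself decomposes as $v = (\psi, \lambda u)$ with $\psi \co \pat\varphi \to \Cof$ cofibrant and, using the identification of $\Sigma_{p \co \pat\varphi}\pat{\psi(p)}$ with $\pat{\Sigma_\varphi \psi}$ from \cref{rmk:cof-composition}, a map $u \co \pat{\Sigma_\varphi \psi} \to \alpha(x)$. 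I would then set
\[
\mu_\alpha(t) \defeq \bigl(\Sigma_\varphi \psi,\, \lambda u\bigr) \co \T(\alpha)(x),
\]
which is a valid element of $\T(\alpha)$ by \cref{prop:forcing.partial}, since $\Sigma_\varphi \psi$ is cofibrant by \cref{prop:closure-properties-of-cof-under-composition}. The assignment is uniform in~$c$ (restriction of partial elements is compatible with cofibration composition), so \cref{cor:forcing.dependent-elements} produces a well-defined morphism $\mu_\alpha \co \T^2(\alpha) \to \T(\alpha)$; naturality in $\alpha$ is immediate as every ingredient of the definition is functorial in $\alpha$.

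Next I would verify the monad laws by forcing. For the left unit $\mu_\alpha \circ \eta_{\T(\alpha)} = \id$, the outer cofibration produced by $\eta_{\T(\alpha)}$ is $\tcof$, and since $\Sigma_\tcof \varphi = \varphi$ the multiplication leaves $(\varphi, \lambda v)$ unchanged. For the right unit $\mu_\alpha \circ \T\eta_\alpha = \id$, the inner first component becomes the constant function $\lambda p.\tcof$, so $\Sigma_\varphi(\lambda p.\tcof) = \varphi$ trivially, and again nothing changes. Associativity $\mu_\alpha \circ \mu_{\T(\alpha)} = \mu_\alpha \circ \T\mu_\alpha$ reduces, upon unfolding a forced element of $\T^3(\alpha)$ as a triply nested $(\varphi, \psi, \chi, u)$, to the associativity of iterated cofibration composition $\Sigma_{\Sigma_\varphi \psi}(\chi) = \Sigma_\varphi \bigl(\Sigma_\psi \chi\bigr)$, which holds as an equation of subobjects of $\yon c$.

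Finally, for the $\tE$-indexed statement I would invoke \cref{rmk:universe-in-slice} and the description of the relative polynomial $\T_X$ on $\tE/_X$ as the polynomial associated to $X^*\tcof \co X^*1 \to X^*\Cof$; the class of cofibrations in $\tE/_X$ defined by pullback-stability again satisfies axioms (C1)--(C6), so the construction of $\mu$ above applies verbatim in each slice. Pullback functors $f^* \co \tE/_Y \to \tE/_X$ preserve the cofibration classifier and cofibration composition, hence they commute with~$\eta$ and~$\mu$, making $(\T, \eta, \mu)$ an $\tE$-indexed monad. The main obstacle will be the associativity check: although conceptually it amounts to associativity of iterated composition of cofibrations, keeping track of the doubly dependent cofibrations and their extents through the forcing unfolding requires careful bookkeeping of the $\Sigma$ operations on~$\Cof$.
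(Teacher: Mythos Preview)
Your proposal is correct and follows essentially the same route as the paper: both construct $\mu_\alpha$ by forcing, unfolding an element of $\T^2(\alpha)$ as a nested pair of cofibrant partial elements and then taking the composite cofibration (your $\Sigma_\varphi\psi$ is the paper's $\vartheta$). The paper spends more effort making the uniformity bookkeeping explicit, while you are more explicit about checking each monad law; but the substance is the same.
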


\begin{proof}
We will force an element $\mu_\alpha$ of the closed function type $\T^2(\alpha) \to \T(\alpha)$.
Thus we need to show:
\begin{enumerate}[(1)]
\item for all $c\in\cC$, there is given $\mu_\alpha c$ such that\ $c \forces \mu_\alpha c \oftype \T^2(\alpha) \to \T(\alpha)$, 
\item for all $f \maps d \to c$,
\begin{equation}\label{eq:force-monad-mult} 
d \forces {\mu_\alpha c}\circ f\, =\, \mu_\alpha d \oftype  \T^2(\alpha) \to \T(\alpha)\,.
\end{equation}
\end{enumerate}

By \cref{prop:forcing.function}, the forcing condition (1) for $ \mu_\alpha c$ of function type $ \T^2(\alpha) \to \T(\alpha)$ at $c$ requires that for every $f \maps d \to c$ and for all $a$ such that $d \forces a \oftype \T^2(\alpha)(f)$,
there is given $\mu_\alpha c_{(f,a)}$ such that\ $d \forces \mu_\alpha c_{(f,a)}\oftype \T(\alpha)(f)$, and moreover 
$\mu_\alpha c_{(f,a)}\circ g = \mu_\alpha c_{(fg,ag)}$ for every  \(g \maps e \to d\).  

Since the types are closed, the indicated substitution of $f$ is trivial, and so the  requirements (1) and (2) simplify to the following (with some relettering): 
\begin{itemize}
\item[(1')]  for all $f \maps d \to c$, and for all $a$ such that $d \forces a \oftype \T^2(\alpha)$,
there is given $\mu_\alpha c_{(f,a)}$ such that\ $d \forces \mu_\alpha c_{(f,a)}\oftype \T(\alpha)$, and
\item[(2')] for all $g\maps e \to d$,\ we have $\mu_\alpha c_{(f,a)}\circ g = \mu_\alpha c_{(fg,ag)}$, and
\item[(3)] for all $a$ such that\ $e \forces a \oftype \T^2(\alpha)$, we have
			$$e\forces  \mu_\alpha c_{(fg,a)}\, =\,  \mu_\alpha d_{(g,a)}  \oftype \T(\alpha).$$
 \end{itemize}
Note that this also gives the uniformity in $c$ required in (2). 

Thus let  $f \maps d \to c$, and suppose that $d \forces a \oftype \T^2\alpha$. 
By \cref{prop:forcing.partial}, for the type of partial elements $\T(\T\alpha)= \dsm{\varphi \oftype \Cof}  (\T\alpha)^{\pat{\varphi}}$, we have $a = (\varphi,\lambda u)$ with:
\begin{enumerate}
	\item[(a)] \(d \forces \varphi \oftype \Cof\) and \(d \forces \lambda u \oftype \pat{\varphi}\to \T\alpha\), the latter meaning that:
	
	\item[(a1)] for all $g\maps e \to d$ and all \(e \forces p \oftype  \pat{\varphi g}\), 
		we have \(e \forces u(g)p \oftype \T\alpha  \), and 
	
	\item[(a2)] for all  \(h \maps e' \to e\),\  we have $(u(g)p)h = u(g h) p(h)$.
 	\end{enumerate}
	
Unfolding the clause \(e \forces u(g)p \oftype \T\alpha  \) in (a1) yields $u(g)p = (\psi, \lambda v)$ with:
\begin{enumerate}
	\item[(b)] \(e \forces \psi \oftype \Cof\) and \(e \forces \lambda v  \oftype \pat{\psi} \to \alpha\), the latter meaning that:
		
	\item[(b1)] for all $h\maps e' \to e$ and all \(e' \forces q \oftype  \pat{\psi h}\),  
		 we have \(e' \forces v(h) q \oftype \alpha \), and 
		
	\item[(b2)] for all  \(i \maps e'' \to e'\),\  we have $(v(h)q)i = v(hi)q(i)$.
 	\end{enumerate}

To satisfy (1'), we need to construct an element  $\mu_\alpha c_{(f,a)} = (\vartheta, \lambda w) \oftype \T\alpha$  (based at $\yon{d}$) such that, among other things,  
\[
 d \forces \vartheta \oftype \Cof\quad \text{and}\quad d \forces \lambda w \oftype \pat{\vartheta}\to \alpha\,.
\]	

From (a) above, we have a span,
\begin{equation}
\label{diag:Tmonad1}
\begin{tikzcd}
 \pat{\varphi} \arrow[d, tail, swap] \arrow[r,"{u}"] & \T\alpha  \\
 \yon{d} \mathrlap{.} &
\end{tikzcd}
\end{equation}
To see this, take any $p \co \yon{e} \to  \pat{\varphi}$ and compose with $\pat{\varphi} \mono \yon{d}$ to get a map $g\maps e \to d$ with  $e \forces p \oftype  \pat{\varphi g}$.  By (a1) we then get $ u(g)p \co \yon{e} \to \T\alpha $.  Since the assignment $p \mapsto u(g)p$ is natural in $e$ by (a2), by Yoneda there is indeed a map $u \co \pat{\varphi} \to \T\alpha$, as claimed. 

Now let $u = \pair{\psi}{\lambda v}$, where $\psi \co  \pat{\varphi}  \to \Phi$, and there is another span 
\begin{equation}
\label{diag:Tmonad2}
\begin{tikzcd}
 \pat{\psi} \arrow[d, tail, swap] \arrow[r,"v"] & \alpha  \\
 \pat{\varphi} \mathrlap{,} &
\end{tikzcd}
\end{equation}
as can be deduced from (b) in the same way that $(\varphi, \lambda u)$ followed from (a).

Let $\vartheta \co \yon{d}\to \Phi$ classify the composite $\pat{\psi} \mono \pat{\varphi} \mono \yon{d}$, which is a cofibration since each factor is one.  We can then take $w = v$ to get the desired pair $(\vartheta, \lambda w)$ with $d\forces (\vartheta, \lambda w)\oftype \T(\alpha)$. Thus with $\mu_\alpha c_{(f,a)} = (\vartheta, \lambda w)$ we have (1').  The uniformity conditions (2') and (3) then follow from the uniformity conditions in (a) and (b).

Thus we have shown \eqref{eq:force-monad-mult}, and so there is a closed term  $\mu_\alpha\co \T^2\alpha \to \T\alpha$.  That $\mu_\alpha$ is natural in $\alpha$ can be seen to follow from the universal property of $(\T, \eta)$ given in \cref{prop:etaclassifies}.  The monad laws for $(\T, \eta,\mu)$ can be seen to follow from the unit and associativity laws for composition of (general maps and) cofibrations. Finally, the entire monad $(\T, \eta, \mu)$ is indexed over $\tE$, because all of the type-theoretic constructions involved are stable under pullback, as is forcing, by \cref{prop:forcing-eq-closedsliceforcing}.
\end{proof}


\section{Uniform trivial fibrations}
\label{sec:tfib}

We again assume a fixed class $\mathsf{Cof}$ of cofibrations in $\tE$, as in \cref{defn:cofibrations}.  The subcategory of $\tE^{\rightarrow}$ consisting of cofibrations and cartesian squares between them determines an algebraic weak factorisation system ~$(\awfsCof, \awfsTrivFib)$ on~$\tE$~\cite[Theorem~9.1]{gambino2017frobenius}.  The maps in the right class $\awfsTrivFib$ are the uniform trivial fibrations
which appear in the semantics of Homotopy Type Theory (see~\cite[Section~5.1]{CoquandT:cubttc}, \cite{awodey-cubical-git}, and~\cite{gambino-larrea}).  In this section, we show how these structured maps 
can be described equivalently in the internal type theory of~$\tE$. In particular, we see how the uniformity condition arises
naturally from Kripke-Joyal forcing.

\begin{defn}\label{defn:uniformtrivialfibration} Let $p \co A \to X$ be a map in $\tE$.
A \emph{uniform trivial fibration structure} on $p$ consists of 
a function $j$ that assigns a diagonal filler $j(m,u,v)$ to every commutative diagram of solid arrows of the form
\[
\begin{tikzcd}
 S \arrow[r, "u"] \arrow[d, swap,  "m", tail]   &[25pt]  A \arrow[d, "p"] \\ 
 T \arrow[r, swap, "v"]  \arrow[ru, dotted]   &  X\diadot{,}
\end{tikzcd}
\]
where  $m$  is a cofibration, subject to the following
\emph{uniformity} condition: for every $t \co T' \to T$, giving rise to the following diagram with a pullback square on the left,
\begin{equation}
\label{diag:uniformcofibration}
\begin{tikzcd}[scale=2]
\pbk{t}(S) \arrow[r, "s"] \arrow[d, tail,swap, "{\pbk{t}(m)}"]  &[25pt] S \arrow[r, "u"] \arrow[d, tail,  pos = 0.65, "m"] 
 &[25pt] A \arrow[d, "p"] \\ 
 T'  \arrow[r, swap, "t"] \arrow[rru,dotted] 
  &  T \arrow[r, swap, "v"]  \arrow[ru, dotted,swap] 
  & X \mathrlap{,}
\end{tikzcd}
\end{equation}
we have $j(\pbk{t}(m),  us, vt) = j(m, u, v) \circ t$.
\end{defn}
\smallskip

The next result recalls from~\cite[Theorem~9.1]{gambino2017frobenius} that a trivial fibration structure is determined by its values for the generating cofibrations, i.e.~those with representable codomains, since a general diagonal filling problem can be expressed as a colimit of filling problems for generating cofibrations. 

\begin{lemma} \label{thm:triv-cof-againts-generating} 
Let $p \co A \to X$ be a map in $\tE$. Then $p$ admits a uniform trivial fibration structure if and only if there is a 
function $j$ that assigns a diagonal filler $a = j(m, u, x)$ to every commutative diagram of solid arrows
\begin{equation}
\label{equ:lift-problem-gen-cof-triv-fib}
\begin{tikzcd}
S  \arrow[r, "u"] \arrow[d, swap, "m", tail]   &[25pt] A  \arrow[d, "p"] \\ 
 \yon{c}  \arrow[r, swap, "x"]  \arrow[ru, dotted, swap, "a"]   &  X \diadot{,}
\end{tikzcd}
\end{equation}
where  $m$ is a generating cofibration, subject to
the uniformity condition indicated in the following diagram:
\begin{equation*}
\begin{tikzcd}[scale=2]
\pbk{f}(S) \arrow[r, ""] \arrow[d, tail]  &[25pt] {S} \arrow[r, "{u}"] \arrow[d, tail] 
 &[25pt] A \arrow[d, "p"] \\ 
 \yon{d}  \arrow[r, swap, "f"] \arrow[rru,dotted] 
  &  \yon{c} \arrow[r, swap, "x"]  \arrow[ru, dotted,swap] 
  & X \mathrlap{.}
\end{tikzcd} \vspace{-1em}
\end{equation*} \qed
\end{lemma}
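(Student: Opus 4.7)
The forward direction is immediate: if $p$ carries a uniform trivial fibration structure, then its restriction to lifting problems with generating cofibrations is plainly a function of the required form, and uniformity is inherited by specialising the uniformity condition in Definition~\ref{defn:uniformtrivialfibration} to morphisms $f \co \yon d \to \yon c$ between representables.

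For the converse, I would build the general filling function from the given one on generating cofibrations by means of the colimit presentation of cofibrations. Given a lifting problem with cofibration $m \co S \mono T$ as on the left of~\eqref{diag:uniformcofibration} (with $T$ in place of $T'$), I would first consider, for each element $t \co \yon c \to T$, the pullback
\[
\begin{tikzcd}
\pbk{t}(S) \ar[r] \ar[d, tail, swap, "{\pbk{t}(m)}"] & S \ar[r, "u"] \ar[d, tail, "m"] & A \ar[d, "p"] \\
\yon c \ar[r, swap, "t"] & T \ar[r, swap, "v"] & X\mathrlap{,}
\end{tikzcd}
\]
where $\pbk{t}(m)$ is a generating cofibration by Axiom~\ref{item:ax-cof-pullback}. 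Applying the given $j$ to the outer lifting problem then yields a filler $a_t \defeq j(\pbk{t}(m), us_t, vt) \co \yon c \to A$, where $s_t$ is the top pullback map.

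The next step is to check that the family $(a_t)_{t \co \yon c \to T}$ is natural in $(c,t) \in \int\! T$, so that it assembles by the Yoneda lemma into a single map $a \co T \to A$. Naturality here means precisely that for every $f \co d \to c$ one has $a_{tf} = a_t \circ f$, which is an instance of the assumed uniformity condition on generating cofibrations applied to the composable pair of pullback squares arising from $f$ and $t$. Granting this, I would then verify that the resulting $a \co T \to A$ satisfies $pa = v$ (which is pointwise, by construction) and $am = u$ (which reduces, after composing with any $s \co \yon c \to S$, to the fact that $j$ produces a diagonal filler, since along the element $ms$ of $T$ the pullback $\pbk{ms}(S)$ contains a canonical point corresponding to $s$).

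Setting $j'(m, u, v) \defeq a$ then defines the extended lifting operation; the uniformity condition of Definition~\ref{defn:uniformtrivialfibration} for $j'$ follows from the uniformity for $j$ by the two-pullbacks lemma, since for any $t \co T' \to T$ and any $t' \co \yon c \to T'$, the element $tt' \co \yon c \to T$ gives rise to the same generating lifting problem that is used to compute $j'(\pbk{t}(m), us, vt)$ at $t'$ and $j'(m,u,v)\circ t$ at~$t'$. The main obstacle is the naturality/Yoneda-gluing step: making precise the fact that the $a_t$'s cohere enough to define a single natural transformation, and verifying that this gluing commutes with the two triangles of the lifting square; once that is in hand, the uniformity bookkeeping is routine pullback-pasting.
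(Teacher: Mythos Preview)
Your argument is correct and is exactly the colimit-gluing strategy the paper has in mind; indeed the paper omits the proof entirely, citing \cite[Theorem~9.1]{gambino2017frobenius} and remarking only that a general filling problem is a colimit of generating ones. The one step worth making slightly more explicit is the verification of $am=u$: since $m$ is monic, the pullback $\pbk{ms}(m)$ is an isomorphism, and the upper triangle of the filling condition then forces $a_{ms}=us$, which is the precise content of your ``canonical point'' remark.
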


We now wish to express the diagonal filling property in the internal type theory. So let $p \co A \to X$ be a small map, $m \co S \mono \yon{c}$ be a cofibration and consider their classifying maps $\alpha \co X \to \UV$ and $\varphi \co \yon{c} \to \Cof$, respectively. The outer diagram in \eqref{equ:lift-problem-gen-cof-triv-fib} can then be expressed equivalently in the form,
\begin{equation}\label{diag:classified-filling-problem}
\begin{tikzcd}
\pat{\varphi}  \ar[rr, "{(xi,u)}"] \ar[d, swap, tail, "i"] && \typecomp{X}{\alpha} \ar[d, "p_{\alpha}"] \\
{\yon{c}} \ar[rr, swap, "x"] && X \diadot{,}
\end{tikzcd}
\end{equation} 
for a suitable $u \co \pat{\varphi} \to E$ with $\yon{c}.\pat{\varphi}\vdash u \co \alpha(xi)$.  Recall thereby that $i\co\pat{\varphi}\mono \yon{c}$ abbreviates the comprehension,
\begin{equation}\label{diag:classified-filling-problem2}
\begin{tikzcd}
\pat{\varphi} \ar[d, swap, tail, "i"]  \ar[r, "="] & \typecomp{\yon{c}}{\pat{\varphi}} \ar[d, "p_{\pat{\varphi}}"]  \\
{\yon{c}} \ar[r, swap, "="] & {\yon{c}} \, \diadot{.}
\end{tikzcd}
\end{equation} 

The commutative square \eqref{diag:classified-filling-problem} represents a partial element $(\varphi, \lambda{u})$ of $\alpha$ at $x \co \yon{c}\to X$, 
in the sense of \cref{defn:sem-partial-element}, where $c\forces \varphi\co \Phi$ and $c\forces \lambda{u} \co \pat{\varphi}\to\alpha(x)$.
We show that also the existence of a diagonal filler for~\eqref{diag:classified-filling-problem} can be written as a forcing condition. 
First recall that for any $x \co \yon{c} \to X$ and $a\co\yon{c} \to \UVptd$, the condition
$c \forces a \oftype \alpha(x)$ means that there is a (necessarily unique)  lift of the form,
\begin{equation}\label{diag:classified-filling-problem3}
 \begin{tikzcd}  
  &[20pt] \typecomp{X}{\alpha} \arrow[d]  \\
\yon{c} \arrow[r, swap, "{x}"] \ar[ur, dotted, "{(x,a)}"] & X\diadot{.}
\end{tikzcd}
\end{equation}
Then for maps $\varphi\co \yon{c} \to \Cof$ and $u \co \pat{\varphi} \to \UVptd$ 
as in \eqref{diag:classified-filling-problem} above, we construct the type 
\[
(u\req{\varphi}a) \co \yon{c} \too \UV
\]
as follows.  The maps $\varphi$ and $\alpha$ give rise to types over $\yon{c}$ as indicated below. 
\begin{equation}\label{diag:fillingtype1}
\begin{tikzcd}
\UVptd \ar[d] & \ar[l] \typecomp{\yon{c}}{\pat{\varphi}}  \ar[rd, swap, tail,swap, "i"] && \typecomp{\yon{c}}{\alpha(x)} \ar[ld,swap, "p_{\alpha}"] \ar[r] & \ar[d] \UVptd \\
\UV & \ar[l,"\pat{-}"] \Cof & \ar[l, "\varphi"] \yon{c} \ar[r, swap, "x"] & X \ar[r,swap, "\alpha"] & \UV \mathrlap{.} 
\end{tikzcd}
\end{equation} 
The pullback of these two display maps may be regarded as weakenings of each context by the other; we choose the one indicated in the following diagram:
\begin{equation}\label{diag:fillingtype2}
\begin{tikzcd}
 && \ar[ld, "p_{\alpha(xi)}"] \yon{c}.\pat{\varphi}.\alpha(xi) \ar[rd] && \\
\UVptd \ar[d] & \ar[l] \typecomp{\yon{c}}{\pat{\varphi}}  \ar[ru, dotted, bend left = 20] \ar[rd, tail, "i"] 
		&& \typecomp{\yon{c}}{\alpha(x)} \ar[ld, swap, "p_{\alpha(x)}"] \ar[r] & \ar[d] \UVptd \\
\UV & \ar[l,"\pat{-}"] \Cof & \ar[l, "\varphi"] \yon{c} \ar[r,swap, "x"] & X \ar[r,swap, "\alpha"] & \UV \mathrlap{.} 
\end{tikzcd}
\end{equation} 
Maps $(xi,u) \co \pat{\varphi} \to \typecomp{X}{\alpha}$ and $(x,a) \co \yon{c}\to \typecomp{X}{\alpha}$ as in \eqref{diag:classified-filling-problem} and \eqref{diag:classified-filling-problem3}  then determine two sections of the indicated display map $p_{\alpha(xi)}$. Expressing the same thing in the type theory of $\tE$, we have $\yon{c}.\pat{\varphi} \vdash u \co \alpha(xi)$ and $\yon{c}.\pat{\varphi} \vdash a(i) \co \alpha(xi)$.
Note that the weakened term and type $\yon{c}.\pat{\varphi} \vdash a(i) \co \alpha(xi)$ do not depend on $\pat{\varphi}$.
Over $\yon{c}$ we can then define the type
\begin{equation}
\label{eq:exto-def1}
\textstyle
(u\req{\varphi}a)\ \defeq\  \dprd{\pat{\varphi}}  u  =_{\alpha(xi)} a(i)   \mathrlap{.}
\end{equation} 
As defined $(u \req{\varphi}a) \co \yon c \to U$, \ie we have a type. However, $(u \req{\varphi}a)$ can be viewed as a proposition since it factors through $\iprop$, which expresses the condition that ``$u$ is the restriction of $a$ to $\pat{\varphi}$'' or, equivalently, ``$u$ extends along $\pat{\varphi}\mono \yon{c}$ to $a$", as in~\cite[Definition~5.1]{orton-pitts}. Because of this, we shall sometimes
merely focus on existence of elements of $(u \req{\varphi}a)$ rather than on their particular form. Diagonal filling of \eqref{diag:classified-filling-problem} can now be expressed in terms of forcing as in \cref{lemma:restriction.forcing} below, where we make use of \cref{nom:fun-and-prod-types}.

\begin{lemma}
\label{lemma:restriction.forcing}
Let $\alpha \co X \to \UV$, $x \co \yon{c} \to X$, $\varphi \co \yon{c} \to \Cof$ and $u \co \pat{\varphi} \to E$ so that the diagram of solid arrows
\begin{equation*}
 \begin{tikzcd}
{\pat{\varphi}} \arrow[d, tail, swap, "i"] \arrow[r, "{u'}"]    &[30pt] \typecomp{X}{\alpha} \arrow[d, "p_{\alpha}"]  \\
\yon{c} \arrow[r, swap, "{x}"] \ar[ur, swap, dotted, "{a'}"] & X 
\end{tikzcd}
\end{equation*}
commutes, where $u' =(x i, u)$. Then there is a bijection between:
\begin{enumerate} 
 \item  diagonal fillers $a' \co \yon c \to \typecomp{X}{\alpha}$ as indicated with the dotted arrow in the diagram, 
 \item maps $a \co \yon{c} \to \UVptd$ such that 
 \[
 c  \forces a \co \alpha(x) \mathrlap{,} \quad 
 c \forces u\req{\varphi}a \mathrlap{.}
\]
\end{enumerate}
\end{lemma}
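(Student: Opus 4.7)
The plan is to establish the equivalence by separately translating each piece of data in the lifting problem into the corresponding forcing statement, using the pullbacks defining $\typecomp{X}{\alpha}$ and the earlier forcing characterisations for function and $\Pi$-types.

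First I would decompose $u'$ via the pullback \eqref{diag:context-extension}. Since the upper triangle of the given square says $p_\alpha \circ u' = x \circ i$, its universal property produces a unique $u \co \pat{\varphi} \to \UVptd$ such that $u' = (xi, u)$, and the requirement $\dspu \circ u = \alpha \circ x \circ i$ is precisely the judgement $\yon{c}.\pat{\varphi} \vdash u \co \alpha(xi)$. Transposing across the $\lambda$-abstraction as in \cref{prop:forcing.function} (or reading off \cref{cor:Forcingiff}) rewrites this as $c \forces \lambda u \co \pat{\varphi} \to \alpha(x)$. Symmetrically, a diagonal filler $a' \co \yon c \to \typecomp{X}{\alpha}$ with $p_\alpha \circ a' = x$ factors as $a' = (x, a)$ for a unique $a \co \yon{c} \to \UVptd$ with $\dspu \circ a = \alpha \circ x$, which is the content of $c \forces a \co \alpha(x)$ by \cref{defn:forcing.kjshott}.

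The remaining step is to match the triangle identity $a' \circ i = u'$ with $c \forces u \req{\varphi} a$. Using the two decompositions from the previous step, $a' \circ i = u'$ translates into $a \circ i = u$ as natural transformations $\pat{\varphi} \to \UVptd$. To go from this to the forcing statement, I would unfold $c \forces \dprd{\pat{\varphi}} u =_{\alpha(xi)} a(i)$ using \cref{prop:forcing.dprd} on the outer $\Pi$ and then \cref{prop:forcing.exteq} on the propositional equality. The $\Pi$-clause says that for every $f \co d \to c$ and every $d \forces p \co \pat{\varphi(f)}$ the two maps $u \circ p$ and $a \circ i \circ p$ have to be forced equal, and the equality clause reduces this to genuine equality of these maps at each such stage. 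By the Yoneda lemma (invoking representability of~$\pat{\varphi}$ via its comprehension \eqref{diag:classified-filling-problem2} and the uniform-family characterisation of~\cref{cor:forcing.dependent-elements}), this stagewise equality is equivalent to $u = a \circ i$ as maps $\pat{\varphi} \to \UVptd$.

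The main potential obstacle is this final translation, where some care is needed to see that forcing a dependent product of a propositional equality collapses to strict equality of the two representing maps; this rests on the extensionality of propositional equality (\cref{rmk:Equality}) and on the fact that forcing for $\Pi$ is determined by its values on all representable stages, as packaged by \cref{cor:forcing.dependent-elements}. Once these three steps are assembled, the triple $(u, a, \text{witness of } u\req{\varphi}a)$ corresponds bijectively to the pair $(u', a')$ with $a'$ a diagonal filler, giving the asserted equivalence.
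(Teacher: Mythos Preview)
Your proposal is correct and follows essentially the same approach as the paper: decompose $u'$ and $a'$ via the pullback defining $\typecomp{X}{\alpha}$ to obtain $u$ and $a$ with the first two forcing conditions, then unfold $c\forces u\req{\varphi}a$ using the forcing clauses for dependent products and propositional equality to reduce it to the pointwise (hence actual, via \cref{thm:equality-pointwise}) equality $u = a\circ i$, which is exactly the upper-triangle condition $u' = a'\circ i$. The paper carries out the same computation slightly more explicitly, tracking the substitutions in the type $(u =_{\alpha(xi)} a(i))(f,e)$ and noting that the uniformity condition for the $\Pi$-type is vacuous because the body is a proposition, but the structure of the argument is the same as yours.
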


\begin{proof} 
We have already observed that maps $a \co \yon{c} \to \UVptd$ such that $c \forces a \co \alpha(x)$ correspond uniquely to maps $a' \co \yon{c} \to \typecomp{X}{\alpha}$ making the bottom triangle in the diagram commute.   
It therefore suffices to show that, for such $a$ and $a'$, $c \forces u \req{\varphi} a$ if and only if the top triangle also commutes.
Unfolding the definition of $u \req{\varphi}a$, assume that
\[
\textstyle c \forces \dprd{\pat{\varphi}}  u  =_{\alpha(xi)} a(i) \,. 
\]
Although the type $\dprd{\pat{\varphi}}  u  =_{\alpha(xi)} a(i)$ is a proposition and therefore requires no explicit forcing term, let us assume one  in order to apply the rule of forcing for dependent products (\cref{prop:forcing.dprd}) literally. Thus suppose we have
\[
\textstyle c \forces t\co \dprd{\pat{\varphi}}  u  =_{\alpha(xi)} a(i) \,. 
\]
Then by \cref{prop:forcing.dprd}, for every $f \co d \to c$ and every $d \forces e \co \pat{\varphi(f)}$, we have
\begin{equation}
\label{equ:subst-type}
d \forces \funapp (  t (f), e) \co ( u  =_{\alpha(xi)} a(i) ) (f, e)\,,
\end{equation}
where we applied part (ii) of \cref{prop:forcing.dprd}.\footnote{We applied it with $\pat{\varphi}$ for $\alpha$, $u  =_{\alpha(xi)} a(i)$ for $\beta$ and $\yon{c}$ for $X$; and $\id_{\yon{c}}$ for $x \co \yon{c} \to X$.}
The substitution in the type in~\eqref{equ:subst-type} can be calculated as follows:
\[
 \big( u  =_{\alpha(xi)} a(i) \big) (f, e) \ =\ \big( u(f, e)  =_{\alpha(xi(f, e))} a(i(f, e)) \big)\,.
\]
Thus, equivalently, for every $f \co d \to c$ and every lift $(f,e) \co \yon{d}\to \pat{\varphi}$, we have 
\begin{equation}
\label{eq:exto-def3}
d \forces u(f,e)  =_{\alpha(xi(f,e))}  a(i(f,e))\,,
\end{equation}
as indicated in the following diagram:
\begin{equation}\label{diag:lemmapartialfill2}
 \begin{tikzcd}
& {\pat{\varphi}} \arrow[d, tail, swap, "i"] \arrow[r, "{u'}"]    & \typecomp{X}{\alpha} \arrow[d, "p_{\alpha}"]  \\
\yon{d} \arrow[ur, "{(f, e)}"] \arrow[r, swap, "{f}"] & \yon{c} \arrow[r, swap, "{x}"] \ar[ur, swap, dotted, "{a'}"] & X \mathrlap{.} 
\end{tikzcd}
\end{equation}
But $a(i(f,e)) = a(f)$ and $\alpha(xi(f,e)) = \alpha(xf)$, so from \eqref{eq:exto-def3} we have
\[
d \forces u(f,e) =_{\alpha(xf)}  a(f) \,.
\]
Thus
\begin{multline}
u' (f,e)
= (xi,u) (f,e) 
= ( xi(f,e) ,u(f,e) ) 
=  (xf , u(f,e) )\\
= (xf , af)
= (x , a)f 
= a'f 
=  a' i (f,e).
\end{multline}
Since this holds for all $(f,e)$, we have $u' = a'i$, as required, by \cref{thm:equality-pointwise}.

Note that the uniformity condition for forcing the dependent product is trivial because we are forcing a proposition.
\end{proof}

\begin{rmk} \label{eq:restrictiontopartial}
Observe that by function extensionality~\cite[Axiom~2.9.3]{hottbook} the type $(u\req{\varphi}a)$ defined in \eqref{eq:exto-def1} is equivalent to the type $\lambda u = \lambda a$, where the equality is on the function type $\alpha(x)^{\pat{\varphi}} = (\pat{\varphi}\to \alpha(x))$ over $\yon{c}$ (making use of \cref{rmk:forcingTiscomsquare}).  The term $\lambda{a}$ here (see \cref{not:constant-functions}) therefore represents the restriction to $\pat{\varphi}$ of the constant $a$-valued function
and we have $(u\req{\varphi}a)\ =\ (\lambda u = \lambda a)$. 
\end{rmk}

We can now construct a classifier $\TFib(\alpha)$ for trivial fibration structures on~$\alpha$  (cf.~\cite[Section 6]{awodey-cubical-git}).
\begin{equation}
\label{def:TFib}
\textstyle
 \TFib(\alpha)\ \defeq\ \dprd{\varphi \oftype \Cof} \dprd{v \oftype   \alpha^{\pat{\varphi}} } \dsm{a \oftype  \alpha} v = \lambda a \mathrlap{.}
 \end{equation}
Note that under the propositions as types reading, this type expresses that every partial element of $\alpha$ extends to a total one.  Indeed, using the partial elements endofunctor $P$ from~\eqref{defn:typeof-partial-elements}, we can write this in the form
\begin{equation}
\label{eq:collapstriv-fib-type}
\textstyle 
\TFib(\alpha)  = \dprd{(\varphi, v) \oftype \T\alpha} \dsm{a \oftype  \alpha} v = \lambda a \mathrlap{.}	
 \end{equation} 

The notion of a uniform trivial fibration structure now arises inexorably from forcing of the type $\TFib(\alpha)$ of trivial fibration structures on $\alpha$.

\begin{thm} Let $\alpha \co X \to \UV$. The following conditions are equivalent.
\label{prop:ext-implies-tfib}
\label{prop:tfib-to-uniform-triv-fib}
\begin{enumerate}
\item There is a term $t$ such that $X \vdash t \oftype \TFib(\alpha)$.
\item For each $x\co \yon{c}\to X$ we have \( c \forces t_x \oftype  \TFib(\alpha)(x)\), uniformly in $x$.
\item There is a uniform trivial fibration  structure on  $\dsp{\alpha} \co \typecomp{X}{\alpha} \to X$.
\end{enumerate}
\end{thm}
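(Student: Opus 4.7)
The plan is to prove $(1)\Leftrightarrow(2)$ by an immediate appeal to Corollary \ref{cor:forcing.dependent-elements}, since $\TFib(\alpha) \co X \to \UV$ is a type in context $X$, and then to establish the substantive equivalence $(2)\Leftrightarrow(3)$ by unfolding the forcing conditions on $\TFib(\alpha)$ layer by layer and matching them against the data specified in \cref{defn:uniformtrivialfibration}. By \cref{thm:triv-cof-againts-generating}, it suffices to restrict the uniform filling data in $(3)$ to generating cofibrations $m \co \pat{\varphi} \mono \yon c$, which is exactly the level at which Kripke-Joyal forcing operates.

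First I would unfold the forcing of $\TFib(\alpha) = \dprd{\varphi \oftype \Cof} \dprd{v \oftype \alpha^{\pat{\varphi}}} \dsm{a \oftype  \alpha} v = \lambda a$ at a stage $x \co \yon c \to X$. Applying \cref{prop:forcing.dprd} to the outer $\Pi_{\varphi}$ and $\Pi_{v}$, the condition $c \forces t_x \oftype \TFib(\alpha)(x)$ amounts to the following: for every $f \co d \to c$, every $d \forces \varphi \oftype \Cof$, and every $d \forces \lambda u \oftype \pat{\varphi} \to \alpha(xf)$, there is provided (uniformly in $f$, by \cref{cor:Forcingiff}) a pair $d \forces \langle a_{(f,\varphi,u)}, p_{(f,\varphi,u)}\rangle \oftype \dsm{a \oftype \alpha(xf)} \lambda u = \lambda a$, which by \cref{prop:forcing.dsum} amounts to an element $d \forces a_{(f,\varphi,u)} \oftype \alpha(xf)$ together with the forced equation $d \forces \lambda u = \lambda a_{(f,\varphi,u)}$. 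By \cref{rmk:forcingTiscomsquare} and \cref{eq:restrictiontopartial}, this data is equivalent (via the bijection of \cref{rmk:sections}) to a choice, for every commutative square of the form \eqref{diag:classified-filling-problem} with $m$ a generating cofibration, of a diagonal filler; this is exactly the content of \cref{lemma:restriction.forcing}.

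The next step is to check that the uniformity condition built into forcing of $\Pi$-types matches the uniformity condition in \cref{defn:uniformtrivialfibration}. The forcing of $\Pi_\varphi \Pi_v(\ldots)$ asserts that $a_{(f,\varphi,u)}$ is natural in $f \co d \to c$: for any $g \co e \to d$ one has $a_{(f,\varphi,u)}\,g = a_{(fg,\varphi g, u(g, -))}$, which translates under \cref{lemma:restriction.forcing} precisely to the condition $j(g^* m, ug', xf g) = j(m,u,xf)\circ g$ displayed in \eqref{diag:uniformcofibration}, for $g' \co g^*\pat{\varphi} \to \pat{\varphi}$ the canonical map. Combined with the assumed uniformity of the family $(t_x)_{x \co \yon c \to X}$ in $x$ itself, this yields the full uniformity of the resulting lifting operation. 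Running this argument backwards recovers a uniform forcing witness from any uniform trivial fibration structure.

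I expect the main obstacle to be bookkeeping rather than mathematical depth: one must carefully track the interplay between the three substitution parameters (the stage $x$, the Π-parameter $\varphi$, and the Π-parameter $v$) and verify that the single uniformity axiom of \cref{defn:uniformtrivialfibration} is equivalent to the conjunction of naturality in $x$ and the $\Pi$-forcing uniformities in $\varphi$ and $v$. Once the diagrammatic translation via \cref{lemma:restriction.forcing} is in place and \cref{thm:triv-cof-againts-generating} is invoked to reduce to generating cofibrations, the equivalence $(2)\Leftrightarrow(3)$ is a direct verification.
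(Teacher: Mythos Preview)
Your proposal is correct and follows essentially the same route as the paper's own proof: $(1)\Leftrightarrow(2)$ via \cref{cor:forcing.dependent-elements} (the paper cites the equivalent \cref{prop:validity-and-forcing}), and $(2)\Leftrightarrow(3)$ by unfolding the forcing of the iterated $\Pi$-$\Sigma$ type and invoking \cref{lemma:restriction.forcing} and \cref{thm:triv-cof-againts-generating}. The only cosmetic difference is that the paper first collapses the two $\Pi$'s into a single $\Pi$ over $\T\alpha$ using \eqref{eq:collapstriv-fib-type} before unfolding, whereas you unfold $\Pi_{\varphi}$ and $\Pi_{v}$ separately; your explicit remark about combining the uniformity in $x$ with the $\Pi$-forcing uniformity is exactly what the paper's notation $t_{xf}(\varphi,\lambda u)$ is silently absorbing.
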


\begin{proof} Observe that (i) and (ii) are equivalent by \cref{prop:validity-and-forcing}.  
 
Now suppose (ii). For each $x \co \yon{c} \to X$, we have \( c \forces t_x \oftype  \TFib(\alpha)(x)  \), uniformly in $x$.  
By definition \eqref{eq:collapstriv-fib-type}  this means  
\[ 
\textstyle
c \forces  t_x \co   \dprd{(\varphi, v) \oftype \T \alpha(x) } \dsm{a \oftype \alpha(x)}  v = \lambda a  \mathrlap{.} 
\]
Unwinding the forcing conditions using \cref{prop:forcing.dprd} and \cref{prop:forcing.dsum}, we see that for every  $f \maps d \to c$ and every
\begin{equation}
\label{eq:coffill0}
\textstyle
d \forces (\varphi, v) \co  \dsm{\varphi \oftype \Phi}  \alpha(xf)^{\pat{\varphi}},
\end{equation} 
we have $t_{xf} (\varphi, v) = \big( t_{x f} (\varphi, v)_1 , t_{x f} (\varphi, v)_2 \big)$ with 
\begin{align}
\label{eq:coffill1} 
d &\forces    t_{x f} (\varphi, v)_1 \co \alpha (x f)\\ 
\label{eq:coffill2} 
d &\forces t_{x f} (\varphi, v)_2 \co  v = \lambda( t_{x f} (\varphi, v)_1 ) . 
\end{align}
Without loss of generality, we may assume that $v= \lambda u$, so that by \eqref{eq:restrictiontopartial} the second condition becomes
\begin{equation}
\label{eq:coffill3}
d \forces t_{x f} (\varphi, \lambda u)_2  \co  u \req{\varphi} t_{x f} (\varphi, \lambda u)_1 .
\end{equation}
Moreover, for any \(g \maps e  \to d\), we have
\begin{equation}
\label{eq:coffill4}
(t_{x f}(\varphi, \lambda u) )g = t_{x fg} (\varphi g , \lambda u g)\,.
\end{equation}
Unfolding the condition \eqref{eq:coffill0} using \cref{prop:forcing.partial} yields a commutative diagram 
\begin{equation}
\label{diag:forcing.cpf}
\begin{tikzcd}
 {\pat{\varphi}} \arrow[r, "{u}"] \arrow[d, tail]  &[20pt] \typecomp{X}{\alpha} \arrow[d] \\
 \yon{d}  \arrow[r, swap, "{x f}"] & X \diadot{,}
\end{tikzcd}
\end{equation}
and the uniformity condition  yields, for any  \(g \maps e  \to d\), a further pullback diagram on the left,
\begin{equation}
\label{diag:forcing.uniform.problems}
\begin{tikzcd}
{\pat{\varphi g}} \arrow[r] \arrow[d, tail] & {\pat{\varphi}} \arrow[r, "{u}"] \arrow[d, tail]  &[20pt] \typecomp{X}{\alpha} \arrow[d] \\
\yon{e}  \arrow[r, swap, "{g}"] & \yon{d}  \arrow[r, swap, "{x f}"] & X \diadot{.}
\end{tikzcd}
\end{equation}
Now we can apply \cref{lemma:restriction.forcing}  to \eqref{eq:coffill1} and \eqref{eq:coffill3} to conclude that 
$t_{x f}(\varphi,\lambda u)_1 $ is a diagonal filler in
\[
\begin{tikzcd}
{\pat{\varphi}} \arrow[d, tail] \arrow[r, "{u}"]  &[20pt]   \typecomp{X}{\alpha} \arrow[d] \\
{\yon{d}} \arrow[ru, dotted] \arrow[r, swap, "{x f}"] &[20pt]  X \diadot{,}
\end{tikzcd}
\]
while $t_{x fg} (\varphi g , \lambda u g)_1$ is similarly a diagonal filler for the outer rectangle in \eqref{diag:forcing.uniform.problems}.

In this way, forcing $\TFib(\alpha)$ produces diagonal fillers 
\begin{equation}\label{defeq:trivfibstru}
j(\varphi, xf, u)\ \defeq\  t_{x f}(\varphi,u)_1
\end{equation}
against all generating cofibrations $\pat{\varphi} \mono \yon{d}$. Finally, the uniformity of these fillers $j(\varphi, xf, u)$ with respect to all \(g \maps e  \to d\) is provided by the uniformity condition \eqref{eq:coffill4}.  

The inference from (iii) to (ii) is essentially the same argument backwards.
\end{proof}

\begin{cor}\label{triv-fib-force}
Let $\alpha \co X \to \UV$. The  display map $p_\alpha \co X.\alpha\to X$ is a uniform trivial fibration if and only if $\tE\forces t\co\TFib(\alpha)$ for some $t$.  Indeed, $t$ itself may be regarded as a trivial fibration structure on $p_\alpha$ in virtue of the specification in \eqref{defeq:trivfibstru}. \qed
\end{cor}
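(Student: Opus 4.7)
The plan is to observe that this corollary is a direct repackaging of the immediately preceding \cref{prop:tfib-to-uniform-triv-fib}, so the work consists only in translating the notation $\tE \forces t \co \TFib(\alpha)$ into the pointwise forcing statement appearing as condition (ii) there. Since $\alpha \co X \to \UV$ makes $\TFib(\alpha)$ a type over $X$, I would interpret $\tE \forces t \co \TFib(\alpha)$ as forcing the corresponding closed type in the slice topos $\tE/_{X}$, and then invoke \cref{prop:forcing-eq-closedsliceforcing} (equivalently the slice version of the soundness and completeness result \cref{cor:SandC}): this gives a bijective correspondence between such witnesses $t$ and uniform families $(t_x)_{(c,x) \in \int X}$ with $c \forces t_x \co \TFib(\alpha)(x)$. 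That is exactly the content of condition (ii) of \cref{prop:tfib-to-uniform-triv-fib}, and the theorem then identifies it with condition (iii), namely the existence of a uniform trivial fibration structure on $p_\alpha$. This establishes the biconditional.

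For the second assertion — that $t$ itself may be regarded as a trivial fibration structure — I would point to the explicit recipe \eqref{defeq:trivfibstru} extracted in the proof of \cref{prop:tfib-to-uniform-triv-fib}, where the filler against a generating cofibration $\pat{\varphi} \mono \yon d$ arising from a lifting problem is defined to be $j(\varphi, xf, u) \defeq t_{xf}(\varphi, \lambda u)_1$. The uniformity of these fillers with respect to maps $g \co \yon e \to \yon d$, in the sense of \cref{defn:uniformtrivialfibration}, follows from the uniformity condition \eqref{eq:coffill4} for the forcing family; and fillers against arbitrary cofibrations are then determined by those against generating cofibrations via \cref{thm:triv-cof-againts-generating}. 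Hence the single term $t$ encodes exactly the data of a uniform trivial fibration structure on $p_\alpha$.

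There is no genuine obstacle here: both directions of the biconditional, as well as the concrete identification of $t$ with a fibration structure, have already been carried out inside the proof of \cref{prop:tfib-to-uniform-triv-fib}. The role of the corollary is to emphasize, by means of the forcing slogan $\tE \forces t \co \TFib(\alpha)$, that the Kripke--Joyal semantics makes the classifying type $\TFib(\alpha)$ literally represent the structured notion of uniform trivial fibration — a viewpoint that will be exploited in \cref{sec:unifib} to build classifying types and universal fibrations.
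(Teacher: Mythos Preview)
Your proposal is correct and matches the paper's intent: the corollary carries a bare \qed\ in the paper, with no proof given, precisely because it is the restatement of \cref{prop:tfib-to-uniform-triv-fib} that you describe. Your unpacking of $\tE\forces t\co\TFib(\alpha)$ via \cref{prop:forcing-eq-closedsliceforcing} and your pointer to \eqref{defeq:trivfibstru} for the second sentence are exactly the implicit steps the paper is relying on.
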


\subsection*{Uniform trivial fibrations as algebras}

For $X \in \tE$, we defined the pointed endofunctor $\T_X \maps \tE/_{X} \to \tE/_{X}$  on the slice topos $\tE/_{X}$ in \eqref{defn:typeof-partial-elements}.  An algebra for $\T_X$ consists of an object 
    $p \co A \to X$ of $\tE/_{X}$ and  a retraction of $\eta_A  \co A \to \T_X(A)$ over $X$, \ie 
    a map $s \maps \T_X(A) \to A $ over $X$ such that
    \[
    \begin{tikzcd}
    A \ar[r, "\eta_A"] \ar[dr, bend right = 20, swap, "\id_A"] & \T_X(A) \ar[d, "s"] \\
     & A 
     \end{tikzcd}
     \]
 commutes.  Such an algebra is the same thing as a uniform fibration structure on $p \co A \to X$ (\cite[Theorem~9.8]{gambino2017frobenius}).
 We give a new simple proof of this fact using forcing, which in particular avoids any reference to Garner's small object argument~\cite{garner:small-object-argument}  and left Kan extensions (cf.~\cite[Proposition 14]{awodey-cubical-git}).

 For this, let us first consider the object~$\Talg(A)$ of algebra structures on $p$ by taking the following pullback in~$\tE/_{X}$, where we write $A$ for $p \co A \to X$ as an object in $\tE/_{X}$ and $\T$ for $\T_X$:
\begin{equation}
\label{diag:algebra-strutures-of-partial-classifier-polynomial}
 \begin{tikzcd}
     \Talg(A) \ar[r] \ar[d, swap]  & {A^{\T(A)}}
      \ar[d, "A^{\eta_A}"] \\
     1 \ar[r, swap, "{\id_A^\sharp}"] & {A^{A}} \mathrlap{.}
    \end{tikzcd}   
\end{equation}
If $p \co A \to X$ is classified by $\alpha \co X \to \UV$, then $\Talg(A) \to X$ is classified by the map $\Talg(\alpha) \co X \to \UV$ defined by setting
\begin{equation} 
 \label{eq:plusalgintt}
 \textstyle
\Talg(\alpha)  =  \dsm{s \oftype \alpha^{\T\alpha}} s\circ\eta = \id_A \mathrlap{.} 
 \end{equation}
We can then strengthen \cref{prop:ext-implies-tfib} to the following isomorphism.

\begin{thm}\label{prop:plusistfib} Let $\alpha \co X \to \UV$. Then there is an isomorphism $\Talg(\alpha)  \cong \TFib(\alpha)$.
\end{thm}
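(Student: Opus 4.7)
The plan is to apply the distributivity isomorphism \eqref{prop:forcing.AC} to the collapsed form \eqref{eq:collapstriv-fib-type} of $\TFib(\alpha)$, producing an isomorphism
\[\TFib(\alpha)\ \cong\ \dsm{s \oftype \alpha^{\T\alpha}}\, \dprd{t \oftype \T\alpha}\, \big(\tproj_2(t) = \lambda(\funapp(s,t))\big).\]
Comparing this with $\Talg(\alpha) = \dsm{s \oftype \alpha^{\T\alpha}} s\circ\eta = \id_\alpha$ from \eqref{eq:plusalgintt}, it then suffices to show that, for each $s \oftype \alpha^{\T\alpha}$, the inner propositions
\[\dprd{t \oftype \T\alpha}\, \big(\tproj_2(t) = \lambda(\funapp(s,t))\big) \qquad\text{and}\qquad s\circ\eta = \id_\alpha\]
are logically equivalent.

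The forward implication is direct: specialise $t$ to $\eta(a) = (\ttrue, \lambda a)$ for an arbitrary $a \oftype \alpha$. Since $\pat{\ttrue}$ is terminal in the ambient slice, function extensionality on $\alpha^{\pat{\ttrue}} \cong \alpha$ reduces the equation $\lambda a = \lambda(s(\eta a))$ to $a = s(\eta a)$.

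The backward implication is the crux. Assume $s\circ\eta = \id_\alpha$; by function extensionality it suffices to show, for every $(\varphi, v) \oftype \T\alpha$ and every $p \oftype \pat{\varphi}$, that $v(p) = \funapp(s,(\varphi, v))$. I would work in the context extended by $p \oftype \pat{\varphi}$: because $p$ witnesses $\varphi$, one has $\varphi = \ttrue$ as cofibrant propositions, so $\pat{\varphi}$ becomes contractible with centre $p$ and the function $v$ reduces to $\lambda(v(p))$. Hence in this extended context $(\varphi, v) = (\ttrue, \lambda(v(p))) = \eta(v(p))$ in $\T\alpha$, and the unit equation gives $\funapp(s,(\varphi, v)) = \funapp(s,\eta(v(p))) = v(p)$, as required.

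The main obstacle is to make the identification $(\varphi, v) = \eta(v(p))$ precise in the context extended by $p \oftype \pat{\varphi}$. Categorically, the equality $\varphi = \ttrue$ of first components follows from the pullback defining $\pat{\varphi}$, which forces the restriction of $\varphi$ along $\pat{\varphi} \hookrightarrow X$ to factor through $\tcof \co 1 \mono \Cof$; the equality $v = \lambda(v(p))$ of second components then follows from $\alpha^{\pat{\ttrue}} \cong \alpha$ in the slice. Naturality of the resulting isomorphism in $\alpha$ and in the ambient context is immediate from these pointwise definitions.
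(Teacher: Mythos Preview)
Your proposal is correct and follows essentially the same route as the paper: apply the type-theoretic axiom of choice \eqref{prop:forcing.AC} to \eqref{eq:collapstriv-fib-type}, then show the two inner propositions are logically equivalent by specialising to $\eta(a)$ in one direction and by observing that, restricted to $\pat{\varphi}$, the partial element $(\varphi,v)$ coincides with $\eta(v(p))$ in the other. The paper carries out the first direction via explicit Kripke--Joyal forcing (as a demonstration of the method) and phrases the converse diagrammatically as $\eta\circ u = (\varphi,\lambda u)\circ i$, but the mathematical content is identical to your internal argument; note only that the paper writes $\tcof$ rather than $\ttrue$ for the top element of $\Cof$.
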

\begin{proof} Recall the description of $\TFib(\alpha)$ in \eqref{eq:collapstriv-fib-type}.
Applying to it the type-theoretic axiom of choice~\eqref{prop:forcing.AC} yields a natural isomorphism
 \begin{equation}
 \label{eq:tfibtoplusalg}
 \textstyle
   \left( \dprd{(\varphi, v) \oftype \T\alpha}  \dsm{a \oftype \alpha} v = \lambda a   \right) \cong 
     \left(\dsm{s \oftype  \alpha^{\T\alpha}} \dprd{(\varphi, v) \oftype \T\alpha}  v = \lambda\,\funapp(s,(\varphi, v)) \right).
 \end{equation} 
We claim that, up to natural isomorphism, the right hand side is the object $\Talg(\alpha)$ of algebra structures on $\alpha$ from \eqref{eq:plusalgintt}.  
It  suffices to show that (in the  context extended by $s \co \alpha^{\T\alpha}$) there is a natural isomorphism
\begin{equation}\label{eq:forcingTalgisoTfib3} 
\textstyle
 \dprd{(\varphi, v) \oftype \T\alpha} \big( v = \lambda\,\funapp(s,(\varphi, v)) \big) \
   \cong\  
   \dprd{a \oftype \alpha} \funapp( s\circ\eta, a) = a )\,.
\end{equation}
Moreover, since both sides are now propositions, it suffices to show that they are logically equivalent, which we will do by forcing.  

Thus take any $x \co \yon{c} \to X$ and any $s \co \yon{c} \to \UVptd$ with $c\forces s \co ({{\T\alpha}\to\alpha})(x)$ and suppose that
\begin{equation}\label{eq:forcingTalgisoTfib1} \textstyle
c \forces \dprd{(\varphi, v) \oftype \T\alpha(x)}  v = \lambda\, \funapp(s,(\varphi, v)) \mathrlap{.}
\end{equation}
Then for all $f \co d \to c$ and all $(\varphi, v)$ such that
\begin{equation}
\label{forces:partialelementtoiso1}
d \forces (\varphi, v) \oftype \T\alpha(xf)   \mathrlap{,}
\end{equation}
we have
\begin{equation}
\label{forces:partialelementtoiso2}
d \forces  v =\lambda\, \funapp(s(f), (\varphi, v))  \mathrlap{.} 
\end{equation}
To see what this means, consider the diagram on the left in \eqref{diag:pluseqtfib} below:
\begin{equation}
\label{diag:pluseqtfib}
\begin{tikzcd}
 \pat{\varphi} \arrow[rr, "{u}"] \arrow[d, tail,swap, "i"] 
&& {\alpha(x)}  \arrow[d, tail, swap, "\eta"] \\
 \yon{d}  \arrow[rr,swap, "\pair{\varphi}{\lambda u}"]
  \arrow[rru, swap, "j", dotted]
&& {\T\alpha(x)} \mathrlap{,}  \arrow[u, bend right=30, swap, dotted, "\sigma_f "]   
\end{tikzcd}
\qquad \qquad 
\begin{tikzcd}
 \pat{\tcof} \arrow[rr, "{u}"] \arrow[d, equals] 
&& \alpha(x)  \arrow[d, tail, swap, "\eta"] \\
 \yon{d}  \arrow[rr,swap, "\pair{\tcof}{a(f)}"]
 \arrow[rru, swap, "a(f)", dotted]
&& \T\alpha(x) \mathrlap{.}  \arrow[u, bend right=30, swap, dotted, "\sigma_f"] 
\end{tikzcd}
\end{equation}
The solid part is the pullback in $\tE/\yon{c}$ arising from \eqref{forces:partialelementtoiso1} and the forcing rules for $\T\alpha(x)$ in \cref{prop:forcing.partial}, with $\lambda u = v$, and we have written $\yon{d}$ for the map $\yon{f}\co\yon{d}\to \yon{c}$ and $\alpha(x)$ for the projection $p_{\alpha(x)} \co \typecomp{\yon{c}}{\alpha(x)}\to \yon{c}$ and similarly for $\T\alpha(x)$.  

By \cref{eq:restrictiontopartial}, the condition \eqref{forces:partialelementtoiso2} thus becomes
\[\textstyle
d \forces u =^{\varphi} \funapp(s(f),(\varphi, \lambda u)).
\]
By \cref{lemma:restriction.forcing}, the map $j = \funapp(s(f),(\varphi, \lambda u))$ therefore fits on the diagonal in the lefthand diagram of \eqref{diag:pluseqtfib} and makes the top triangle  commute.

Take $\sigma_f$ with $\lambda \sigma_f = s(f)$, so that  
\[
j = \funapp(s(f),(\varphi, \lambda u)) = \funapp(\lambda\sigma_f , (\varphi, \lambda u)) = \sigma_f (\varphi, \lambda u).
\]
Then we claim that $\sigma_f \,\eta = \id$, which will give the required righthand side of \eqref{eq:forcingTalgisoTfib3}.

To prove the claim, recall that \eqref{forces:partialelementtoiso2}  holds for all  $f \co d\to c$ and all $(\varphi, \lambda u)$ with
\[
d \forces (\varphi, \lambda u) \oftype \T\alpha(xf) \mathrlap{.}
\]  
Since for any $c \forces a \oftype \alpha(x)$, we can take $c \forces (\tcof, \lambda a) \oftype \T\alpha(x)$ to get~$d \forces (\tcof, \lambda a(f)) \oftype \T\alpha(xf)$,  we will then have $a(f) = \sigma_f \,(\tcof, a(f))$. Indeed, in this case we have $i = \id \co \pat{\tcof}\mono \yon{d}$ and so $u = a(f)$ as on the right in \eqref{diag:pluseqtfib} above, so the (partial) commutativity of the diagram on the left specialises on the right to give $a(f) = \sigma_f\,(\tcof, a(f)) $.  

Since this holds for all $f \co d\to c$ and is uniform with respect to all $g\co e \to d$, and since~$(\tcof, a_f) = \eta(a_f)$, we conclude that 
\begin{equation*}
\textstyle
c \forces \dprd{a \oftype \alpha(x)} (  \sigma\, \eta(a) = a \big) \mathrlap{.}
\end{equation*}

Conversely, $\id =s \circ \eta$ implies $u = s \circ \eta \circ u$, and since $\eta\circ u = (\varphi, \lambda u) \circ i$, it follows that~$u = s\circ (\varphi,u) \circ i$, and thus $u =^\varphi \funapp(s, (\varphi, u))$.
\end{proof}

\begin{cor}[Pullback stability of $\TFib$] \label{cor:TFibstable}
Let $\alpha \co X \to \UV$. Then the object $\TFib(\alpha)$ over~$X$ is stable under pullback along any map $t \co Y \to X$, in the sense that the right hand square below is a pullback whenever the left hand one is:
 \[
 \begin{tikzcd}
\typecomp{Y}{\beta} \arrow[d] \arrow[r] & \typecomp{X}{\alpha} \arrow[d] \\ 
Y \ar[r, swap, "t"] & X \mathrlap{,} 
 \end{tikzcd} \qquad
  \begin{tikzcd}
 \TFib(\beta)  \arrow[d] \ar[r] & \TFib(\alpha) \arrow[d] \\
Y \ar[r, swap, "t"] & X \mathrlap{.}
  \end{tikzcd}
 \]
In other words, $\pbk{t} \TFib(\alpha) \cong \TFib(\alpha(t))$. 
 \end{cor}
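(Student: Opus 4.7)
The plan is to reduce the statement to the fact that $\TFib(\alpha)$ is built entirely out of type constructors that commute strictly with substitution. Recall the definition
\[
\textstyle
\TFib(\alpha)\ =\ \dprd{\varphi \oftype \Cof} \dprd{v \oftype \alpha^{\pat{\varphi}}} \dsm{a \oftype \alpha} v = \lambda a,
\]
which uses only $\Sigma$, $\Pi$, the equality type, $\lambda$-abstraction, the function type, and the cofibrant comprehension $\pat{-}$. As shown in the paragraphs containing \eqref{equ:sigma-pair} and \eqref{equ:pi-lambda}--\eqref{diagram:Picomp}, each of these operations arises from a polynomial functor on $\UV$, and their Beck--Chevalley coherences express exactly the fact that the associated classifying maps commute strictly with precomposition by any $t \co Y \to X$.

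The first step is to record that the partial-elements constructor $\T$ is itself pullback-stable, i.e.\ $\T(\alpha)(t) = \T(\alpha(t))$. This follows because $\T(\alpha) = \Sigma_{\varphi \oftype \Cof}\alpha^{\pat{\varphi}}$ is a composite of the stable constructors above together with the comprehension of $\tcof \co 1 \mono \Cof$ (which is stable since it is defined by pullback); alternatively, this is a direct consequence of the $\tE$-indexed monad structure on $(\T,\eta,\mu)$ from \cref{thm:monad-structure-partial-elts}. The second step is to apply the same reasoning to the outer construction of $\TFib$: strict commutation with substitution of $\Pi$, $\Sigma$, $=_\alpha$, and $\lambda$ on the polynomial representation yields
\[
\TFib(\alpha)\circ t\ =\ \TFib(\alpha\circ t)\ =\ \TFib(\alpha(t))
\]
as maps $Y \to \UV$.

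The third step is purely formal: by the two-pullbacks lemma applied to
\[
\begin{tikzcd}
\typecomp{Y}{\TFib(\alpha(t))} \ar[r] \ar[d] & \typecomp{X}{\TFib(\alpha)} \ar[r] \ar[d] & \UVptd \ar[d] \\
Y \ar[r, swap, "t"] & X \ar[r, swap, "\TFib(\alpha)"] & \UV \mathrlap{,}
\end{tikzcd}
\]
the right square is the defining pullback of the comprehension $\typecomp{X}{\TFib(\alpha)} \to X$, the outer rectangle is the defining pullback of $\typecomp{Y}{\TFib(\alpha)\circ t} = \typecomp{Y}{\TFib(\alpha(t))}$, and so the left square is a pullback as required, giving $\pbk{t}\TFib(\alpha) \cong \TFib(\alpha(t))$.

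There is essentially no obstacle here beyond unwinding notation; the content is carried by the strict naturality of the polynomial functor presentations of $\Sigma$, $\Pi$ and $\T$. An alternative, perhaps more conceptual, route is to invoke the isomorphism $\TFib(\alpha)\cong \Talg(\alpha)$ from \cref{prop:plusistfib}, which is natural in $\alpha$, and then observe that the defining pullback \eqref{diag:algebra-strutures-of-partial-classifier-polynomial} of $\Talg(\alpha)$ involves only finite limits and exponentials in $\tE/_{X}$; these are preserved by the (logical) pullback functor $\pbk{t}\co \tE/_{X}\to \tE/_{Y}$, which commutes with $\T$ (up to coherent isomorphism) by the $\tE$-indexing of the monad $(\T,\eta,\mu)$.
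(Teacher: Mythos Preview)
Your proposal is correct. Your \emph{alternative} route at the end---invoking the isomorphism $\TFib(\alpha)\cong\Talg(\alpha)$ from \cref{prop:plusistfib} and then observing that the pullback square \eqref{diag:algebra-strutures-of-partial-classifier-polynomial} defining $\Talg$ involves only exponentials, pullbacks, and the $\tE$-indexed endofunctor $\T$, all preserved by $\pbk{t}$---is precisely the paper's argument.

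Your primary route is slightly different: you work directly with the type-theoretic definition \eqref{def:TFib} of $\TFib(\alpha)$ and invoke strict Beck--Chevalley for $\Sigma$, $\Pi$, function types, equality, and $\lambda$, to conclude $\TFib(\alpha)\circ t = \TFib(\alpha(t))$ as maps into $\UV$, then finish with the two-pullbacks lemma. This is valid and arguably more elementary, since it avoids the detour through \cref{prop:plusistfib}; on the other hand it requires tracking stability for a longer list of constructors (including the term constructor $\lambda$ and the dependent equality type), whereas the paper's argument reduces everything to the single square \eqref{diag:algebra-strutures-of-partial-classifier-polynomial} built from finite limits and exponentials in the slice. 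Both approaches ultimately rest on the same underlying fact: that all the structure used is $\tE$-indexed.
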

 
 \begin{proof}
 Note that we have simplified the notation $\typecomp{X}{\TFib(\alpha)} \to X$ to $\TFib(\alpha) \to X$.
 The description in~\eqref{diag:algebra-strutures-of-partial-classifier-polynomial} involves only $\T$-algebras, exponentials, and pullbacks, all of which are stable under all pullbacks (as is $\T$ itself).
 \end{proof}
 
It also follows from \cref{prop:plusistfib} that, for every small $A\in\tE$, the unique map $\T{A} \to 1$ is a uniform trivial fibration, as it can be equipped with the $(\T, \eta)$-algebra structure  $\mu_A \maps \T{\T{A}} \to \T{A}$.  Thus, we have the following result.

\begin{cor}\label{rmk:tfib-as-algebras} Every small map can be factored as a cofibration followed by a uniform trivial fibration. 
\end{cor}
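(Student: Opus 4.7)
The plan is to exploit the monad $(\T, \eta, \mu)$ constructed in \cref{thm:monad-structure-partial-elts}, applied in the slice $\tE/_{X}$. Given a small map $p \co A \to X$, view it as a small object $A \in \tE/_{X}$ and form the unit
\[
\eta_A \co A \to \T_X(A)
\]
in $\tE/_{X}$. Composing with the structure map $\T_X(A) \to X$ recovers $p$, so we have a candidate factorisation $A \to \T_X(A) \to X$ in $\tE$.

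Next, I would check the two halves in turn. For the left factor, apply \cref{lem:tiscof} inside $\tE/_{X}$ (the axioms for cofibrations are inherited by slices, and the cofibration classifier of $\tE/_{X}$ is $X^*\tcof$, as already noted just before \cref{defn:uniformtrivialfibration}): this gives that $\eta_A$ is a cofibration in $\tE/_{X}$. Since cofibrations are monomorphisms stable under pullback by \ref{item:ax-cof-mono} and \ref{item:ax-cof-pullback}, being a cofibration in $\tE/_{X}$ is the same as being a cofibration in $\tE$, so $\eta_A$ is a cofibration in $\tE$. For the right factor, observe that the terminal object $1$ of $\tE/_{X}$ (i.e.\ $\id_X$) carries the structure of a $\T_X$-algebra in a trivial way, but more to the point $\T_X(A) \to X$ itself carries the canonical algebra structure
\[
\mu_A \co \T_X(\T_X(A)) \to \T_X(A)
\]
from the monad structure of \cref{thm:monad-structure-partial-elts}, satisfying $\mu_A \circ \eta_{\T_X(A)} = \id$ by the monad unit law. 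By \cref{prop:plusistfib} (applied in $\tE/_{X}$, using the pullback stability noted in \cref{cor:TFibstable}), a $\T_X$-algebra structure on an object of $\tE/_{X}$ is the same as a uniform trivial fibration structure on its structure map to $X$. Hence $\T_X(A) \to X$ is a uniform trivial fibration.

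The main conceptual point, and the only thing that might require care, is the translation between the monad $\T_X$ on $\tE/_{X}$ and the cofibration/trivial-fibration structures in $\tE$; but this has already been established by \cref{thm:monad-structure-partial-elts} (which gives the $\tE$-indexed monad structure) together with \cref{prop:plusistfib} and \cref{cor:TFibstable} (which give the pullback-stable correspondence between $\T$-algebras and uniform trivial fibration structures). No further calculation is needed.
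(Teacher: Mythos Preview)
Your proposal is correct and follows essentially the same approach as the paper: factor $p$ in the slice $\tE/_{X}$ as $\eta_A \co A \to \T_X(A)$ followed by the structure map $\T_X(A) \to X$, invoke \cref{lem:tiscof} for the cofibration part, and use the monad multiplication $\mu_A$ as the algebra structure together with \cref{prop:plusistfib} for the trivial fibration part. The only difference is that you spell out a few of the translations between slice and ambient category more carefully than the paper does, and you cite \cref{cor:TFibstable}, which is not strictly needed here since \cref{prop:plusistfib} already applies directly in the slice.
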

 
\begin{proof} Consider $p \maps A \to X$  as an object of the slice category $\tE/_{{X}}$ and apply the functor $\T_X \maps \tE/_{X} \to \tE/_{X}$ to form the object $\T_X (p) \co \dom({\T_X (p)}) \to X$ in $\tE/_{X}$. The map~$\eta_p \maps p \to \T_X(p)$ is a cofibration in $\tE/_{X}$, hence in $\tE$, while $\T_X (p)$ is a uniform trivial fibration, as was just observed.  
\end{proof}

We conclude the section by constructing a universal uniform trivial fibration, using the method of \cite[Proposition 85]{awodey-cubical-git}.
For $\alpha \co X \to \UV$, applying \cref{cor:TFibstable} to the pullback in~\eqref{diag:context-extension}
we obtain a pullback diagram as the outside square below, where $\id \co \UV \to \UV$ of course classifies $\pi \co \UVptd\to \UV$.
 \begin{equation}\label{diag:alphafactors}
 \begin{tikzcd}
  \TFib(\alpha)  \arrow[d] \ar[r] &[20pt] \TFib(\id) \arrow[d]  \\
X \ar[r, swap, "\alpha"]  \arrow[u, bend left=30, swap, dotted]  \arrow[ru, dotted, "\bar{\alpha}"]  &  \UV \mathrlap{.}
 \end{tikzcd}
 \end{equation}
Trivial fibration structures on $\alpha$, \ie sections of $\TFib(\alpha)$, then correspond uniquely to 
factorisations~$\bar{\alpha}$, as indicated.

\begin{thm}[Universal trivial fibration]
\label{prop:classifier-trivfib}
There exists a \strong{universal small uniform trivial fibration} $\TFibptd \to \TFib$, 
in the sense that every small uniform trivial fibration $p \co A \to X$ is a pullback of $\TFibptd \to \TFib$ along a canonically determined classifying map $\bar{\alpha} \co X \to \TFib$.
\begin{equation}
\begin{tikzcd}
     A \ar[r, dotted] \ar[d, swap, "p"]  & \TFibptd 
      \ar[d] \\
    X \ar[r, swap,dotted, "\bar{\alpha}"] & \TFib \mathrlap{.} 
    \end{tikzcd}   
\end{equation}
\end{thm}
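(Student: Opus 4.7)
The plan is to construct the universal trivial fibration by slicing over $\UV$. I would set $\TFib \defeq \typecomp{\UV}{\TFib(\id_\UV)}$, so that the display map $q \co \TFib \to \UV$ is classified by $\TFib(\id_\UV)$ and represents the universal small map equipped with a trivial fibration structure. Then define $\TFibptd$ as the pullback
\[
\begin{tikzcd}
\TFibptd \ar[r] \ar[d] & \UVptd \ar[d, "\pi"] \\
\TFib \ar[r, swap, "q"] & \UV \mathrlap{,}
\end{tikzcd}
\]
making $\TFibptd \to \TFib$ a small map with classifying map $q$.

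Next, I would equip $\TFibptd \to \TFib$ with its own uniform trivial fibration structure. Applying \cref{cor:TFibstable} with $\alpha = \id_\UV$ and $t = q$ gives an isomorphism $\TFib(q) \cong q^*\TFib(\id_\UV)$ over $\TFib$. Since $q$ is itself the display map of $\TFib(\id_\UV)$, the right-hand side is the self-pullback $q^*q = \TFib \times_\UV \TFib \to \TFib$, which admits the tautological diagonal section $\Delta \co \TFib \to \TFib \times_\UV \TFib$. Transferring $\Delta$ back across the isomorphism yields a section of $\TFib(q) \to \TFib$, which by \cref{triv-fib-force} is precisely a uniform trivial fibration structure on $\TFibptd \to \TFib$.

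For the universal property, let $p \co A \to X$ be any small uniform trivial fibration, and take $\alpha \co X \to \UV$ to be its canonical classifying map. By \cref{triv-fib-force}, the trivial fibration structure on $p$ corresponds to a section of the display map $\typecomp{X}{\TFib(\alpha)} \to X$. By the pullback square of \eqref{diag:alphafactors}, such a section is equivalent to a lift $\bar\alpha \co X \to \TFib$ of $\alpha$ along $q$, canonically determined by the structure on $p$. Two applications of the two-pullbacks lemma — first to the square defining $\TFibptd$, then to the classifying square of $p$ — then exhibit $A \cong X \times_\TFib \TFibptd$ over $\bar\alpha$, as required.

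The hard part will be Step~2: one must verify that the tautological diagonal $\Delta$, which is manifestly a section in $\tE$, does indeed correspond under the chain of identifications $\TFib(q) \cong q^*\TFib(\id_\UV) \cong q^*q$ to a genuine uniform trivial fibration structure in the sense of \cref{defn:uniformtrivialfibration} on $\TFibptd \to \TFib$. With the pullback-stability machinery of \cref{cor:TFibstable} and the soundness/completeness of forcing (\cref{triv-fib-force}) in place, this amounts to checking that naturality of the isomorphism in \cref{cor:TFibstable} intertwines the diagonal section on the right with the algebra structure on the left — a diagram chase rather than a substantive new argument.
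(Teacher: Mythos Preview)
Your proposal is correct and follows essentially the same route as the paper: define $\TFib = \typecomp{\UV}{\TFib(\id_\UV)}$, pull back $\pi$ to get $\TFibptd \to \TFib$, and use the correspondence in \eqref{diag:alphafactors} between sections of $\TFib(\alpha)$ and factorisations $\bar\alpha$ of $\alpha$ through $\TFib \to \UV$.

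One remark on your ``hard part'': the paper dispatches Step~2 in a single line by observing that when $X = \TFib$ and $\alpha = q$, the identity map $\bar\alpha = \id_{\TFib}$ is already a factorisation of $q$ through $\TFib \to \UV$, and so by \eqref{diag:alphafactors} it corresponds directly to a section of $\TFib(q)$ over $\TFib$. This \emph{is} your diagonal section, but presented without the intermediate isomorphism $\TFib(q) \cong q^*q$; the diagram chase you anticipate is absorbed into the pullback correspondence of \eqref{diag:alphafactors} and requires no further verification.
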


\begin{proof}
Define $\TFib = \TFib(\id)$ and  $\TFibptd \to \TFib$ by the pullback indicated on the right below.
\[
\begin{tikzcd}[column sep=small, row sep = small]
& A
\arrow[rd, dotted,swap] 
\arrow[ddd]
\arrow[rr] && \UVptd
\arrow[ddd, "\dspu"] \\
&& \TFibptd 
\arrow[ddd,swap,pos=0.3,crossing over]
\arrow[ur, swap]
  \\ 
  \\ 
  & 
X \arrow[rd, dotted,swap, "\bar{\alpha}"] 
    \arrow[rr, "\alpha",pos=0.3] && \UV \mathrlap{.}  \\ 
&& \TFib
\arrow[ur,swap,sloped]
\end{tikzcd}
\]
As was just shown \eqref{diag:alphafactors}, a trivial fibration structure on $A \to X$ corresponds to a factorisation~$\bar{\alpha}$ of~$\alpha$, thus making a pullback square as indicated on the left above.  In particular, taking $X = \TFib$ and~$\alpha$ the projection $\TFib\to \UV$ and $\bar{\alpha} = \id$, we see that $\TFibptd\to \TFib$ is itself a trivial fibration, and thus it is a universal one.
\end{proof}

\begin{rmk}\label{rmk:TFib}
Using variables, the identity map $\id \co \UV \to \UV$  is the interpretation of the judgement $\alpha \co\UV \vdash \alpha \co\UV$, from which we also have $\alpha \co\UV \vdash \TFib(\alpha)\co\UV$.  Thus for the type $\TFib = \UV.\TFib(\id)$ we have $\TFib =  \dsm{\alpha\co\UV}\!\TFib(\alpha)$ 
and then $\TFibptd = \dsm{(\alpha,t)\co\TFib}\alpha$. 
\end{rmk}


\section{Uniform fibrations}
\label{sec:unifib}

The definition of a uniform fibration was originally introduced in the setting of categories of cubical sets in~\cite{CoquandT:modttc,CoquandT:cubttc} to provide a constructive model of the Univalence Axiom. The notion was analysed in terms of algebraic weak factorisation systems in~\cite{SwanA:algwfs} and generalised 
to the setting of presheaf categories equipped with a class of cofibrations
and a suitable interval object in~\cite{gambino2017frobenius,awodey-cubical-git}. The definition of~\cite[Section~8.2]{CoquandT:cubttc} was rephrased in the internal type theory of such a category in~\cite[Definition~5.6]{orton-pitts}.
Here, we relate precisely the category-theoretic and the type-theoretic formulations of the notion of uniform fibration in the latter setting. In particular, we show how this notion arises naturally from our Kripke-Joyal forcing for the internal type theory of a presheaf category $\tE = \PshC$ of cubical sets with connections.
We expect that a similar treatment could be given for other cases, such as \cite{ABC,awodey-cubical-git}.

We assume a class of cofibrations as in \cref{defn:cofibrations}, classified by $\Cof \mono \Omega$.

\begin{defn}\label{def:IntervalWithConnections} An \strong{interval with connections} is an object  $\II$ in $\tE$ 
equipped with endpoints $\delta^k \co 1 \to \II$
and connections $\conn_k \co \II \times \II \to \II$, for $k=0,1$,
satisfying the following axioms.
\begin{enumerate}
\item The maps $\delta^k \co 1 \to \II$ are cofibrations, for $k=0,1$.
\item The pullback of $\delta^0$ and $\delta^1$ is the initial object $0$,
\[
\begin{tikzcd}
0 \ar[d]  \ar[r] & 1 \ar[d, "{\delta^1}"] \\
1 \ar[r, swap, "{\delta^0}"] & \II . 
\end{tikzcd} 
\] 
\item The diagrams
\[
\begin{tikzcd}
\II \ar[r, "{(\delta^k, 1)}"]  \ar[d] &[20pt] \II \times \II \ar[d, "{\conn_k}"] &[20pt] \II \ar[l, swap, "{(1,\delta^k)}"]  \ar[d] \\
1 \ar[r, swap, "{\delta^k}"] & \II & 1 \ar[l, "{\delta^k}"] \mathrlap{,} 
\end{tikzcd}  \qquad
\begin{tikzcd} 
\II \ar[dr, equal] \ar[r, "{(\delta^{1-k}, 1)}"] &[20pt] \II \times \II \ar[d, "{\conn_{k}}"] &[20pt] \II \ar[l, swap, "{(1, {\delta^{1-k}})}"]  \ar[dl, equal] \\
 & \II & 
 \end{tikzcd} 
\] 
commute, for $k = 0,1$.
\end{enumerate} 
\end{defn}

We recall the definition of a uniform fibration from \cite[Section~7]{gambino2017frobenius}. This involves the pushout-product construction, described in \cite[Construction 11.1.7]{riehl-cat-homotopy} under the name of Leibniz construction. We define a \strong{naive trivial cofibration} to be the pushout-product map $m \otimes\delta^k$ in
\[
\begin{tikzcd}[column sep = large]
S \ar[r, tail, "1 \times \delta^k"] \ar[d, tail, "m"'] & S \times \II \ar[d] \ar[ddr, tail, bend left = 30, "m \times 1"] &  \\
T \ar[r]  \ar[drr, bend right = 20, tail,  "1 \times \delta^k"'] & T +_S (S \times \II) \ar[dr, tail, "m \otimes \delta^k" pos=0.4]  & \\
 & & T \times\II \mathrlap{,}
\end{tikzcd}
\]
where $m \co S \mono T$ is an arbitrary cofibration and $\delta^k \co 1\mono\II$, for $k=0,1$ is an endpoint.  Note that $m \otimes\delta^k$ is indeed a cofibration by Remark \ref{remark:cofibrations}.
Naive trivial cofibrations are stable under pullback along arbitrary maps $t \co T' \to T$, in the sense that one then has a pullback square
\begin{equation}
\label{diag:uniformcofibration1}
\begin{tikzcd}
T' +_{S'} (S'\times\II) \arrow[r] \arrow[d, tail,swap,"m' \otimes\delta^k"] 
  & T +_S (S \times\II) \arrow[d, tail, "m \otimes\delta^k"]   \\
T' \times \II  \arrow[r, swap, "t\times\II"] 
  &  T \times \II  \mathrlap{,}
\end{tikzcd}
\end{equation}
where $m' \co S'  \mono T'$ is the pullback of $m$ along $t$.

\begin{defn}\label{def:uniformfibration}
Let $p \co A \to X$ be a small map. A \emph{uniform  fibration structure} on $p$ consists of 
a function $j$ that assigns a dotted filler $j(m,u,v) \co T \times \II \to A$ to every diagram
of solid arrows
\[
\begin{tikzcd}
T +_S (S \times\II) \arrow[r, "u"] \arrow[d, tail,  swap, "m \otimes\delta^k"]  & A \ar[d] \\
T \times \II  \arrow[r, swap, "v"] \arrow[ru, dotted] 
& X \mathrlap{,} 
\end{tikzcd}
\]
where  $m$  is a cofibration, and $k=0,1$, subject to the following
uniformity condition: for any map $t \co T' \to T$ and induced pullback square on the left,
\begin{equation}
\label{diag:uniformcofibration2}
\begin{tikzcd}
T' +_{S'} (S'\times\II) 
	\arrow[r] 
	\arrow[d, tail,swap,"m'\otimes\delta^k"] 
&[20pt] T +_{S} (S\times\II) 
	\arrow[r, "u"] 
	\arrow[d, tail] 
 &[20pt] A
 	\arrow[d] \\
T' \times \II  
	\arrow[r, swap, "t\times\II"] 
	\arrow[rru,dotted] 
&  T \times \II  
	\arrow[r, swap, "v"]  
	\arrow[ru, dotted] 
& X \mathrlap{,}
\end{tikzcd}
\end{equation}
we have 
\[
j(m', u (t\times i)', v (t\times i)) = j(m, u, v) \circ (t\times\II)\,,
\]
 where $(t\times \II)'$ is the evident pullback of $(t\times \II)$.
\end{defn} 

As a functor on the arrow category $\tE^\ra$, the pushout-product $(-)\otimes\delta^k$ has a right adjoint $\delta^k \Raw(-)$, the pullback-hom, taking $p \co A \to X$ to the map $\delta^k \Rightarrow p$ indicated in the following diagram.
\begin{equation}
\label{diagram:pullbackhom2}
\begin{tikzcd} 
A^\II \ar[rdd, swap, "{p^\II}", bend right = 20] \ar[rd, dotted, "{\delta^k \Rightarrow{p}}"]  \ar[rrd, bend left = 20, "{A^{\delta^k}}"]  && \\
&X^\II \times_{X} A \ar[d] \ar[r] & A \ar[d, "p"] \\
& X^\II \ar[r, swap, "{X^{\delta^k}}"] &  X \mathrlap{.} 
\end{tikzcd} 
\end{equation}
By adjointness,  these operations can be seen to satisfy 
\begin{equation}\label{eq:leibnizadjunction}
(m \otimes \delta^k ) \pitchfork p \quad \text{if and only if} \quad m \pitchfork (\delta^k \Raw f) \mathrlap{,}
\end{equation}
where the notation $u \pitchfork v$ means, as usual, that $u$ has the left lifting property with respect to $v$.  Indeed, more is true:\ by adjointness there is a natural bijection between the diagonal fillers witnessing the statement $(m \otimes \delta^k ) \pitchfork p$ and those establishing $m \pitchfork (\delta^k \Raw f)$. 

The condition in~\eqref{diag:uniformcofibration2} for $p \co A \to X$ to be a uniform fibration can therefore be reformulated as the requirement that, for all cofibrations $m \co S \mono T$, we have $m \pitchfork (\delta^k \Raw p)$ uniformly in $m$.  But this of course just says that $\delta^k \Raw p$ is a uniform \emph{trivial} fibration in the sense of \cref{defn:uniformtrivialfibration}.  We record this for later use (cf.~\cite[Proposition~7.4]{gambino2017frobenius}).

\begin{prop}\label{prop:fibistfib}
A small map $p \co A \to X$ is a uniform fibration if and only if the map $\delta^k \Raw p \co A^\II \to X^\II \times_X A$ is a uniform trivial fibration, for $k=0,1$. \qed
\end{prop}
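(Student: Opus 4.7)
The plan is to derive the equivalence from the Leibniz adjunction $(-)\otimes\delta_k \dashv \delta_k\Raw(-)$ on the arrow category $\tE^\to$, promoting the pointwise bijection of fillers recorded in \eqref{eq:leibnizadjunction} to a bijection of uniform structures. The key observation is that the adjunction does not merely provide a logical equivalence of existence statements, but an explicit natural bijection between diagonal fillers: given a cofibration $m \co S \mono T$, a filler $j \co T\times\II \to A$ for a lifting problem of $m\otimes\delta_k$ against $p$ corresponds by adjoint transposition to a filler $\tilde{\jmath} \co T \to A^\II$ for the transposed lifting problem of $m$ against $\delta_k\Raw p$, and this correspondence is bijective on fillers for each fixed outer square.

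First I would fix $k\in\{0,1\}$ and spell out the adjoint transposition carefully. A lifting problem against $p$ consists of $u \co T +_S (S\times\II) \to A$ and $v \co T\times\II \to X$ with $pu = v\circ(m\otimes\delta_k)$; by the universal property of the pushout this amounts to $u_T \co T\to A^\II$ (transpose of the $T\times\II$-component) and $u_S \co S\times\II\to A$, together with agreement on $S$. The transpose of $v$ is $v^\sharp \co T \to X^\II$, and the compatibility with $p$ packages $u_T$ and $(u_S\circ(\id_S\times\delta_k), v^\sharp\circ m)$ into a square of $m$ against $\delta_k\Raw p$, using the pullback description of the codomain in~\eqref{diagram:pullbackhom2}. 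The adjoint transpose $\tilde{\jmath}$ of a filler $j$ is then a filler of this transposed square, and the assignment $j\mapsto\tilde{\jmath}$ is a bijection.

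The second step is to verify that this pointwise bijection is natural with respect to any $t \co T' \to T$ in the sense required by the two uniformity conditions. The pullback square \eqref{diag:uniformcofibration1} says precisely that $(-)\otimes\delta_k$ sends the cartesian square $m'\to m$ in $\tE^\to$ (along $t$) to a cartesian square in $\tE^\to$; dually, $\delta_k\Raw(-)$, being a right adjoint, preserves the relevant pullbacks on the other side. Unwinding definitions, the uniformity equation $j(m',u(t\times\II)',v(t\times\II)) = j(m,u,v)\circ(t\times\II)$ in \cref{def:uniformfibration} transposes to exactly the uniformity equation for $\tilde{\jmath}$ as a filler against $\delta_k\Raw p$ in \cref{defn:uniformtrivialfibration}, and vice versa. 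Hence a uniform fibration structure on $p$ (for the index $k$) is precisely a uniform trivial fibration structure on $\delta_k\Raw p$; applying this for both $k=0,1$ gives the proposition.

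The main obstacle is purely bookkeeping: checking that the adjoint transposition respects the identification of the pushout $T +_S (S\times\II)$ and of the pullback $X^\II\times_X A$ in a manner compatible with pulling back along $t$. Once naturality of the $(-)\otimes\delta_k \dashv \delta_k\Raw(-)$ adjunction is traced through these (co)limits, the uniformity conditions match on the nose, and no further argument is needed.
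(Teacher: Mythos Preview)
Your proposal is correct and follows essentially the same route as the paper. The paper treats this proposition as an immediate consequence of the discussion preceding it: the Leibniz adjunction $(-)\otimes\delta_k \dashv \delta_k\Raw(-)$ gives a \emph{natural} bijection between diagonal fillers for $(m\otimes\delta_k)\pitchfork p$ and for $m\pitchfork(\delta_k\Raw p)$, and this naturality transports the uniformity conditions in each direction, which is exactly what you spell out in more detail.
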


\begin{rmk}\label{rem:gentrivcof}
As was the case for uniform trivial fibrations (\cref{thm:triv-cof-againts-generating}), a uniform fibration structure is determined by its values on the \strong{generating trivial cofibrations}, defined as those of the form $m \otimes \delta^k $  where $m \co S\mono \yon{c}$ is a cofibration with a representable codomain. We leave it to the reader to reformulate \cref{def:uniformfibration} of a uniform fibration structure on a map $p \co A \to X$ in these terms.
\end{rmk}

Our aim now is to show how \cref{def:uniformfibration} can be formulated in the internal type theory of $\tE$ and how it is thereby related to forcing.  In light of \cref{prop:fibistfib} we would like to use \cref{prop:ext-implies-tfib} to show that there is an element of $\TFib(\delta^k \Rightarrow{f})$ over $X$, but for this to make sense~$\delta^k \Rightarrow{f}$ would need to be given as a display map.  Thus let $\alpha \co X \to \UV$ classify $A\to X$. Consider the case $k = 0$ and let $\mathsf{ev_0}  \co X^\II\ra X$ be the evaluation at the point~$\delta^0 \co 1\to \II$  (i.e.\ $\mathsf{ev_0}  = X^{\delta^0}$), so that the pullback in \eqref{diagram:pullbackhom2} above becomes

\begin{equation}\label{diagram:uniformfib1}
\begin{tikzcd} 
\typecomp{X^\II}{\alpha(\mathsf{ev_0})} \ar[d] \ar[r]  & \typecomp{X}{\alpha} \ar[d] \\
X^\II \ar[r, swap, "{\mathsf{ev_0}}"] &  X \mathrlap{.}
\end{tikzcd} 
\end{equation}

In what follows, we adopt the indexed-family style of writing a display map $X.\alpha\to~X$ as $\big(\dsm{x \co X} \alpha(x)\big)\to X$ for easier comparison with \cite{orton-pitts}.
For any $x\co \yon{c} \to X^\II$ and section $(x,a) \co \yon{c} \to \alpha(x_0)$, for $k = 0, 1$, we  define 
$F_k(x,a) \co \yon{c} \to \UV $ as 
\begin{equation}
\label{def:fibertypeF0}
\textstyle
F_k(x,a) \
  \defeq\ \dsm{s \oftype \dprd{i \oftype \II} \alpha(x_i)} s_k =  a \mathrlap{.} 
\end{equation}

\begin{prop} \label{prop:leibniz-trick}
Let $\alpha \co X  \to \UV$. Then the following conditions are equivalent.
\begin{enumerate}
\item The projection $p\co \big( \dsm{x \oftype X} \alpha(x) \big) \to X$ admits a uniform fibration structure.
\item\label{item:classtypeTFib} There is a term of type
\[
\textstyle
\dprd{x \oftype X^\II}  \big( \dprd{a \oftype \alpha(x_0)} \TFib \, F_0 (x,a) \big) \times 
\big(\dprd{a \oftype \alpha(x_1)} \TFib \, F_1(x,a) \big) \,,
\]
where $\TFib$ is as defined in \eqref{def:TFib}.
\end{enumerate}
Moreover, there is a bijection between such structures and such terms.
\end{prop}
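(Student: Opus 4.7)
The plan is to combine \cref{prop:fibistfib} with \cref{prop:tfib-to-uniform-triv-fib}. By \cref{prop:fibistfib}, a uniform fibration structure on $p \co \dsm{x\oftype X}\alpha(x) \to X$ is the same data as a pair of uniform trivial fibration structures on the pullback--hom maps $\delta_k \Rightarrow p \co A^\II \to X^\II \times_X A$, for $k=0,1$, where the pullback in the codomain uses the endpoint evaluation $\mathsf{ev}_k \co X^\II \to X$ (cf.~\eqref{diagram:uniformfib1}). So the task reduces to rewriting the existence of such trivial fibration structures as terms in the internal type theory.

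First I would identify the codomain $X^\II \times_X A$ with the display $\typecomp{X^\II}{\alpha(\mathsf{ev}_k)} \cong \dsm{x \oftype X^\II} \alpha(x_k)$, where $x_k \defeq \funapp(x,\delta_k)$; this is immediate from \eqref{diagram:uniformfib1}. Then I would introduce the family $F_k$ over this base by setting
\[
F_k(x,a)\ \defeq\ \dsm{\bar{a} \oftype \dprd{i \oftype \II} \alpha(\funapp(x,i))} \bar{a}(\delta_k) =_{\alpha(x_k)} a\,,
\]
and verify that the associated display map $p_{F_k}$ is canonically isomorphic to $\delta_k \Rightarrow p$ over $\dsm{x \oftype X^\II}\alpha(x_k)$. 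This can be done via the universal property exhibited in \eqref{diagram:pullbackhom2}, or equivalently by a forcing calculation using \cref{prop:forcing.dprd} and \cref{prop:forcing.exteq}: a section of $p_{F_k}$ over $\yon c$ picks out a path $\bar{a}$ in $\alpha$ lying over the given path in $X$, together with a witness that it matches the prescribed endpoint~$a$, which is precisely what a lift against $\delta_k \Rightarrow p$ encodes.

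With this identification in place, \cref{prop:tfib-to-uniform-triv-fib} (more precisely \cref{triv-fib-force} together with \cref{rem:sliceCwF}) converts uniform trivial fibration structures on $\delta_k \Rightarrow p$ into terms of type $\TFib(F_k)$ in the context $\dsm{x \oftype X^\II}\alpha(x_k)$, and the bijection is that of \cref{prop:ext-implies-tfib}. Currying via \cref{thm:rules-pi} rewrites such terms as terms of type $\dprd{x \oftype X^\II}\dprd{a \oftype \alpha(x_k)} \TFib(F_k(x,a))$, and pairing the two cases $k=0,1$ yields the classifying type displayed in~\cref{item:classtypeTFib}. The overall bijection between uniform fibration structures on $p$ and such terms is then assembled from the bijections of \cref{prop:fibistfib}, \cref{prop:ext-implies-tfib}, and currying.

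The main obstacle is the identification $p_{F_k} \cong \delta_k \Rightarrow p$ over the base: this is where the pullback--hom construction and the interval interact with the polynomial presentation of $\Pi$-types in~\eqref{diagram:Picomp}, and some bookkeeping via forcing or direct diagram chasing is needed to pin it down. Once that identification is in hand, the rest is a formal rearrangement of adjoints and bijections already established.
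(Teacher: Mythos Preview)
Your proposal is correct and follows essentially the same route as the paper: reduce via \cref{prop:fibistfib} to trivial fibration structures on $\delta_k\Rightarrow p$, identify the display map of the fiber type $F_k(x,a)$ with $\delta_k\Rightarrow p$ over $\dsm{x\oftype X^\II}\alpha(x_k)$, and then invoke \cref{prop:ext-implies-tfib} and curry. The paper carries out the identification $p_{F_k}\cong \delta_k\Rightarrow p$ by explicitly computing $A^\II\cong\dsm{x\oftype X^\II}\dprd{i\oftype\II}\alpha(x_i)$ and recognising $\delta_k\Rightarrow p$ as the $k^{\text{th}}$ endpoint projection $\pi_k$, which is exactly the step you flag as the main obstacle; your forcing-based alternative for this identification is equally valid.
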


\begin{proof} Writing $x_0 = \mathsf{ev_0}(x)$, for \eqref{diagram:uniformfib1} we then have,
\begin{equation*}\label{diagram:uniformfib2}
\begin{tikzcd} 
\dsm{x \oftype X^\II} \alpha(x_0)  \ar[d] \ar[r]  & \dsm{x \oftype X} \alpha(x) \ar[d] \\
X^\II \ar[r, swap, "{\mathsf{ev_0}}"] &  X \mathrlap{.}
\end{tikzcd} 
\end{equation*}
Using the isomorphism
\[
\textstyle
A^\II\ \cong\ \big( \dsm{x \oftype X} \alpha(x) \big)^\II\ \cong\ \dprd{i \oftype \II} \dsm{x \oftype X}\alpha(x)\  
\cong\ \dsm{x \oftype X^\II} \dprd{i \oftype \II} \alpha(x_i) \mathrlap{,} 
\]
the map~$p^\II \co A^\II\ra X^\II$ may  be rewritten as a display map over $X^\II$ in the form 
\[\textstyle
\big( \dsm{x \oftype X^\II} \dprd{i \oftype \II} \alpha(x_i) \big) \to X^\II\,.
\]
Up to isomorphism, our previous diagram \eqref{diagram:pullbackhom2} then becomes
\[
\begin{tikzcd} 
\dsm{x  \oftype X^\II} \dprd{i \oftype \II} \alpha(x_i)  \ar[rdd, bend right = 15] \ar[rd, "{\delta^0 \Rightarrow{p}}"] \ar[rrd, bend left = 12] && \\
& \dsm{x \oftype X^\II} \alpha(x_0) \ar[d] \ar[r]  & \dsm{x \oftype X} \alpha(x)  \ar[d] \\
& X^\II \ar[r,swap, "{\varev_0}"] & X \mathrlap{.}
\end{tikzcd}
\]
Thus for any $x \co \yon{c} \to X^\II$ the map $(\delta^0 \Raw p)(x)$ over $\yon{c}$ is the $0^{th}$ projection of the type family $\dprd{i \oftype \II} \alpha(x_i)$, 
\[
\textstyle
\pi_0 \co \big( \dprd{i \oftype \II} \alpha(x_i) \big)  \to \alpha(x_0) \mathrlap{,}
\] 
as indicated in the following diagram:
\begin{equation}\label{diagram:pullbackhom4}
\begin{tikzcd} 
{\dprd{i \oftype \II}} \alpha(x_i) 
	\ar[rdd, bend right = 15pt]  
	\ar[rd, swap, "\pi_0" ] \ar[r] 
 & \dsm{ x \oftype X^\II} \dprd{i \oftype \II} \alpha(x_i)
 	\ar[rd, "{\delta^0 \Rightarrow{p}}"]  
	\ar[rrd, bend left =12] && \\
& \alpha(x_0) 
	\ar[d] 
	\ar[r] 
& \dsm{x \oftype X^\II} \alpha(x_0)
	\ar[d] 
	\ar[r] 
&\dsm{x \oftype X} \alpha(x) \ar[d]  \\
& \yon{c} 
	\ar[r, swap, "x"] 
& X^\II 
	\ar[r, swap, "{\varev_0}"]
&  X \mathrlap{.}
\end{tikzcd} 
\end{equation}

Thus, for $x\co \yon{c} \to X^\II$ and section $(x,a) \co \yon{c} \to \alpha(x_0)$, the type $F_0(x,a) \co \yon{c} \to \UV $ defined in~\eqref{def:fibertypeF0}
can be regarded as the fiber of $\pi_0$ at $(x, a)$.
The display map 
\[\textstyle
\big( \dsm{(x,a) \oftype \dsm{x \co X^\II} \alpha(x_0)} F_0(x,a) \big) \too\ \dsm{x\co X^\II} \alpha(x_0) 
\]
is then isomorphic to ${\delta^0 \Rightarrow{p}}$ over $\dsm{x \co X^\II} \alpha(x_0)$.
Similarly, for $(x,a) \co \yon{c} \to \alpha(x_1)$, let $F_1(x,a) \co \yon{c} \to \UV$ be the analogous type consisting of fibers of $\pi_1\co \big(\dprd{i \oftype \II} \alpha(x_i)\big) \to \alpha(x_1)$ rather than $\pi_0$.
The type in \cref{item:classtypeTFib} then classifies uniform fibration structures on $p\co\big(\dsm{x \oftype X} \alpha(x)\big) \to X$ by \cref{prop:ext-implies-tfib}. 
\end{proof}

Recall from \cite[Definition~5.6]{orton-pitts}  the following definition of the type of \emph{$0$-directed filling structures} $\Fill_0(\alpha) \co \UV$ on an $\II$-indexed family of types $\alpha \co \II\to \UV$.
\begin{multline}\label{eqn:pplift2}
\textstyle
\Fill_0(\alpha) = \\
\textstyle
\big( \dprd{\varphi \oftype \Cof} \dprd{v \oftype \pat{\varphi} \to \prod_{i \oftype \II} \alpha_i} \dprd{a \oftype \alpha_0} 
({\tilde{v}_0}=\lambda{a}) \big)
\to\ \sum_{ s \oftype \prod_{i \oftype \II} \alpha_i} (s_0 =_{\alpha_0} a ) \times ({v}=\lambda{s}) \,,
\end{multline}
where 
\[ 
\textstyle
\tilde{(-)}\ \maps \big( \pat{\varphi} \to \dprd{i \oftype \II} \alpha_i \big) \iso\ \dprd{i \oftype \II} \pat{\varphi} \to \alpha_i 
\] 
is the canonical isomorphism, and the type $(v=\lambda s)$ is as in \eqref{eq:restrictiontopartial}.
There is an analogous condition $\Fill_1(\alpha)$ in which $1$ replaces $0$ everywhere, describing $1$-directed filling from the other end of the interval $\II$.  Note that for $\alpha \co \II \to \UV$, the types $\Fill_0(\alpha)$ and $\Fill_1(\alpha)$ are \emph{closed}.  

Now for any object $X$ and family of types $\alpha \co X\to\UV$, we have the family $x \co X^\II \vdash \alpha\circ x \co \II\to\UV$ obtained by ``restricting $\alpha$ along paths in $X$'', i.e.\ composing $\alpha$ and $\ev \co X^\II \times \II \to X$.  Thus we have the closed type
\begin{equation}\label{eq:OPfibtype}\textstyle
 \Fib(\alpha)\ \defeq\ \dprd{x \oftype  X^\II} \Fill_0( \alpha\circ{x} ) \times  \Fill_1( \alpha\circ{x} )\,.
 \end{equation} 
We  show below that $\Fib(\alpha)$ is isomorphic to the type in \cref{item:classtypeTFib} of \cref{prop:leibniz-trick} and therefore also classifies uniform fibration structures on $\dsm{x \oftype X} \alpha(x)\to X$. 

\begin{prop} \label{prop:FibvsFill}
For any object $X$ and any $\alpha \co X \to \UV$, there is an isomorphism,
\[\textstyle
\Fib(\alpha)\ \cong\ 
\dprd{x \oftype X^\II} \big( \dprd{a \oftype \alpha(x_0)} \TFib \, F_0(x ,a) \big) \times \big( 
\dprd{a \oftype \alpha(x_1)} \TFib \, F_1 (x ,a) \big) \,.
\]
\end{prop}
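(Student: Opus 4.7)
The plan is to reduce the claim to a fibrewise statement at each $x \co X^\II$, namely $\Fill_0(\alpha\circ x) \cong \dprd{a \oftype \alpha(x_0)} \TFib(F_0(x,a))$ together with its $1$-directed analogue; the full isomorphism then follows by taking the binary product and quantifying by $\dprd{x \oftype X^\II}$. Writing $\beta_i \defeq \alpha(x_i)$, the matter is purely formal once the types are unfolded in the extensional internal type theory $\mathcal{T}_\tE$.

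First I would establish the key currying equivalence
$$\textstyle \pat{\varphi}\to F_0(x,a)\ \cong\ \dsm{v \oftype \pat{\varphi}\to \prod_{i\oftype\II}\beta_i}(\tilde v_0 = \lambda a).$$
Unfolding $F_0(x,a) = \dsm{s \oftype \prod_i \beta_i}(s_0 = a)$ from \eqref{def:fibertypeF0} and applying the type-theoretic axiom of choice \eqref{prop:forcing.AC} turns $\pat{\varphi}\to F_0(x,a)$ into $\dsm{v\oftype \pat{\varphi}\to \prod_i \beta_i}\dprd{p\oftype \pat{\varphi}}(v(p)_0 = a)$; function extensionality then identifies the proposition in the second factor with $\tilde v_0 = \lambda a$. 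Crucially, since equality in $\mathcal{T}_\tE$ is extensional (\cref{rmk:Equality}), the type $s_0 = a$ is a (sub)proposition, so the indicated $\Sigma$-type is essentially a subset inclusion rather than genuinely proof-relevant data.

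Next, under the encoding $w \leftrightarrow (v,h)$, I would unfold $\dsm{b \oftype F_0(x,a)}(w = \lambda b)$: writing $b = (s, p)$ with $p \co s_0 = a$, the equation $w = \lambda b$ becomes the conjunction $(v = \lambda s) \wedge (h = \lambda p)$, and by uniqueness of identity proofs in extensional type theory the second conjunct is automatic once the first holds. Hence this type simplifies to $\dsm{s\oftype \prod_i \beta_i}\bigl((s_0 = a)\wedge (v = \lambda s)\bigr)$. Commuting the outer $\dprd{a\oftype \beta_0}$ past $\dprd{\varphi}$ and $\dprd{v}$, which do not depend on $a$, then yields
$$\textstyle \dprd{a\oftype \beta_0}\TFib(F_0(x,a))\ \cong\ \dprd{\varphi}\dprd{v}\dprd{a}(\tilde v_0 = \lambda a)\to \dsm{s}(s_0 = a)\wedge(v = \lambda s),$$
which is precisely $\Fill_0(\alpha\circ x)$ as defined in \eqref{eqn:pplift2}.

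The $1$-directed version is entirely symmetric, replacing $0$ by $1$ throughout, and the definitions $\Fill_i$ and $\TFib(F_i(x,a))$ are already laid out so as to match. Taking the binary product and quantifying over $x\co X^\II$ then produces the required isomorphism. The main obstacle is purely one of bookkeeping: keeping the currying, extensionality, and uniqueness-of-proofs steps aligned with the precise forms of $\TFib$ in \eqref{def:TFib} and $\Fill_0$ in \eqref{eqn:pplift2}, and being careful about the implicit weakenings when the variables $a$, $\varphi$, $v$ are reshuffled; no genuinely deep step is required once the $(v,h)$ encoding of $\pat{\varphi}\to F_0(x,a)$ is in place.
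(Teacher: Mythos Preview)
Your proposal is correct and follows essentially the same approach as the paper: reduce to the fibrewise claim $\Fill_0(\alpha\circ x)\cong\dprd{a}\TFib(F_0(x,a))$, unfold $\TFib$, curry $\pat{\varphi}\to F_0(x,a)$ into a pair $(v,h)$ with $v\co\pat{\varphi}\to\prod_i\alpha(x_i)$ and $h$ a proof of $\tilde v_0=\lambda a$, simplify the extension type using extensionality and uniqueness of identity proofs, and finally swap $\dprd{a}$ past $\dprd{\varphi}\dprd{v}$. The paper compresses the middle steps into a single ``simplifies to'' between its displays \eqref{eq:fibfromtfib4} and \eqref{eq:fibfromtfib5}; your $(v,h)$ encoding makes that passage explicit, but the content is the same.
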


\begin{proof}
It clearly suffices to show  
\[\textstyle
\Fill_0( \alpha\circ x )\ \cong\ \prod_{a \oftype \alpha(x_0)} \TFib \, F_0(x ,a) \,,
\]
in context $x \co X^\II$, and similarly for $\Fill_1$.  Using \eqref{def:fibertypeF0}, the type on the right becomes
\begin{equation}\label{eq:fibfromtfib3}
\textstyle
\dprd{a \oftype \alpha(x_0)} \TFib \left( \sum_{s \oftype \prod_{i \oftype \II} \alpha(x_i)} 
s_0 =  a \right) \,.
\end{equation}
Inserting the definition of $\TFib$ from \eqref{def:TFib} results in the  type
\begin{equation}
\label{eq:fibfromtfib4}
\textstyle
\dprd{a \oftype \alpha(x_0)}\ 
\dprd{\varphi \oftype \Cof}\ 
\dprd{v \oftype {\pat{\varphi}}\to\sum_{s \oftype \prod_{i \oftype \II} \alpha(x_i) } s_0 =  a}\ 
\dsm{t \oftype \sum_{s \oftype \prod_{i \oftype \II} \alpha(x_i)} s_0 = a }\  
v =\lambda{t}\,,
\end{equation}
which simplifies  to
\begin{multline}\label{eq:fibfromtfib5}
\textstyle
\big( \dprd{a \oftype \alpha(x_0)}\ 
\dprd{\varphi \oftype \Cof}\
\dprd{v \oftype \pat{\varphi}\to \dprd{i \oftype \II} \alpha(x_i)} ({\tilde{v}_0}=\lambda{a}) \big)\\ 
\textstyle
\to\ \dsm{s \oftype \prod_{i \oftype \II} 
\alpha(x_i)} (s_0 = a) \times (v=\lambda{s})\,.
\end{multline}
Comparing with \eqref{eqn:pplift2}, we see that this is indeed $\Fill_0( \alpha\circ x)$.  The case of $\Fill_1$ is entirely analogous.
\end{proof}

It follows that the  type $\Fib(\alpha)$ from \eqref{eq:OPfibtype} also classifies uniform fibration structures on the type family $\alpha \co X\to\UV$.
\begin{cor} \label{prop:uniform-fibration-from-fib-type}
Let $\alpha \co X \to \UV$. Then the following conditions are equivalent.
\begin{enumerate}
\item The projection map $p\co \big( \dsm{x \oftype X} \alpha(x) \big) \to X$  is a uniform fibration.
\item There is a term $t \oftype \Fib(\alpha)$.
\end{enumerate}
Moreover, there is a bijection between uniform fibration structures as in \cref{def:uniformfibration} and terms $t \oftype \Fib(\alpha)$.
\end{cor}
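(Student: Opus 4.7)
The plan is to obtain this result essentially by composing the two results immediately preceding it. From \cref{prop:leibniz-trick} we already know that uniform fibration structures on $p \co \dsm{x \oftype X}\alpha(x)\to X$ correspond bijectively to terms of the type
\[
\textstyle
\Theta(\alpha)\ \defeq\ \dprd{x \oftype X^\II}  \Big( \dprd{a \oftype \alpha(x_0)} \TFib(F_0(x,a)) \times
\dprd{a \oftype \alpha(x_1)} \TFib(F_1(x,a)) \Big),
\]
and from \cref{prop:FibvsFill} we have an isomorphism $\Fib(\alpha) \cong \Theta(\alpha)$ in the category of types over $X$. Composing these two facts will give the claimed bijection between uniform fibration structures on $p$ and terms $t \oftype \Fib(\alpha)$, and a fortiori the equivalence between (i) and (ii).

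More precisely, I would first invoke \cref{prop:leibniz-trick} to reduce the existence of a uniform fibration structure on $p$ to the existence of a term of type $\Theta(\alpha)$ in context $X$, together with the stronger statement that these two data are in bijection. Then I would use \cref{prop:FibvsFill} to transport a term of type $\Fib(\alpha)$ to a term of type $\Theta(\alpha)$ along the given isomorphism, and vice versa. Since $\Fib(\alpha)$ and $\Theta(\alpha)$ are isomorphic as objects in the category of types $\tS/_{X} \simeq \underline{\tE}(X,\catU)$ (see \eqref{eq:catoftypes}), passing to sections over $X$ gives a bijection between elements $X \vdash t \oftype \Fib(\alpha)$ and elements $X \vdash t \oftype \Theta(\alpha)$, which is what is needed.

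I do not expect a serious obstacle in carrying this out, since both input propositions are already established and the argument amounts to composing bijections. The only point deserving attention is to confirm that the isomorphism $\Fib(\alpha) \cong \Theta(\alpha)$ from \cref{prop:FibvsFill}, whose proof proceeded by unfolding definitions and applying standard $\Sigma$/$\Pi$-type manipulations together with function extensionality, indeed induces a bijection on the sets of sections over $X$ rather than merely an equivalence up to isomorphism. This follows from the soundness of the rules in \cref{thm:rules-sigma} and \cref{thm:rules-pi} together with \cref{prop:validity-and-forcing}, which ensures that judgementally isomorphic types have naturally bijective sets of elements in each context. With this observation in place, the composite bijection is precisely the content of the proposition.
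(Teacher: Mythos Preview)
Your proposal is correct and is exactly the approach the paper takes: the corollary is stated immediately after \cref{prop:FibvsFill} with no explicit proof, since it follows at once by composing the bijection of \cref{prop:leibniz-trick} with the isomorphism of \cref{prop:FibvsFill}. One small correction: $\Fib(\alpha)$ and $\Theta(\alpha)$ are \emph{closed} types (the outer $\Pi$ ranges over $X^\II$), not types in context $X$, so the relevant bijection is between global elements rather than sections over $X$---but this does not affect your argument.
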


The  construction of a universal uniform fibration given below (following \cite{awodey-cubical-git,coquand-slides,LOPS18}) requires the interval $\II$ to be \emph{tiny}, meaning that the ``path space'' functor $X^\II$ has a right adjoint $X_\II$ called the $\II^{th}$-\emph{root}.  
This is the case, for instance, when $\tE = \PshC$ is the category of cubical sets with $\cC$  the Dedekind cube category \cite{BuchholtzMoorehouse} and $\II = \yon{[1]}$  the 1-cube, which then also satisfies the conditions in \cref{def:IntervalWithConnections}.

In order to use the approach of \cref{prop:classifier-trivfib} to construct a universal fibration, we need the analogue  of \cref{cor:TFibstable} for $\Fib(\alpha)$, i.e.\ stability under pullback.  Unlike $\TFib(\alpha)$, however, the object  $\Fib(\alpha)$ is not  indexed over $X$, but instead has the form
 \begin{equation}
 \label{eq:deffillA}
 \textstyle
 \Fib(\alpha) = \dprd{x \oftype X^I}\Fill (\alpha\circ x) \mathrlap{,}
   \end{equation}
where $\Fill(\alpha\circ x) \defeq \Fill_0( \alpha\circ x) \times \Fill_1( \alpha\circ x)$. Now $\Fill(\alpha\circ x)$ is also not indexed over $X$, but rather over $X^\II$.  However, we can use it to construct a pullback stable family $\Fib^*(\alpha)$ over $X$ as follows (cf.~\cite{coquand-slides,LOPS18}).  Applying the root functor to $\Fill(\alpha\circ x)$, regarded as a display map over $X^\II$, and pulling back along the unit of the adjunction, 
 \begin{equation}\label{diagFibstar}
 \begin{tikzcd}
 \Fib^*(\alpha)\arrow[d] \arrow[r]  \pbmark & \Fill(\alpha\circ x)_\II\arrow[d] \\ 
X \ar[r,swap,"\eta"] & (X^\II)_\II \mathrlap{,} 
 \end{tikzcd}
 \end{equation}
we obtain a family $\Fib^*(\alpha)$ on $X$ that parametrises fibration structures on $\alpha$, in the sense made precise in \cref{prop:Fibstable}
below.  It may be noted that this construction of~$\Fib^*(\alpha)$ is categorical, rather than in the type theory of~$\tE$ from \cref{sec:types}. In fact, such an internal construction is impossible, as is shown in~\cite{LOPS18}, where a type theoretic definition of~$\Fib^*(\alpha)$ is obtained by extending of the internal type theory of $\tE$ by a modal operator describing the comonad $\Delta\circ\Gamma$ on $\tE$.

\begin{prop}\label{prop:Fibstable}
Let $\alpha \co X \to \UV$. 
\begin{enumerate}
\item There is a natural bijection between sections of $\Fib^*(\alpha)$ over $X$ and global sections $1\to\Fib(\alpha)$.
\item The object $\Fib^*(\alpha)$ over $X$ is stable under pullback along any map $t \co Y \to X$, in the sense that the 
right hand square below is a pullback whenever the left hand one is.
 \[
 \begin{tikzcd}
Y. \alpha(t)  \arrow[d] \arrow[r] & X.\alpha \arrow[d] \\ 
Y \ar[r, swap,"t"] & X \mathrlap{,} 
 \end{tikzcd}  \qquad
  \begin{tikzcd}
  \Fib^*(\alpha(t))  \arrow[d] \ar[r] & \Fib^*(\alpha) \arrow[d] \\ 
   Y\ar[r, swap,"t"] & X \mathrlap{.} 
  \end{tikzcd}
 \]
\end{enumerate}
\end{prop}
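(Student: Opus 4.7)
The plan is to deduce both parts from the adjunction $(-)^\II \dashv (-)_\II$, together with the naturality of its unit and the pullback stability of the type $\Fill$.

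For part (i), first observe that by the universal property of the defining pullback~\eqref{diagFibstar}, a section $s \co X \to \Fib^*(\alpha)$ over $X$ corresponds bijectively to a lift of $\eta_X \co X \to (X^\II)_\II$ through the display map $\Fill(\alpha \circ x)_\II \to (X^\II)_\II$. Transposing under the adjunction $(-)^\II \dashv (-)_\II$, such a lift is equivalent to a lift of the identity $X^\II \to X^\II$ through $\Fill(\alpha \circ x) \to X^\II$, which is exactly a section of the latter display map over $X^\II$. By \cref{rmk:sections}, this in turn is in bijection with a term $\vdash t \co \dprd{x \oftype X^\II} \Fill(\alpha \circ x)$ of $\mathcal{T}_\tE$, which by the definition~\eqref{eq:OPfibtype} is a global section $1 \to \Fib(\alpha)$. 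Naturality of the bijection in $\alpha$ (and the fact that it is canonical) follows from naturality of the hom-set bijection of the adjunction.

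For part (ii), I would combine three ingredients. Naturality of $\eta$ gives $(t^\II)_\II \circ \eta_Y = \eta_X \circ t$, so pulling the defining square~\eqref{diagFibstar} back along $t$ yields $t^* \Fib^*(\alpha) \cong \eta_Y^* \big( ((t^\II)_\II)^* \Fill(\alpha \circ x)_\II \big)$. Since $(-)_\II$ is a right adjoint and therefore preserves pullbacks, one has $((t^\II)_\II)^* \Fill(\alpha \circ x)_\II \cong \big( (t^\II)^* \Fill(\alpha \circ x) \big)_\II$. Finally, for any $y \co \yon c \to Y^\II$ we have the identification $\alpha(t) \circ y = \alpha \circ t^\II(y)$ of families $\II \to \UV$, and since the operations of dependent product, dependent sum, equality, and cofibrant comprehension from which $\Fill$ in~\eqref{eqn:pplift2} is built are all pullback stable (via the Beck-Chevalley conditions recorded in \cref{sec:presheaves}), the evident comparison $(t^\II)^* \Fill(\alpha \circ x) \to \Fill(\alpha(t) \circ y)$ is an isomorphism over $Y^\II$. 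Chaining the three isomorphisms gives $t^* \Fib^*(\alpha) \cong \Fib^*(\alpha(t))$, as required.

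The main obstacle I anticipate is the bookkeeping required for pullback stability of $\Fill$ in the last step: while each constituent of the formula~\eqref{eqn:pplift2} is manifestly stable in isolation, one needs to carefully propagate the substitution $\alpha(t) \circ y = \alpha \circ t^\II(y)$ through the nested $\prod$'s, $\sum$'s, restriction predicates $\tilde{v}_0 = \lambda a$, and equality types in order to exhibit the comparison morphism as a canonical isomorphism rather than merely a pointwise one. Once this verification is in hand, the remainder of both parts reduces to routine applications of the adjunction $(-)^\II \dashv (-)_\II$ and the two-pullbacks lemma.
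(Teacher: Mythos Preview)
Your argument is correct and, for part (i), essentially identical to the paper's: both use the adjunction $(-)^\II \dashv (-)_\II$ to transpose sections of $\Fib^*(\alpha)$ over $X$ into sections of $\Fill(\alpha\circ x)$ over $X^\II$, which are exactly global sections of $\Fib(\alpha)$. The paper phrases this via a classifying map $\mathsf{fill}(\alpha)\co X^\II\to\UV$ and its transpose $\mathsf{fill}(\alpha)^\sharp\co X\to\UV_\II$, but the content is the same.

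For part (ii), the skeleton also agrees with the paper: naturality of the unit, preservation of pullbacks by the right adjoint $(-)_\II$, and stability of $\Fill$ under pullback along $t^\II$. The difference lies in how that last stability is established. You invoke Beck--Chevalley directly on the type-theoretic formula \eqref{eqn:pplift2}, observing that $\alpha(t)\circ y = \alpha\circ t^\II(y)$ and that every constituent ($\Pi$, $\Sigma$, equality, cofibrant comprehension, $\Pi_{i\co\II}$) is substitution-stable. The paper instead routes through \cref{prop:FibvsFill}, rewriting $\Fill_0(\alpha\circ x)\cong\prod_{a\co\alpha(x_0)}\TFib(F_0(x,a))$, identifying the display map for $F_0$ with the pullback-hom $\delta_0\Rightarrow p_\alpha$, and then using that $\delta_0\Rightarrow(-)$ preserves pullbacks, the already-proved stability of $\TFib$ (\cref{cor:TFibstable}), and Beck--Chevalley for a single $\Pi$. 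Your route is shorter in principle but demands exactly the bookkeeping you flag; the paper's route trades that bookkeeping for an appeal to the categorical description of $F_0$ and the reuse of \cref{cor:TFibstable}, which keeps the verification structural rather than syntactic.
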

\begin{proof}
For (i), by the specification~\eqref{eq:deffillA}, points $1\to\Fib(\alpha)$ correspond bijectively to sections 
of~$\Fill(\alpha\circ x)$ over $X^\II$.  Let $\mathsf{fill}(\alpha) \co X^\II \to \UV$ classify  $\Fill(\alpha\circ x) \to X^\II$.  We then have the correspondence indicated below between maps $j_1,j_2,j_3,j_4$,
\[
\begin{tikzcd}
 \Fill(\alpha\circ x) \arrow[d] \arrow[r] & \UVptd\arrow[d] \\ 
X^\II \ar[r,swap,"\mathsf{fill}(\alpha)"] \ar[u,bend left=30,dotted,"j_1"] \ar[ur,dotted,"j_2"] & \UV \mathrlap{,} 
 \end{tikzcd}
\qquad
 \begin{tikzcd}
X. \mathsf{fill}(\alpha)^\sharp \arrow[d] \arrow[r] & (\UVptd)_\II \arrow[d] \\ 
X \ar[r,swap,"{\mathsf{fill}(\alpha)^\sharp}"] \ar[u,bend left=30,dotted,"j_4"] \ar[ur,dotted,"j_3"] & \UV_\II \mathrlap{,} 
 \end{tikzcd}
 \]
where $\mathsf{fill}(\alpha)^\sharp$ is the adjoint transpose of $\mathsf{fill}(\alpha)$, and both squares are pullbacks.  

It thus suffices to show that the following is also a pullback.
 \[
 \begin{tikzcd}
\Fib^*(\alpha) \arrow[d] \arrow[r] & (\UVptd)_\II \arrow[d] \\ 
X \ar[r,swap,"{\mathsf{fill}(\alpha)^\sharp}"]  & \UV_\II \mathrlap{.} 
 \end{tikzcd}
 \]
 To see this, factor the transpose $\mathsf{fill}(\alpha)^\sharp$ as $\mathsf{fill}(\alpha)_\II \circ \eta$, as indicated below, noting that the root preserves pullbacks.
\[
 \begin{tikzcd}
 \Fib^*(\alpha) \arrow[d] \arrow[r] & \Fill(\alpha\circ x)_\II\arrow[d] \arrow[r] & {\UVptd}_\II\arrow[d] \\ 
X \ar[r,"\eta"] \ar[rr,bend right=20,swap,"\mathsf{fill}(\alpha)^\sharp"] & (X^\II)_\II \arrow[r,"\mathsf{fill}(\alpha)_\II"] & \UV_\II
 \end{tikzcd}
 \]
The square on the left is a pullback by the definition \eqref{diagFibstar} of $\Fib^*(\alpha)$. 

For (ii), it now suffices to show that for every $t \co Y \to X$,
\begin{equation}\label{eq:fillAnatural}
\mathsf{fill}(\alpha)^\sharp \circ t = \mathsf{fill}({\alpha(t))}^\sharp \mathrlap{.}
\end{equation}
Since $\mathsf{fill}(\alpha)^\sharp \circ t = (\mathsf{fill}(\alpha) \circ t^\II)^\sharp$ by the naturality of transposition, it suffices to show
\begin{equation}\label{eq:needforFillstable}
\mathsf{fill}(\alpha) \circ t^\II = \mathsf{fill}(\alpha(t)) \mathrlap{.}
\end{equation}
Now, by definition, $\mathsf{fill}(\alpha) \co X^\II \to \UV$ classifies $\Fill(\alpha\circ x) = \Fill_0( \alpha\circ x) \times \Fill_1( \alpha\circ x)$ (over $X^\II$), so we just need to show that $\Fill_0(\alpha\circ x) \to X^\II$ is stable under pullback along $t^\II \co Y^\II \to X^\II$ (the case $\Fill_1( \alpha\circ x)$ is analogous).  As shown in the proof of  \cref{prop:FibvsFill},  over $X^\II$ we have
\[\textstyle
\Fill_0( \alpha\circ x )\ \cong\ \prod_{a \oftype \alpha(x_0)} \TFib \, F_0(x ,a)\mathrlap{.}
\]
Moreover, by the definition \eqref{def:fibertypeF0} the display map  for $F_0(x ,a)$, namely
\[\textstyle
\big( \dsm{(x,a) \oftype (\dsm{x\co X^\II} \alpha(x_0))} F_0(x,a) \big)  \too\ \dsm{x\co X^\II} \alpha(x_0) \,,
\]
is isomorphic to the pullback-hom ${\delta^0 \Rightarrow{p_\alpha}}$ over the isomorphism
\[\textstyle
\dsm{x\co X^\II} \alpha(x_0)\ \cong\ X^\II\times_X X.p_\alpha\, \mathrlap{.}
\]
Applying the functor ${\delta^0 \Rightarrow{(-)}}$ to the pullback square on the left below results in the upper one on the right: 
\[
 \begin{tikzcd}
Y.\alpha(t) \arrow[d,swap, "{p_\alpha(t)}"] \arrow[r] \pbmark & X.\alpha \arrow[d,"{p_\alpha}"]  \\ 
Y \ar[r,swap,"t"] & X \mathrlap{,} 
 \end{tikzcd}
 \qquad \qquad
 \begin{tikzcd}
Y.\alpha(t)^\II \arrow[d,swap, "{\delta^0 \Rightarrow{p_{\alpha(t)}}}"] \arrow[r] \pbmark & X.\alpha^\II \arrow[d,"{\delta^0 \Rightarrow{p_\alpha}}"]  \\ 
Y^\II\times_Y Y.p_{\alpha(t)} \ar[r,swap,""] \ar[d] \pbmark & X^\II\times_X X.p_\alpha \ar[d]  \\
Y^\II \ar[r,swap,"t^\II"] & X^\II \mathrlap{.} 
 \end{tikzcd}
 \]
So we are done by the stability of $\TFib$ (\cref{cor:TFibstable}) and the Beck-Chevalley condition for $\Pi$, using the lower pullback on the right.
\end{proof}

\begin{thm}[Universal fibration] 
\label{prop:classifier-fib}
There is a \strong{universal small uniform fibration}, $\Fibptd \to \Fib$, 
in the sense that every small fibration $A \to X$ is a pullback of $\Fibptd \to \Fib$ along a canonically determined classifying map $X \to \Fib$.
\begin{equation}
\begin{tikzcd}
      A \ar[r, dotted] \ar[d] \pbmark & \Fibptd
      \ar[d] \\
      X \ar[r, swap,dotted] & \Fib
    \end{tikzcd}   
\end{equation}
\end{thm}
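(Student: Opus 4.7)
The plan is to mimic the construction of the universal uniform \emph{trivial} fibration in \cref{prop:classifier-trivfib}, substituting the pullback-stable family $\Fib^*$ for $\TFib$. Concretely, I will apply $\Fib^*$ to the tautological type $\id \co \UV \to \UV$, which classifies the universal small map $\dspu \co \UVptd \to \UV$, and define $\Fib \defeq \UV.\Fib^*(\id)$ together with $\Fibptd \to \Fib$ as the pullback of $\dspu$ along the projection $\Fib \to \UV$. All the heavy lifting has already been done in \cref{prop:uniform-fibration-from-fib-type} and \cref{prop:Fibstable}, so the argument will be essentially a chase of universal properties.

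For the universal property, let $p \co A \to X$ be a small uniform fibration with characteristic map $\alpha \co X \to \UV$. First, by \cref{prop:uniform-fibration-from-fib-type} the given fibration structure on $p$ corresponds to a term $\vdash t \co \Fib(\alpha)$, and then by \cref{prop:Fibstable}(i) to a section $s \co X \to \Fib^*(\alpha)$ of the display map. Next, the pullback stability provided by \cref{prop:Fibstable}(ii), applied to $\alpha \co X \to \UV$, identifies $\Fib^*(\alpha)$ with the pullback of $\Fib^*(\id) \to \UV$ along $\alpha$, so the section $s$ is equivalently the data of a lift $\bar{\alpha} \co X \to \Fib$ of $\alpha$ along the projection $\Fib \to \UV$. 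Pasting this lift with the defining pullback of $\Fibptd$ against the two-pullbacks lemma then exhibits $A \to X$ as the pullback of $\Fibptd \to \Fib$ along $\bar{\alpha}$, and shows that $\bar{\alpha}$ is uniquely determined by $\alpha$ together with the chosen fibration structure.

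It remains to observe that $\Fibptd \to \Fib$ itself carries a canonical uniform fibration structure, so that the universal property applies to it tautologically. To see this, take $X = \Fib$, let $\alpha$ be the projection $\Fib \to \UV$, and note that by the very definition $\Fib = \UV.\Fib^*(\id)$ the pullback of $\Fib^*(\id)$ along $\alpha$ carries a distinguished diagonal section (coming from the universal property of pullbacks applied to the identity on $\Fib$); via the chain of correspondences above, this section produces the desired uniform fibration structure on $\Fibptd \to \Fib$, making it universal in the stated sense. The only point requiring any care is the coherent interaction of \eqref{diagFibstar} with the naturality equation \eqref{eq:fillAnatural} when specialised to $\alpha = \mathrm{proj} \co \Fib \to \UV$, but this has already been recorded in the proof of \cref{prop:Fibstable}, so no genuinely new difficulty arises here.
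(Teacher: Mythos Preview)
Your proposal is correct and follows essentially the same approach as the paper: define $\Fib = \Fib^*(\id)$ and $\Fibptd$ as the pullback of $\dspu$ along the projection, then use the pullback stability of $\Fib^*$ (\cref{prop:Fibstable}) exactly as $\TFib$ was used in \cref{prop:classifier-trivfib}. The paper's proof is in fact just the one-line reference ``the rest of the proof is the same as for \cref{prop:classifier-trivfib}'', which you have correctly unfolded.
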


\begin{proof}
Define $\Fib = \Fib^*(\id)$ and  $\Fibptd \to \Fib$ by the pullback indicated below.
\[
\begin{tikzcd}
\Fibptd \arrow[d] \arrow[r]  \pbmark & \UVptd \arrow[d, "\dspu"] \\
\Fib \arrow[r] & \UV
\end{tikzcd}
\]
The rest of the proof is the same as for \cref{prop:classifier-trivfib}.
\end{proof}

\begin{rmk}\label{rmk:universalfib}
As in \cref{rmk:TFib}, in terms of variables we have $\Fib = \dsm{\alpha\co\UV}\Fib^*(\alpha)$.  Thus a uniform fibration is a small family $\alpha\co\UV$ equipped with a fibration structure $f\co\Fib^*(\alpha)$, and the universal one is indexed by the type of all such pairs $(\alpha,f)$.
\end{rmk}


\section{Path types} 
\label{sec:path-types} 

The fundamental insight of Homotopy Type Theory \cite{awodey-warren:homotopy-idtype,gambino-garner:idtypewfs} is that identity terms $p :\Id_A(a, b)$ in (intensional) Martin-L\"of type theory \cite{nordstrom-petersson-smith:ml} behave like continuous paths  $p \co [0,1] \to A$ in a \emph{space} $A$, and the identity type $\Id_A$ itself then acts like a \emph{pathspace} $A^{[0,1]}$.  Moreover, the rules of the type theory then permit a formal derivation of the \emph{homotopy lifting property} for type families $X\vdash A \type$, thus forcing them to be regarded as \emph{fibrations} $A\twoheadrightarrow X$. 

To make this insight precise, for any object $A\in \tE$, define the \emph{path object} $A^\II$  to be the exponential by the interval $\II$ (\cref{def:IntervalWithConnections}). The path object is indexed over $A\times A$ by the pair of \emph{endpoint evaluations} $\epsilon= ( \epsilon_0, \epsilon_1 ) \co  A^\II \too A\times A$, where $\epsilon_k = A^{\delta_k}$ for $k = 0,1$.  Together with the \emph{constant path} map $\con = A^{!} \co  A \to A^\II$, where $! \co  A \to 1$, these maps make the following diagram commute.
\begin{equation}\label{diag:pathtype}
\begin{tikzcd}
A \ar[d, swap, "="] \ar[r, "\con" ]  &  A^\II  \ar[d, "{\epsilon}"]  \\
A  \ar[r, swap,"\Delta_A" ] &  A\times A \mathrlap{.}
\end{tikzcd}
\end{equation}

More generally, for $p \co A \to X$  any (small) map, with classifying map $\alpha \co  X \to \UV$, the \strong{path object} of $p \cong p_{\alpha}$ in $\tE/_X$ is the exponential of $p_{\alpha}$ by $\II_X = (\pi_1\co X\times \II \to X)$, which can be constructed in $\tE$ as the following pullback along $\con_X \co  X\to X^\II$.
\[
\begin{tikzcd}
  \typecomp{X}{\alpha^\II} \ar[r] \ar[d,swap, "{p_{\alpha^\II}}"] \pbmark  & (\typecomp{X}{\alpha})^\II \ar[d, "(p_{\alpha})^\II"] \\ 
X  \ar[r, swap, "\con_X"] & X^\II  \mathrlap{.}
\end{tikzcd}
\]
Let $\alpha^\II \co   X\to \UV$ classify $p_{\alpha^\II} \co  \typecomp{X}{\alpha^\II} \to X$, so that in the type theory of $\tE$ we have the \emph{path type} $X\vdash \alpha^\II \co  \UV$.
Terms $X\vdash u\co \alpha^\II$ are called \strong{paths} in $\alpha$.  
There are of course maps over $X$ analogous to those in \eqref{diag:pathtype},  of the form: 
\[
\begin{tikzcd}
\typecomp{X}{\alpha} \ar[d, swap,"="] \ar[rr, "\con_{\alpha}" ] 
		&&  \typecomp{X}{\alpha^\II} \ar[d,  "{\epsilon}"] \\
\typecomp{X}{\alpha} \ar[r, swap,"\Delta_\alpha" ] & 
	 \typecomp{X}{\alpha}\times_X \typecomp{X}{\alpha}  \ar[r, swap,"\cong"]  
	 	& \typecomp{X}{(\alpha\times\alpha)} \mathrlap{.}   
\end{tikzcd}
\]
Given  any $x \co  \yon{c} \to X$ and elements $c  \forces a,b \co  \alpha(x)$ (corresponding to sections of $\yon{c}.\alpha(x)\to\yon{c}$),  a path $c  \forces u\co \alpha(x)^\II(a,b)$ thus represents a \emph{homotopy} $a\sim b$ (over $\yon{c}$).

Regarding paths $a\path b$ as ``identifications'' as in Homotopy Type Theory, the path type $\alpha^\II$ acts as an \emph{identity type} for $\alpha$.   Under the propositions-as-types view, we should then expect a type family $X.\alpha \vdash \beta \co  \UV$ to respect identifications $a\sim b$ in $\alpha$, by the principle of ``indistinguishability of identicals'':  if $a\sim b$ and $\beta(a)$, then $\beta(b)$,  regarding $\beta \co   X.\alpha \to \UV$ as a (type-valued) ``propositional function" on $\alpha$. Indeed, we have the following:

\begin{lemma}\label{prop:transport}
For any $X\in\tE$ and small map $A \to X$, let $B \to A$ be a small uniform fibration.  For the associated families of types $\alpha\co  X \to \UV$ and $\beta \co  X.\alpha \to\UV$, and for any $x \co  \yon{c} \to X$, suppose we have terms  $c\forces a, b\co  \alpha(x)$ and a path between them $c\forces u \co  \alpha(x)^\II(a,b)$.  Then for each $e$ with $c\forces e \co  \beta(x,a)$ there is an associated $u*e$ with $c\forces u*e\co  \beta(x,b)$. 
\end{lemma}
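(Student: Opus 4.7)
The plan is to pull back the fibration $\beta$ along the exponential transpose of the path $u$, and then apply the uniform fibration structure to fill against the endpoint $\delta_0$ of $\II$.

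First I would transpose the path: by the adjunction $(-)^\II \dashv (-)\times\II$, the datum $c \forces u \co \alpha(x)^\II(a,b)$ corresponds to a map $\tilde u \co \yon c \times \II \to \typecomp{X}{\alpha}$ over $X$ (where $\yon c \times \II \to X$ is $x$ composed with the projection) satisfying $\tilde u \circ (\id_{\yon c}, \delta_0) = (x, a)$ and $\tilde u \circ (\id_{\yon c}, \delta_1) = (x, b)$; these are precisely the endpoint conditions on the path. I would then pull the display map $p_\beta \co \typecomp{X.\alpha}{\beta} \to \typecomp{X}{\alpha}$ back along $\tilde u$, obtaining a small map $q \co F \to \yon c \times \II$. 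This $q$ inherits the structure of a uniform fibration, since uniform fibration structure is stable under pullback (as provided by \cref{prop:Fibstable}, or equivalently by \cref{prop:fibistfib} together with the pullback-stability of $\TFib$ in \cref{cor:TFibstable}).

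Next, I would use the element $c \forces e \co \beta(x,a)$ to produce a partial section. Together with $\tilde u \circ (\id, \delta_0) = (x, a)$, the element $e$ determines a section $\bar e \co \yon c \to F$ of $q$ over the map $(\id, \delta_0) \co \yon c \to \yon c \times \II$. But $(\id, \delta_0)$ is exactly the pushout-product $!_{\yon c} \otimes \delta_0$ of the cofibration $!_{\yon c} \co 0 \mono \yon c$ with $\delta_0$, hence is a generating trivial cofibration in the sense of \cref{rem:gentrivcof}. The uniform fibration structure of $q$ therefore supplies a diagonal filler $E \co \yon c \times \II \to F$ with $E \circ (\id, \delta_0) = \bar e$.

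Finally, restricting $E$ along the opposite endpoint $(\id, \delta_1) \co \yon c \to \yon c \times \II$ and composing with the top map $F \to \typecomp{X.\alpha}{\beta}$ of the pullback square gives a section over $(x, b) = \tilde u \circ (\id, \delta_1) \co \yon c \to \typecomp{X}{\alpha}$; the corresponding element $u \ast e \co \yon c \to \UVptd$ then satisfies $c \forces u \ast e \co \beta(x, b)$, as required. The only point requiring any attention is the pullback stability of uniform fibration structure, which is guaranteed by \cref{prop:Fibstable}; once this is in hand, the construction is a direct application of the filling operation to a generating trivial cofibration.
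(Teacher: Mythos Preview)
Your proof is correct and follows essentially the same idea as the paper: transpose the path to a map $\tilde u \co \yon c\times\II \to X.\alpha$, recognise $\yon c\times\delta_0$ as the generating trivial cofibration $!_{\yon c}\otimes\delta_0$, fill, and evaluate at $\delta_1$. The only difference is that you first pull $p_\beta$ back along $\tilde u$ and invoke pullback stability of the fibration structure, whereas the paper sets up the lifting problem directly against $p_\beta$ with $\tilde u$ as the bottom map and applies the given structure $f\co\Fib(\beta)$ immediately; this makes the appeal to \cref{prop:Fibstable} unnecessary (and incidentally, the adjunction is $(-)\times\II \dashv (-)^\II$, not the other way around).
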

\begin{proof}
Consider the following diagram, in which $\lambda{u'} = u$. 
\[
\begin{tikzcd}[column sep=large]
\yon{c}\times 1 \arrow[d, tail,  swap, "{\yon{c}\times\delta_0}"] \arrow[r, "{(x,a,e)}"]  & X.\alpha.\beta \ar[d,"{p_\beta}"] \\
\yon{c}\times \II  \arrow[r, swap, "u'"] \arrow[ru, dotted,swap, "{j}"] \arrow[d]  & X.\alpha  \ar[d] \\
\yon{c} \arrow[r,swap, "{x}"]  &  X  \mathrlap{.}
\end{tikzcd}
\]
The solid upper square commutes because $u'(\id\times\delta_0) = (x,a) = p_\beta(x,a,e)\co  \yon{c} \to X.\alpha$, and by assumption  $c\forces e \co  \beta(x,a)$.  As the reader can check, for the map on the left we have $\yon{c}\times\delta_0 = 0_{\yon{c}} \otimes \delta_0$ with the cofibration $0_{\yon{c}}\co  0\mono \yon{c}$, and thus it is a (generating) trivial cofibration in the sense of \cref{rem:gentrivcof}.
By assumption, we have a uniform fibration structure $f \co  \Fib(\beta)$ over $X.\alpha$, so by \cref{prop:uniform-fibration-from-fib-type} we obtain a diagonal filler $j = f(0_{\yon{c}},\, (x,a,e),\, u')$ as indicated.
We can then take  $u*e \defeq \pi_2\, j(\yon{c}\times\delta_1)$, so that $c\forces u*e \co  \beta(\pi_1\, j(\yon{c}\times\delta_1))$.  But 
\[
\beta(\pi_1\, j(\yon{c}\times\delta_1)) = \beta(p_\beta\, j(\yon{c}\times\delta_1)) = \beta(u'(\yon{c}\times\delta_1))= \beta(x,b)\,. \qedhere
\]
\end{proof}

The term $u*e$ is called the \emph{transport} of $e$ along $u$.  It is in virtue of this transport operation that fibrations are used as the type families in Homotopy Type Theory, as we shall see in \cref{prop:IdTypeRules} .  Of course, we then also need the path type $\alpha^\II$ itself to be a fibration over  $\alpha\times\alpha$.  Fortunately, we have the following result.

\begin{lemma}\label{prop:IdisFib}
Let $X\in\tE $ and $\alpha\co  X \to \UV$. Given a uniform fibration structure $f\co  \Fib(\alpha)$ over $X$,
we can construct a uniform fibration structure $f'\co \Fib(\alpha^\II)$ over $X.\alpha.\alpha \cong X.\alpha \times_X X.\alpha$.
\end{lemma}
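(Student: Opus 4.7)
The approach is to reduce the fibration structure on $\alpha^\II$ to the one given on $\alpha$, via the pullback-hom characterisation of uniform fibrations from \cref{prop:fibistfib} together with the connections on $\II$ from \cref{def:IntervalWithConnections}. The display map in question, classifying $\alpha^\II$ viewed as a family over $X.\alpha \times_X X.\alpha$, is the endpoint evaluation $\epsilon \co \typecomp{X}{\alpha^\II} \to X.\alpha \times_X X.\alpha$. By construction this is the pullback-hom $[\delta_0, \delta_1] \Rightarrow_X p_\alpha$ computed in the slice $\tE/X$, where $[\delta_0, \delta_1] \co 1 + 1 \to \II$ is the boundary inclusion. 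Applying \cref{prop:fibistfib} to $\epsilon$, it thus suffices to endow $\delta_k \Rightarrow \epsilon$ with a uniform trivial fibration structure, for $k = 0, 1$.

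By iterating the pushout-product / pullback-hom adjunction \eqref{eq:leibnizadjunction} and invoking the symmetry of the pushout-product, one obtains a natural identification
\[
\delta_k \Rightarrow \epsilon \ \cong\ \big( \delta_k \otimes [\delta_0, \delta_1] \big) \Rightarrow p_\alpha,
\]
where $\otimes$ is the pushout-product. The domain of $\delta_k \otimes [\delta_0, \delta_1]$ is the union of the three faces $\{k\} \times \II$, $\II \times \{0\}$ and $\II \times \{1\}$ of the square $\II \times \II$, included into $\II \times \II$. The connection $\conn_k \co \II \times \II \to \II$ of \cref{def:IntervalWithConnections} provides a deformation retraction of the square onto this three-sided subobject (via the homotopy $(i, j, t) \mapsto (\conn_k(i, t), j)$), and a routine manipulation exhibits $\delta_k \otimes [\delta_0, \delta_1]$ as a retract of a generating trivial cofibration of the form $m \otimes \delta_j$ for a suitable cofibration $m$ and endpoint $\delta_j$.

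Since $p_\alpha$ is a uniform fibration, \cref{def:uniformfibration} gives a uniform trivial fibration structure on $(m \otimes \delta_j) \Rightarrow p_\alpha$, which transfers along the retraction to one on $\delta_k \Rightarrow \epsilon$, completing the argument via \cref{prop:fibistfib}. The main obstacle is making the connection-based retraction explicit and verifying that the resulting structure satisfies the naturality required by \cref{prop:Fibstable}. An arguably cleaner alternative, in keeping with the spirit of this paper, would be to construct $f' \co \Fib(\alpha^\II)$ directly via Kripke-Joyal forcing: unfolding the forcing conditions through \cref{prop:uniform-fibration-from-fib-type} and \eqref{eqn:pplift2}, a filling problem for $\alpha^\II$ corresponds to filling a square in $\alpha$ with three prescribed faces, which admits a closed-form solution in terms of the connection $\conn_k$ and the given filler $f \co \Fib(\alpha)$, while pullback-stability follows automatically from \cref{prop:forcing-eq-closedsliceforcing}.
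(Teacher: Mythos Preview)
Your opening identification is exactly right and matches the paper: the display map $\epsilon \co \typecomp{X}{\alpha^\II} \to X.\alpha.\alpha$ is the pullback-hom $\del \Rightarrow p_\alpha$ with $\del = [\delta_0,\delta_1] \co 1+1 \mono \II$, so a filling problem for $\epsilon$ against a generating trivial cofibration $m\otimes\delta_k$ transposes to a filling problem for $p_\alpha$ against $(m\otimes\delta_k)\otimes\del$.

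Where you diverge from the paper is in the next step. You try to analyse $\delta_k\otimes\del$ on its own, invoking the connections to exhibit it as a retract of some $m'\otimes\delta_j$, and then push uniform lifting through the retraction. The paper does something both simpler and connection-free: it just reassociates
\[
(m\otimes\delta_k)\otimes\del\ \cong\ (\del\otimes m)\otimes\delta_k,
\]
observes that $\del$ is a cofibration (each $\delta_i$ is, by \cref{def:IntervalWithConnections}(ii), and cofibrations are closed under coproducts), and that cofibrations are closed under the pushout-product $\otimes$; hence $\del\otimes m$ is a cofibration and $(\del\otimes m)\otimes\delta_k$ is a trivial cofibration of exactly the shape demanded by \cref{def:uniformfibration}. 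The given $f\co\Fib(\alpha)$ then fills it directly, and uniformity follows from naturality of transposition. No connections, no retracts.

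Your route can be made to work, but the ``routine manipulation'' you defer is the entire content: you would need to write down explicit retraction data for the open-box inclusion in the arrow category and then check that uniform lifting structures transfer along such retracts---which they do, but it is more bookkeeping than the one-line reassociation above. Note also that the connections are genuinely needed elsewhere (for \cref{prop:pathcontr}), but not here; the paper's proof of this lemma is purely a pushout-product calculation. Your alternative forcing suggestion at the end is reasonable in spirit, but the paper's argument already achieves the same effect diagrammatically with less effort.
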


\begin{proof}
Consider a diagonal filling problem for the display map $p_{\alpha^\del} \co  X.\alpha^\II \to X.\alpha.\alpha$ with respect to a generating trivial cofibration $m \otimes\delta_k$ with cofibration $m \co S \mono \yon{c}$ (see \cref{rem:gentrivcof}):
\begin{equation}\label{diag:diagfillerforalphaI}
\begin{tikzcd}[column sep=large] 
\yon{c} +_S (S \times\II) \arrow[r, "u"] \arrow[d, tail,  swap, "{m \otimes\delta_k\, }"]   & X.\alpha^\II \ar[d , "{\, p_{\alpha^\del} }"] \\
\yon{c} \times \II  \arrow[r, swap, "v"] \arrow[ru, dotted] & X.\alpha.\alpha \mathrlap{.}
\end{tikzcd}
\end{equation}
In terms of the adjoint functors $\otimes$ and $\Raw$, we have $p_{\alpha^\del} = (\del\Raw p_{\alpha})$, where $\del = [\delta_0, \delta_1] \co  1+1\mono \II$, as the reader can check.   So by the adjoint relation \eqref{eq:leibnizadjunction}, the above diagonal filling problem can be reformulated equivalently as follows, where $D = \dom((m \otimes\delta_k)\otimes \del)$.
\[
\begin{tikzcd}[column sep=large] 
D \arrow[r, "u' "] \arrow[d, tail,  swap, "{(m \otimes\delta_k)\otimes \del\, }"]   & X.\alpha \ar[d, "{\, p_{\alpha} }" ] \\
(\yon{c} \times \II) \times \II  \arrow[r, swap, "v' "] \arrow[ru, dotted,swap, "j"] & X \mathrlap{.}
\end{tikzcd}
\]
Since $(m \otimes\delta_k)\otimes \del \cong (\del\otimes m) \otimes\delta_k$, and $\del$ is a cofibration, and cofbrations are closed under $\otimes$, the map on the left is a trivial cofibration.  Thus from the assumed fibration structure $f\co  \Fib(\alpha)$ we obtain a diagonal filler $j = f(\del\otimes m, u',v')$ as indicated.  Transposing $j \co  (\yon{c} \times \II) \times \II \to X.\alpha$ to $j' \co  \yon{c} \times \II \to  X.\alpha^\II$ provides a diagonal filler for \eqref{diag:diagfillerforalphaI}, which we may take as the corresponding value of $f'\co \Fib(\alpha^\II)$,
\[
f'(m,u,v) \defeq  f(\del\otimes m, u', v')'
\]
(up to the iso $(m \otimes\delta_k)\otimes \del \cong (\del\otimes m) \otimes\delta_k$). Uniformity of $f'$ then follows from  that of $f$ and the naturality of adjoint transposition.  
\end{proof}

Recalling \cref{def:IntervalWithConnections}, a consequence of the connection $\conn_0 \co \II \times \II \to \II$ is the contractibility of (the fibers of) the $0^{th}$-endpoint map $\epsilon_0 \co  \typecomp{X}{\alpha^\II}\too \typecomp{X}{\alpha}$  (and similarly for $\conn_1$ and $\epsilon_1$).

\begin{lemma}\label{prop:pathcontr}
Let $\alpha \co  X\to\UV$ with elements $c \forces a,b \co  \alpha(x)$.  For any path $u \co  a\sim b$ in $\alpha(x)$,  there is a path $\varepsilon_u \co  \con_a \sim u$ in the path type $\alpha(x)^\II$.  Formally, from $c \forces u\co  \alpha(x)^\II(a,b)$,
we can construct $c \forces \varepsilon_u  \co  (\alpha(x)^\II)^\II(\con_a,u)$.
\end{lemma}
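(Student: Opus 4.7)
The plan is to construct $\varepsilon_u$ directly by precomposition with the connection $\conn_0 \co \II\times\II \to \II$, which geometrically contracts the square to a corner and therefore carries the path $u$ to a homotopy from a constant path to $u$ itself.

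First, I would unfold what the forcing statement says in diagrammatic terms. Since the path type $\alpha^\II$ is defined via the exponential $(-)^\II$ and $\typecomp{X}{\alpha^\II}$ is the pullback $X \times_{X^\II} (\typecomp{X}{\alpha})^\II$, the datum $c \forces u \co \alpha(x)^\II(a,b)$ is equivalent, by the adjunction $(-)\times\II \dashv (-)^\II$, to a map $\tilde{u} \co \yon c \times \II \to \typecomp{X}{\alpha}$ over $X$ (i.e.\ $p_\alpha \circ \tilde u = x\circ \pi_{\yon c}$) whose restrictions along $(\id,\delta^0)$ and $(\id,\delta^1)$ are the sections $(x,a)$ and $(x,b)$ respectively. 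Similarly, the desired path $\varepsilon_u$ in $\alpha(x)^\II$ from $\con_a$ to $u$ corresponds to a map $\tilde{\varepsilon}_u \co \yon c \times \II \times \II \to \typecomp{X}{\alpha}$ over $X$ whose restriction along the first coordinate $\delta^0$ is the constant path $\con_a = a\circ \pi_{\yon c}$ and whose restriction along the first coordinate $\delta^1$ is $\tilde u$.

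Then I would define
\[
\tilde{\varepsilon}_u\ \defeq\ \tilde{u} \circ (\id_{\yon c}\times \conn_0) \co \yon c \times \II\times\II \too \yon c \times \II \too \typecomp{X}{\alpha}\,,
\]
and verify the endpoint conditions using the axioms of \cref{def:IntervalWithConnections}. From the commutativity axiom $\conn_0\circ(\delta^0,1) = \delta^0\circ{!}$, the restriction of $\tilde{\varepsilon}_u$ along $\yon c \times \delta^0 \times \II$ factors through the composite $\tilde u \circ (\id_{\yon c} \times \delta^0)$ constantly in the second $\II$-coordinate, yielding $\con_a$. From the dual axiom $\conn_0\circ(\delta^1,1) = \id_\II$, the restriction of $\tilde{\varepsilon}_u$ along $\yon c \times \delta^1 \times \II$ equals $\tilde u$ itself. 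One also needs to check that $\tilde{\varepsilon}_u$ lies over $X$ correctly, but this is automatic since $p_\alpha \circ \tilde u$ already factors through $\pi_{\yon c}$. Transposing $\tilde{\varepsilon}_u$ back across $(-)\times\II \dashv (-)^\II$ yields the required section $\varepsilon_u$, and the forcing clause $c \forces \varepsilon_u \co (\alpha(x)^\II)^\II(\con_a,u)$ follows from \cref{thm:forcing-alternative}\cref{item:forcing-vs-entail}.

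There is essentially no obstacle beyond the careful unwinding of adjoint transposes and identification of the path object in $\tE/_X$ with an exponential over $\II_X$; the entire geometric content of the statement is packaged into the two connection axioms, which is exactly why the definition of an interval \emph{with connections} was made. The only bookkeeping worth flagging is that $\tilde{u}$ represents an element of $\alpha(x)^\II$ rather than of $\alpha(x)$, so the ambient display map $p_\alpha$ appears in a ``constant along $\II$'' form, but this is preserved by precomposition with $\id_{\yon c}\times \conn_0$ because $\pi_{\yon c}\circ (\id_{\yon c}\times\conn_0) = \pi_{\yon c}$ after the first projection is discarded.
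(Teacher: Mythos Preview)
Your proposal is correct and is essentially the same argument as the paper's: both construct $\varepsilon_u$ by precomposing (the transpose of) $u$ with the connection $\conn_0 \co \II\times\II\to\II$ and then read off the required endpoint behaviour from the connection equations in \cref{def:IntervalWithConnections}(iii). The paper phrases this globally as a map $X.\alpha^\II \to X.\alpha^{\II\times\II} \cong X.(\alpha^\II)^\II$ induced by $\conn_0$, whereas you work pointwise with the adjoint transpose $\tilde u$, but the content is identical.
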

\begin{proof}
Over $X$, we of course have an iso $ X.(\alpha(x)^\II)^\II \cong X.\alpha(x)^{\II\times\II}$, and so we can use the connection $\conn_0 \co \II \times \II \to \II$ to obtain a map over $X$ of the form
\[
\varepsilon \co  X.\alpha(x)^\II \too X.\alpha(x)^{\II\times\II} \cong X.(\alpha(x)^\II)^\II\,.
\] 
It follows from the connection diagrams in \cref{def:IntervalWithConnections}(iii) that $\varepsilon_u$ satifies the required forcing condition.
\end{proof}

\cref{prop:pathcontr} can be understood informally as follows: the fiber  at $a \co  \alpha$ of the endpoint map $\epsilon_0 \co  \typecomp{X}{\alpha^\II}\too \typecomp{X}{\alpha}$ may be regarded as a ``homotopy singleton'',
\[
\textstyle \mathsf{ho}{\pat{a}} = \dsm{b \oftype \alpha} a\sim b\,.
\]  
The lemma says that, up to homotopy, the only element in $\mathsf{ho}{\pat{a}}$ is the constant path $\con_{a} \co  a\sim~a$.
\smallskip

Using the previous three lemmas we can show that Martin-L\"of's rules \cite{nordstrom-petersson-smith:ml} for identity types $\Id_\alpha$  are satisfied by the path types  $\alpha^\II$, provided that we interpret types as (uniform) fibrations (cf.~\cite{awodey-warren:homotopy-idtype}).  In the following, we take $A\to 1$ rather than the general case $A\to X$ merely for notational convenience, and use displayed variables for a more familiar presentation.  The judgement $\Gamma \vdash A \type$ will be interpreted to mean  $ \Gamma \vdash A\co  \Fib$, where $\Fib = \dsm{\alpha\co \UV}\Fib^*(\alpha)$ is the base of the universal (small, uniform) fibration (\cref{rmk:universalfib}), and so we can write $ A\co  \Fib$ as a pair $A = (\alpha, f)$ with $\alpha \co \UV$  and $f \co  \Fib^*(\alpha)$.

\begin{thm}\label{prop:IdTypeRules} Suppose the presheaf topos $\tE$ has a tiny interval $\II$ with connections.   Then in any context $\Gamma$, for any $A = (\alpha, f) \co  \Fib$ with $\alpha \co  \UV$ and $f\co  \Fib^*(\alpha)$, 
let  $A^\II = (\alpha^\II, f') \co  \Fib$ for $\alpha^\II \co  \UV$ and $f'\co  \Fib^*(\alpha^\II)$.   The following standard \emph{formation}, \emph{introduction}, and \emph{elimination} rules for identity types then hold in the internal type theory of $\tE$.
\medskip
\[
\begin{prooftree}
A  \type
\justifies
x \oftype A, \, y \oftype A\, \der\,   A^\II(x,y) \type
\end{prooftree}
\qquad\qquad
\begin{prooftree}
\strut
\justifies
x\oftype A \, \der\,  \con(x) \co  A^\II(x,x)
\end{prooftree}
\]
\medskip
\[
\begin{prooftree}
x \co  A, \, 
y \co  A, \,
z \co  A^\II(x,y)\, 
\der\,  B(x, y, z) \type  \qquad\qquad
x \co  A \, 
\der\,  b(x) \co  B(x, x, \con(x))
\justifies
x \co  A, \, 
y \co  A, \, 
z \co  A^\II(x,y) \, 
\der\,  \mathsf{j}_b(x,y,z) \co  B(x,y,z) \mathrlap{.} 
\end{prooftree}
\]
\medskip

\noindent If, moreover, the constant path map $\con\co  A\to A^\II$ is a cofibration, then the following \emph{computation} rule also holds.
\begin{equation}\label{eq:Idcomp}
x \oftype A \, \der\, \mathsf{j}_b(x,x,\con(x)) = b(x) \co  B(x, x, \con(x) )
\end{equation}
\end{thm}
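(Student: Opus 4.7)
The plan is to verify the four rules in sequence, leveraging the three preceding lemmas: \cref{prop:IdisFib} (path types of fibrations are fibrations), \cref{prop:transport} (transport along paths), and \cref{prop:pathcontr} (contraction of the path space onto constant paths via the connection $\conn_0$). The first three rules follow essentially formally; the obstruction is concentrated in the computation rule, which demands a judgmental rather than propositional identity.

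For formation, given $A = (\alpha, f) \co \Fib$ over $\Gamma$, \cref{prop:IdisFib} produces a uniform fibration structure $f' \co \Fib^*(\alpha^\II)$, so that $A^\II \defeq (\alpha^\II, f') \co \Fib$ is a well-formed fibration in context $x \co A,\, y \co A$, displayed as $x \co A,\, y \co A \vdash A^\II(x,y) \type$. For introduction, the constant-path section $\con_\alpha \co \typecomp{\Gamma}{\alpha} \to \typecomp{\Gamma.\alpha.\alpha}{\alpha^\II}$ from the relative version of~\eqref{diag:pathtype} supplies $x \co A \vdash \con(x) \co A^\II(x,x)$ directly.

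For elimination, suppose given the fibration $B$ in context $x, y \co A,\, z \co A^\II(x,y)$ together with $x \co A \vdash b(x) \co B(x, x, \con(x))$. For arbitrary $x, y \co A$ and $z \co A^\II(x,y)$, I apply \cref{prop:pathcontr} to the path $z$ to obtain the contraction $\varepsilon_z \co (A^\II)^\II(\con_x, z)$, which is a path in $A^\II$ from $\con_x$ to $z$. Composing with endpoint projections lifts this to a path in the total space $\dsm{x,y \co A} A^\II(x,y)$ running from $(x, x, \con(x))$ to $(x, y, z)$, and invoking \cref{prop:transport} for the fibration $B$ along this path transports $b(x)$ to produce $\mathsf{j}_b(x, y, z) \defeq \varepsilon_z * b(x) \co B(x, y, z)$.

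The hard part will be the computation rule, since \cref{prop:transport} only yields transport as a chosen filler, which a priori is only equal to $b(x)$ up to a path, not on the nose. When $z = \con(x)$, however, the contraction $\varepsilon_{\con(x)}$ is doubly degenerate: by the construction $\varepsilon_u = u \circ \conn_0$ in the proof of \cref{prop:pathcontr} together with the connection axioms of \cref{def:IntervalWithConnections}(iii), it is the constant $\II \times \II$-map at $x$. Thus the lifting problem of \cref{prop:transport} defining $\varepsilon_{\con(x)} * b(x)$ has all of its path data factoring through $\con \co A \to A^\II$. Under the hypothesis that $\con$ is a cofibration, this problem is a pullback (along $\con$) of a trivial filling problem whose canonical solution is the constant section $b(x)$; the uniformity condition in \cref{def:uniformfibration} for the uniform fibration structure on $B$ then forces the filler to be precisely this pullback, yielding $\mathsf{j}_b(x,x,\con(x)) = \varepsilon_{\con(x)} * b(x) = b(x)$ judgmentally, as required.
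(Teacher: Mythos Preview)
Your treatment of formation, introduction, and elimination matches the paper's: both define $\mathsf{j}_b(x,y,z) = \varepsilon_z * b(x)$ using \cref{prop:pathcontr} and \cref{prop:transport}. The paper carries this out explicitly via forcing over representables and verifies the uniformity needed to glue the local $j_{(a,a',u)}$ into a global section, which you elide, but that is a minor presentational difference.

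The computation rule, however, has a genuine gap. You correctly observe that $\varepsilon_{\con(x)}$ is the doubly constant map, so the transport lifting problem has a constant bottom map. But your claim that the uniformity condition of \cref{def:uniformfibration} forces the chosen filler to be the constant section $b(x)$ is not right. Uniformity is a compatibility condition for pullbacks of the \emph{trivial cofibration} along maps $t\co T'\to T$ (base change on the left column of the square); it says nothing about the filler when the \emph{bottom horizontal} map happens to factor through a point, and there is no ``trivial filling problem with canonical solution $b(x)$'' of which your square is a pullback of the required shape. That transport along a constant path is strictly the identity is precisely the regularity condition that does \emph{not} follow from a uniform fibration structure; this is why the hypothesis on $\con$ is needed at all.

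The paper's argument is different in kind: it does not try to show that the constructed $j$ already satisfies the computation rule. Instead, one first observes that the filler in \cref{prop:transport} itself supplies a fibrewise path $b(a) \sim \varepsilon_{\con_a}*b(a)$, naturally in $a$, so $b$ and $j\circ(\Delta,\con)$ are homotopic over $1.\alpha$. Then, using that $(\Delta,\con)=\con$ is a cofibration, one solves the lifting problem with left leg $\con\otimes\delta_k$ against the fibration $p_\beta$ to \emph{replace} $j$ by a homotopic $j'$ for which the upper triangle in \eqref{diag:Idelim} commutes strictly. It is this corrected $j'$, not the original transport term, that validates both elimination and computation.
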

\begin{proof} We follow the approach in \cite[Section 2.4]{Awo18-naturalModels}.
Reasoning first in an arbitrary context, which we leave implicit, let $A = (\alpha, f) \co  \Fib$  with $\alpha \co  \UV$ and $f\co  \Fib^*(\alpha)$.  Then by \cref{prop:IdisFib} we have $f' \co  \Fib^*(\alpha^\II)$, whence  $A^\II = (\alpha^\II, f') \co  \Fib$.  Thus we have the formation rule. The introduction rule follows from \eqref{diag:pathtype}.  The underlying types in those rules are displayed in the following two diagrams.
\begin{equation}\label{diag:Idformintro}
\begin{tikzcd}
 1.\alpha.\alpha \ar[r] \pbmark \ar[d] & 1.\alpha \ar[r] \ar[d,swap,"p_\alpha"] \pbmark & \UVptd \ar[d] \\
1.\alpha \ar[r,swap, "p_\alpha"]  & 1  \arrow[r, swap, "\alpha"] & \UV 
\end{tikzcd}  \qquad \qquad
\begin{tikzcd}
1.\alpha  \arrow[r, "\con"] \ar[d, swap, "\Delta"] & \UVptd \arrow[d] \\
1.\alpha.\alpha \ar[r,  swap, "{\alpha^\II}"] & \UV 
\end{tikzcd} 
\end{equation}
The premises of the elimination rule are then interpreted as the solid part of the following, letting $B = (\beta, g)\co \Fib$.
\begin{equation}\label{diag:Idelim}
\begin{tikzcd}
1.\alpha.\alpha.\alpha^{\II}  \ar[r] \ar[d] \pbmark  & \UVptd  \ar[d] \\
1.\alpha.\alpha  \ar[r, swap, "{\alpha^\II}"]  & \UV
\end{tikzcd} \qquad  \qquad
\begin{tikzcd}
1.\alpha \ar[d,swap, "{(\Delta, \con)}"] \ar[rr, "b"]  & & \UVptd  \ar[d] \\
1.\alpha.\alpha.\alpha^{\II} \ar[rru, swap, dotted, "j"] \ar[rr, swap, "\beta"] && \UV 
\end{tikzcd}
\end{equation}
The conclusion of elimination asserts the existence of the indicated diagonal arrow $j$ making the \emph{lower} triangle commute, while the computation rule states that the \emph{upper} one does.  
To build such a map $j$ it suffices to show that for any $c$ and $x \co  \yon{c} \to 1.\alpha.\alpha.\alpha^{\II}$, there is an element $j_{x}  \co  \yon{c} \to \UVptd$ such that
\[
c \forces j_{x} \co \beta(x)\,,
\]
uniformly in $c$. Since $x$ can be rewritten as $(a,a',u)\co  \yon{c} \to 1.\alpha.\alpha.\alpha^{\II}$, we have $c\forces u\co  \alpha^{\II}(a,a')$.  By \cref{prop:pathcontr} we then obtain $c \forces \varepsilon_u  \co  (\alpha(x)^\II)^\II(\con{(a)},u)$.  Substituting $a$ in the second premise of the elimination rule yields $c\forces b(a) \co \beta(a,a,\con(a))$. Transporting the latter along $\varepsilon_u$ by  \cref{prop:transport} then provides $c\forces \varepsilon_u * b(a)  \co  \beta(a,a',u)$.  Now  set
\[
j_{(a,a',u)} \defeq\ \varepsilon_u * b(a)\,. 
\]
Uniformity in $c$ follows from the same for each of the terms in the construction of $j_{(a,a',u)}$.  Indeed,  given $f \co  d\to c$, we have 
\begin{equation}\label{eq:uniformJ}
d \forces j_{(a,a',u)}(f) = (\varepsilon_u * b(a))(f) = \varepsilon_{uf} * b(af) = j_{(af,a'f,uf)} \co  \beta(af,a'f,uf)\,.
\end{equation}

Finally, if $\con\co  A\to A^\II$ is a cofibration then, looking at \eqref{diag:Idelim}, the map $j$ can be (uniformly) replaced by a homotopic one $j \sim j'$ that \emph{also} makes the upper triangle commute, by a standard argument using the fibration on the right (as in e.g.~\cite[Lemma 56]{awodey-cubical-git}).  The new map $j'$ then validates \emph{both} the elimination and computation rules.  
 
Note that the required coherence conditions for $A^\II$ and $j$ with respect to all context maps $Y \to X$ are ensured by the same device used for $\Pi$ in \eqref{diagram:Picomp}; namely, we apply the above construction once in a universal case constructed from small map classifier $\UVptd\to\UV$, and then obtain all of the instances in a uniform way by composition. See \cite[Section 2.4]{Awo18-naturalModels} for details.
\end{proof}

\begin{rmk} 
The condition that the constant path map $\con \co  A\to A^\II$ is a cofibration holds, for instance, when \emph{all} monos are taken as cofibrations, as in a Cisinski model structure.  When this fails, as it may for presheaves not valued in sets, 
one can still recover the computation rule \eqref{eq:Idcomp} by factoring $\con\co  A\to A^\II$ into a cofibration followed by a (uniform) trivial fibration (\cref{rmk:tfib-as-algebras}), as shown in~\cite{swan2018identity}.   
\end{rmk}


\section{Conclusions}
\label{sec:con}

We conclude the paper by outlining some directions for future work. First, while here we considered $\cSet$-valued presheaves, where $\cSet$ is the
category of sets associated to ZFC extended with an inaccessible cardinal, it is also interesting to consider presheaf categories
valued in other kinds of categories or categories of internal presheaves in some topos. Such a project may also connect with the idea of developing
the material here in a constructive metatheory, as suggested to us by Thierry Coquand and Michael Rathjen. Secondly, one could try to extend 
our results to categories of sheaves. One preliminary issue for this is to find an adequate counterpart of the Hofmann-Streicher universe and
of the classifier of small maps. One may also try to develop the material here without using a universe, adopting the approach of~\cite{shulman:stack} to
replace quantification over the universe. Finally, one may wish to consider extending the Kripke-Joyal forcing introduced here to other forms of type theory, such as the modal type theory studied in~\cite{LOPS18}.


\begin{thebibliography}{10}

\bibitem{ABC}
C.~Angiuli, G.~Brunerie, T.~Coquand, R.~Harper, K.~Hou, and D.~Licata.
\newblock Syntax and models of cartesian cubical type theory.
\newblock {\em Math. Struct. Comput. Sci.}, 31(4):424--468, 2021.

\bibitem{Awodey:cubical-model}
S.~Awodey.
\newblock A cubical model of homotopy type theory.
\newblock {\em Annals of Pure and Applied Logic}, 169(12):1270--1294, 2018.

\bibitem{Awo18-naturalModels}
S.~Awodey.
\newblock Natural models of homotopy type theory.
\newblock {\em Mathematical Structures in Computer Science}, 28(2):241--286,
  2018.

\bibitem{awodey-cubical-git}
S.~Awodey.
\newblock Cartesian cubical model categories, 2023.
\newblock arXiv:2305.00893.

\bibitem{Awodey-Bauer}
S.~Awodey and A.~Bauer.
\newblock Propositions as [types].
\newblock {\em Journal of Logic and Computation}, 14(4):447--471, 2004.

\bibitem{Awodey-Rabe2011}
S.~Awodey and F.~Rabe.
\newblock Kripke semantics for {M}artin-{L}\"of's extensional type theory.
\newblock {\em Logical Methods in Computer Science}, 7(3):1--25, 2011.

\bibitem{awodey-warren:homotopy-idtype}
S.~Awodey and M.~A. Warren.
\newblock Homotopy theoretic models of identity types.
\newblock {\em Mathematical Proceedings of the Cambridge Philosophical
  Society}, 146:45--55, 2009.

\bibitem{van-den-berg-faber}
{\noopsort{Berg}}{B. van~den Berg} and E.~Faber.
\newblock Effective {K}an fibrations in simplicial sets, 2020.
\newblock arXiv:2009.12670.

\bibitem{van-den-berg-garner}
{\noopsort{Berg}}{B. van~den Berg} and R.~Garner.
\newblock Topological and simplicial models of identity types.
\newblock {\em ACM Transactions in Computational Logic}, 13(1):1--44, 2012.

\bibitem{CoqArt}
Y.~Bertot and P.~Cast{\'e}ran.
\newblock {\em Interactive Theorem Proving and Program Development. Coq'Art:
  The Calculus of Inductive Constructions}.
\newblock Morgan \& Claypool, 2004.

\bibitem{CoquandT:modttc}
M.~Bezem, T.~Coquand, and S.~Huber.
\newblock A model of type theory in cubical sets.
\newblock In R.~Matthes and A.~Schubert, editors, {\em 19th International
  Conference on Types for Proofs and Programs (TYPES 2013)}, volume~69 of {\em
  Leibniz International Proceedings in Informatics (LIPIcs)}, pages 107 -- 128.
  Schloss Dagstuhl. Leibniz-Zent. Inform., Wadern, 2015.

\bibitem{joyal}
A.~Boileau and A.~Joyal.
\newblock La logique des topos.
\newblock {\em The Journal of Symbolic Logic}, 45(1):6--16, 1981.

\bibitem{bourke-garner-I}
J.~Bourke and R.~Garner.
\newblock Algebraic weak factorisation systems {I}: accessible {AWFS}.
\newblock {\em Journal of Pure and Applied Algebra}, 220:108--147, 2016.

\bibitem{bourke-garner-II}
J.~Bourke and R.~Garner.
\newblock Algebraic weak factorisation systems {II}: categories of weak maps.
\newblock {\em Journal of Pure and Applied Algebra}, 220:148--174, 2016.

\bibitem{BuchholtzMoorehouse}
U.~Buchholtz and E.~Morehouse.
\newblock Varieties of cubical sets.
\newblock In P.~P.~H{\"o}fner, D.~Pous, and G.~Struth, editors, {\em Relational
  and Algebraic Methods in Computer Science}, volume 10226 of {\em Lecture
  Notes in Computer Science}, pages 77--92. Springer, 2017.

\bibitem{cisinski-asterisque}
D.-C. Cisinski.
\newblock Les pr\'efaisceaux comme mod\`eles des types d'homotopie.
\newblock {\em Ast{\'e}risque}, 308:xxiv+392, 2006.

\bibitem{CoquandT:cubttc}
C.~Cohen, T.~Coquand, S.~Huber, and A.~M\"ortberg.
\newblock Cubical type theory: a constructive interpretation of the univalence
  axiom.
\newblock In T.~Uustalu, editor, {\em 21st International Conference on Types
  for Proofs and Programs (TYPES 2015)}, volume~69 of {\em Leibniz
  International Proceedings in Informatics (LIPIcs)}, pages 5:1--5:34. Schloss
  Dagstuhl. Leibniz-Zent. Inform., Wadern, 2018.

\bibitem{coquand-mfps}
T.~Coquand.
\newblock A cubical type theory, 2015.
\newblock Slides of an invited lecture given at the {M}athematical
  {F}oundations of {P}rogramming {S}emantics conference, {N}ijmegen. Available
  from \url{https://www.cse.chalmers.se/~coquand/nijm1.pdf}.

\bibitem{coquand-slides}
T.~Coquand.
\newblock Univalent type theory and modular formalization of mathematics, 2017.
\newblock Slides of a talk given at the {B}ig {P}roof conference, {I}saac
  {N}ewton {I}nstitute, {C}ambridge. Available from
  \url{https://www.newton.ac.uk/seminar/21728/}.

\bibitem{gambino-garner:idtypewfs}
N.~Gambino and R.~Garner.
\newblock The identity type weak factorisation system.
\newblock {\em Theoretical Computer Science}, 409:94--109, 2008.

\bibitem{gambino-henry}
N.~Gambino and S.~Henry.
\newblock Towards a constructive simplicial model of {U}nivalent {F}oundations,
  2019.
\newblock arXiv:2009.12670. To appear in {\em Journal of the London
  Mathematical Society}.

\bibitem{gambino-kock}
N.~Gambino and J.~Kock.
\newblock Polynomial functors and polynomial monads.
\newblock {\em Mathematical Proceedings of the Cambridge Philosophical
  Society}, 154(1):153--192, 2013.

\bibitem{gambino-larrea}
N.~Gambino and M.~F. Larrea.
\newblock Models of {M}artin-{L}\"of type theory from algebraic weak
  factorisation systems.
\newblock {\em Journal of Symbolic Logic}, 88(1):242--289, 2023.

\bibitem{gambino2017frobenius}
N.~Gambino and C.~Sattler.
\newblock The {F}robenius condition, right properness, and uniform fibrations.
\newblock {\em Journal of Pure and Applied Algebra}, 221(12):3027--3068, 2017.

\bibitem{garner:small-object-argument}
R.~Garner.
\newblock Understanding the small object argument.
\newblock {\em Applied Categorical Structures}, 17(3):247--285, 2009.

\bibitem{grandis-tholen-nwfs}
M.~Grandis and W.~Tholen.
\newblock Natural weak factorisation systems.
\newblock {\em Archivum Mathematicum}, 42:397--408, 2006.

\bibitem{henry-kan-constructive}
S.~Henry.
\newblock A constructive account of the {K}an-{Q}uillen model structure and of
  {K}an's $\mathrm{Ex}^\infty$ functor, 2019.
\newblock ArXiv:1905.06160.

\bibitem{HofmannM:intttl}
M.~Hofmann.
\newblock On the interpretation of type theory in locally cartesian closed
  categories.
\newblock In {\em Computer Science Logic (CSL '94)}, volume 933 of {\em Lecture
  Notes in Computer Science}, pages 427--441. Springer, 1994.

\bibitem{HofmannM:synsdt}
M.~Hofmann.
\newblock Syntax and semantics of dependent types.
\newblock In P.~Dybjer and A.~M. Pitts, editors, {\em Semantics and Logics of
  Computation}, Publications of the Newton Institute, pages 79--130. Cambridge
  University Press, 1997.

\bibitem{hofmann-streicher-universes}
M.~Hofmann and T.~Streicher.
\newblock Lifting {G}rothendieck universes.
\newblock Available from the second-named author's web page,
  \url{https://www2.mathematik.tu-darmstadt.de/~streicher/NOTES/lift.pdf},
  1997.

\bibitem{Johnstone:TT}
P.~T. Johnstone.
\newblock {\em Sketches of an {E}lephant: {A} {T}opos {T}heory {C}ompendium}.
\newblock Oxford University Press, 2002.

\bibitem{joyal-moerdijk}
A.~Joyal and I.~Moerdijk.
\newblock {\em Algebraic Set Theory}.
\newblock London Mathematical Society Lecture Note Series. Cambridge University
  Press, 1995.

\bibitem{LOPS18}
D.~R. Licata, I.~Orton, A.~M. Pitts, and B.~Spitters.
\newblock Internal universes in models of homotopy type theory.
\newblock In H.~Kirchner, editor, {\em 3rd International Conference on Formal
  Structures for Computation and Deduction (FSCD 2018)}, volume 108 of {\em
  Leibniz International Proceedings in Informatics (LIPIcs)}, pages
  22:1--22:17, 2018.

\bibitem{mm:sgl}
S.~Mac~Lane and I.~Moerdijk.
\newblock {\em Sheaves in geometry and logic. A first introduction to topos
  theory}.
\newblock Universitext. Springer-Verlag, 1994.

\bibitem{MaiettiME:modcdt}
M.~E. Maietti.
\newblock Modular correspondence between dependent type theories and categories
  including pretopoi and topoi.
\newblock {\em Mathematical Structures in Computer Science}, 15:1089--1149,
  2005.

\bibitem{nordstrom-petersson-smith:ml}
B.~Nordstr\"om, K.~Petersson, and J.~Smith.
\newblock Martin-{L}\"of type theory.
\newblock In S.~Abramsky, D.~M. Gabbay, and T.~S.~E. Maibaum, editors, {\em
  Handbook of Logic in Computer Science}, volume~5, pages 1--37. Oxford
  University Press, 2001.

\bibitem{Norell2009}
U~Norell.
\newblock Dependently typed programming in {A}gda.
\newblock In P.~Koopman, R.~Plasmeijer, and D.~Swierstra, editors, {\em
  Advanced Functional Programming: 6th International School}, volume 5832 of
  {\em Lecture Notes in Computer Science}, pages 230--266. Springer, 2009.

\bibitem{orton-pitts}
I.~Orton and A.~M. Pitts.
\newblock {Axioms for Modelling Cubical Type Theory in a Topos}.
\newblock {\em {Logical Methods in Computer Science}}, 14(4):1--33, 2018.

\bibitem{OsiusG:notkj}
G.~Osius.
\newblock A note on {K}ripke-{J}oyal semantics for the internal language of
  topoi.
\newblock In F.W. Lawvere, C.~Maurer, and G.C. Wraith, editors, {\em Model
  Theory and Topoi}, volume 445 of {\em Lecture Notes in Computer Science},
  pages 349--354. Springer, 1975.

\bibitem{pitts-catlog}
A.~M. Pitts.
\newblock Categorical logic.
\newblock In S.~Abramsky, D.~M. Gabbay, and T.~S.~E. Maibaum, editors, {\em
  Handbook of {L}ogic in {C}omputer {S}cience}, volume~5, pages 39--128. Oxford
  University Press, 2001.

\bibitem{quillen-homotopical}
D.~G. Quillen.
\newblock {\em Homotopical algebra}, volume~43 of {\em Lecture Notes in
  Mathematics}.
\newblock Springer, 1967.

\bibitem{riehl-cat-homotopy}
E.~Riehl.
\newblock {\em Categorical homotopy theory}.
\newblock Cambridge University Press, 2014.

\bibitem{Rosolini:thesis}
G.~Rosolini.
\newblock {\em Continuity and Effectiveness in Topoi}.
\newblock PhD thesis, University of Oxford, 1986.

\bibitem{sattler17}
C.~Sattler.
\newblock The equivalence extension property and model structures, 2017.
\newblock Available from ArXiv. \url{http://arxiv.org/abs/1704.06911}.

\bibitem{seely-lccc}
R.~A.~G. Seely.
\newblock Local cartesian closed categories and type theory.
\newblock {\em Mathematical Proceedings of the Cambridge Philosophical
  Society}, 95:33--48, 1984.

\bibitem{shulman:stack}
M.~Shulman.
\newblock Stack semantics and the comparison of material and structural set
  theories, 2010.
\newblock ArXiv:1004.3802.

\bibitem{shulman2019infty1toposes}
M.~Shulman.
\newblock All $(\infty,1)$-toposes have strict univalent universes, 2019.
\newblock arXiv.1904.07004.

\bibitem{SwanA:algwfs}
A.~Swan.
\newblock An algebraic weak factorisation system on 01-substitution sets: a
  constructive proof.
\newblock {\em Journal of Logic \& Analysis}, 8:1--35, 2016.

\bibitem{swan2018identity}
A.~Swan.
\newblock Identity types in algebraic model structures and cubical sets, 2018.
\newblock arXiv.1808.00915.

\bibitem{swan2021cttalk}
A.~Swan.
\newblock Some remarks on locally representable algebraic weak factorisation
  systems, 2021.
\newblock Talk at the International Category Theory Conference 2020-21.

\bibitem{troelstra-van-dalen}
A.~S. Troelstra and D.~van Dalen.
\newblock {\em Constructivism in Mathematics}.
\newblock North-Holland, 1988.

\bibitem{hottbook}
The {Univalent Foundations Program}.
\newblock {\em Homotopy Type Theory: Univalent Foundations of Mathematics}.
\newblock \url{https://homotopytypetheory.org/book}, Institute for Advanced
  Study, 2013.

\bibitem{voevodsky:library}
V.~Voevodsky.
\newblock An experimental library of formalized mathematics based on the
  univalent foundations.
\newblock {\em Mathematical Structures in Computer Science}, 25(5):1278--1294,
  2015.

\end{thebibliography}
\end{document}